\title{Spherical CR uniformization of Dehn surgeries of the Whitehead link complement}
\author{Miguel ACOSTA \\
UPMC, 
IMJ-PRG\\
 and Université de Lorraine - IECL\\ 
France}
\begin{document}
\maketitle

\begin{abstract}
We apply a spherical CR Dehn surgery theorem in order to obtain infinitely many Dehn surgeries of the Whitehead link complement that carry spherical CR structures. We consider as starting point the spherical CR uniformization of the Whitehead link complement constructed by Parker and Will, using a Ford domain in the complex hyperbolic plane $\h2c$.
We deform the Ford domain of Parker and Will in $\h2c$ in a one parameter family. On the one side, we obtain infinitely many spherical CR uniformizations on a particular Dehn surgery on one of the cusps of the Whitehead link complement. On the other side, we obtain spherical CR uniformizations for infinitely many Dehn surgeries on the same cusp of the Whitehead link complement. These manifolds are parametrized by an integer $n \geq 4$, and the spherical CR structure obtained for $n = 4$ is the Deraux-Falbel spherical CR uniformization of the Figure Eight knot complement.
\end{abstract}

\tableofcontents

\setcounter{section}{-1}
\section{Introduction}

The present work takes place in the frame of the study of geometric structures on manifolds as well as in complex hyperbolic geometry. 
The Thurston geometrization conjecture, recently proved by Perelman, confirms that the study of the geometric structures carried by manifolds is extremely useful in order to understand their topology: any 3-dimensional manifold can be cut into pieces that carry a geometric structure.
Among the 3-dimensional structures, we find the spherical CR structures: they are not on the list of the eight 3-dimensional Thurston geometries but have an interesting behavior, and there are relatively few general facts known about them.
 More precisely, a spherical CR structure is a $(G,X)$-structure, where $X = \dh2c \simeq S^3$ is the boundary at infinity of the complex hyperbolic plane $\h2c$ and $G = \pu21$ is the group of holomorphic isometries of $\h2c$.
Hence, the study of discrete subgroups of $\pu21$ is closely related to the understanding of spherical CR structures.
An approach to construct such discrete subgroups is to consider triangle groups. The $(p,q,r)$ triangle group is the group $\Delta_{(p,q,r)}$ with presentation
\[\langle (\sigma_1,\sigma_2,\sigma_3) \mid \sigma_1^2 = \sigma_2^2 = \sigma_3^2 =  (\sigma_2 \sigma_3)^p = (\sigma_3\sigma_1)^q = (\sigma_1\sigma_2)^r = \mathrm{Id} \rangle .\]

 If $p$, $q$ or $r$ equals $\infty$, then the corresponding relation does not appear. The representations of triangle groups into $\pu21$ where the images of $\sigma_1$, $\sigma_2$ and $\sigma_3$ are complex reflexions have been widely studied. 
 For example, in \cite{goldman_complex_1992}, Goldman and Parker study the representations of $(\infty , \infty , \infty )$ triangle groups, and show that they are parametrized, up to conjugation, by a real number $s \in [0, + \infty [$. They conjecture a condition on the parameter for having a discrete and faithful representation. The conjecture, proved by Schwartz in \cite{schwartz_ideal_2001} and \cite{schwartz_better_2005}, can be summarized as follows:
\begin{thm*}[Goldman-Parker, Schwartz]
 A representation of the $(\infty,\infty,\infty)$ triangle group into $\pu21$ is discrete and faithful if and only if the image of $\sigma_1\sigma_2\sigma_3$ is non-elliptic.
\end{thm*}

A more complete picture on complex hyperbolic triangle groups can be found in the survey of Schwartz \cite{schwartz_complex_2002}, where he states the following conjecture:

\begin{conj*}[Schwartz]
 Let $\Delta_{(p,q,r)}$ a triangle group with $p \leq q \leq r$. Then, a representation of $\Delta_{(p,q,r)}$ into $\pu21$ where the images of the generators are complex reflexions is discrete and faithful if
and only if the images of $\sigma_1\sigma_2\sigma_1\sigma_3$ and $\sigma_1\sigma_2\sigma_3$ are not elliptic. Furthermore:
\begin{enumerate}
 \item If $p < 10$ then the representation is discrete and faithful if and only if image of $\sigma_1\sigma_2\sigma_1\sigma_3$ is
nonelliptic.
 \item If $p > 13$ then the representation is discrete and faithful if and only if image of $\sigma_1\sigma_2\sigma_3$ is
nonelliptic.
\end{enumerate}
\end{conj*}

More recently, in \cite{deraux_deforming_2006}, Deraux studies the representations of $(4,4,4)$-triangle groups, and shows that the representation for which the image of $\sigma_1\sigma_2\sigma_1\sigma_3$ is of order 5 is a lattice in $\pu21$.
 In \cite{parker_wang_xie}, Parker, Wang and Xie study the representations of $(3,3,n)$-triangle groups, and prove Schwartz conjecture in this case. Namely, the discrete and faithful representations are the ones for which the image of $\sigma_1\sigma_2\sigma_1\sigma_3$ is
nonelliptic. These representations will appear naturally in this article.

 Back to the geometric structures, determining if a manifold carries a spherical CR structure or not is a difficult question. The only negative result known to us is due to Goldman in \cite{goldman_tore}, and concerns the torus bundles over the circle. On the side of the known structures on manifolds, we can obtain spherical CR structures on quotients of $S^3$ of the form
 $\Gamma \backslash \dh2c$, containing the lens spaces.
 In \cite{falbel-gusevskii}, Falbel and Gusevskii construct spherical CR structures on circle bundles on hyperbolic surfaces with arbitrary Euler number $e \neq 0$.
 
 Constructing discrete subgroups of $\pu21$ can be used to construct spherical CR structures on manifolds.
Among general $(G,X)$-structures, the structures obtained as $\Gamma \backslash X$ for $\Gamma < G$ are called \emph{complete}, and are specially interesting since all the information is given by the group $\Gamma$. In the case of the spherical CR structures, we are interested in a more general class of structures, called \emph{uniformizable};
we say that a spherical CR structure on a manifold $M$ is uniformizable if it is obtained as as $\Gamma \backslash \Omega_\Gamma$, where $\Omega_\Gamma \subset \dh2c$ is the set of discontinuity of $\Gamma$.

In order to show that a spherical CR structure is uniformizable, the proofs often extend the structure to $\h2c$ and use the Poincaré polyhedron theorem as stated for example in \cite{parker_complex_2017a}. Besides the examples cited below, there are mainly three known uniformizable spherical CR structures on cusped manifolds.
There are two different uniformizable spherical CR structures of the Whitehead link complement. The first one is constructed
 in \cite{schwartz}, by R. Schwartz. The second one is constructed by Parker and Will in \cite{parker_complex_2017a}. 
  In \cite{falbel}, Deraux and Falbel construct a spherical CR uniformization of Figure eight knot complement. This uniformization can be deformed in a one parameter family of uniformizations, as shown by Deraux in \cite{deraux_uniformizations}.


On the other hand, as in the real hyperbolic case treated in the notes of Thurston \cite{gt3m}, we can expect to construct spherical CR uniformizations of other manifolds by performing a Dehn surgery on a cusp of one of the examples above. In \cite{schwartz}, Schwartz proves a spherical CR Dehn surgery theorem, stating that, under some convergence conditions, the representations close to the holonomy representation of a uniformizable structure give spherical CR uniformizations of Dehn surgeries of the initial manifold.

 This can be applied to the first uniformization of the Whitehead link complement. It leads to an infinity of uniformizable manifolds, parametrized by some rational points in an open set of a deformation space. However, the hypotheses of the Schwartz surgery theorem contain a condition on the \emph{porosity} of the limit set of the holonomy representation, that we were unable to check in the two other cases. In \cite{acosta_spherical_2016}, we show another spherical CR Dehn surgery theorem, with weaker hypotheses and weaker conclusions, giving spherical CR structures but not the uniformizability. We apply the theorem to the Figure Eight knot complement in \cite{acosta_spherical_2016}, and we will apply it to the Parker-Will structure in section \ref{sect_ch_cr_wlc} of this paper.

If we use Dehn surgeries to construct spherical CR uniformizations on manifolds there are two main difficult points. The first one is to apply a theorem or to prove the uniformizability of a given structure. The second one is that the two spherical CR Dehn surgery theorems give structures parametrized by the points of an open set of a space of deformations of representations that is not explicit.
Two questions rise then naturally:

\begin{enumerate}
 \item Can we give explicitly an open set of representations giving spherical CR structures on Dehn surgeries of the Whitehead link complement ?
 \item Are these structures uniformizable ?
\end{enumerate}

 The aim of this article is to answer, at least partially and in a particular case, to the the two questions above. We will take as starting point the Parker-Will uniformization of the Whitehead link complement. We will use as space of deformations the representations constructed by Parker and Will in \cite{parker_complex_2017a}, that factor through the group $\z3z3$. This corresponds to consider a slice of the character variety $\mathcal{X}_{\su21}(\z3z3)$ as defined in \cite{acosta_character_2016}. Furthermore, we will be considering the representations of a whole component of the character variety, since Guilloux and Will show in \cite{guilloux_will_2016} that a whole component of the $\sl3c$-character variety of the fundamental group of the Whitehead link complement corresponds only to representations that factor through $\z3z3$.
 
 The slice that we will consider is parametrized by a single complex number $z$, and the representation of the Parker-Will uniformization has parameter $z=3$. However, we will consider the parametrization of representations used by Parker and Will in \cite{parker_complex_2017a}, given by a pair of angles $(\alpha_1, \alpha_2) \in ]-\frac{\pi}{2}, \frac{\pi}{2} [^2$. With these parameters, the representation of the Parker-Will uniformization has parameter $(0, \alpha_2^{\lim})$. For the deformations of the representation having parameter $(0, \alpha_2)$, we show the two following theorems:
   
   \newtheorem*{thm:associativity}{Theorem \ref{thm_ch_dehn_eff_ell}}
\begin{thm:associativity}
Let $n\geq 4$. Let $\rho_n$ be the representation with parameter $(0, \alpha_2)$ such that $z = 8 \cos^2(\alpha_2) = 2\cos(\frac{2\pi}{n}) + 1$ in the Parker-Will parametrization. Then, $\rho_n$ is the holonomy representation of a \CR {} structure on the Dehn surgery of the Whitehead link complement on $T_1$ of type $(1,n-3)$ (i.e. of slope $\frac{1}{n-3}$). 
\end{thm:associativity}

\newtheorem*{thm:associativity2}{Theorem \ref{thm_ch_dehn_eff_lox}}
\begin{thm:associativity2}
 Let $\alpha_2 \in ]0 , \alpha_2^{\lim}[$. Let $\rho$ be the representation with parameter $(0, \alpha_2)$ in the Parker-Will parametrization. Then $\rho$ is the holonomy representation of a \CR {} structure on the Dehn surgery of the Whitehead link complement on $T_1$ of type $(1,-3)$ (i.e. of slope $-\frac{1}{3}$).
\end{thm:associativity2} 

 The corresponding representations have been studied previously by Parker and Will in \cite{parker_complex_2017a} and by Parker, Wang and Xie in \cite{parker_wang_xie}. In these two articles, the authors prove that the groups are discrete using the Poincaré polyhedron theorem in $\h2c$, but they do not identify the topology of the manifolds at infinity. In this article, we give a new proof of this facts, but using different and more geometrical techniques, and we establish the topology of the manifolds at infinity.

 On the one hand, in \cite{parker_complex_2017a}, Parker and Will study a region $\mathcal{Z} \subset ]-\frac{\pi}{2},\frac{\pi}{2}[^2$ parametrizing representations of $\z3z3$ with values in $\su21$. The region is given in Figure \ref{peche2}, and contains the parameters that appear in the statement of Theorem \ref{thm_ch_dehn_eff_lox}.
 On the other hand, in \cite{parker_wang_xie}, Parker, Wang and Xie study the representations that appear in the statement of Theorem \ref{thm_ch_dehn_eff_ell}, since their images are index two subgroups of a $(3,3,n)$ triangle group in $\su21$. They use the Poincaré polyhedron theorem to show that the groups are discrete. The Dirichlet domain used to apply the Poincaré polyhedron theorem is very similar to the domain that we use in this article. However, they do not identify the topology of the manifold at infinity and there is no visible link between the Dirichlet domain of \cite{parker_wang_xie} and the Ford domain of \cite{parker_complex_2017a} that we establish in this article.

\paragraph{Outline of the article}
 This article has three main parts. 
 
 In Part \ref{part_geom_background}, we give the geometric background needed to state and prove the results. We will set notation and describe the complex hyperbolic plane and several objects related to this space in Section \ref{sect_h2c}, and specially in the visual sphere of a point in $\cp2$ in Section \ref{sect_sphere_visuelle}. We will then focus in the definition and properties of the equidistant hypersurfaces of two points, called \emph{bisectors} and their continuation to $\cp2$, called \emph{extors}, as well as some of their intersections in Section \ref{sect_extors_biss_spinal}.
 
 In Part \ref{part_surgeries_wlc}, we consider some spherical CR structures on the Whitehead link complement and on manifolds obtained from it by Dehn surgeries. We recall the spherical CR uniformizations of Schwartz and Parker-Will, and describe a space of deformations of the corresponding holonomy representations. At last, we apply the surgery theorem of \cite{acosta_spherical_2016}, and identify the expected Dehn surgeries that would have a spherical CR structure if the open set of the surgery theorem is large enough.
 
 In Part \ref{part_eff_defor}, which is the core of the article, we give an explicit deformation of the Ford domain in $\h2c$ constructed by Parker and Will in \cite{parker_complex_2017a}, and that is bounded by bisectors. We recall the construction of Parker and Will, that gives the spherical CR uniformization of the Whitehead link complement when restricted to the boundary at infinity. We consider the deformations of the holonomy representation with parameters $(0,\alpha_2)$, and deform the bisectors that border the Ford domain. By studying carefully their intersections, we show that if a particular element $[U]$ in the group is either loxodromic or elliptic of finite order $\geq 4$, then the bisectors border a domain in $\h2c$ with a face pairing. We identify the manifold obtained by restricting the construction to $\dh2c$ as the expected Dehn surgery of the Whitehead link complement. For the parameters for which $[U]$ is an elliptic element of finite order and for some of the parameters for which $[U]$ is loxodromic we apply the Poincaré polyhedron theorem as stated in \cite{parker_complex_2017a}, and show that the spherical CR structures obtained are uniformizable.

 In Section
 \ref{sect_strategie_de_preuve}, we will state the results on surgeries and uniformization and we will give the strategy of the proof. The rest of Part \ref{part_eff_defor} will be devoted to prove these statements. Section \ref{sect_notat_combi_initiale} fixes the notation and describes the construction of Parker and Will in detail. We will prove the statements in Section \ref{sect_fin_preuve}, but admitting some technical conditions that we will prove in the three last sections. We will check the conditions on the faces of the domain: a condition on the topology of the faces in Section \ref{sect_topo_faces}, a local combinatorics condition in Section \ref{sect_combi_locale}, and we will show that the global combinatorics of the intersection of the faces is the expected one in Section \ref{sect_combi_globale}.

\paragraph*{Acknowledgements}
The author would like to acknowledge his advisors
Martin Deraux and Antonin Guilloux, as well as Pierre Will for many discussions about the subject.

\newpage
\part{Geometric background} \label{part_geom_background}

\section{The complex hyperbolic plane and its isometries}\label{sect_h2c}
In this section, we set notation and recall the definition of the complex hyperbolic space $\h2c$ and its boundary at infinity $\dh2c$. We also describe briefly its isometries and the geometric structure modeled on $\dh2c$. The main reference for these objects is the book of Goldman \cite{goldman}.

\subsection{Definition and models}

Throughout this article, we will use objects belonging to complex vector spaces and their projectivizations. For a complex vector space $V$ and a vector $v \in V$, we will denote by $[v]$ its image in $\mathbb{P}V$. The same notation holds for matrix groups. For example, the image of a matrix $M \in \su21$ in the group $\pu21$ will be denoted by $[M]$.

Let $V$ be a complex vector space of dimension $3$.
Let $\Phi$ be a Hermitian form of signature $(2,1)$ on $V$, and define:
\begin{eqnarray*}
 V_{-} &=& \{ v \in V \mid \Phi(v)<0 \} \\
 V_{0} &=& \{ v \in V \mid \Phi(v) = 0 \}
\end{eqnarray*} 

 \begin{defn}
  The complex hyperbolic plane is the space $\h2c = \mathbb{P}V_{-}$ endowed with the Hermitian metric induced by $\Phi$. Its boundary at infinity is the set $\dh2c = \mathbb{P}V_{0}$. We denote by $\con{\h2c}$ the set $\h2c \cup \dh2c \in \cp2$.
 \end{defn}

 The space $\h2c$ is homeomorphic to a ball $B^4$, and $\dh2c$ is homeomorphic to the sphere $\mathbb{S}^3$.
Comment: curvature, topology, S3

\begin{defn}
 If $V = \mathbb{C}^3$ and the Hermitian form $\Phi$ has matrix
 \[ \begin{pmatrix}
 1 & 0 & 0 \\
 0 & 1 & 0 \\
 0 & 0 & -1
\end{pmatrix}  \]
we obtain the ball model. In this model, we identify $\h2c$ and $\dh2c$ as follows:

 \begin{eqnarray*}
  \h2c &=& \left\{ \begin{bmatrix} z_1 \\ z_2 \\ 1 \end{bmatrix} \in \cp2 \mid |z_1|^2 + |z_2|^2 < 1 \right\} \\
  \dh2c &=& \left\{ \begin{bmatrix} z_1 \\ z_2 \\ 1 \end{bmatrix} \in \cp2 \mid |z_1|^2 + |z_2|^2 = 1 \right\}
 \end{eqnarray*}
 
\end{defn}

\begin{defn}
 If $V = \mathbb{C}^3$ and the Hermitian form $\Phi$ has matrix
 \[ \begin{pmatrix}
 0 & 0 & 1 \\
 0 & 1 & 0 \\
 1 & 0 & 0
\end{pmatrix}  \]
we obtain the Siegel model. In this model, we identify $\h2c$ and $\dh2c$ as follows:

 \begin{eqnarray*}
  \h2c &=& \left\{ \begin{bmatrix} -\frac{1}{2}(|z|^2 + w) \\ z \\ 1 \end{bmatrix}  \mid (z,w)\in \CC^2 , \Re(w)<0 \right\} \\
  \dh2c &=& \left\{ \begin{bmatrix} -\frac{1}{2}(|z|^2 + it) \\ z \\ 1 \end{bmatrix} \mid (z,t)\in \CC \times \RR \right\} \cup \left\{ \begin{bmatrix} 1 \\ 0 \\ 0 \end{bmatrix} \right\}
 \end{eqnarray*}
 In this case, we identify $\dh2c$ with $\CC \times \RR \cup \{ \infty \}$.
\end{defn}

In \cite{goldman}, Goldman shows that the totally geodesic subspaces of $\h2c$ are points, real geodesics, copies of $\mathbb{H}^1_{\CC}$, copies of $\mathbb{H}^2_{\mathbb{R}}$ and $\h2c$ itself. The copies of $\mathbb{H}^1_{\CC}$ are the intersections of linear subspaces of $\mathbb{P}V$ with $\h2c$, and are called \emph{complex geodesics}. Notice that given two distinct points of $\h2c$ there is a unique complex geodesic containing them, as well as a unique real geodesic containing them. The boundary at infinity of a complex geodesic is called a \emph{$\CC$-circle}, and the boundary at infinity of a copy of $\mathbb{H}^2_\RR$ is called an \emph{$\RR$-circle}: they are unknotted circles in $\dh2c \simeq S^3$.  The group $\pu21$ acts transitively on each kind of subspace.

\subsection{Isometries}
 The group of holomorphic isometries of $\h2c$ is the projectivized of the unitary group for the Hermitian form $\Phi$: we denote it by $\pu21$. Notice that the definition of the group depends on the choice of $\Phi$, and may change depending on the model that we consider.
 
 We will often consider matrices in the group $\su21$ instead of elements of $\pu21$.
  Every element of $\pu21$ admits exactly three lifts to the group $\su21$ of unitary matrices for $\Phi$ of determinant one. If $\omega$ is a cube root of $1$ and $U \in \su21$, then $U$, $\omega U$ and $\omega^2 U$ are the three lifts of $[U]$ to $\su21$.
 
 As in real hyperbolic geometry, the elements of $\pu21$ are classified by their fixed points in $\h2c$ and $\dh2c$; and we can refine the classification by dynamical considerations.

 \begin{defn}
  An isometry $[U] \in \pu21$ is
     \emph{elliptic} if it fixes a point in $\h2c$, \emph{parabolic} if it is not elliptic and fixes exactly one point in $\dh2c$ and \emph{loxodromic} if it is not elliptic and it has two fixed points in $\dh2c$.
 \end{defn} 
 
 We can state this classification in terms of the eigenvalues and eigenspaces:
 \begin{prop}
  Let $U \in \su21 \setminus \{\mathrm{Id}\}$. Then $U$ is in one of the three following cases:
  \begin{enumerate}
\item $U$ has an eigenvalue $\lambda$ of modulus different from $1$. Then $[U]$ is loxodromic.
\item $U$ has an eigenvector $v \in V_{-}$. Then $[U]$ is elliptic and its eigenvalues have
modulus equal to $1$ but are not all equal.
\item All eigenvalues of $U$ have modulus $1$ and $U$ has an eigenvector $v \in V_{0}$. Then
$[U]$ is parabolic.
  \end{enumerate}
 \end{prop}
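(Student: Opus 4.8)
The plan is to reduce the statement to linear algebra over $\Phi$, using two elementary facts. First, if $Uv=\lambda v$ with $v\neq 0$, then $\Phi(v)=\Phi(Uv)=|\lambda|^{2}\Phi(v)$, so either $|\lambda|=1$ or $v\in V_{0}$; equivalently, an eigenvector for an eigenvalue of modulus $\neq 1$ is isotropic, while an eigenvector in $V_{-}$ or $V_{0}$ has a unimodular eigenvalue. Second, $U^{\ast}\Phi U=\Phi$ (with $\Phi$ also denoting the Gram matrix) gives $U^{\ast}=\Phi U^{-1}\Phi^{-1}$, so $U^{\ast}$ is conjugate to $U^{-1}$; comparing spectra, the multiset of eigenvalues of $U$ is stable under $\lambda\mapsto\bar\lambda^{-1}$, and together with $\dim V=3$ and $\det U=1$ this constrains the possible shapes of the spectrum. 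I would read the three cases as a decision procedure applied in order: first whether $U$ has an eigenvalue of modulus $\neq 1$ (Case 1), then whether it has an eigenvector in $V_{-}$ (Case 2), and Case 3 covers what is left. I also assume $U$ is not a scalar matrix, since the scalar elements of $\su21$ are the cube roots of unity times the identity and project to $\mathrm{Id}\in\pu21$.

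Suppose first $U$ has an eigenvalue $\lambda$ with $|\lambda|\neq 1$ (Case 1). Then $\bar\lambda^{-1}$ is an eigenvalue distinct from $\lambda$ (otherwise $|\lambda|=1$), so the spectrum is $\{\lambda,\bar\lambda^{-1},\mu\}$ with $\mu=\bar\lambda/\lambda$ unimodular and distinct from the other two; hence $U$ has three one-dimensional eigenlines, which are the only fixed points of $[U]$ in $\cp2$. The eigenvectors $v_{\lambda}$ and $v_{\bar\lambda^{-1}}$ are isotropic, so $[v_{\lambda}],[v_{\bar\lambda^{-1}}]\in\dh2c$; each is $\Phi$-orthogonal to $v_{\mu}$, and $\Phi$ is nondegenerate on $\mathrm{span}(v_{\lambda},v_{\bar\lambda^{-1}})$ — otherwise it would be degenerate on all of $V$ — so $\Phi$ is positive on $\mathbb{C}v_{\mu}$ and $[v_{\mu}]\notin\con{\h2c}$. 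Therefore $[U]$ fixes no point of $\h2c$ and exactly two points of $\dh2c$: it is loxodromic.

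Now suppose the spectrum is unimodular and $U$ has an eigenvector $v\in V_{-}$ (Case 2). Then $[v]\in\h2c$ is fixed, so $[U]$ is elliptic by definition; moreover $\Phi$ is positive definite on the $U$-invariant plane $v^{\perp}$, so $U|_{v^{\perp}}$ is unitary for a definite form, hence $U$ is diagonalizable, and the eigenvalues cannot all be equal (a diagonalizable $U$ with all eigenvalues equal is scalar). Contrapositively, in the remaining case — unimodular spectrum and no eigenvector in $V_{-}$ — $U$ is not diagonalizable: a diagonalizable $U$ with unimodular spectrum has $\Phi$-orthogonal eigenspaces (from $\Phi(v,w)=\lambda\bar\mu\,\Phi(v,w)$ with $\lambda\bar\mu\neq 1$) whose orthogonal sum is $V$, and since $\Phi$ has signature $(2,1)$ one of these summands meets $V_{-}$.

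So in Case 3, $U$ has a Jordan block of size $\geq 2$ for some unimodular $\lambda$; choosing $v,w$ with $Uv=\lambda v$ and $Uw=\lambda w+v$, the identity $\Phi(v,w)=\Phi(Uv,Uw)=\Phi(v,w)+\lambda\,\Phi(v)$ forces $\Phi(v)=0$, so $U$ indeed has an eigenvector in $V_{0}$ and $[v]\in\dh2c$. It remains to show that $[v]$ is the \emph{only} fixed point of $[U]$ in $\con{\h2c}$: none lies in $\h2c$ (such a fixed point would produce an eigenvector in $V_{-}$, contradicting non-diagonalizability), and for $\dh2c$ one runs through the non-diagonalizable Jordan forms of $U$ — a single block of size $3$, or a block of size $2$ together with a third eigenvector sharing or not the eigenvalue $\lambda$ — and verifies, by pairing each further eigenvector with $v$ and $w$ and invoking nondegeneracy of $\Phi$, that every other eigenline is positive and hence lies outside $\con{\h2c}$. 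Granting this, $[U]$ is not elliptic and fixes exactly one point of $\dh2c$, so it is parabolic. I expect this last verification — the case analysis on the Jordan form of a non-diagonalizable $U$ and the bookkeeping of the signature of $\Phi$ on the associated invariant subspaces — to be the only genuinely delicate step; everything else follows directly from the two preliminary facts.
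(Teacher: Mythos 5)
The paper itself does not prove this proposition: it is quoted as the standard eigenvalue classification of isometries (essentially Theorem 6.2.4 and the surrounding material in Goldman's book), so there is no in-paper argument to compare yours with. Judged on its own terms, your proof is correct and follows the expected linear-algebra route: the two preliminary facts (an eigenvector for a non-unimodular eigenvalue is isotropic; the spectrum is invariant under $\lambda\mapsto\bar\lambda^{-1}$), the decision-procedure reading of the three cases (which is indeed the right reading, since a non-regular elliptic element can have isotropic eigenvectors, so the hypotheses of cases 2 and 3 are not literally disjoint), and the exclusion of the scalars $\omega\,\mathrm{Id}$, which the hypothesis $U\neq\mathrm{Id}$ does not literally rule out, are all handled properly. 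Cases 1 and 2, and the reduction of case 3 to a non-diagonalizable $U$ with an isotropic eigenvector $v$, are complete as written.

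The only place you stop short is the verification you flag at the end of Case 3, and it does go through exactly as you predict; for completeness, here it is. If the Jordan form is a single block of size $3$, the line $[v]$ is the only eigenline, so there is nothing to check. If there is a block of size $2$ for $\lambda$ (with $Uv=\lambda v$, $Uw=\lambda w+v$) and a third eigenvector $u$ for $\mu\neq\lambda$, then $\langle u,v\rangle=\mu\bar\lambda\langle u,v\rangle$ gives $\langle u,v\rangle=0$, and then $\langle u,w\rangle=\mu\bar\lambda\langle u,w\rangle+\mu\langle u,v\rangle$ gives $\langle u,w\rangle=0$; so $\Phi(u)=0$ would make $u$ orthogonal to the spanning set $\{v,w,u\}$, contradicting nondegeneracy, while $\Phi(u)<0$ contradicts the case hypothesis; hence $\Phi(u)>0$ and $[u]\notin\con{\h2c}$. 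If instead the third eigenvector $u$ has the same eigenvalue $\lambda$, pairing $u$ with $w$ gives $\lambda\langle u,v\rangle=0$, so $u\perp v$; then $\Phi(u)=0$ would make $\mathrm{span}(v,u)$ a two-dimensional totally isotropic subspace, impossible in signature $(2,1)$, and $\Phi(u)<0$ is again excluded, so $\Phi(u)>0$; since every eigenvector is of the form $av+bu$ with $\Phi(av+bu)=|b|^2\Phi(u)$, every eigenline other than $[v]$ is positive. With these few lines inserted, your argument is a complete proof of the proposition.
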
 
 
 We say that an element is \emph{regular} if it has three different eigenvalues, and unipotent if it is not the identity and has three equal eigenvalues (hence equal to a cube root of $1$).
 This last definition and the proposition above extend easily to $\pu21$.
 
 It is possible to recognize the type of a regular element only by considering its trace, using the following proposition, given by Goldman in \cite{goldman}. Notice that the function $f$ satisfies $f(z) = f(\omega z)$, and hence $f \circ \mathrm{tr}$ is well defined on elements of $\pu21$.
 
 \begin{prop}
  Let
  $f(z) = |z|^4 - 8 \Re(z^3)+18 |z|^2 - 27$.
Let $U \in \su21$. Then $U$ is regular if and only if $f (tr(U )) \neq 0$. Furthermore, if
$f (tr(U )) < 0$ then $[U]$ is regular elliptic, and if $f (tr(U )) > 0$ then $[U]$ is loxodromic.
 \end{prop}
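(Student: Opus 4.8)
The plan is to identify $f(\mathrm{tr}(U))$ with the discriminant of the characteristic polynomial of $U$, and then to read off its sign directly from the eigenvalues. For $U\in\su21$ one has $\det U=1$ and, since $U^{-1}=\Phi^{-1}U^{*}\Phi$, also $\mathrm{tr}(U^{-1})=\overline{\mathrm{tr}(U)}$. Writing $\tau=\mathrm{tr}(U)$, the characteristic polynomial of $U$ is therefore $\chi_{U}(X)=X^{3}-\tau X^{2}+\overline{\tau}X-1$, and a direct computation of the discriminant of this cubic yields exactly $|\tau|^{4}-8\Re(\tau^{3})+18|\tau|^{2}-27=f(\tau)$. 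Since a cubic has a repeated root if and only if its discriminant vanishes, $U$ is regular precisely when $f(\tau)\neq 0$, which proves the first assertion. I would also record here the conjugate--reciprocal symmetry of $\chi_{U}$: its set of roots is stable under $\lambda\mapsto 1/\overline{\lambda}$, as one sees by comparing $\chi_{U}(\lambda)$ with $\overline{\chi_{U}(1/\overline{\lambda})}$.

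Next I would pin down the eigenvalue normal forms in the regular case. Let $\lambda_{1},\lambda_{2},\lambda_{3}$ be the distinct eigenvalues, with eigenvectors $v_{1},v_{2},v_{3}$. For $i\neq j$, $\Phi(v_{i},v_{j})=\Phi(Uv_{i},Uv_{j})=\lambda_{i}\overline{\lambda_{j}}\,\Phi(v_{i},v_{j})$, so $\Phi(v_{i},v_{j})=0$ unless $\lambda_{i}\overline{\lambda_{j}}=1$. Combined with the symmetry above, this forces exactly one of two possibilities: either all three eigenvalues have modulus $1$, or exactly one does and the other two are $\lambda$ and $1/\overline{\lambda}$ with $|\lambda|\neq 1$, in which case $\det U=1$ makes the third equal to $\overline{\lambda}/\lambda$. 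In the first case the three eigenlines are mutually $\Phi$-orthogonal, each non-isotropic by non-degeneracy of $\Phi$, and since $\Phi$ has signature $(2,1)$ exactly one lies in $V_{-}$; hence $[U]$ fixes a point of $\h2c$ and is elliptic. In the second case $[U]$ has an eigenvalue of modulus $\neq 1$, so it is loxodromic. In particular a regular element is never parabolic, consistently with the trichotomy.

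It then remains to compute the sign of $f(\tau)=\prod_{i<j}(\lambda_{i}-\lambda_{j})^{2}$ in each case. If $[U]$ is elliptic, write $\lambda_{j}=e^{i\theta_{j}}$ with $\theta_{1}+\theta_{2}+\theta_{3}$ an integer multiple of $2\pi$; from $e^{i\theta_{a}}-e^{i\theta_{b}}=2i\,e^{i(\theta_{a}+\theta_{b})/2}\sin\tfrac{\theta_{a}-\theta_{b}}{2}$ one gets $\prod_{i<j}(\lambda_{i}-\lambda_{j})^{2}=-64\,e^{2i(\theta_{1}+\theta_{2}+\theta_{3})}\prod_{i<j}\sin^{2}\tfrac{\theta_{i}-\theta_{j}}{2}=-64\prod_{i<j}\sin^{2}\tfrac{\theta_{i}-\theta_{j}}{2}<0$, the $\theta_{j}$ being pairwise distinct modulo $2\pi$. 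If $[U]$ is loxodromic, substitute the eigenvalues $\lambda,\ 1/\overline{\lambda},\ \overline{\lambda}/\lambda$ with $|\lambda|\neq 1$; a direct simplification gives $f(\tau)=\frac{(|\lambda|^{2}-1)^{2}\,|\lambda^{2}-\overline{\lambda}|^{4}}{|\lambda|^{8}}>0$, since regularity guarantees $|\lambda|\neq 1$ and $\lambda^{2}\neq\overline{\lambda}$. This finishes the proof.

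The computations themselves are routine; the step that needs the most care is the case analysis of the second paragraph, where one must check that the conjugate--reciprocal symmetry of $\chi_{U}$ really does force the two eigenvalue normal forms and that, in the first of them, the signature condition produces an eigenvector in $V_{-}$ --- that is, the precise interface between the algebraic trichotomy of the earlier proposition and the Hermitian geometry of $\Phi$. A slightly more conceptual but less elementary alternative would be to note that the zero set of $f$ is exactly the deltoid $\{\,2e^{i\theta}+e^{-2i\theta}\mid\theta\in\RR\,\}$, a Jordan curve splitting $\CC$ into two regions on each of which $f$ has constant sign, the sign then being determined by the single value $f(0)=-27$ together with one elliptic and one loxodromic sample in the two regions.
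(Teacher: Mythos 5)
Your proof is correct: the identification of $f(\mathrm{tr}(U))$ with the discriminant of the characteristic polynomial $X^3-\tau X^2+\overline{\tau}X-1$ is accurate, the conjugate--reciprocal symmetry of the spectrum and the signature argument correctly split the regular case into elliptic and loxodromic normal forms, and the two sign computations (which I checked) are right, so exhaustiveness gives both directions of the ``furthermore'' statements. Note that the paper itself gives no proof of this proposition --- it is quoted from Goldman's book (Theorem 6.2.4 of \cite{goldman}) --- and your discriminant argument is essentially the standard one found there, so there is no substantive divergence of method to report.
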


 We recall some dynamical properties of regular elliptic elements, that we need to classify some of them.
 For a detailed description of the dynamics of elements of $\pu21$ on $\dh2c$ see \cite{acosta_spherical_2016}.
 
 A regular elliptic element $[U]$ stabilizes two complex geodesics on $\h2c$ intersecting at the fixed point of $[U]$, and two linked $\CC$-circles in $\dh2c$. In this case, $[U]$ belongs to a one parameter subgroup of $\pu21$: the orbits of such a subgroup are the two stable $\CC$-circles and torus knots turning around the two circles. In some cases, we say that an element is of type $(\frac{p}{n}, \frac{q}{n})$:

 \begin{defn}
  Let $p,q \in \ZZ$ and $n \in \NN^*$ be three relatively prime integers. We say that a regular elliptic element $[U] \in \pu21$ is of type $(\frac{p}{n},\frac{q}{n})$ if $[U]$ is conjugated in the ball model to:
  \[ \begin{bmatrix}
  e^{i \alpha} & 0 & 0 \\
  0 & e^{i \beta} & 0 \\
  0 & 0 & e^{i \gamma}
  \end{bmatrix}
  \]
  with $\alpha - \gamma = \frac{p}{n}$ and $\beta - \gamma = \frac{q}{n}$.
 \end{defn}

We can make two remarks about this definition: 
 \begin{rem}
  I $[U]$ is elliptic of type $(\frac{p}{n},\frac{q}{n})$, there is a one parameter subgroup $([U_s)_{s \in \RR}]$ such that $[U_1] = [U]$. A generic orbit of the subgroup is a torus knot of type $(p,q)$, turning $p$ times around a $\CC$-circle $C_1$ and $q$ times around a second $\CC$-circle $C_2$. The whole orbit is completed in a time $n$ of the flow, so the action of $[U]$ corresponds morally to $\frac{p}{n}$ turns around $C_1$ and $\frac{q}{n}$ turns around $C_2$. Remark also that if $p$ or $q$ equals $\pm 1$, then the torus knot is not knotted.
 \end{rem} 
 
 \begin{rem}
  Not every elliptic element is of some type $(\frac{p}{n}, \frac{q}{n})$. The elements of some type $(\frac{p}{n}, \frac{q}{n})$ are the ones for which the surgery theorem of \cite{acosta_spherical_2016} works, and for which a geometric structure is expected in the deformations that we consider further in this article. 
 \end{rem}

\subsection{Polarity and the box-product}

In order to have a better understanding of the space $\h2c$, we will sometimes use the language of polars and polarity. This language corresponds to a geometric point of view of the orthogonality of the Hermitian form $\Phi$.

\begin{defn}
 Given a point $[u] \in \mathbb{P}V$, let
  \begin{equation*}
  [u]^{\perp} = \mathbb{P} \left\lbrace v \in V\setminus \{0\} \mid \langle u, v\rangle = 0 \right\rbrace .
  \end{equation*}

  It is the projectivized of the orthogonal of $u$ for the Hermitian form $\Phi$. Hence, it is a complex line of $\mathbb{P}V$, called \emph{polar line} of $[u]$.
\end{defn}

 We state some results following immediately from linear algebra considerations and from the fact that the Hermitian form $\Phi$  is non degenerated.

\begin{notat}
 If $[u]$ and $[v]$ are distinct points of $\pv$, we denote by $l_{[u],[v]}$ the complex line passing by $[u]$ and $[v]$.
\end{notat}

\begin{defn}
 Given a complex line $l$ of $\pv$, there is a unique point $[v] \in \pv$ such that $l = [v]^{\perp}$. We say that $[v]$ is the \emph{pole} of $l$, and we denote it by $[v] = l^{\perp}$
\end{defn}

\begin{rem} If $[u],[v] \in \pv$, then :
 \begin{enumerate}
  \item $[u] \in [v]^{\perp} \iff [v] \in [u]^{\perp}$
  \item $[u] \in \dh2c \iff [u] \in [u]^{\perp}$
  \item $[u]^{\perp} \cap \h2c \neq \emptyset \iff [u] \in \pv^{+}$
\end{enumerate}
\end{rem}  
   
   
 \begin{defn}
  Let $[u],[v],[w] \in \pv$ be three non-aligned points. We say that they form an \emph{auto-polar triangle} if the poles of the lines $l_{[u],[v]}$, $l_{[v],[w]}$ and $l_{[w],[u]}$ are precisely the points $[u]$, $[v]$ and $[w]$. 
 \end{defn} 

We state some general remarks about the terms defined above.
  
\begin{rem} \label{rem_polaires}
\begin{enumerate}
Let $[U] \in \pu21$.
  \item The group $\pu21$ is the subgroup of $\mathrm{PGL}_3(\mathbb{C})$ stabilizing $\h2c$ (and hence also $\dh2c$ and $\mathbb{P}V^{+}$).
  \item A point $[Z] \in \cp2$ is fixed by $[U]$ if and only if $Z$ is an eigenvector of $U$.
  \item The elements of $\pu21$ preserve the polarity: if $[u]\in \cp2$, then $[U]([u]^{\perp})= ([U][u])^{\perp}$.
  \item If $l$ is a complex line of $\cp2$, then $l$ is stable by $[U]$ if and only if $[U](l^{\perp})=l^{\perp}$.
  \item If $[u],[v] \in \cp2$ are fixed by $[U]$, then the line $l_{[u],[v]}$ passing by $[u]$ and $[v]$ is stable by $[U]$ ; $l_{[u],[v]}^{\perp}$ is then fixed by $[U]$.
  \item If $[U] \in \pu21$ has exactly three non-collinear fixed points $[u],[v],[w] \in \cp2$, then they form an auto-polar triangle.
 \end{enumerate}
\end{rem}

 We can express the polarity in an algebraic language by using the hermitian cross product, that we define below. It is the complex version of the usual cross product on $\RR^3$. It is briefly described by Goldman in Chapter 2 of \cite{goldman}.
 
 
 \begin{rem}
  Let $p,q \in \CC^3$. Let $\psi$ be the linear form
 \begin{equation*}
 \psi :
 \begin{array}{rcl}
  \CC^3 &\rightarrow& \CC \\
  r &\mapsto & \det(p,q,r)
 \end{array}.
\end{equation*}   
  Since the Hermitian form $\Phi$ is non-degenerated, there is a unique vector $s \in \CC^3$ such that $\psi(r) = \langle s, r \rangle$ for all $r \in \CC^3$.
 \end{rem}
 
 
 \begin{defn}
  Let $p,q \in \CC^3$. We define the \emph{Hermitian cross product of $p$ and $q$}, denoted by $p \boxtimes q$, as the unique vector $s \in \CC^3$ such that $\langle s, r \rangle = \det(p,q,r)$ for all $r \in \CC^3$.
 \end{defn} 
 
 
 \begin{rem}
  Let $p,q \in \CC^3$. If $p$ and $q$ are collinear, then $p \boxtimes q =0$. If not, then $[p \boxtimes q] = l_{[p],[q]}^{\perp}$. Indeed, it is a nonzero vector such that $\langle p , p \boxtimes q \rangle = \langle q , p \boxtimes q \rangle = 0$.
 \end{rem}
 
 For the explicit computations that we make in Part \ref{part_eff_defor},
  we will need the expression of the Hermitian cross product with coordinates. We give this expression for the ball model and for the Siegel model in the two following lemmas, that we obtain immediately by checking the condition $\langle p \boxtimes q , r \rangle = \det(p,q,r)$ for $r$ in the canonical basis of $\CC^3$.
 
 \begin{lemme}
  In the ball model, we have:
  \begin{equation*}
   \begin{pmatrix}
   z_1 \\ z_2 \\ z_3
   \end{pmatrix}
   \boxtimes
   \begin{pmatrix}
   w_1 \\ w_2 \\ w_3
   \end{pmatrix}
   =
   \begin{pmatrix}
   \con{z_2w_3} - \con{z_3w_2} \\ \con{z_3w_1} - \con{z_1w_3} \\ \con{z_2w_1} - \con{z_1w_2}
   \end{pmatrix}
  \end{equation*}
 \end{lemme}
 
  \begin{lemme}
  In the Siegel model, we have:
  \begin{equation*}
   \begin{pmatrix}
   z_1 \\ z_2 \\ z_3
   \end{pmatrix}
   \boxtimes
   \begin{pmatrix}
   w_1 \\ w_2 \\ w_3
   \end{pmatrix}
   =
   \begin{pmatrix}
   \con{z_1w_2} - \con{z_2w_1} \\ \con{z_3w_1} - \con{z_1w_3} \\ \con{z_2w_3} - \con{z_3w_2}
   \end{pmatrix}
  \end{equation*}
 \end{lemme}

\subsection{Spherical CR structures and horotubes}
 We will consider Spherical CR structures on some manifolds in the second and third part of this article. We recall here some definitions and results about these structures as well as the definition of the horotubes, which are geometric objects that model cusps for the structures.

 \begin{defn}
A \CR structure on a manifold $M$ is a $(G,X)$-structure on $M$ for $G = \pu21$ and $X = \dh2c$. That is an atlas of $M$ with charts taking values in $\dh2c$ and transition maps given by elements of $\pu21$. 
 \end{defn}
 
 \begin{rem}
  A $(G,X)$-structure on a manifold $M$ defines a developing map $\Dev : \widetilde{M} \rightarrow X$ and a holonomy representation $\rho : \pi_1(M) \rightarrow G$ such that for all $x \in  \widetilde{M}$ and $g\in \pi_1(M)$ we have $\rho(g)\Dev(x) = \Dev(g \cdot c) $. Remark that the holonomy representation is defined up to conjugation an the developing map up to translation by an element of $G$.
 \end{rem}
 
 \begin{defn}
  Consider a $(G,X)$-structure on $M$ with holonomy $\rho$. Let $\Gamma = \Im(\rho)$. We say that the structure is \emph{complete} if $M \simeq \Gamma \backslash X$. We say that the structure is \emph{uniformizable} if $M \simeq \Gamma \backslash \Omega_\Gamma$, where $\Omega_\Gamma \in X$ is the set of discontinuity of $\Gamma$.
 \end{defn}

\begin{rem}
 The usual condition for a structure is to be complete, which is equivalent to be geodesically complete if $X$ has a complete Riemannian metric. However, when considering \CR structures, there are very few manifolds admitting complete structures since $\dh2c$ is compact. We will consider non-complete structures, and look for uniformizable ones, since they are still intrinsically related to the image of the holonomy representation.
\end{rem}

We will consider further in this paper \CR {} uniformizations on two particular manifolds: two uniformizable structures on the Whitehead link complement, constructed by Schwartz in \cite{schwartz} and by Parker and Will in \cite{parker_complex_2017a} respectively, and a uniformizable structure on the Figure eight knot complement, constructed by Deraux and Falbel in \cite{falbel}. 

 For these three structures, the image of a neighborhood of a cusp by the developing map is a horotube. We recall the definition of this object, which is crucial for attempting to perform \CR {} Dehn surgeries and construct structures on other manifolds, as made in \cite{schwartz} or in \cite{acosta_spherical_2016}.
 
 \begin{defn}
  Let $[P] \in \pu21$ be a parabolic element with fixed point
$[p] \in \dh2c$ . A \emph{$[P]$-horotube} is an open set $H$ of $\dh2c \setminus \{[p]\}$, invariant under
$[P]$ and such that the complement of $H/\langle [P] \rangle$ in ($\dh2c \setminus \{[p]\}) / \langle [P] \rangle$ is compact. We say that a $[P]$-horotube is \emph{nice} if it is invariant by a one-parameter parabolic subgroup of $\pu21$ containing $[P]$.
 \end{defn}

\section{The visual sphere of a point in $\cp2$}\label{sect_sphere_visuelle}
 In this section, we are going to define the visual sphere of a point in $\cp2$ and give coordinates for some charts of this object.
 We will use the visual sphere of a point of $\cp2$ in order to have a better understanding of bisectors and their topology, in Section \ref{sect_extors_biss_spinal}, and to parameter some intersections. We will also use this tool to control the intersections of the faces of the deformed Ford domain that we construct in Part \ref{part_eff_defor}.

\subsection{Definition}
\begin{defn}
Let $[p] \in \cp2$. We call \emph{visual sphere of} $[p]$ the set of complex lines of $\cp2$ passing by $[p]$. We will denote it by $L_{[p]}$. In this way:

\begin{equation*}
L_{[p]} = \left\{ l_{[p],[q]} \mid [q]\in \cp2 \setminus \{[p]\} \right\}.
\end{equation*}
\end{defn}

\begin{rem}
 The space $L_{[p]}$ is isomorphic to $\mathbb{CP}^1$. We can identify it in two other ways. We will often use the abusive language corresponding to the following identifications. On the one hand, the set of the lines passing by $[p]$ is the projectivized of the tangent space to $\cp2$ at $[p]$, hence
 \begin{equation*}
L_{[p]} = \mathbb{P}(T_{[p]} \cp2 ).
\end{equation*}
On the other hand, by considering the dual space, we also have:
\begin{equation*}
L_{[p]} = \left\{ [\varphi] \in \mathbb{P}((\mathbb{C}^{3})^*) \mid \varphi(p) = 0 \right\}.
\end{equation*}

At last, if we have a Hermitian product, $\mathbb{C}^3$ is canonically identified to its dual, and we have:
\begin{equation*}
L_{[p]} = [p]^{\perp}.
\end{equation*}
\end{rem}

\subsection{Coordinates for the visual sphere}
In this subsection, we are going to give coordinates for the visual sphere of a point $[p]$. These coordinates will be useful for making explicit computations in this space. The following proposition gives a way to construct a chart.

\begin{prop}
Let $\varphi_1,\varphi_2 \in (\mathbb{C}^3)^*$ be two independent linear forms and such that $\varphi_1(p) = \varphi_2(p) = 0$. Then the map

  \begin{equation*}
\begin{array}{llll}
f \colon 
  \begin{array}{lll}
L_{[p]} & \rightarrow & \mathbb{CP}^1 \\
l_{[p],[q]} & \mapsto & \frac{\varphi_1(q)}{\varphi_2(q)}
\end{array} 
\end{array}
 \end{equation*}
is well defined and an isomorphism.
\end{prop}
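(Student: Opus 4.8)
The plan is to verify directly that the formula $f(l_{[p],[q]}) = \frac{\varphi_1(q)}{\varphi_2(q)}$ does not depend on the choice of representative $q$ of the line, that it takes values in $\mathbb{CP}^1$ (never $\frac00$), and that it is a bijective morphism — the inverse being manifestly algebraic.

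First I would check that $f$ is well defined. A point of $L_{[p]}$ is a complex line $l$ through $[p]$; choosing $[q] \in l \setminus \{[p]\}$ determines $l = l_{[p],[q]}$, and any other point of $l$ distinct from $[p]$ has the form $[\lambda p + \mu q]$ with $\mu \neq 0$. Since $\varphi_1(p) = \varphi_2(p) = 0$, we get $\varphi_i(\lambda p + \mu q) = \mu \varphi_i(q)$, so the ratio $\frac{\varphi_1(\lambda p + \mu q)}{\varphi_2(\lambda p + \mu q)} = \frac{\varphi_1(q)}{\varphi_2(q)}$ is unchanged; it also does not depend on the scaling of the chosen representative. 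Moreover $\varphi_1(q)$ and $\varphi_2(q)$ cannot vanish simultaneously: if both were zero, then $q$ would lie in $\ker\varphi_1 \cap \ker\varphi_2$, which is the line $\mathbb{C}p$ because $\varphi_1, \varphi_2$ are independent and both annihilate $p$; this would force $[q] = [p]$, contradicting $[q] \neq [p]$. Hence the pair $(\varphi_1(q) : \varphi_2(q))$ is a well-defined point of $\mathbb{CP}^1$, and $f$ is well defined.

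Next I would exhibit the inverse. Fix vectors $e_1, e_2 \in \mathbb{C}^3$ spanning a complement of $\mathbb{C}p$; solving the linear system, pick $e_1, e_2$ so that the $2\times 2$ matrix $(\varphi_i(e_j))$ is the identity — possible since the restriction of $(\varphi_1,\varphi_2)$ to any complement of $\mathbb{C}p$ is an isomorphism onto $\mathbb{C}^2$ (its kernel is $\mathbb{C}p$ intersected with the complement, i.e. $\{0\}$). Then define $g \colon \mathbb{CP}^1 \to L_{[p]}$ by $g([a:b]) = l_{[p],[a e_1 + b e_2]}$; this is well defined because $a e_1 + b e_2 \neq 0$ whenever $(a,b) \neq (0,0)$ and is never proportional to $p$. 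One computes $f(g([a:b])) = (\varphi_1(a e_1 + b e_2) : \varphi_2(a e_1 + b e_2)) = (a : b)$, and conversely $g(f(l_{[p],[q]}))$: writing $q = \xi p + a e_1 + b e_2$ one has $f(l_{[p],[q]}) = (a:b)$ and $g$ sends this to $l_{[p],[a e_1 + b e_2]} = l_{[p],[q]}$ since $q$, $p$, and $a e_1 + b e_2$ are coplanar. Thus $f$ is a bijection. Finally, both $f$ and $g$ are given in homogeneous coordinates by linear (hence holomorphic) expressions, so $f$ is an isomorphism of $\mathbb{CP}^1$'s.

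I do not expect a serious obstacle here — the statement is essentially a bookkeeping exercise in projective linear algebra. The only point requiring a little care is the non-simultaneous-vanishing of $\varphi_1(q), \varphi_2(q)$, which is exactly where the hypothesis that $\varphi_1, \varphi_2$ are \emph{independent} and both kill $p$ is used; everything else is a direct computation with representatives.
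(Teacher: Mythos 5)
Your proof is correct, and every step checks out: the independence of the value on the chosen point $[q]$ of the line (since $\varphi_i(\lambda p+\mu q)=\mu\varphi_i(q)$), the non-simultaneous vanishing of $\varphi_1(q),\varphi_2(q)$ via $\ker\varphi_1\cap\ker\varphi_2=\mathbb{C}p$, and the explicit inverse built from a basis $(e_1,e_2)$ of a complement of $\mathbb{C}p$ normalized so that $(\varphi_i(e_j))$ is the identity. Note that the paper itself states this proposition without any proof, treating it as elementary background, so there is no argument of the author's to compare against; yours fills that gap. If you want to compress it, observe that since $\varphi_1,\varphi_2$ both annihilate $p$, the map $(\varphi_1,\varphi_2)\colon\mathbb{C}^3\to\mathbb{C}^2$ descends to a linear isomorphism $\mathbb{C}^3/\mathbb{C}p\to\mathbb{C}^2$, and under the identification $L_{[p]}\simeq\mathbb{P}(\mathbb{C}^3/\mathbb{C}p)$ the map $f$ is exactly its projectivization; well-definedness, bijectivity and holomorphy then come in a single stroke, which is essentially what your computation with representatives verifies by hand.
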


By translating this fact in terms of orthogonality, we obtain the following proposition:

\begin{prop}
Let $[p] \in \cp2$. Let $[p'],[p''] \in [p]^{\perp}$ be two distinct points.  Then the map

  \begin{equation*}
\begin{array}{llll}
f \colon 
  \begin{array}{lll}
L_{[p]} & \rightarrow & \mathbb{CP}^1 \\
l_{[p],[q]} & \mapsto & \frac{\langle p' , q \rangle}{\langle p'' , q \rangle}
\end{array} 
\end{array}
 \end{equation*}
is well defined and an isomorphism.
\end{prop}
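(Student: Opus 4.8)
The statement to prove is that, given $[p] \in \cp2$ and two distinct points $[p'], [p''] \in [p]^\perp$, the map $f \colon L_{[p]} \to \cp1$ sending $l_{[p],[q]}$ to $\frac{\langle p', q\rangle}{\langle p'', q\rangle}$ (viewed as a point of $\cp1$, i.e. the pair $[\langle p',q\rangle : \langle p'',q\rangle]$) is well defined and an isomorphism. The plan is to deduce this directly from the preceding proposition, which is the same assertion phrased in terms of linear forms $\varphi_1, \varphi_2 \in (\CC^3)^*$ with $\varphi_i(p) = 0$. The only work is to translate between the two formulations using the (non-degenerate) Hermitian form $\Phi$.

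First I would set $\varphi_1 = \langle p', \cdot \rangle$ and $\varphi_2 = \langle p'', \cdot \rangle$; by definition of the polar line, $[p'], [p''] \in [p]^\perp$ means exactly $\langle p', p \rangle = \langle p'', p\rangle = 0$, i.e. $\varphi_1(p) = \varphi_2(p) = 0$, so both forms vanish at $p$. Next I would check that $\varphi_1$ and $\varphi_2$ are linearly independent: since $\Phi$ is non-degenerate, the antilinear map $v \mapsto \langle v, \cdot\rangle$ is a bijection $\CC^3 \to (\CC^3)^*$, so $\varphi_1, \varphi_2$ are dependent iff $p', p''$ are collinear; but $[p'] \neq [p'']$ in $\pv$ rules that out. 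Having verified both hypotheses of the previous proposition, I would conclude that the map $l_{[p],[q]} \mapsto \frac{\varphi_1(q)}{\varphi_2(q)} = \frac{\langle p', q\rangle}{\langle p'', q\rangle}$ is well defined and an isomorphism, which is precisely $f$. (One should note the harmless abuse that $\langle p', q\rangle$ is antilinear in $p'$ and linear in $q$, so the expression depends only on $[p']$, $[p'']$, $[q]$; the ratio is independent of the chosen representative $q$ because scaling $q$ scales numerator and denominator equally, and independent of representatives of $p', p''$ up to an overall nonzero constant, which does not affect a point of $\cp1$.)

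There is essentially no obstacle here — the statement is a routine reformulation. The one point that deserves a line of care is making sure the previous proposition is stated for \emph{linear} forms while $q \mapsto \langle p', q\rangle$ is indeed $\CC$-linear in $q$ (the conjugation falls on the fixed argument $p'$), so the hypotheses match verbatim; if instead one wanted $q \mapsto \langle q, p'\rangle$ one would get an antilinear form and would need to precompose with conjugation, but that is not the convention in play. So the proof reduces to: \emph{apply the previous proposition with} $\varphi_i = \langle p^{(i)}, \cdot\rangle$, \emph{having checked independence via non-degeneracy of} $\Phi$ \emph{and} $[p'] \neq [p'']$.
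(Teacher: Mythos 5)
Your proof is correct and is exactly the translation the paper has in mind: it states this proposition as an immediate consequence of the preceding one ("by translating this fact in terms of orthogonality"), i.e. taking $\varphi_i = \langle p^{(i)}, \cdot \rangle$, which vanish at $p$ and are independent by non-degeneracy of $\Phi$ and $[p'] \neq [p'']$. Your write-up just makes this routine verification explicit, including the correct observation that with the paper's convention the form is linear in the second argument, so the $\varphi_i$ are genuinely linear forms.
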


\begin{notat}
 From now on, we will denote this map by $\psi_{p',p''} : L_{[p]} \rightarrow \mathbb{CP}^1$.
\end{notat}

 The following remark tells us that by choosing an auto-polar triangle as frame, the computations are easier in the corresponding chart.
\begin{rem}
 In the proposition above, if $[p], [p']$ and $[p'']$ form an auto-polar triangle, then $f([p'']) = 0$ and $f([p']) = \infty$.
\end{rem}

\begin{rem}
 If $[q],[q'] \in \cp2 \setminus \{p\}$, then $\frac{f([q])}{f([q'])}$ is the cross ratio of $[p],[p'],[q]$ and $[q']$.
\end{rem}

\section{Extors, bisectors and spinal surfaces}\label{sect_extors_biss_spinal}

 In this section, we are going to study some objects appearing naturally when constructing Dirichlet or Ford domains in $\h2c$. These objects will be the surfaces equidistant to two points, that we call metric bisectors, and some natural generalizations of them, that we simply call bisectors. In order to study them, we will also study their analytic continuation to $\cp2$, called extors and their intersection with $\dh2c$, called spinal surfaces. In his book \cite{goldman}, Goldman dedicates chapter 5 to the topological study of metric bisectors, chapter 8 to extors, extending bisectors to $\cp2$, and chapter 9 to some intersections of bisectors.
  We will use significantly this study of bisectors and extors. However, we adopt a point of view closer to projective geometry.

\subsection{Definition}

 We begin by defining the objects that we will use to work, beginning by the metric bisectors, which are the equidistant surfaces of two points in $\h2c$.
 
\begin{defn}\label{defn_biss_metrique}
 Let $[p],[q] \in \h2c$ be two distinct points. The \emph{metric bisector} \footnote{In the literature, it is simply called \emph{bisector}. We will use this last term for a more general object, that we will define in Definition \ref{defn_bisector}.} of $[p]$ and $[q]$ is the set
 \[ \mathfrak{B} =
 \left\{ [z] \in \h2c \mid d([z],[p]) = d([z],[q]) \right\}. 
 \]
 If $p,q \in \mathbb{C}^3$ are lifts of $[p]$ and $[q]$ such that $\langle p, p \rangle = \langle q , q \rangle$, then the bisector can be written as
 \[ \mathfrak{B} =
 \left\{ [z] \in \h2c \mid |\langle z , p \rangle | = | \langle z , q \rangle| \right\}. 
 \]
 Its boundary at infinity is a \emph{spinal sphere}.
\end{defn}

 \begin{rem}
  In Chapter 5 of his book, Goldman shows that, topologically, a bisector is a three dimensional ball, and that a spinal sphere is a smooth sphere in $\dh2c$. They are analytic objects, but they are not totally geodesic, since there are no totally geodesic subspaces of $\h2c$ of dimension 3.
 \end{rem} 
 
 We define the extors below. They are objects of $\cp2$ extending the metric bisectors. We keep the terms used by Goldman in \cite{goldman} for this object.
 
\begin{defn}\label{defn_extor}
 Let $[f] \in \cp2$. Let $C$ be a real circle in $L_{[f]}$. The \emph{extor} from $[f]$ given by $C$ is the set
 \[ \mathfrak{E} =
 \left\{ [z] \in \cp2 \mid l_{[f],[z]} \in C \right\}.
 \]
 In this feature, $[f]$ is the \emph{focus} of $\mathfrak{E}$.
\end{defn}

We remark that all extors are projectively equivalent. The following remark gives an explicit link between extors and metric bisectors, and will motivate the study of extors in $\cp2$ and their intersections in order to understand the bisectors and their intersections.

\begin{rem}
 Every metric bisector extends to an extor. If $\mathfrak{B}$ is the metric bisector of $[p]$ and $[q]$, then it extends to an extor with focus $[p \boxtimes q]$, given by 
  \[ \mathfrak{E} =
 \left\{ [z] \in \cp2 \mid |\langle z , p \rangle | = | \langle z , q \rangle| \right\} 
 \]
 if $p$ and $q$ are lifts of $[p]$ and $[q]$ such that $\langle p , p \rangle = \langle q ,q \rangle$.
 The corresponding circle $C \subset L_{[p \boxtimes q]} \simeq [p \boxtimes q]^{\perp}$ is given by 
 \begin{equation*}
 C = \left\{ [p-\alpha q] \mid \alpha \in S^1 \right\}.
\end{equation*}
\end{rem}

\begin{rem}
 The extors are precisely the equidistant surfaces of two points of $\cp2$ when it is endowed with the Fubini-Study metric. A detailed proof can be found in Chapter 8 of \cite{goldman}.
\end{rem}

We will consider, when deforming a Ford domain in Part \ref{part_eff_defor}, some objects defined in the same way, but with fewer restrictions on the points $[p],[q] \in \cp2$. Hence we define the following generalization of the notion of metric bisector, that we study below.

\begin{defn}\label{defn_bisector} Let $p,q \in \mathbb{C}^3 \setminus \{0\}$. We define:
\begin{itemize}
 \item the \emph{extor} of $p$ and $q$ as
 \[\mathfrak{E}(p,q) = \{[z] \in \cp2 \mid |\langle z , p \rangle| = |\langle z , q \rangle| \}.\] 
 \item the  \emph{bisector} of $p$ and $q$ as its intersection with $\h2c$: 
 \[\mathfrak{B}(p,q) = \{[z] \in \h2c \mid |\langle z , p \rangle| = |\langle z , q \rangle| \}.\] 
 \item the  \emph{spinal surface} of $p$ and $q$ as the boundary at infinity of the bisector:
 \[\mathfrak{S}(p,q) = \{[z] \in \dh2c \mid |\langle z , p \rangle| = |\langle z , q \rangle| \}.\] 
\end{itemize}
\end{defn}

 We will limit ourselves to the case where the points $p$ and $q$ defining an extor, a bisector or a spinal sphere have the same norm. It is always the case when they are in the same orbit for a subgroup of $\su21$.
 We can recover the complex lines of the extor $\mathfrak{E}(p,q)$ in the following way:

\begin{prop}\label{prop_param_un_extor}
 Let $p,q \in \mathbb{C}^3 \setminus \{0\}$. The extor $\mathfrak{E}(p,q)$ can be written as a union of complex lines in the following way:
\[
\mathfrak{E}(p,q) = \bigcup_{\alpha \in S^1} [q - \alpha p]^{\perp}.
\]
\end{prop}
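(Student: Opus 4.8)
The plan is to show the two inclusions between $\mathfrak{E}(p,q)$ and $\bigcup_{\alpha\in S^1}[q-\alpha p]^\perp$ directly from the defining equation $|\langle z,p\rangle| = |\langle z,q\rangle|$, the only subtlety being to handle the points where $\langle z,p\rangle$ and $\langle z,q\rangle$ vanish.

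First I would take a point $[z]$ with $|\langle z,p\rangle| = |\langle z,q\rangle|$ and a lift $z$. If this common value is nonzero, then the ratio $\langle z,q\rangle / \langle z,p\rangle$ is a well-defined element of $S^1$; calling it $\alpha$, we get $\langle z, q-\alpha p\rangle = \langle z,q\rangle - \overline{\alpha}\,\langle z,p\rangle$. Here one must be a little careful about the convention on the Hermitian form: with the convention that $\langle\,\cdot\,,\,\cdot\,\rangle$ is conjugate-linear in the second slot (the one used implicitly in Definition \ref{defn_biss_metrique}), one instead sets $\alpha = \overline{\langle z,q\rangle/\langle z,p\rangle} \in S^1$ so that $\langle z,q\rangle = \alpha\,\langle z,p\rangle$, giving $\langle z, q-\alpha p\rangle = \langle z,q\rangle - \alpha\langle z,p\rangle = 0$, hence $[z]\in[q-\alpha p]^\perp$. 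If instead $\langle z,p\rangle = \langle z,q\rangle = 0$, then $[z]$ lies in $[p]^\perp \cap [q]^\perp$; since $[p]^\perp$ and $[q]^\perp$ are distinct complex lines (as $p,q$ are not collinear — if $p$ and $q$ are collinear the statement is trivial, both sides being a single complex line or all of $\cp2$, so I would dispose of that degenerate case at the start), any line $[q-\alpha p]^\perp$ containing the intersection point works, e.g. $\alpha=1$; more directly $\langle z, q-\alpha p\rangle = 0$ for every $\alpha\in S^1$, so $[z]$ lies in all of them. This gives $\mathfrak{E}(p,q)\subseteq\bigcup_{\alpha\in S^1}[q-\alpha p]^\perp$.

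Conversely, if $[z]\in[q-\alpha p]^\perp$ for some $\alpha\in S^1$, then $\langle z, q-\alpha p\rangle = 0$, i.e. $\langle z,q\rangle = \alpha\langle z,p\rangle$ (up to the conjugation convention above), and since $|\alpha|=1$ we get $|\langle z,q\rangle| = |\langle z,p\rangle|$, so $[z]\in\mathfrak{E}(p,q)$. This gives the reverse inclusion and finishes the argument. One can also remark that this is consistent with the description of extors as unions of complex lines through the focus $[p\boxtimes q]$ from the earlier remark, since each $q-\alpha p$ is orthogonal to $p\boxtimes q$, so every line $[q-\alpha p]^\perp$ passes through $[p\boxtimes q]$, and $\{[q-\alpha p]:\alpha\in S^1\}$ is exactly the circle $C$ in $L_{[p\boxtimes q]}$ from that remark.

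I do not expect any real obstacle here: the proposition is essentially a restatement of the definition, and the only place demanding attention is the bookkeeping of the sesquilinearity convention and the edge case $\langle z,p\rangle = \langle z,q\rangle = 0$ (equivalently the point $[p\boxtimes q]$ itself and, when $p,q$ have equal norm and the extor is a genuine bisector extension, possibly nothing in $\h2c$). Making sure the chosen $\alpha$ genuinely lies on the unit circle — which is exactly where the hypothesis $|\langle z,p\rangle| = |\langle z,q\rangle|$ is used — is the crux of the forward inclusion.
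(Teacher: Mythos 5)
Your proposal is correct and follows essentially the same argument as the paper: the equality $|\langle z,p\rangle|=|\langle z,q\rangle|$ holds precisely when there is a unit complex number $\alpha$ with $\langle z,q-\alpha p\rangle=0$, and the paper's proof is exactly this equivalence (stated without dwelling on the conjugation convention or the degenerate cases you carefully treat). Your extra care with the case $\langle z,p\rangle=\langle z,q\rangle=0$ and with collinear $p,q$ is harmless refinement, not a different route.
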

\begin{proof}
 A point $[z] \in \cp2$ belongs to $\mathfrak{E}(p,q)$ if and only if $|\langle z , p \rangle| = |\langle z , q \rangle|$. This happens if and only if there exists $\alpha \in S^1$ such that $\alpha \langle z , p \rangle = \langle z , q \rangle$, that is such that $\langle z , q-\alpha p \rangle = 0$. Hence, $[z] \in \mathfrak{E}(p,q)$ if and only if there exists $\alpha \in S^1$ such that $[z] \in [q - \alpha p]^{\perp}$.
\end{proof}

\begin{rem}
 By Proposition \ref{prop_param_un_extor}, the extor of $p$ and $q$ is an extor in terms of Definition \ref{defn_extor}.
 It is clear, by Definition \ref{defn_biss_metrique}, that a metric bisector is a bisector and that a spinal sphere is a spinal surface.
\end{rem}

\begin{rem}
 If we want to define these objects for $[p],[q] \in \cp2$, different choices of lifts will lead to different objects. From now on, we will only consider the case when the lifts satisfy $\langle p , p \rangle =  \langle q , q \rangle$. 

\begin{itemize}
  \item If $\langle p , p \rangle =  \langle q , q \rangle < 0$, then $\mathfrak{B}(p,q)$ and $\mathfrak{S}(p,q)$ are the metric bisector and the spinal sphere of $p,q$ in $\dh2c$ as in Definition \ref{defn_biss_metrique}.
 
 \item If $\langle p , p \rangle =  \langle q , q \rangle = 0$ and there is a preferred element $G \in \su21$ such that $[G][p] = [q]$, we will choose lifts $p$ and $q$ such that $Gp=q$. In this case, $\mathfrak{B}(p,q)$ and $\mathfrak{S}(p,q)$, are a metric bisector and a spinal sphere.
 
 \item If $\langle p , p \rangle =  \langle q , q \rangle > 0$, then $\mathfrak{B}(p,q)$ and $\mathfrak{S}(p,q)$, are sometimes a metric bisector and a spinal sphere. We prove this fact in Proposition \ref{lox_spinal_spheres}.
\end{itemize}
\end{rem}

\begin{prop}\label{prop_extor_equid}
 Let $\mathfrak{E}$ be an extor with focus $[f]$ and let $p \in f^{\perp} \sm0$ such that $[p]^\perp \nsubseteq \mathfrak{E}$. Then there is a unique $q \in \mathbb{C}^3 \setminus \{0\}$, up to multiplication by a unitary complex number, such that $\mathfrak{E} = \mathfrak{E}(p,q)$.
\end{prop}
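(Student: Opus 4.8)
The plan is to reduce the statement to a two–dimensional fact about circles in $\mathbb{CP}^1$, via the identification $L_{[f]} = [f]^\perp$ furnished by polarity. Under this identification a complex line $l$ through $[f]$ corresponds to its pole $l^\perp\in[f]^\perp$, so the real circle $C\subset L_{[f]}$ defining $\mathfrak{E}$ becomes a real circle $\tilde C\subset[f]^\perp$ and $\mathfrak{E}=\bigcup_{[v]\in\tilde C}[v]^\perp$. On the other hand, Proposition \ref{prop_param_un_extor} gives $\mathfrak{E}(p,q)=\bigcup_{\alpha\in S^1}[q-\alpha p]^\perp$, and when $[q]\neq[p]$ all these complex lines pass through the common point $[p\boxtimes q]=l_{[p],[q]}^\perp$ (the focus). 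I will establish three things, which together prove the proposition: (i) the hypothesis $[p]^\perp\nsubseteq\mathfrak{E}$ is equivalent to $[p]\notin\tilde C$; (ii) for $q\in\CC^3\setminus\{0\}$ one has $\mathfrak{E}(p,q)=\mathfrak{E}$ if and only if $[q]\in[f]^\perp$ and $\{[q-\alpha p]\mid\alpha\in S^1\}=\tilde C$ as subsets of $[f]^\perp$; and (iii) given the real circle $\tilde C\subset[f]^\perp\simeq\mathbb{CP}^1$ and the point $[p]\in[f]^\perp\setminus\tilde C$, there is $q\in f^\perp\setminus\{0\}$, unique up to a unitary complex factor, with $\{[q-\alpha p]\mid\alpha\in S^1\}=\tilde C$.

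For (i) and (ii) I would first record the elementary lemma that a complex line contained in an extor $\bigcup_{l}l_l$ (a real circle's worth of complex lines) must itself be one of the lines $l_l$: a line not in the family meets each $l_l$ in a single point and, since it passes through each $l_l$ at a distinct point, one gets a continuous injection of a complex line ($\cong S^2$) into a circle, which is impossible; and a line through the common focus but not in the family meets the extor only at that focus. Granting this: since $p\in f^\perp$, the line $[p]^\perp$ passes through $[f]$, hence $[p]^\perp\subset\mathfrak{E}$ iff $[p]^\perp=[v]^\perp$ for some $[v]\in\tilde C$, i.e. iff $[p]\in\tilde C$ — this is (i). For (ii), if $\mathfrak{E}(p,q)=\mathfrak{E}$ then each line $[v]^\perp$ ($[v]\in\tilde C$) lies in $\mathfrak{E}(p,q)$, hence by the lemma equals some $[q-\alpha p]^\perp$ and so passes through $[p\boxtimes q]$; as the lines of $\tilde C$ all pass through $[f]$ as well, and two distinct complex lines meet only once while $\tilde C$ has at least two points, $[p\boxtimes q]=[f]$, i.e. $[q]\in[f]^\perp$; conversely, once $[q]\in f^\perp$ with $[q]\neq[p]$, Proposition \ref{prop_param_un_extor} identifies $\mathfrak{E}(p,q)$ with the union of the lines $[v]^\perp$ over $[v]\in\{[q-\alpha p]\}$, and comparing the two circles of lines gives the claim. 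I expect this bookkeeping — matching the two descriptions of an extor and tracking poles and foci through the visual–sphere identification — to be the only real obstacle; nothing here is deep.

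It remains to prove (iii). I would fix an auxiliary $q_0\in f^\perp$ with $[q_0]\neq[p]$ and use the affine coordinate $t$ on $[f]^\perp$ with $t([xp+yq_0])=x/y$, so that $[p]$ sits at $t=\infty$. Writing $q=ap+bq_0$ with $b\neq0$ (the case $b=0$ is excluded since then $[q]=[p]$), the map $\alpha\mapsto[q-\alpha p]$ reads $\alpha\mapsto(a-\alpha)/b$, so $\{[q-\alpha p]\mid\alpha\in S^1\}$ is the Euclidean circle of centre $a/b$ and radius $1/|b|$. Because $[p]=\infty$ does not lie on $\tilde C$ (this is where hypothesis (i) enters), the circle $\tilde C$ is a genuine Euclidean circle in the $t$–plane, of some centre $c\in\CC$ and radius $r>0$; the two circles coincide exactly when $a/b=c$ and $|b|=1/r$, that is when $q=b(cp+q_0)$ with $|b|=1/r$. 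Such $q$ exist — take $q=cp+q_0$ — and any two of them differ by multiplication by a complex number of modulus $1$. This yields existence and the asserted uniqueness, completing the proof. (Alternatively, the uniqueness can be read off from the fact that two valid choices $q,q'$ give Möbius parametrisations of $[f]^\perp$ that agree on $S^1$ and at $\infty=[p]$, hence differ by a rotation fixing $\infty$ and preserving $S^1$, forcing $q'$ to be a unit multiple of $q$.)
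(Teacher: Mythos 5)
Your proposal is correct and follows essentially the same route as the paper: identify $L_{[f]}$ with $[f]^\perp\simeq\mathbb{CP}^1$, reduce to the statement that a real circle in $\mathbb{CP}^1$ avoiding $[p]$ is of the form $\{[p-\alpha q]\mid\alpha\in S^1\}$ for a $q$ unique up to a unit scalar (the paper isolates this as a lemma and proves it by normalizing a basis so the circle becomes the unit circle, while you compute centre $a/b$ and radius $1/|b|$ in an affine chart — your steps (i)–(ii) merely make explicit two points the paper leaves implicit via its slice-decomposition proposition). One trivial slip: with your normalization the existence witness should be $q=(cp+q_0)/r$ (so that $|b|=1/r$), not $q=cp+q_0$.
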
 
\begin{cor}
 Every extor $\mathfrak{E}$ is of the form $\mathfrak{E}(p,q)$.
\end{cor}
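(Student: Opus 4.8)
The plan is to deduce the corollary from Proposition~\ref{prop_extor_equid} by exhibiting, for an arbitrary extor $\mathfrak{E}$ with focus $[f]$ and defining circle $C \subset L_{[f]}$, a point $[p] \in [f]^\perp$ satisfying the hypothesis $[p]^\perp \nsubseteq \mathfrak{E}$. Once such a $[p]$ is found, Proposition~\ref{prop_extor_equid} directly produces $q$ with $\mathfrak{E} = \mathfrak{E}(p,q)$, which is exactly the assertion.

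First I would recall that, under the identification $L_{[f]} \simeq [f]^\perp \simeq \mathbb{CP}^1$, the extor $\mathfrak{E}$ is the union $\bigcup_{l \in C} l$ of the complex lines through $[f]$ lying in the real circle $C$. A complex line $[p]^\perp$ with $[p] \in [f]^\perp$ is contained in $\mathfrak{E}$ if and only if $[p]^\perp$ is one of these lines, i.e. if and only if $[p] \in C$ under the above identification (here one uses that two distinct complex lines through $[f]$ meet only at $[f]$, so a line $[p]^\perp \ne l$ through $[f]$ cannot be swallowed by the union unless it equals some $l \in C$). Since $C$ is a real circle in $\mathbb{CP}^1$, it is a proper subset, so there exists $[p] \in [f]^\perp$ with $[p] \notin C$, and for this choice $[p]^\perp \nsubseteq \mathfrak{E}$. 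I would also note $p \in f^\perp \setminus \{0\}$ is automatic once $[p] \in [f]^\perp$.

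Applying Proposition~\ref{prop_extor_equid} to this extor $\mathfrak{E}$, its focus $[f]$, and the chosen $p$, we obtain $q \in \mathbb{C}^3 \setminus \{0\}$ (unique up to a unimodular scalar) with $\mathfrak{E} = \mathfrak{E}(p,q)$, which proves the corollary.

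The only delicate point is the claim that a complex line through $[f]$ contained in $\mathfrak{E}$ must be one of the defining lines of $\mathfrak{E}$; this is where one must be careful that $\mathfrak{E}$, although a union of complex lines all sharing the point $[f]$, does not secretly contain extra complex lines through $[f]$. This follows from the elementary fact that two distinct complex lines in $\mathbb{CP}^2$ meet in exactly one point: if $[p]^\perp \subseteq \mathfrak{E} = \bigcup_{l\in C} l$ and $[p]^\perp$ passed through $[f]$ but were not among the $l \in C$, then for each $l \in C$ the intersection $[p]^\perp \cap l$ would be the single point $[f]$, forcing $[p]^\perp = \{[f]\}$, a contradiction. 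Hence the set of complex lines through $[f]$ inside $\mathfrak{E}$ is exactly $C$, a proper subset of $L_{[f]}$, and the required $p$ exists.
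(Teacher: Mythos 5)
Your proof is correct and follows essentially the route the paper intends: the corollary is treated as an immediate consequence of Proposition \ref{prop_extor_equid}, obtained by choosing $[p]\in[f]^{\perp}$ off the defining circle $C$ (possible since a real circle is a proper subset of $L_{[f]}\simeq\mathbb{CP}^1$), so that $[p]^{\perp}\nsubseteq\mathfrak{E}$. Your careful justification that the only complex lines through $[f]$ contained in $\mathfrak{E}$ are the slices parametrized by $C$ is exactly the observation the paper records separately in its slice-decomposition proposition, so nothing is missing.
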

In order to prove this proposition, we need the following elementary lemma:
\begin{lemme}\label{lemme_cercle_point_cp1}
 Let $p \in \CC^2 \sm0$ and $C \subset \ccp1$ a circle not containing $[p]$. Then, there is a unique $q \in \CC^2 \sm0$, up to multiplication by a unitary complex number, such that $C = \{[p-\alpha q] \mid \alpha \in S^1\}$.
\end{lemme}
\begin{proof}
Since $[p] \notin C$, we can complete $p$ into a basis $(p,q)$ of $\CC^2$ in such a way that, in the chart $q^{*}$ of $\ccp1$, the circle $C$ is the unit circle. This vector $q$ is unique up to multiplication by a unitary complex number; a change of basis induces a non-trivial similarity of the chart. The circle can be written as $ \{ q^*(p- \alpha q) \mid \alpha \in S^1\}$. We deduce that $C = \{[p-\alpha q] \mid \alpha \in S^1\}$.
\end{proof}

\begin{proof}(of Proposition \ref{prop_extor_equid})

We know that $\mathfrak{E}$ can be written as $\bigcup_{\alpha \in S^1}l_\alpha$, where the complex lines $l_\alpha$ form a circle in $L_{[f]}$. We identify $L_{[f]}$ with $[f]^{\perp} \simeq \ccp1$. Hence, we have a circle $C$ defining $\mathfrak{E}$ and a point $[p]$ that does not belong to $C$. By Lemma \ref{lemme_cercle_point_cp1}, there exists $q \in f^\perp \subset \CC^3$, unique up to multiplication by a unitary complex number, such that $C = \{[p-\alpha q] \mid \alpha \in S^1\}$. We deduce that $\mathfrak{E} = \bigcup_{\alpha \in S^1}l_\alpha = \bigcup_{\alpha \in S^1} [p- \alpha q]^{\perp}$.
A point $[z]$ of $\cp2$ belongs to $\bigcup_{\alpha \in S^1}[p- \alpha q]^{\perp}$ if and only if there exists $\alpha \in S^1$ such that $\alpha \langle z , p \rangle = \langle z , q \rangle$, i.e. if $|\langle z , p \rangle| =  |\langle z , q \rangle|$. We deduce that $\mathfrak{E} = \mathfrak{E}(p,q)$.
\end{proof}

\subsection{Topology of bisectors and spinal surfaces.}
 We are going to study in detail the topology of the objects that we defined above, and we will see that there are three possibilities, depending on the relative position of certain points and $\dh2c$. We begin by defining the complex spine and the real spine of a bisector or an extor, which are a complex and a real line of $\cp2$ and that will help us to understand extors and bisectors.

\begin{defn} Let $\mathfrak{E}$ be an extor with focus $[f]$ and given by the circle $C \subset
 L_{[f]}$. The \emph{complex spine} $\Sigma$ of $\mathfrak{E}$ is the complex line $[f]^{\perp}$. By identifying $L_{[f]}$ with $[f]^{\perp}$, we define the \emph{real spine} of $\mathfrak{E}$ as the real circle $\sigma \subset \Sigma$ corresponding to $C$.
\end{defn}

We make some remarks about this definition.

\begin{rem}\label{rem_spine_reel_extor}
 If $[f] \notin \dh2c$, then its real spine $\sigma$ is the set $\mathfrak{E} \cap \Sigma$.
\end{rem}

\begin{rem}
In the case of metric bisectors, as described by Goldman in \cite{goldman}, the complex and the real spine are the intersections with $\h2c$ of the ones we defined above.
\end{rem}

\begin{rem}
 An extor $\mathfrak{E}$ is determined by its real spine $\sigma$. Indeed, there exists a unique complex line $\Sigma$ containing it: it must be the complex spine. Hence, the focus of $\mathfrak{E}$ is $\Sigma^{\perp}$ and the circle determining $\mathfrak{E}$ is given by $\sigma$. 
\end{rem}

\begin{rem}
 Let $p,q \in \mathbb{C}^3 \sm0$ be two distinct points. The complex spine of the extor $\mathfrak{E}(p,q)$ is the complex line $l_{[p],[q]}$. 
\end{rem}

 In the case that we consider, the real spine of an extor cannot be any real circle of $\cp2$. The following lemma gives a necessary condition for a real circle to be the real spine of one of the extors that we consider.
 \begin{lemme}\label{lemme_echine_orthogonale}
  Let $p,q \in \CC^3 \sm0$ be two distinct points such that $\langle p,p \rangle = \langle q ,q \rangle$. Let $\Sigma$ be the complex spine of $\mathfrak{E}(p,q)$ and $\sigma$ its real spine. Then
 \begin{itemize}
  \item If $\Sigma$ intersects $\h2c$, then $\sigma$ is a circle orthogonal to $\dh2c$ in $\cp2$.
  \item If $\Sigma$ is tangent to $\dh2c$, then $\sigma$ is a circle containing $[p \boxtimes q]$
\end{itemize}   
 \end{lemme}
\begin{proof}
 We begin by the first case. If $\alpha \in \CC$ has modulus 1, we know that 
\begin{eqnarray*} 
 |\langle p , p + \alpha q \rangle| 
 &=& |\langle p ,p \rangle + \alpha\langle p ,q \rangle| \\
 &=&
 |\langle q ,q \rangle + \con{\alpha}\langle q ,p \rangle| \\
  &=&
  |\langle q , p + \alpha q \rangle| 
\end{eqnarray*} 
  We can hence parametrise the real spine by $\sigma = \Sigma \cap \mathfrak{E}(p,q) = \{ [p + \alpha q] \mid \alpha \in S^1 \}$. In the chart $\lambda \mapsto [p + \lambda q]$ of $\Sigma$, the points of $\dh2c$ are given by the equation
  \begin{equation*}
   (|\lambda|^2 +1) \langle p ,p \rangle + 2 \Re (\lambda \langle p,q \rangle) = 0
  \end{equation*}
  It is the equation of a circle (or a line passing by $0$ if $\langle p ,p \rangle = 0$) which is orthogonal to the unit circle, since if $\lambda$ is a solution of the equation, then $\con{\frac{1}{\lambda}}$ is also a solution.
  
  For the second point, it is enough to check that if $\Sigma$ is tangent to $\dh2c$, then at least a point of $l_{[p],[q]}$ different from $[p \boxtimes q]$ belongs to the extor $\mathfrak{E}(p,q)$. Since $l_{[p],[q]}$ is tangent to $\dh2c$, the restriction of the Hermitian form to $l_{[p],[q]}$ is degenerated. Its determinant in the basis $(p,q)$ is equal to $\langle p ,p \rangle \langle q,q \rangle - \langle p, q \rangle \langle q , p \rangle$ ; we deduce that $| \langle p,p \rangle| = |\langle p , q \rangle|$ and that $[p] \in \mathfrak{E}(p,q)$.
\end{proof}

\subsubsection{Two decompositions of extors}
 Following the description given by Goldman in Chapters 5 and 8 of his book \cite{goldman}, we give here two decompositions of extors that will be useful later. There are the slice decomposition, in complex lines, and the meridional decomposition, in real planes.
 
\begin{prop} \emph{(Slice decomposition)}
 Let $\mathfrak{E}$ be an extor with focus $[f]$. Then the complex lines contained in $\mathfrak{E}$ passing by $[f]$ form a foliation of  $\mathfrak{E} \setminus \{[f]\}$ and are the only complex lines contained in $\mathfrak{E}$. If, moreover, $[f]\notin \dh2c$ and $\mathfrak{E}$ admits $\Sigma$ as complex spine and $\sigma$ as real spine, they are precisely the complex lines orthogonal to $\Sigma$ at the points of $\sigma$.
\end{prop}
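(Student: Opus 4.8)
The plan is to argue directly from Definition~\ref{defn_extor}, using that $\mathfrak{E} = \bigcup_{l \in C} l$ is a union of complex lines through $[f]$ indexed by the real circle $C \subset L_{[f]}$ (equivalently, by Proposition~\ref{prop_param_un_extor}, $\mathfrak{E} = \bigcup_{\alpha \in S^1}[q-\alpha p]^\perp$ if $\mathfrak{E} = \mathfrak{E}(p,q)$). I would first settle the foliation statement: since two distinct complex lines through $[f]$ meet only at $[f]$, the punctured lines $l \setminus \{[f]\}$ with $l \in C$ are pairwise disjoint and cover $\mathfrak{E}\setminus\{[f]\}$, and the map $[z] \mapsto l_{[f],[z]}$ is a smooth submersion $\mathfrak{E}\setminus\{[f]\} \to C$ whose fibers are exactly these punctured lines, each a copy of $\mathbb{C}$. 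Concretely, the incidence space $\{([z],l) \mid l\in C,\ [z]\in l\}$ is the total space of a smooth $\mathbb{CP}^1$-bundle over $S^1 \simeq C$, and the projection to the first factor is a homeomorphism of it minus its canonical section onto $\mathfrak{E}\setminus\{[f]\}$; this displays the punctured lines as the leaves of a foliation.

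For the claim that there are no other complex lines in $\mathfrak{E}$, let $l'$ be a complex line with $l' \subset \mathfrak{E}$. If $[f] \in l'$, I would take $[z] \in l' \setminus \{[f]\}$ and use that $[z] \in \mathfrak{E}$ forces $l_{[f],[z]} = l' \in C$, so $l'$ is a leaf. If $[f] \notin l'$, I would restrict the projection $\cp2 \setminus \{[f]\} \to L_{[f]}$ to $l'$; this is a morphism $l' \simeq \mathbb{CP}^1 \to L_{[f]} \simeq \mathbb{CP}^1$, injective because two points of $l'$ on a common line through $[f]$ would force that line to equal $l'$, hence an isomorphism, so its image is all of $L_{[f]}$ --- contradicting that $l' \subset \mathfrak{E}$ makes the image lie in the proper subcircle $C$. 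Thus every complex line contained in $\mathfrak{E}$ passes through $[f]$ and lies in $C$.

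Finally, under the extra hypothesis $[f] \notin \dh2c$, one has $[f] \notin [f]^\perp = \Sigma$, so every complex line $l$ through $[f]$ meets $\Sigma$ in exactly one point, and the identification $L_{[f]} \simeq \Sigma$ sending $l$ to $l \cap \Sigma$ carries $C$ onto $\sigma$ by the definition of the real spine; hence the leaves are exactly the complex lines joining $[f]$ to a point of $\sigma$. To conclude I would invoke the polar characterization of orthogonality of complex lines: a complex line $m$ is orthogonal to $\Sigma$ iff the pole $\Sigma^\perp = [f]$ belongs to $m$, i.e. iff $m$ passes through $[f]$, and in that case $m$ is orthogonal to $\Sigma$ precisely at the point $m \cap \Sigma$. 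Intersecting the condition ``$m$ passes through $[f]$'' with ``$m \cap \Sigma \in \sigma$'' recovers exactly the family $C$, which is what is claimed. I expect the only delicate point to be the projection argument excluding extra complex lines in $\mathfrak{E}$; the remaining steps are routine manipulations with lines in $\cp2$ and the polar correspondence.
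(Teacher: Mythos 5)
Your argument is correct, but it proves the key point by a different route than the paper. For the uniqueness of the complex lines, the paper argues infinitesimally: away from its focus an extor is a smooth real hypersurface of dimension $3$, so its tangent space at a point $[p]$ contains a unique maximal holomorphic line, which must simultaneously be the tangent to any complex line $l \subset \mathfrak{E}$ through $[p]$ and to the slice $l_{[f],[p]}$, forcing $l = l_{[f],[p]}$ (the same tangency argument reappears later for pairs of extors). You instead argue projectively: a line $l'$ not through $[f]$ is mapped bijectively by $\pi_{[f]}$ onto all of $L_{[f]}$ (each slice through $[f]$ meets $l'$ in exactly one point), whereas $l' \subset \mathfrak{E}$ would confine the image to the proper real circle $C$ --- a contradiction; the case $[f] \in l'$ is immediate from the definition. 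This avoids any smoothness input and is arguably more elementary, while the paper's argument has the advantage of introducing the complex-tangency principle it reuses afterwards. The foliation statement and the final assertion are handled essentially as in the paper (lines through $[f]=\Sigma^{\perp}$ are exactly the lines orthogonal to $\Sigma$, orthogonal at their intersection point with $\Sigma$).

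One small point of care: you identify $L_{[f]}$ with $\Sigma$ by $l \mapsto l \cap \Sigma$ and say this carries $C$ onto $\sigma$ ``by the definition of the real spine,'' but the paper's definition identifies $L_{[f]}$ with $[f]^{\perp}$ via polarity (a slice corresponds to its pole), which is a different pointwise map. The two images of $C$ in $\Sigma$ nevertheless coincide as sets --- this is exactly the content of the paper's remark that $\sigma = \mathfrak{E} \cap \Sigma$ when $[f] \notin \dh2c$ --- so your step is fine provided you invoke that remark (or reprove it) rather than the bare definition.
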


\begin{proof}
 Let $l$ be a complex line contained in $\mathfrak{E}$. Let $[p]\in l$ be a point different from $[f]$. We know that the lines $l$ and $l_{[f],[p]}$ are contained in $\mathfrak{E}$. But an extor is a smooth sub-manifold of dimension 3 of $\cp2$ besides its focus: the tangent space at $[p]$ to $\mathfrak{E}$ is hence of real dimension 3. It contains a maximal holomorphic subspace of complex dimension 1, which must be at the same time the tangent space of $l$ and of $l_{[f],[p]}$ at $[p]$. We deduce that $l = l_{[f],[p]}$, which proves the first assertion. 
 
 By Remark \ref{rem_spine_reel_extor}, we know that the lines contained in  $\mathfrak{E}$ are precisely the lines passing by $[f]$ and a point of $\sigma$. Since the lines passing by $[f] = \Sigma^{\perp}$ are precisely  the orthogonal lines to $\Sigma$, this shows the second assertion.
\end{proof}

\begin{defn}
 Such a complex line is called a \emph{slice} of $\mathfrak{E}$. This decomposition is called \emph{the slice decomposition of the extor}. We can also consider it on the corresponding bisector.
\end{defn}

 The other decomposition given by Goldman is the meridional decomposition, given in the following proposition. For a complete proof, see Section 8.2.3 of \cite{goldman}, or Theorem 5.1.10 of \cite{goldman}.

\begin{prop} \emph{(Meridional decomposition)}
 Let $\sigma$ be a real circle in $\cp2$. Then the union of the real planes of $\cp2$ containing $\sigma$ form a singular foliation of the extor of real spine $\sigma$.
\end{prop}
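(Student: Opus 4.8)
The plan is to reduce the statement to the classical meridional decomposition of metric bisectors given by Goldman (Theorem 5.1.10 and Section 8.2.3 of \cite{goldman}), using the fact that all extors are projectively equivalent. First I would recall that a real plane of $\cp2$ — that is, the projectivization of a totally real $2$-plane in $\CC^3$, or equivalently a copy of $\mathbb{RP}^2$ — meets a complex line either in a single point or in a real circle; so the real planes containing a fixed real circle $\sigma$ are exactly those whose underlying totally real subspace contains a lift of $\sigma$. The key preliminary observation is that $\sigma$ spans a unique complex line $\Sigma$ (its complex spine), so the focus $[f] = \Sigma^\perp$ and the circle $C \subset L_{[f]}$ determined by $\sigma$ recover the extor $\mathfrak{E}$ from $\sigma$ alone, as already noted in the remarks preceding Lemma \ref{lemme_echine_orthogonale}.

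Next I would set up the foliation concretely. Parametrize $\sigma$ as $\{[p + \alpha q] \mid \alpha \in S^1\}$ for suitable lifts $p, q$ (possible by the computation in the proof of Lemma \ref{lemme_echine_orthogonale} when $\Sigma$ meets $\h2c$; the tangent and exterior cases are handled by the projective equivalence of all extors). For each pair of antipodal-type points on $\sigma$ — more precisely, for each real projective line inside the $\mathbb{RP}^1 \simeq \sigma$ viewed inside the appropriate $\mathbb{RP}^2$'s — there is a real plane $\Pi$ of $\cp2$ containing $\sigma$, and one checks that $\Pi \cap \mathfrak{E}$ is a well-defined piece (a disk, or meridian) of the extor. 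The union of these planes covers $\mathfrak{E}$: given $[z] \in \mathfrak{E}$, there is $\alpha \in S^1$ with $[z] \in [p - \alpha q]^\perp$, i.e. $[z]$ lies on a slice; the slice through $[z]$ meets $\sigma$ in one point, and one produces a real plane through $\sigma$ and $[z]$ by a dimension count in the totally real geometry. The planes meet pairwise along $\sigma$, which is the singular locus of the foliation — hence the adjective \emph{singular}.

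The main obstacle is verifying that this family is genuinely a (singular) foliation rather than just a covering of $\mathfrak{E}$ by real planes: one must show the real planes through $\sigma$ are pairwise disjoint away from $\sigma$, and that near a non-focus point of $\mathfrak{E}$ the leaves fit together smoothly. I expect this to follow from Goldman's meridional decomposition for metric bisectors together with the remark that every extor is projectively equivalent to (the $\cp2$-extension of) a metric bisector: a projective transformation carrying one extor to another carries real planes to real planes and the real spine to the real spine, so the foliation transports. Thus the cleanest write-up is: establish projective equivalence to a metric bisector, invoke Goldman's Theorem 5.1.10 / Section 8.2.3 for that model, and transport the foliation back. The remaining bookkeeping — that the transported leaves are exactly the real planes of $\cp2$ containing $\sigma$, and that no real plane through $\sigma$ is missed — is routine linear algebra about totally real subspaces containing a given real $2$-plane.
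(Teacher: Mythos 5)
Your plan is essentially what the paper does: the paper gives no independent argument and simply refers the reader to Goldman (Section 8.2.3, or Theorem 5.1.10, of \cite{goldman}), which is also the endpoint of your reduction, and the projective-equivalence/transport step you add is harmless since all extors are projectively equivalent. So the proposal is correct and takes the same route, modulo the slightly garbled parametrization of meridians by ``real projective lines inside $\sigma$'', which you would want to replace by the standard description of the $\RR$-planes containing the real spine.
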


 \begin{defn}
  Such a real plane is called a \emph{meridian} of $\mathfrak{E}$; the associated decomposition is the \emph{meridional decomposition} of $\mathfrak{E}$, that we can also consider on the corresponding bisector.
 \end{defn}

 We are now going to describe the topology of bisectors and spinal surfaces, but only in the case of a bisector of the form $\mathfrak{B}(p,q)$ with $p$ and $q$ with the same norm. A general description is not more difficult, but we would need to consider some extra cases depending on the relative position of the real spine and $\dh2c$.
 
\subsubsection{Metric bisectors and spinal spheres.}
We begin by describing the metric bisectors, which are the usual bisectors studied in detail by Goldman in \cite{goldman}. We characterize them by their real spine in the following proposition.
 
\begin{prop}\label{prop_carac_biss_metr_echine}
 A bisector is a metric bisector if and only if its focus belongs to $\cp2 \setminus \con{\h2c} $ and its real spine is orthogonal to $\dh2c$.
\end{prop}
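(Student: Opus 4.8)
The statement to prove is Proposition \ref{prop_carac_biss_metr_echine}: a bisector $\mathfrak{B}(p,q)$ (with $\langle p,p\rangle = \langle q,q\rangle$) is a metric bisector if and only if its focus lies in $\cp2 \setminus \con{\h2c}$ and its real spine is orthogonal to $\dh2c$. I would prove the two implications separately, using Lemma \ref{lemme_echine_orthogonale} and the characterization of metric bisectors from Definition \ref{defn_biss_metrique}.

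For the forward implication, suppose $\mathfrak{B}(p,q)$ is a metric bisector, i.e.\ there exist lifts with $\langle p,p\rangle = \langle q,q\rangle < 0$ realizing $\mathfrak{B}(p,q)$ as the equidistant set of the two points $[p],[q] \in \h2c$. First I note the focus is $[p\boxtimes q] = l_{[p],[q]}^\perp$; since $[p],[q] \in \h2c$, the complex line $l_{[p],[q]}$ meets $\h2c$, so by the polarity remarks (Remark \ref{rem_polaires}, item on $[u]^\perp \cap \h2c \neq \emptyset \iff [u] \in \pv^+$) its pole $[p\boxtimes q]$ lies in $\pv^- \cup \pv^0$; in fact since $l_{[p],[q]}$ genuinely intersects the open ball, $[p\boxtimes q] \in \pv^-$, hence in particular in $\cp2 \setminus \con{\h2c}$. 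That the real spine is orthogonal to $\dh2c$ is then exactly the first bullet of Lemma \ref{lemme_echine_orthogonale}, applied with $\langle p,p\rangle = \langle q,q\rangle < 0$: since $\Sigma = l_{[p],[q]}$ intersects $\h2c$, the real spine $\sigma$ is orthogonal to $\dh2c$ in $\cp2$.

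For the converse, suppose the focus $[f] = [p\boxtimes q]$ lies in $\cp2 \setminus \con{\h2c}$, so $[f] \in \pv^+$, and the real spine $\sigma$ is orthogonal to $\dh2c$. Since $[f] \in \pv^+$, its polar line $\Sigma = [f]^\perp = l_{[p],[q]}$ intersects $\h2c$ (again by the polarity remark). The point is now to produce lifts $p',q'$ of two points of $\h2c$ with $\langle p',p'\rangle = \langle q',q'\rangle < 0$ and $|\langle z,p'\rangle| = |\langle z,q'\rangle| \iff |\langle z,p\rangle| = |\langle z,q\rangle|$. Working in the complex line $\Sigma$ with the chart $\lambda \mapsto [p+\lambda q]$ as in the proof of Lemma \ref{lemme_echine_orthogonale}: the points of $\dh2c \cap \Sigma$ form a circle orthogonal to the real spine circle $\{|\lambda| = 1\}$ (this is where the orthogonality hypothesis on $\sigma$ enters); orthogonal circles in $\cp1$, one of which encloses a region meeting $\h2c$, let me choose two points $\lambda_1, \lambda_2$ symmetric with respect to $\sigma$ (i.e.\ $\bar\lambda_1 = 1/\lambda_2$ after normalizing $\sigma$ to the unit circle) lying in $\Sigma \cap \h2c$ — concretely, the two points where the geodesic/complex line perpendicular structure picks out a pair — and set $p' = p + \lambda_1 q$, $q' = p + \lambda_2 q$. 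One checks $[z] \in \mathfrak{E}(p,q) \iff [z] \in \mathfrak{E}(p',q')$ because the circle $C$ in $L_{[f]} \simeq \Sigma$ defining the extor is unchanged, and then rescales $p',q'$ to have equal negative norm (possible since both are in $\pv^-$, and the two norms can be matched by a unitary scalar once one fixes the circle equation). Then $\mathfrak{B}(p',q') = \mathfrak{B}(p,q)$ is the metric bisector of $[p'],[q'] \in \h2c$.

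\textbf{Main obstacle.} The delicate point is the converse: turning the abstract data ``focus outside $\con{\h2c}$ plus real spine orthogonal to $\dh2c$'' into an explicit pair of negative-norm vectors realizing the same extor, and in particular verifying that one can simultaneously (i) keep the defining circle $C \subset L_{[f]}$ fixed, (ii) land both chosen points inside $\h2c \cap \Sigma$, and (iii) normalize to equal norms. The key fact making (ii) possible is that a circle $\sigma$ orthogonal to $\dh2c \cap \Sigma$ — a ``real geodesic'' of the disk $\Sigma \cap \h2c$ in the Poincaré sense — always has two points of $\Sigma \cap \h2c$ that are inverse to each other through $\sigma$; this is precisely the computation $\lambda \leftrightarrow \overline{1/\lambda}$ from the proof of Lemma \ref{lemme_echine_orthogonale} read in reverse. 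I would spend most of the writeup making that interchange of roles between ``points of $\dh2c$'' and ``points realizing the bisector'' rigorous, rather than on the (immediate) forward direction.
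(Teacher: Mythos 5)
Your proof plan is correct, and while your forward direction coincides with the paper's (Lemma \ref{lemme_echine_orthogonale} plus polarity; note one sign slip: since $l_{[p],[q]}=[p\boxtimes q]^{\perp}$ meets $\h2c$, the pole lies in $\pv^{+}$, not $\pv^{-}$ — in the paper's notation $\pv^{-}$ is $\h2c$ itself — though the conclusion you draw is the right one), your converse takes a genuinely different route. The paper argues by homogeneity: $\pu21$ acts transitively on $\cp2\setminus\con{\h2c}$, so the focus may be assumed to be that of a reference metric bisector $\mathfrak{B}(p_0,q_0)$, and the stabilizer of the focus, isomorphic to $\mathrm{PU}(1,1)$, acts $2$-transitively on the circle $\dh2c\cap\Sigma$, hence transitively on the circles of $\Sigma$ orthogonal to it; a group element then carries the given bisector onto $\mathfrak{B}(p_0,q_0)$, and pulling back shows it is metric. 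You instead construct the two equidistant points explicitly inside the complex spine as a pair swapped by the inversion in $\sigma$; this is sound and avoids any appeal to transitivity, at the cost of the normalization check you identify as the main obstacle. Two remarks for closing it: first, the norms of $p'=p+\lambda_1q$ and $q'=p+\lambda_2q$ are matched by a \emph{positive real} rescaling, not a unitary scalar (a unitary scalar does not change norms); since $\lambda_2=1/\overline{\lambda_1}$ gives $\langle p+\lambda_2q,p+\lambda_2q\rangle=|\lambda_1|^{-2}\langle p+\lambda_1q,p+\lambda_1q\rangle$, the forced rescaling is exactly the one that makes $|\langle p+\lambda q,p'\rangle|=|\langle p+\lambda q,q'\rangle|$ on the circle $|\lambda|=1$, so the two constraints are compatible. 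Second, you can bypass this computation entirely: the inversion in $\sigma$ preserves the disk $\Sigma\cap\h2c$ precisely because $\sigma$ is orthogonal to $\dh2c$, restricts there to the hyperbolic reflection across the geodesic $\sigma\cap\h2c$, and swaps $[p']$ and $[q']$; hence $\sigma\cap\h2c$ is contained in, and therefore equals, the equidistant geodesic of $[p'],[q']$ in $\Sigma$, so the real spines of $\mathfrak{B}(p,q)$ and of the metric bisector $\mathfrak{B}(p',q')$ agree, and an extor is determined by its real spine. In comparison, the paper's argument is shorter and leans on the symmetry of $\h2c$, while yours produces explicit equidistant points, which is closer in spirit to the explicit computations carried out later in Part \ref{part_eff_defor}.
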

\begin{proof}
 Consider a metric bisector $\mathfrak{B}(p_0 , q_0)$, where $p_0 , q_0 \in \CC^3$ satisfy $\langle p_0,p_0 \rangle = \langle q_0,q_0 \rangle < 0$.
 By Lemma \ref{lemme_echine_orthogonale}, we know that the real spine of $\mathfrak{B}(p_0 , q_0)$ is a circle orthogonal to $\dh2c$, and that its focus $[f_0] = [p_0 \boxtimes q_0]$ belongs to $\cp2 \setminus \con{\h2c}$. This shows the first side of the equivalence. Consider now a bisector $\mathfrak{B}$ with focus $[f] \in \cp2 \setminus \con{\h2c}$ and whose real spine $\sigma$ is orthogonal to $\dh2c$. Since $\pu21$ acts transitively on $\cp2 \setminus \con{\h2c}$, we can suppose that $[f] = [f_0] = [p_0 \boxtimes q_0]$.

  Furthermore, remark that the stabilizer of $[f]$ is isomorphic to $\mathrm{PU}(1,1)$, and acts $2$-transitively on the circle $\dh2c \cap [p]^{\perp}$.
 Hence it also acts transitively on the circles orthogonal to $\dh2c \cap [p]^{\perp}$. Hence there exists an element $[G] \in \pu21$ such that $[G]\sigma$ is the real spine of $\mathfrak{B}(p_0,q_0)$; we deduce that $[G]\mathfrak{B} = \mathfrak{B}(p_0,q_0)$ and that $\mathfrak{B} = \mathfrak{B}(G^{-1}p_0 , G^{-1}q_0)$ is a metric bisector.
\end{proof}

 With the preceding proof, we can make the following remark:
\begin{rem}\label{rem_carac_biss_metr_sommets}
 A metric bisector is determined by the two points of the intersection of its real spine with $\dh2c$. These two points are called the \emph{vertices of the bisector} by Goldman in \cite{goldman}. Furthermore, since $\pu21$ acts 2-transitively on $\dh2c$, it acts transitively on the metric bisectors.
\end{rem}

 
\begin{prop}
 A metric bisector is homeomorphic to a 3-dimensional ball. A spinal sphere is a smooth sphere $\dh2c$.
\end{prop}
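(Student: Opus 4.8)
The plan is to leverage the two decompositions of extors established above, together with the explicit model, to reduce the statement to standard facts. First I would recall that by Proposition \ref{prop_carac_biss_metr_echine} and Remark \ref{rem_carac_biss_metr_sommets}, any metric bisector is $\pu21$-equivalent to a fixed model one, so it suffices to prove the topological statement for a single convenient metric bisector; the natural choice is the bisector of two points symmetric with respect to the origin in the ball model, whose real spine is a diameter (a geodesic through the center) of a complex geodesic $\Sigma$. Concretely, take $[p_0] = [0,0,1]^{t}$-type data or, even more simply, place the complex spine so that the metric bisector is $\{[z_1 : z_2 : 1] \mid z_1 + \bar z_1 = 0\} \cap \h2c$, i.e. the set where $\Re(z_1) = 0$.

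Next I would analyze this explicit set. Intersecting $\{\Re(z_1) = 0\}$ with the unit ball $|z_1|^2 + |z_2|^2 < 1$ in $\mathbb{C}^2$: writing $z_1 = i s$ with $s \in \mathbb{R}$, the condition becomes $s^2 + |z_2|^2 < 1$, which describes an open solid region homeomorphic to $\{(s, z_2) \in \mathbb{R} \times \mathbb{C} \mid s^2 + |z_2|^2 < 1\}$, an open $3$-ball. For the boundary at infinity, the equation $s^2 + |z_2|^2 = 1$ with $s \in \mathbb{R}$, $z_2 \in \mathbb{C}$ is exactly the unit sphere $S^2 \subset \mathbb{R} \times \mathbb{C} \cong \mathbb{R}^3$; one checks smoothness by noting the defining function $s^2 + |z_2|^2 - 1$ has nonvanishing gradient on its zero set, and that this zero set sits smoothly inside $\dh2c \cong S^3$ (e.g. it is the fixed-point set of the antiholomorphic involution $z_1 \mapsto -\bar z_1$ restricted to the sphere, or simply read off in the Siegel/ball chart). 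This identifies the spinal sphere as a smoothly embedded $2$-sphere in $S^3$.

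Alternatively, and perhaps more in keeping with the projective viewpoint of the section, I would argue via the slice decomposition: the metric bisector is fibered over its real spine $\sigma \cap \h2c$ — an arc (open geodesic segment), hence an interval — with fibers the slices, each an intersection of a complex line orthogonal to $\Sigma$ with $\h2c$, i.e. a copy of $\mathbb{H}^1_\CC$, homeomorphic to an open $2$-disc. A fibration of a contractible base by discs, varying smoothly, gives a space homeomorphic to $I \times D^2 \cong B^3$; at infinity each slice contributes its bounding $\CC$-circle and these sweep out a $2$-sphere as the endpoint of $\sigma$ varies over the closed arc (the two vertices collapsing the family at the ends). The only genuinely delicate point is making the "smoothly varying fibration implies product" step rigorous — that the slices fit together without pinching except, in a controlled way, over $\partial \sigma$ — which is precisely the content of Goldman's Chapter 5 analysis; I would simply cite Goldman's book \cite{goldman} (Chapter 5 for the metric bisector being a $3$-ball, Chapter 8 for the spinal sphere) rather than redo it. The main obstacle, then, is not any hard computation but choosing whether to present a self-contained explicit-coordinates proof or to defer to \cite{goldman}; given that the excerpt has already announced "as described by Goldman" and "studied in detail by Goldman", the cleanest route is a one-line proof: reduce to the model bisector by transitivity (Remark \ref{rem_carac_biss_metr_sommets}), exhibit it explicitly in the ball model as above, and conclude.

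\begin{proof}
 By Remark \ref{rem_carac_biss_metr_sommets}, $\pu21$ acts transitively on metric bisectors, so it suffices to treat one example. In the ball model, consider the metric bisector $\mathfrak{B} = \{ [z_1 : z_2 : 1] \in \h2c \mid \Re(z_1) = 0 \}$, which is the metric bisector of two points exchanged by the antiholomorphic involution $z_1 \mapsto -\con{z_1}$. Writing $z_1 = is$ with $s \in \RR$, the defining inequality $|z_1|^2 + |z_2|^2 < 1$ becomes $s^2 + |z_2|^2 < 1$, so $\mathfrak{B}$ is homeomorphic to $\{(s,z_2) \in \RR \times \CC \mid s^2+|z_2|^2<1\}$, an open $3$-ball. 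Its boundary at infinity is $\mathfrak{S} = \{(s,z_2) \in \RR \times \CC \mid s^2 + |z_2|^2 = 1\}$, which is the unit sphere of $\RR \times \CC \cong \RR^3$; the defining function $s^2 + |z_2|^2 - 1$ has non-vanishing differential along its zero set, so $\mathfrak{S}$ is a smoothly embedded $2$-sphere in $\dh2c \simeq S^3$. For the detailed topological study of metric bisectors and spinal spheres, including the slice and meridional decompositions, see Chapters 5 and 8 of \cite{goldman}.
\end{proof}
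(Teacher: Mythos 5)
Your proof is correct, but it follows a slightly different route from the paper's. The paper only invokes transitivity (Remark \ref{rem_carac_biss_metr_sommets}) for the spinal sphere, which it exhibits explicitly in the Siegel model with vertices $[1,0,0]$ and $[0,0,1]$; for the $3$-ball statement it argues intrinsically, for an arbitrary metric bisector, via the slice decomposition: by Proposition \ref{prop_carac_biss_metr_echine} the real spine meets $\h2c$ in an open interval, and $\mathfrak{B} = \bigcup_{[s]\in \sigma\cap\h2c} l_{[f],[s]}\cap\h2c$ is an interval of disks, hence a ball. You instead reduce both halves to a single explicit model in the ball model, $\{\Re(z_1)=0\}$, and read everything off in coordinates. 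Both arguments are valid: the paper's slice argument has the advantage of exhibiting the structure that is reused immediately afterwards for fans and Clifford cones (and is exactly the alternative you sketch in your plan), while yours is shorter and completely concrete, and the smoothness of the spinal sphere is immediate since it is the great $2$-sphere $S^3\cap\{\Re(z_1)=0\}$ (equivalently, $0$ is a regular value of $\Re(z_1)$ restricted to $S^3$, or it is the fixed-point set of an isometric involution). Two small points you should make explicit: first, justify that $\{\Re(z_1)=0\}\cap\h2c$ really is a metric bisector by naming the two points, e.g. $p=(r,0,1)^{t}$ and $q=(-r,0,1)^{t}$ with $0<r<1$, which satisfy $\langle p,p\rangle=\langle q,q\rangle=r^2-1<0$ and $|\langle z,p\rangle|=|\langle z,q\rangle| \iff \Re(z_1)=0$; second, note that transporting the conclusion back to an arbitrary metric bisector uses that $\pu21$ acts by homeomorphisms of $\con{\h2c}$ (and smoothly on $\dh2c$), so the homeomorphism type of the bisector and the smooth embedding of the spinal sphere are preserved.
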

\begin{proof}
Let $\mathfrak{B}$ be a metric bisector with focus $[f]$ and real spine $\sigma$.
By Proposition \ref{prop_carac_biss_metr_echine}, we know that $\sigma \cap \h2c$ is an open interval. Furthermore, we have
 $\mathfrak{B} = \bigcup_{[s] \in \sigma \cap \h2c} l_{[f],[s]} \cap \h2c$,
  which is the product of an open interval and a disk, and hence is homeomorphic to a 3-dimensional ball. By Remark \ref{rem_carac_biss_metr_sommets}, we know that $\pu21$ acts transitively on the metric bisectors. We can hence study the boundary at infinity of a particular metric bisector in order to complete the proof. Consider the bisector with vertices 
\begin{equation*}
\begin{array}{rcl}
 \begin{bmatrix}
 1 \\ 0 \\0
 \end{bmatrix} 
 &\text{ and }&
 \begin{bmatrix}
 0\\0\\1
 \end{bmatrix}
 \end{array}
\end{equation*}

in the Siegel model. The corresponding spinal surface is then given by:
\begin{equation*}
\mathfrak{S} = 
 \left\{ 
\begin{bmatrix}
-\frac{1}{2}|z|^2 \\z \\1
\end{bmatrix} 
\mid z \in \CC
  \right\}
  \cup
  \left\{
  \begin{bmatrix}
  1\\0\\0
  \end{bmatrix}
  \right\},
\end{equation*}
which is a smooth sphere in $\dh2c$.
\end{proof}

\subsubsection{Fans}
 We are going to describe now other types of bisectors, that are not necessarily metric bisectors but that will be useful to construct fundamental domains on $\h2c$ for the actions of some subgroups of $\pu21$. We will see the fans an the Clifford cones. We begin by defining and describing a fan.

\begin{defn}
 We call \emph{fan} a bisector whose focus $[f]$ belongs to $\dh2c$ and that does not contain $[f]^{\perp}$ as a slice. We call \emph{spinal fan} its boundary at infinity.
\end{defn}

\begin{prop}
 A fan is homeomorphic to a 3-dimensional ball. A spinal fan is a smooth sphere with a singular point in $\dh2c$.
\end{prop}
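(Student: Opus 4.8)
The plan is to understand a fan by reducing to a normal form via the transitivity properties of $\pu21$, and then exhibiting an explicit product structure, exactly as was done for metric bisectors. First I would use the fact that $\pu21$ acts transitively on $\dh2c$ to place the focus $[f]$ at a convenient point of the boundary — in the Siegel model, say $[f] = \begin{bmatrix} 1 \\ 0 \\ 0 \end{bmatrix}$, so that $[f]^\perp$ is the complex line at infinity through $[f]$. The stabilizer of $[f]$ acts on the visual sphere $L_{[f]} \simeq [f]^\perp \simeq \ccp1$; since the real spine $\sigma$ is a real circle in $[f]^\perp$ that, by the hypothesis defining a fan, does \emph{not} pass through $[f]$ itself (that is the condition that $[f]^\perp$ is not a slice — equivalently, by the slice decomposition, $[f] \notin \sigma$) and does not coincide with the $\CC$-circle $\dh2c \cap [f]^\perp$, I would invoke Lemma~\ref{lemme_echine_orthogonale}: since the complex spine $\Sigma = [f]^\perp$ is tangent to $\dh2c$ (because $[f] \in \dh2c$), the real spine $\sigma$ is a circle passing through the focus-related point, but the relevant consequence here is that $\sigma$ meets the tangency $\CC$-circle in a controlled way. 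After a further element of the stabilizer I can normalize $\sigma$ to a standard circle.

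Next I would carry out the slice decomposition (Proposition, slice decomposition): $\mathfrak{E} = \bigcup_{[s] \in \sigma} l_{[f],[s]}$, and correspondingly the fan is $\mathfrak{B} = \bigcup_{[s] \in \sigma} \bigl( l_{[f],[s]} \cap \h2c \bigr)$. Each slice $l_{[f],[s]}$ is a complex line through the boundary point $[f]$; its intersection with $\h2c$ is either empty, a single boundary point, or a copy of $\mathbb{H}^1_{\CC}$ (an open disk) whose closure touches $\dh2c$ at $[f]$. I would show that for the circle $\sigma$ arising from a fan, all but (at most) the parameter values where the slice degenerates give genuine disks, and that these disks fit together: removing the common limit point $[f]$ at infinity, the family $\{ \overline{l_{[f],[s]} \cap \h2c} \setminus \{[f]\} \}_{[s] \in \sigma}$ is a product of the circle $\sigma$ with a half-open disk, hence homeomorphic to $S^1 \times (\text{half-open disk})$, which is a solid torus minus a boundary annulus — collapsing to account for the single cone point $[f]$ on the boundary then gives a $3$-ball. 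Concretely, it is cleanest to just compute the spinal fan in the normalized Siegel coordinates: with $[f] = \begin{bmatrix} 1 \\ 0 \\ 0 \end{bmatrix}$ and $\sigma$ the standard circle, the equation $|\langle z, p \rangle| = |\langle z, q \rangle|$ restricted to $\dh2c \simeq \CC \times \RR \cup \{\infty\}$ becomes a quadric that one recognizes, away from $[f]$, as a smoothly embedded plane $\simeq \RR^2$, whose one-point compactification at $[f]$ is a sphere with a single (conical, non-smooth) point at $[f]$. That a fan itself is then a $3$-ball follows since it cones off this boundary sphere along the real spine to the interior.

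The main obstacle I anticipate is handling the degenerate slices and the behavior near the focus carefully. Unlike the metric bisector case, where $\sigma \cap \h2c$ is a clean open interval and every slice over it is an honest disk, here the focus lies on $\dh2c$ and the real spine can interact with the tangency $\CC$-circle $\dh2c \cap [f]^\perp$; a slice $l_{[f],[s]}$ can be tangent to $\dh2c$ (meeting $\h2c$ in nothing but the boundary point $[f]$), secant (giving a disk), or exterior. The condition that $\mathfrak{B}$ be a fan — $[f]^\perp$ not a slice — is exactly what prevents the worst degeneration, but I still need to check that the locus of "bad" parameters on $\sigma$ is small enough (a finite set, or at worst an arc not disconnecting things) so that the product/coning picture goes through and the resulting singular point on the spinal fan is unique. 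I would dispatch this by direct inspection in the normalized coordinates rather than abstractly: once $\sigma$ is a concrete circle, counting its intersections with the concrete tangency $\CC$-circle is elementary, and the claimed homeomorphism types can be read off. The rest — smoothness of the spinal fan away from $[f]$, non-smoothness at $[f]$ — follows from the same implicit-equation computation, since $[f]$ is precisely the focus where all slices are pinched together.
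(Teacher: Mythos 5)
Your argument breaks at the point where you decide how the slices of a fan are indexed, and the error comes from misreading the geometry of $[f]^\perp$ at a boundary focus. Since $[f]\in\dh2c$, the polar line $[f]^\perp$ is the complex tangent line to $\dh2c$ at $[f]$: it meets $\con{\h2c}$ only in the point $[f]$, so there is no ``$\CC$-circle $\dh2c\cap[f]^\perp$'' for the real spine to interact with, and every complex line through $[f]$ other than $[f]^\perp$ has restricted signature $(1,1)$ and cuts $\h2c$ in a disk (none is ``exterior''). Moreover you assert $[f]\notin\sigma$ while citing Lemma~\ref{lemme_echine_orthogonale}, which says exactly the opposite: for the fans that actually occur in the paper, namely $\mathfrak{B}(p,q)$ with equal-norm lifts and $r=0$ as in Proposition~\ref{lox_spinal_spheres}, one checks that $|\langle z,p\rangle|=|\langle z,q\rangle|$ for every $[z]$ in the complex spine $l_{[p],[q]}=[f]^\perp$, so the extor contains $[f]^\perp$ and, under the identification $L_{[f]}\simeq[f]^\perp$, the defining circle passes through $[f]$, i.e.\ $[f]\in\sigma$. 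Whatever reading one gives to the clause ``does not contain $[f]^\perp$ as a slice'' (note $[f]^\perp\cap\h2c=\emptyset$, so the bisector never contains it), it cannot be taken as $[f]\notin\sigma$ for these objects, and your proposal never resolves this contradiction.

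This is not a cosmetic issue, because the topology hinges on it. With $[f]\in\sigma$, the slices that meet $\h2c$ are parametrized by $\sigma\setminus\{[f]\}\simeq\RR$, they are pairwise disjoint in $\h2c$, and the fan is directly $\RR\times D^2\simeq B^3$; nothing is collapsed, since $[f]$ is not a point of $\h2c$. Under your reading the slices would form an $S^1$-family of disks meeting only at the boundary point $[f]$, so the object in $\h2c$ would be an open solid torus and its boundary at infinity a torus pinched at $[f]$; your step ``$S^1\times(\text{half-open disk})$, then collapsing the cone point, gives a $3$-ball'' is simply false, and no collapsing is available inside $\h2c$ anyway. Your closing fallback --- compute everything in normalized Siegel coordinates --- is indeed the paper's proof (focus $[f]=[1,0,0]$, the standard $\RR$-plane taken as a meridian, slices $T_r$ with $r\in\RR$, the fan diffeomorphic to $\{(z,r)\in\CC\times\RR \mid 2\Re(z)<-r^2\}$, and the spinal fan a plane in the parameters $(r,t)$ compactified by the single point $[f]$), but to get there you must first correct the slice count above rather than ``dispatch it by inspection''; as written, the structural reduction preceding your computation would lead you to the wrong model.
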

\begin{proof}
 Let $\mathfrak{E}$ be an extor with focus $[f] \in \dh2c$.
 We work on the Siegel model and suppose, without lost of generality, that $[f] = \begin{bmatrix}
 1 \\ 0 \\0
 \end{bmatrix} $. Without lost of generality, suppose that the $\RR$-plane of points with real coordinates is a meridian of $\mathfrak{E}$. The slices of the bisectors are hence of the form $T_r$ for $r \in \RR$, where
 \begin{equation*}
 T_r = \{[f]\} \cup \left\{\begin{bmatrix}
 z \\ r \\1
 \end{bmatrix} \mid z \in \mathbb{C}\right\}.
 \end{equation*}
 Hence, the bisector $\mathfrak{E} \cap \h2c$ is diffeomorphic to the set $\{(z,r) \in \CC \times \RR \mid 2\Re (z) < r^2\}$, which diffeomorphic to a  3-dimensional ball.
 Its boundary at infinity is the following set, which is a smooth sphere besides the point $[f]$, where it has a singularity:
 \begin{equation*}
 \left\{\begin{bmatrix}
 1 \\ 0 \\0
 \end{bmatrix}\right\} \cup \left\{\begin{bmatrix}
 -\frac{1}{2}(r^2 + it) \\ r \\ 1
 \end{bmatrix} \mid (r,t) \in \RR ^2 \right\}.
 \end{equation*}
\end{proof}

\subsubsection{Clifford cones and tori}
 For the last type of bisectors, which are Clifford cones, we use partially the terms given by Goldman in his book \cite{goldman}. Nevertheless, it seems preferable to us to call Clifford torus the boundary at infinity of a Clifford cone, even if Goldman keeps this name for the intersections of extors.

\begin{defn}
 We call \emph{Clifford cone} a bisector whose focus belongs to $\h2c$. We call \emph{Clifford torus} its boundary at infinity.
\end{defn}

\begin{prop}
 A Clifford cone is homeomorphic to $(S^1 \times D^2)/(S^1 \times \{0\})$. A Clifford torus is a smooth torus in $\dh2c$.
\end{prop}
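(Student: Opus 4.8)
The plan is to exploit the slice decomposition of the underlying extor $\mathfrak{E}(p,q)$, using the feature peculiar to this case: every slice now passes through the focus, which lies in $\h2c$. First I would normalise. Since $\pu21$ acts transitively on $\h2c$, conjugate so that the focus $[f]=[p\boxtimes q]$ of the Clifford cone $\mathfrak{B}=\mathfrak{B}(p,q)$ is the centre $[0:0:1]$ of the ball model; then the complex spine $\Sigma=[f]^{\perp}=\{z_3=0\}$ is a complex line disjoint from $\con{\h2c}$, so we are outside the two cases of Lemma \ref{lemme_echine_orthogonale}. Using the stabiliser of $[f]$ (which acts on $\Sigma\cong\mathbb{CP}^1$ as $\mathrm{PU}(2)$) and rescaling the lifts, I may further assume $p=(1,0,0)$ and $q=(\cos\phi,\sin\phi,0)$ for some $\phi\in\,]0,\frac{\pi}{2}[$; one checks $\langle p,p\rangle=\langle q,q\rangle=1$ and that $p\boxtimes q$ is proportional to $(0,0,1)$, as required. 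By Proposition \ref{prop_param_un_extor}, $\mathfrak{E}(p,q)=\bigcup_{\alpha\in S^1}l_\alpha$ with $l_\alpha=[q-\alpha p]^{\perp}$; since $\langle p\boxtimes q,\,q-\alpha p\rangle=\det(p,q,q)-\bar\alpha\det(p,q,p)=0$, every slice $l_\alpha$ contains $[f]$, and as $p,q$ are not proportional the $l_\alpha$ are pairwise distinct, hence meet one another exactly at $[f]$.

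Next, the topology of the cone. In the affine chart $z_3=1$ the set $\con{\h2c}$ is the closed unit ball $\overline{B^4}\subset\CC^2$ and $[f]$ is its centre, so each slice $l_\alpha$ meets $\con{\h2c}$ in a flat diametral disc $D_\alpha$ centred at $[f]$, and $\con{\mathfrak{B}}=\mathfrak{E}(p,q)\cap\con{\h2c}=\bigcup_{\alpha}D_\alpha$ is a union of such discs, pairwise meeting only at their common centre $[f]$. The direction of $l_\alpha$ depends continuously on $\alpha$, so the obvious parametrisation $S^1\times\con{D^2}\to\con{\mathfrak{B}}$, sending $\{\alpha\}\times\con{D^2}$ onto $D_\alpha$ by a rotation--scaling with $0\mapsto[f]$, is a continuous surjection, injective off $S^1\times\{0\}$ and collapsing $S^1\times\{0\}$ to $[f]$. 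As the source is compact and the target Hausdorff, it descends to a homeomorphism between $(S^1\times\con{D^2})/(S^1\times\{0\})$ and $\con{\mathfrak{B}}$; removing the frontier $\mathfrak{E}(p,q)\cap\dh2c=\bigcup_\alpha\partial D_\alpha$, which is disjoint from the collapsed point $[f]\in\h2c$, gives $\mathfrak{B}(p,q)\cong(S^1\times D^2)/(S^1\times\{0\})$.

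Finally, the Clifford torus $\mathfrak{E}(p,q)\cap\dh2c=\bigcup_\alpha\partial D_\alpha$ is the image of $S^1\times\partial D^2=S^1\times S^1$ under the same parametrisation; this restriction is injective because the $\CC$-circles $\partial D_\alpha=l_\alpha\cap\dh2c$ are pairwise disjoint (two distinct slices meet only at $[f]\in\h2c$), so the torus is at least a topological torus. For smoothness I would compute in the normal form: $l_\alpha=\{[z]\mid(\cos\phi-\bar\alpha)z_1+\sin\phi\,z_2=0\}$, whence $l_\alpha\cap\dh2c$ consists of the points $[\,\sin\phi:\bar\alpha-\cos\phi:R_\alpha e^{i\psi}\,]$, and in the chart $z_3=1$ the Clifford torus is the image of
\[
(\alpha,\psi)\longmapsto\left(\frac{\sin\phi}{R_\alpha}\,e^{-i\psi},\ \frac{\bar\alpha-\cos\phi}{R_\alpha}\,e^{-i\psi}\right),\qquad R_\alpha=\sqrt{2-2\cos\phi\,\Re\alpha}\,,
\]
from $S^1\times S^1$ into $S^3$. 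Since $\phi\in\,]0,\frac{\pi}{2}[$ one has $R_\alpha>0$ throughout, so the map is smooth; it is an embedding because $\alpha$ is recovered from the ratio of the two coordinate functions and $\psi$ from their common argument. Hence the Clifford torus is a smoothly embedded torus in $\dh2c$.

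I expect the main obstacle to be twofold: making the parametrisation $S^1\times\con{D^2}\to\con{\mathfrak{B}}$ and its descent to a homeomorphism fully rigorous (the continuous dependence on $\alpha$ together with the compactness argument), and establishing smoothness of the Clifford torus, for which the explicit normal-form computation above is by far the cleanest route. It is worth stressing that this argument needs only the transitivity of $\pu21$ on $\h2c$ (and of a point-stabiliser on its polar line), and not any transitivity statement for $\pu21$ on Clifford cones, which in fact form a one-parameter family.
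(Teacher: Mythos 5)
Your proof is correct and rests on the same mechanism as the paper's: the slice decomposition of the extor through its focus $[f]\in\h2c$, each slice meeting $\h2c$ in a disk through $[f]$ and $\dh2c$ in a $\CC$-circle, so that the cone is the union of disks glued at $[f]$ and its boundary at infinity is a torus; your normalization to explicit ball-model coordinates is a concrete implementation of this, with the added benefit that it actually verifies smoothness of the Clifford torus, which the paper's proof leaves implicit. Two small touch-ups: your normal form should allow $\phi=\tfrac{\pi}{2}$ (the orthogonal case $\langle p,q\rangle=0$ is a legitimate Clifford cone, and all your formulas, in particular $R_\alpha=\sqrt{2}$, hold there verbatim), and to conclude that the torus is a \emph{smooth} torus you should supplement the injectivity of your parametrization with the immediate immersion check (injectivity alone does not exclude critical points; here the $\psi$-derivative stays tangent to the slice $l_\alpha$ while the $\alpha$-derivative moves the slice, i.e.\ projects to a nonzero tangent vector of $L_{[f]}$, so the differential has rank $2$).
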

\begin{proof}
Let $\mathfrak{E}$ be an extor with focus $[f] \in \h2c$. Every complex line of $\cp2$ passing by $[f]$ intersects $\h2c$ in a complex geodesic, which is homeomorphic to a disk $D^{2}$, and has a $\mathbb{C}$-circle as boundary at infinity. By the slice decomposition of $\mathfrak{E}$, we know that $(\mathfrak{E} \setminus \{[f]\}) \cap \h2c$ is homeomorphic to $S^{1} \times (D^{2}\setminus \{[f]\})$, and that its boundary at infinity is homeomorphic to $S^1 \times S^1$.
\end{proof}

Putting together those definitions and considering the relative position of two points $[p]$ and $[q]$ and of the pole $[p\boxtimes q]$ of the line $l_{[p],[q]}$, we obtain the following proposition:
\begin{prop}\label{lox_spinal_spheres}
 Let $p,q \in \mathbb{C}^3 \sm0$ be non collinear and such that $\langle p , p \rangle =  \langle q , q \rangle $. Let $r = \langle p , p \rangle \langle q , q \rangle -  \langle p , q \rangle \langle q , p \rangle$. We suppose that $\mathfrak{B}(p,q) \neq \emptyset$.
 \begin{itemize}
  \item If $r<0$, then $\mathfrak{B}(p,q)$ is a bisector and $\mathfrak{S}(p,q)$ is a spinal sphere.
  \item If $r=0$, then $\mathfrak{B}(p,q)$ is a fan and $\mathfrak{S}(p,q)$ is a spinal fan.
  \item If $r>0$, then $\mathfrak{B}(p,q)$ is a Clifford cone and $\mathfrak{S}(p,q)$ is a Clifford torus.
\end{itemize}  
\end{prop}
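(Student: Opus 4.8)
The plan is to reduce the statement to a local analysis near the focus $[f] = [p\boxtimes q]$ and then use the slice decomposition of the extor. The sign of $r = \langle p,p\rangle\langle q,q\rangle - \langle p,q\rangle\langle q,p\rangle$ is exactly the (suitably normalized) value of the Hermitian form $\Phi$ restricted to $l_{[p],[q]}$, evaluated on the pole $[f]$: indeed $r$ is the determinant of the Gram matrix of $(p,q)$, and since $\langle p,p\rangle = \langle q,q\rangle$, a direct computation shows $\langle f, f\rangle = \langle p\boxtimes q, p\boxtimes q\rangle$ is (up to a positive factor coming from $\det$) equal to $r$. Hence $r<0$ means $[f] \in \h2c$... wait — I need to be careful with signs, so the first step is to pin down the precise relation between the sign of $\langle f,f\rangle$ and the sign of $r$, using the coordinate formula for $\boxtimes$ from the two lemmas in Section~\ref{sect_h2c}. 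The upshot I expect: $r<0 \iff [f]\in \cp2\setminus\con{\h2c}$, $r=0 \iff [f]\in\dh2c$, $r>0 \iff [f]\in\h2c$. (The apparent mismatch of signs with the naive reading is resolved by the fact that $\langle p\boxtimes q, \cdot\rangle = \det(p,q,\cdot)$ involves the conjugate-linear slot, so one gets an extra sign/conjugation; this is the kind of routine check I would not grind through here.)

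Granting that dictionary, the three cases follow from the structural results already proved. The complex spine of $\mathfrak{E}(p,q)$ is $l_{[p],[q]}$ and its focus is $[f] = l_{[p],[q]}^\perp = [p\boxtimes q]$. In the case $r<0$: the focus lies in $\cp2\setminus\con{\h2c}$, and by the first bullet of Lemma~\ref{lemme_echine_orthogonale} the real spine $\sigma$ is orthogonal to $\dh2c$, so Proposition~\ref{prop_carac_biss_metr_echine} applies and $\mathfrak{B}(p,q)$ is a metric bisector — hence a bisector in the sense of Definition~\ref{defn_bisector} with spinal sphere $\mathfrak{S}(p,q)$. In the case $r=0$: the focus lies on $\dh2c$, so $\mathfrak{B}(p,q)$ is a bisector with focus in $\dh2c$; to conclude it is a \emph{fan} I must check that $[f]^\perp = l_{[p],[q]}$ is not a slice of $\mathfrak{E}(p,q)$. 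The slices are the lines $[p-\alpha q]^\perp$ for $\alpha\in S^1$ (Proposition~\ref{prop_param_un_extor}), and $l_{[p],[q]} = [p-\alpha q]^\perp$ would force $[p-\alpha q]$ to be the pole $[f]$, i.e. $p-\alpha q$ collinear with $p\boxtimes q$; but $p\boxtimes q$ is orthogonal to both $p$ and $q$ while $p-\alpha q$ is not (as $p,q$ are non-collinear and $\mathfrak{B}(p,q)\neq\emptyset$), giving a contradiction — so $\mathfrak{B}(p,q)$ is a fan with spinal fan $\mathfrak{S}(p,q)$. In the case $r>0$: the focus lies in $\h2c$, which is precisely the definition of a Clifford cone, with Clifford torus $\mathfrak{S}(p,q)$.

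Finally one should note that the hypothesis $\mathfrak{B}(p,q)\neq\emptyset$ is what guarantees these objects are genuinely the 3-balls / solid-torus-quotients described in the preceding propositions rather than degenerate or empty sets; in the metric bisector and Clifford cone cases non-emptiness is automatic once the focus is correctly placed, and in the fan case it is used above to rule out the bad coincidence. The main obstacle I anticipate is entirely in the first paragraph: getting the sign conventions in the identification of $\mathrm{sign}(r)$ with the position of $[f]$ relative to $\con{\h2c}$ exactly right, since the Hermitian cross product mixes a determinant (complex-linear in all slots) with the Hermitian form (conjugate-linear in one slot), and an off-by-a-conjugation error would swap the $r<0$ and $r>0$ cases. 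Everything after that is a direct appeal to Lemma~\ref{lemme_echine_orthogonale}, Proposition~\ref{prop_carac_biss_metr_echine}, Proposition~\ref{prop_param_un_extor}, and the definitions of fan and Clifford cone.
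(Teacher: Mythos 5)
Your overall route is the same as the paper's: locate the focus $[f]=[p\boxtimes q]$ from the sign of $r$, then quote the structural results. For the dictionary itself the paper argues more directly than you propose: $r$ is the determinant of the Gram matrix of $(p,q)$, i.e.\ of $\Phi$ restricted to $\mathrm{Vect}(p,q)$, and since the ambient signature is $(2,1)$ this restriction has signature $(1,1)$ when $r<0$ (so $[f]$, which spans $\mathrm{Vect}(p,q)^{\perp}$, lies outside $\con{\h2c}$), signature $(2,0)$ when $r>0$ (so $[f]\in\h2c$), and is degenerate when $r=0$ (its radical is $\mathrm{Vect}(p,q)\cap\mathrm{Vect}(p,q)^{\perp}$, so $[f]\in\dh2c$). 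This sidesteps the box-product sign verification that you explicitly decline to carry out, which as written leaves the hinge of your first paragraph asserted rather than proved; in the case $r<0$ you should also record that $l_{[p],[q]}$ meets $\h2c$ (signature $(1,1)$), which is the hypothesis of the first bullet of Lemma~\ref{lemme_echine_orthogonale} that you invoke.

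The genuine error is in your $r=0$ case: the contradiction you claim does not exist, and the intermediate statement you try to prove is false. If $r=0$ then $\langle p,q\rangle\neq 0$ and $\langle p,p\rangle=\langle q,q\rangle>0$ (otherwise $p$ and $q$ would be mutually orthogonal null vectors, hence collinear), and $|\langle p,q\rangle|=\langle p,p\rangle$; the unimodular number $\alpha$ determined by $\alpha\langle q,p\rangle=\langle p,p\rangle$ then makes $p-\alpha q$ orthogonal to $p$, and the relation $\langle p,q\rangle\langle q,p\rangle=\langle p,p\rangle\langle q,q\rangle$ makes it orthogonal to $q$ as well. Hence $p-\alpha q$ is a nonzero vector of $\mathrm{Vect}(p,q)^{\perp}$, so it is proportional to $p\boxtimes q$, and $[f]^{\perp}=[p-\alpha q]^{\perp}$ \emph{is} a slice of $\mathfrak{E}(p,q)$; non-collinearity and $\mathfrak{B}(p,q)\neq\emptyset$ do not rule this out. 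This is exactly the situation of the second bullet of Lemma~\ref{lemme_echine_orthogonale}: for $r=0$ the complex spine is tangent to $\dh2c$ and the real spine passes through the focus. So the ``$[f]^{\perp}$ is not a slice'' check you set out to do cannot succeed, and your argument for the second bullet collapses; the conclusion is nevertheless the intended one, because the configuration that actually occurs --- focus on $\dh2c$, slices meeting $\h2c$ parametrized by the real spine minus the focus --- is precisely the one used in the proof that a fan is a topological $3$-ball (so the slice clause in the definition of fan must be read as compatible with this case, as the paper's own proof of that proposition shows). To repair your proof, drop the slice verification and argue, as the paper implicitly does, that an extor with equal-norm coequidistant lifts whose focus lies on $\dh2c$ has real spine through the focus and meets $\h2c$ in the fan picture.
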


\begin{proof}
  Denote by $\Phi '$ the restriction of the Hermitian form to $\mathrm{Vect}(p,q)$. In the basis $(p,q)$, its determinant equals $r$. 
 
 If $r<0$, the Hermitian form $\Phi '$ has signature $(1,1)$. The focus of $\mathfrak{B}(p,q)$, $[p \boxtimes q]$, belongs hence to $\cp2 \setminus \con{\h2c}$. The intersection of an extor with focus in $\cp2 \setminus \con{\h2c}$ with $\h2c$ is a metric bisector.
 
   If $r = 0$, the Hermitian form $\Phi '$ is degenerated. The focus of $\mathfrak{B}(p,q)$, $[p \boxtimes q]$, belongs hence to $\dh2c$. The intersection of an extor with focus in  $\dh2c$ with $\h2c$ is a fan.
   
   If $r>0$, the Hermitian form $\Phi '$ has signature $(2,0)$. The focus of $\mathfrak{B}(p,q)$, $[p \boxtimes q]$, belongs hence to $\h2c$. The intersection of an extor with focus in $\h2c$ with $\h2c$ is a cone over its boundary at infinity, which is a Clifford torus.
\end{proof}

\subsection{From the visual sphere}
 We are going to state two facts about bisectors and some particular visual spheres. We take the following notation for the natural projection on a visual sphere:
 \begin{notat}
  Let $[p]\in \cp2$.We denote by 
    \begin{equation*}
\begin{array}{rcl}
\pi_{[p]} & \colon &
  \begin{array}{rcl}
    \cp2 \setminus \{ [p] \} & \rightarrow & L_{[p]} \\
    \left[ q \right] & \mapsto & l_{[p],[q]}
  \end{array} 
\end{array}
 \end{equation*}
the natural projection on the visual sphere $L_{[p]}$.
 \end{notat}

  The first remark follows from the definition of an extor by a focus and a circle in the visual sphere:

\begin{rem}
 Let $p,q \in \mathbb{C}^3 \setminus \{0\}$ be two distinct points such that $\langle p,p \rangle = \langle q ,q \rangle $. Then $\pi_{[p \boxtimes q]} (\mathfrak{E}(p,q))$ is a circle in $L_{[p \boxtimes q]}$. Furthermore, $\pi_{[p \boxtimes q]} (\mathfrak{B}(p,q))$ is an arc of a circle or a circle in $L_{[p \boxtimes q]}$, depending on whether $[p\boxtimes q] \in  \con{\h2c}$ or not; the set $\pi_{[p \boxtimes q]} (\mathfrak{S}(p,q))$ is its closure. 
 \end{rem}

 The following proposition describes the projection on the visual sphere $L_{[p]}$ of a bisector defined as $\mathfrak{B}(p,q)$ or of its corresponding spinal surface.

 \begin{prop}\label{prop_proj_biss_sphere_visuelle}
 Let $p,q \in \mathbb{C}^3 \sm0$ be non-collinear and such that $\langle p , p \rangle =  \langle q , q \rangle $. Let $r = \langle p , p \rangle \langle q , q \rangle -  \langle p , q \rangle \langle q , p \rangle$.
 \begin{itemize}
  \item If $r \neq 0$, then $\pi_{[p]}(\mathfrak{S}(p,q))$ is a closed disk with centres $l_{[p],[q]}$ and $l_{[p],[p \boxtimes q]}$.
  \item If $r=0$, then $\pi_{[p]}(\mathfrak{S}(p,q))$ is a closed disk whose boundary contains $l_{[p],[q]}$.
\end{itemize}
 In the two cases $\pi_{[p]} (\mathfrak{B}(p,q))$ is the interior of $\pi_{[p]} (\mathfrak{S}(p,q))$. 
\end{prop}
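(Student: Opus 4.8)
The plan is to reduce the statement to a computation in a single chart of the visual sphere $L_{[p]}$, using the slice decomposition of the extor $\mathfrak{E}(p,q)$ together with Proposition \ref{prop_param_un_extor}. First I would choose coordinates adapted to the problem: complete $p$ to a basis, so that we may take $p'' = p\boxtimes q$ and $p'$ any second vector in $[p]^\perp$ distinct from $[p\boxtimes q]$, and work in the chart $\psi_{p',p''} : L_{[p]} \to \mathbb{CP}^1$. Note that $l_{[p],[q]}$ and $l_{[p],[p\boxtimes q]}$ both make sense as points of $L_{[p]}$ (when $r\neq 0$ these are distinct, since $[q]\neq[p\boxtimes q]$ as $\langle p\boxtimes q, q\rangle = 0$ but $\langle p\boxtimes q, p\boxtimes q\rangle\neq 0$ while $\langle q,q\rangle$ may be nonzero; in any case non-collinearity of $p,q$ and $r\neq0$ force $[q]\notin[p]^\perp$).

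Next I would compute $\pi_{[p]}(\mathfrak{S}(p,q))$ directly. A point $[z]\in\dh2c$ lies in $\mathfrak{S}(p,q)$ iff $|\langle z,p\rangle| = |\langle z,q\rangle|$. The fibre $\pi_{[p]}^{-1}(l_{[p],[w]})$ is the line $l_{[p],[w]}$ itself, so I must determine for which complex lines $\ell$ through $[p]$ the set $\ell\cap\mathfrak{S}(p,q)$ is nonempty. Parametrising $\ell$ by $[p + t w]$ with $w$ a second point of $\ell$, the condition $|\langle p+tw, p\rangle| = |\langle p+tw,q\rangle|$ together with the Siegel/ball equation for $\dh2c$ gives, after eliminating, a real inequality in the parameter that cuts out the visual-sphere image. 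The cleanest route is: the slices of $\mathfrak{E}(p,q)$ are the lines $[q-\alpha p]^\perp$ for $\alpha\in S^1$ (Proposition \ref{prop_param_un_extor}); a line $\ell$ through $[p]$ meets $\mathfrak{E}(p,q)$ iff $\ell$ meets one of these slices, and $\ell\cap\mathfrak{S}(p,q)\neq\emptyset$ refines this by intersecting with $\dh2c$. I would translate "$\ell$ meets the slice $[q-\alpha p]^\perp$" into the pairing $\langle q - \alpha p, \cdot\rangle$ vanishing somewhere on $\ell$, and then impose the negativity/nullity condition coming from $\con{\h2c}$; by Lemma \ref{lemme_echine_orthogonale} and Proposition \ref{lox_spinal_spheres} the sign of $r$ governs whether the focus $[p\boxtimes q]$ lies in $\cp2\setminus\con{\h2c}$, on $\dh2c$, or in $\h2c$, which is exactly the trichotomy that distinguishes the "disk with two interior centres" case ($r\neq0$) from the "disk with $l_{[p],[q]}$ on the boundary" case ($r=0$). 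In the chart $\psi_{p',p''}$ the boundary of the image will come out as a circle (or line), and I would read off that $l_{[p],[p\boxtimes q]}$ maps to $0$ and identify where $l_{[p],[q]}$ maps, checking it is interior when $r\neq0$ and on the circle when $r=0$.

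Finally, for the statement that $\pi_{[p]}(\mathfrak{B}(p,q))$ is the interior of $\pi_{[p]}(\mathfrak{S}(p,q))$: since $\mathfrak{B}(p,q) = \mathfrak{S}(p,q)\setminus\partial$ corresponds fibrewise to taking the $\h2c$-part rather than the $\dh2c$-part, and since along each slice line $\ell$ the intersection $\ell\cap\con{\h2c}$ is a closed disk whose interior is $\ell\cap\h2c$, a line $\ell$ meets $\mathfrak{B}(p,q)$ iff it meets the open disk bounded by $\ell\cap\dh2c$ inside the region where the equidistance condition holds — this happens precisely for $\ell$ in the interior of the visual image, with the boundary lines being those tangent to the configuration. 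I would make this precise by the same parametrised computation, noting the inequality becomes strict.

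**Main obstacle.** The delicate point is the $r=0$ case and, more generally, correctly tracking which complex lines through $[p]$ are tangent (contributing to the boundary of the disk) versus secant (contributing to the interior) to $\mathfrak{S}(p,q)$; this requires care because the focus $[p\boxtimes q]$ may itself lie in $\con{\h2c}$, so $l_{[p],[p\boxtimes q]}$ behaves differently in the three regimes, and one must verify that the image is genuinely a topological disk rather than a more complicated region — which ultimately rests on the slice and meridional decompositions of the extor established above.
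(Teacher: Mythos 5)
Your general framing (project to $L_{[p]}$, keep $l_{[p],[q]}$ and $l_{[p],[p\boxtimes q]}$ as reference points, let the sign of $r$ via Proposition \ref{lox_spinal_spheres} organize the cases, and use the tangent/secant dichotomy for the final claim about $\pi_{[p]}(\mathfrak{B}(p,q))$) is consistent with the paper, but the step that actually produces ``closed disk'' is missing. In the paper, for $r\neq 0$ the argument is a symmetry argument: $(p,q,p\boxtimes q)$ is a basis, the stabilizer of the pair $([p],[q])$ in $\pu21$ is a circle group, diagonal of the form $(e^{i\theta},e^{i\theta},e^{-2i\theta})$ in that basis; it preserves $\mathfrak{B}(p,q)$, fixes $[p]$, hence acts on $L_{[p]}$, and in the chart sending $l_{[p],[q]}$ to $0$ and $l_{[p],[p\boxtimes q]}$ to $\infty$ it acts by rotations. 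So $\pi_{[p]}(\mathfrak{S}(p,q))$ is a compact, connected, rotation-invariant subset of $\mathbb{C}\cup\{\infty\}$, and the whole proof reduces to checking that exactly one of $0,\infty$ lies in the image; this is exactly where the sign of $r$ enters (for $r<0$ one shows $l_{[p],[q]}$ meets $\mathfrak{B}(p,q)$ while $l_{[p],[p\boxtimes q]}$ misses $\mathfrak{S}(p,q)$, for $r>0$ the roles are exchanged). For $r=0$ the stabilizer of $l_{[p],[q]}$ is a unipotent one-parameter group acting by translations on the chart, and an explicit Siegel-model computation identifies the image as a half-plane $\{s+it \mid s\leq -\tfrac18\}\cup\{\infty\}$, a disk with $l_{[p],[q]}$ on its boundary.

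Your proposal instead defers to an unexecuted elimination: you say the condition ``cuts out a real inequality'' and that ``the boundary of the image will come out as a circle (or line)'', but that is precisely the assertion to be proved; nothing in your plan rules out, say, an annulus (note that a rotation-invariant compact connected set containing neither $0$ nor $\infty$ is an annulus, so the decisive check that one of the two distinguished lines does meet the bisector cannot be skipped), nor does it show roundness of the boundary without the group action. Relatedly, your chart choice $p''=p\boxtimes q$ with $p'$ arbitrary in $[p]^\perp$ obscures the symmetry; you need $p'=q$, i.e.\ the chart $\psi_{q,\,p\boxtimes q}$, for the stabilizer to act by rotations. Two smaller points: the identity ``$\mathfrak{B}(p,q)=\mathfrak{S}(p,q)\setminus\partial$'' is not meaningful as written ($\mathfrak{B}$ lies in $\h2c$ and $\mathfrak{S}$ in $\dh2c$, so they are disjoint); and the paper's proof of the last claim is the short observation that a complex line meeting $\mathfrak{B}(p,q)$ meets $\mathfrak{S}(p,q)$, while a line meeting $\mathfrak{S}(p,q)$ meets $\mathfrak{B}(p,q)$ unless it is tangent to $\mathfrak{S}(p,q)$, the tangent lines giving the boundary of the image. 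As it stands, the key idea (the one-parameter stabilizer and the resulting rotation/translation invariance, plus the explicit check of which of $l_{[p],[q]}$, $l_{[p],[p\boxtimes q]}$ meets the bisector) is absent, so the proposal does not yet constitute a proof.
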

\begin{proof}
 First, notice that a complex line that intersects $\mathfrak{B}(p,q)$ also intersects $\mathfrak{S}(p,q)$, and that the complex lines that intersect $\mathfrak{S}(p,q)$ also intersect $\mathfrak{B}(p,q)$ unless they are tangent to $\mathfrak{S}(p,q)$. This will show the last point.

 Suppose at first that $r \neq 0$. In this case, $(p,q,p \boxtimes q)$ is a basis of $\mathbb{C}^3$. The stabilizer of $([p],[q])$ in $\pu21$ is then isomorphic to $S^1$ and its elements can be written in the basis $(p,q,p \boxtimes q)$ in the form
\begin{equation*}
\begin{bmatrix}
 e^{i\theta} & 0& 0 \\
 0 & e^{i\theta} & 0 \\
 0 & 0 & e^{-2i\theta}
\end{bmatrix}. 
 \end{equation*}
 These elements stabilize the bisector $\mathfrak{B}(p,q)$ and, since they fix $[p]$, act in a natural way on $L_{[p]}$. In the chart $\psi_{q , p\boxtimes q}$, the action is given by a rotation centered at $0$.
 In the chart sending $l_{[p],[q]}$ to $0$ and $l_{[p],[p \boxtimes q]}$ to $\infty$, the projection $\pi_{[p]}(\mathfrak{S}(p,q))$ is a compact set, connected, invariant by the rotations of $\mathbb{C} \cup \{\infty\}$. It only remains to check that exactly one of the points $0$ and $\infty$ belongs to the image of $\pi_{[p]}$. This corresponds to check that only one line among $l_{[p],[q]}$ and $l_{[p],[p \boxtimes q]}$ intersects $\mathfrak{B}(p,q)$.
 
 \paragraph{First case: $r<0$}
 Let $\alpha \in \mathbb{C}$ be of modulus $1$ such that $\langle p, \alpha q \rangle \in \mathbb{R}^-$. On the one hand, we know that $\langle p+\alpha q, p+\alpha q \rangle = 2(\langle p,p \rangle + \langle p, \alpha q \rangle ) < 0$ since $r<0 $. On the other hand, $|\langle p, p+\alpha q \rangle| = |\langle p, p \rangle -|\langle p, q \rangle|| = |\langle q, q \rangle -|\langle p, q \rangle|| = |\langle q, p+\alpha q \rangle |$. Hence we know that $l_{[p],[q]}$ intersects $\mathfrak{B}(p,q)$.
 
 Let us show that $l_{[p],[p \boxtimes q]}$ does not intersect $\mathfrak{S}(p,q)$. We know that $[p\boxtimes q] \notin \con{\h2c}$. Let $\lambda \in \mathbb{C}$. We have:
 \begin{eqnarray*}
  |\langle p + \lambda p \boxtimes q , p \rangle | &=&|\langle p , p \rangle | \\
  |\langle p + \lambda p \boxtimes q , q \rangle | &=&|\langle p , q \rangle | 
 \end{eqnarray*}
 Since $r<0$, the two quantities are different, hence $l_{[p],[p \boxtimes q]}$ does not intersect $\mathfrak{S}(p,q)$.
 
 \paragraph{Second case: $r > 0$}
 In this case, we know that $l_{[p],[q]}$ does not intersect $\con{\h2c}$, and hence does not intersect $\mathfrak{S}(p,q)$. Furthermore, $[p \boxtimes q] \in \h2c$ and $\langle p , p\boxtimes q \rangle = \langle p , p\boxtimes q \rangle = 0$, hence $l_{[p],[p \boxtimes q]}$ intersects $\mathfrak{B}(p,q)$.
 
 \paragraph{Third case: $r=0$}
 Suppose now that $r=0$. In this case, $[p\boxtimes q] \in \dh2c$ and $[p], [q]$ and $[p\boxtimes q]$ are collinear. The fixing subgroup of $l_{[p],[q]}$ is then a vertical unipotent subgroup isomorphic to $\mathbb{R}$, that acts on $L_{[p]}$ as a translation. Furthermore, since $l_{[p],[q]}$ is tangent to $\dh2c$ at $[p \boxtimes q]$, $\pi_{[p]}(\mathfrak{B}(p,q)) \setminus \{l_{[p][q]}\}$ is connected and invariant by one translation direction. 
 
 Consider coordinates in the Siegel model. By multiplying by an element of $\su21$, we can assume that 
 
\begin{eqnarray*} 
 p = \begin{pmatrix}
 0 \\ 1\\0
\end{pmatrix}
&\text{ and }&
p \boxtimes q = \begin{pmatrix}
 1 \\ 0\\0
\end{pmatrix}.
\end{eqnarray*}
 
 In this case, $q$ can be written in the following way, where $\theta \in \RR$:
\begin{equation*} 
 q = \begin{pmatrix}
 -1 \\ e^{i\theta}\\0
\end{pmatrix}
.
\end{equation*}
 We choose $\frac{z_1}{z_3}$ as coordinate in $\mathcal{L}_{[p]}$ for the complex line passing by $[p]$ and $\begin{pmatrix}
 z_1 \\ 1 \\ z_3
\end{pmatrix}$. With these coordinates, the fixing subgroup of $l_{[p],[q]}$ acts on $\mathcal{L}_{[p]}$ by translations of the form $z \mapsto z + it$, where $t\in \RR$. In order to determine the image of $\mathfrak{B}(p,q)$ by $\pi_{[p]}$, it is enough to determine the set of $s \in \RR$ such that a point of the form $[w_s] = \begin{bmatrix}
 sz_3 \\ 1 \\ z_3
\end{bmatrix}$ belongs to $\con{\mathfrak{B}}(p,q)$.
We compute then: $\langle p , w_s \rangle = 1$ and $\langle q , w_s \rangle = e^{i\theta} - z_3$. In order to have $[w_s] \in \mathfrak{B}(p,q)$, it is then necessary that $z_3$ could be written in the form $z_3(\phi) = e^{i\theta} + e^{i\phi}$, where $\phi \in \RR$.

 But $\langle w_s, w_s \rangle = 1 + 2s|z_3|^2$, and $0\leq |z_3|^2 \leq 4$, where the two equalities hold when $\phi = \pi+\theta$ and $\phi = \theta$ respectively. We deduce that there exists $\phi \in \RR$ such that $ 1 + 2s|z_3(\phi)|^2 \leq 0$ if and only if $s \leq -\frac{1}{8}$.
 The image of $\con{\mathfrak{B}}(p,q)$ by $\pi_{[p]}$ is hence equal to
 \[
 \left\{ s+it \mid (s,t)\in \RR^2 , s \leq -\frac{1}{8} \right\} \cup \{ \infty \}.
 \]
 It is hence a closed disk whose boundary contains $\infty$, which is the coordinate of $l_{[p],[q]}$.
\end{proof}

\subsection{Real visual diameter of a metric bisector}
 We are going to consider here the real visual diameter of a bisector of the form $\mathfrak{B}(p,q)$ seen from $[p]$. In order to control the intersections of certain bisectors it will be useful to understand this angular diameter. It will be of a crucial for the construction of Dirichlet domains in Part \ref{part_eff_defor}.

 \begin{prop}\label{prop_diam_ang_reel_bisector}
  Let $[p] , [q] \in \h2c$. Let $\theta_{\max}$ be the real angular diameter of $\mathfrak{B}(p,q)$ seen from $[p]$.  Then 
\begin{equation*} 
 \cos\left(\frac{\theta_{\max}}{2}\right) = \tanh \left(\frac{d([p],[q] )}{2}\right).
 \end{equation*}
 \end{prop}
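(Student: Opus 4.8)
The plan is to reduce the statement to an explicit computation in a convenient model, using the homogeneity of the situation. Since $\mathrm{PU}(2,1)$ acts transitively on pairs of points of $\h2c$ at a fixed distance, and since both sides of the claimed identity depend only on $d([p],[q])$, I would first normalize: place $[p]$ and $[q]$ in a standard position. The ball model is the natural choice here, since the visual sphere $L_{[p]}$ at a point of $\h2c$, and the notion of \emph{real} angular diameter (the diameter measured among $\mathbb{R}$-planes, i.e. among the real directions in the tangent space), are easiest to handle there. Concretely I would take $[p] = [0:0:1]$ and $[q] = [t:0:1]$ with $t = \tanh(d([p],[q])/2) \in [0,1)$, using the standard formula relating the Bergman distance to the Euclidean coordinate along a geodesic through the origin in the ball model.

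Next I would use the slice/meridional decomposition of the bisector $\mathfrak{B}(p,q)$ together with Proposition~\ref{prop_proj_biss_sphere_visuelle}: the projection $\pi_{[p]}(\mathfrak{S}(p,q))$ is a closed disk in $L_{[p]} \simeq \mathbb{CP}^1$ with ``centres'' $l_{[p],[q]}$ and $l_{[p],[p\boxtimes q]}$, and $\pi_{[p]}(\mathfrak{B}(p,q))$ is its interior. The real angular diameter seen from $[p]$ is the angular diameter, measured in the visual sphere, of this disk; equivalently it is twice the spherical radius of the disk. So the task becomes: compute the spherical (Fubini--Study) radius of the disk $\pi_{[p]}(\mathfrak{S}(p,q))$ in $L_{[p]}$. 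For this I would pick an orthonormal chart of $L_{[p]} = [p]^\perp$ adapted to the auto-polar frame $([p],[q],[p\boxtimes q])$ — by the remark after the visual-sphere coordinate proposition, in the chart $\psi_{q,\, p\boxtimes q}$ the line $l_{[p],[p\boxtimes q]}$ goes to $0$ and $l_{[p],[q]}$ goes to $\infty$ — and then explicitly parametrize the spinal surface. The spinal surface consists of the $[z]\in\dh2c$ with $|\langle z,p\rangle| = |\langle z,q\rangle|$; writing a boundary point $z$ as a combination and projecting to the chart coordinate, one gets the equation of a circle whose Euclidean radius and center I can read off directly in terms of $\langle p,p\rangle$, $\langle q,q\rangle$, $\langle p,q\rangle$ (equivalently in terms of $t$). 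Converting that Euclidean disk in the chart to its spherical radius via the standard Fubini--Study formula $\cos(\text{sph. dist from }0\text{ to }[w]) = \frac{1}{\sqrt{1+|w|^2}}$-type relation, and then taking $\theta_{\max}$ to be twice that radius, I expect to land exactly on $\cos(\theta_{\max}/2) = t = \tanh(d([p],[q])/2)$.

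An alternative, perhaps cleaner, route is purely synthetic: a tangent complex line from $[p]$ to $\mathfrak{B}(p,q)$ meets the bisector tangentially, so the corresponding slice is tangent to the spinal surface; the extreme real directions from $[p]$ are realized in the real plane (meridian) containing $[p]$, $[q]$ and the relevant tangency point, and inside that real plane — a copy of $\mathbb{H}^2_{\mathbb{R}}$ — the bisector restricts to the ordinary perpendicular bisector geodesic of $[p]$ and $[q]$. So $\theta_{\max}/2$ is the angle at $[p]$, in a real hyperbolic plane, between the geodesic $[p][q]$ and the geodesic ray from $[p]$ asymptotic to (i.e. tangent at infinity to) the perpendicular bisector of $[p]$ and $[q]$; that is, the angle of parallelism-type quantity, and a short computation in $\mathbb{H}^2_{\mathbb{R}}$ with $[p],[q]$ at distance $d$ gives $\cos(\theta_{\max}/2) = \tanh(d/2)$. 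I would probably present the computational version for rigor, but mention this geometric interpretation.

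The main obstacle I anticipate is pinning down precisely what ``real angular diameter seen from $[p]$'' means and showing the supremum over real directions is attained exactly at the configuration described — i.e. justifying that the extremal real tangent directions at $[p]$ to $\mathfrak{B}(p,q)$ lie in the meridian through $[p]$, $[q]$ and $[p\boxtimes q]$, so that the diameter of the disk $\pi_{[p]}(\mathfrak{S}(p,q))$ (a Fubini--Study disk, whose two ``centres'' are antipodal) is what measures it. Once that identification is secured, the remaining work is the elementary circle-radius computation in one affine chart of $\mathbb{CP}^1$ together with the conversion between the coordinate $t$ and $\tanh(d/2)$, both of which are routine.
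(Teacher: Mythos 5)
Your reduction has a genuine gap, and it sits exactly at the step you defer to the end as the "main obstacle". The \emph{real} angular diameter of $\mathfrak{B}(p,q)$ seen from $[p]$ is measured among unit real tangent vectors at $[p]$: for unit $u,v\in p^{\perp}$ the relevant angle is $\arccos \Re\langle u,v\rangle$, a quantity living on the unit sphere $S^3\subset T_{[p]}\h2c$. The visual sphere $L_{[p]}=\mathbb{P}(T_{[p]}\cp2)$ only remembers the complex line spanned by a direction, and its natural (Fubini--Study) distance sees $\arccos|\langle u,v\rangle|$; for instance $u$ and $iu$ are at real angle $\frac{\pi}{2}$ but project to the same point of $L_{[p]}$. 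So "twice the spherical radius of the disk $\pi_{[p]}(\mathfrak{S}(p,q))$" is not the quantity in the statement --- indeed the paper later uses the two notions for different purposes (the disks $D_k^{\pm}\subset L_{[p_U]}$ on one hand, real cones with vertex $[p_U]$ on the other), precisely because they are inequivalent. Your first route therefore computes a different invariant, no matter how carefully the chart computation is done.

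The synthetic variant has the same problem in sharper form: the claim that the extremal real directions are realized in a totally geodesic $2$-plane through $[p]$ and $[q]$ in which the bisector traces the perpendicular bisector is exactly what must be proved, and it is delicate. A meridian of $\mathfrak{B}(p,q)$ contains the real spine, not $[p]$; and the two kinds of totally geodesic planes through $[p]$ and $[q]$ (the complex spine and the $\RR$-planes) carry different constant curvatures once the normalization of $d$ is fixed, so the "short computation in $\mathbb{H}^2_{\RR}$" produces different constants depending on which plane is used ($\tanh(d/2)$ in the complex spine, $\tanh(d/4)$ in an $\RR$-plane) --- deciding which configuration is extremal is the whole content of the proposition. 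A concrete test shows the danger: in the ball model with $[p]$ at the origin, $[q]=[\sinh r:0:\cosh r]$ and $d([p],[q])=2r$, the point $z=[\tanh(r/2):\sqrt{1-\tanh^2(r/2)}:1]$ satisfies $\langle z,p\rangle=\langle z,q\rangle=-1$, hence lies on $\mathfrak{S}(p,q)$; since geodesics through the center of the ball are straight rays and the metric there is $\mathrm{U}(2)$-invariant, the geodesic from $[p]$ to this point makes angle $\arccos\tanh(r/2)$ with the geodesic $[p][q]$, which is strictly larger than $\arccos\tanh(r)$. So the supremum of the angle over $\con{\mathfrak{B}}(p,q)$ is \emph{not} attained at the asymptotic endpoints of the real spine, contrary to what both of your routes implicitly assume, and as sketched neither route establishes the stated identity. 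Any correct treatment must carry out the extremal analysis over the whole spinal sphere; this is what the paper does, parametrizing $\con{\mathfrak{B}}(p,q)$ by slices $[p+e^{i\phi}q+\lambda\, p\boxtimes q]$, normalizing lifts so that $t\mapsto[p+tz]$ is a geodesic, and reducing to the one-variable function $\phi\mapsto \frac{\sinh r(\cosh r+\cos\phi)}{1+2\cosh r\cos\phi+\cosh^2 r}$; the step you label routine is where all the work (and all the risk) lies.
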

 
  \begin{proof}
   We work in the ball model, and let $r = \frac{1}{2} d([p],[q])$. After translating by an element of $\su21$, we can suppose that
   \begin{equation*}
 \begin{array}{rclcrclrccl}   
    [p] &=& \begin{bmatrix}
   0 \\ 0 \\ 1
\end{bmatrix} 
&\text{ and }&
[q] &=& \begin{bmatrix}
   \sinh(r) \\ 0 \\ \cosh(r)
\end{bmatrix}. 
&\text{ In this case, } &
p \boxtimes q &=& \begin{pmatrix}
   0 \\ \sinh(r) \\ 0 \\
\end{pmatrix}.
\end{array}
  \end{equation*}
   
   Let $g_0$ be the real geodesic passing by $[p]$ and $[q]$. We search for a real geodesic $g$ passing by $[p]$ and a point $[z]$ of $\con{\mathfrak{B}}(p,q)$ forming a maximal angle with $g_0$. Hence we can suppose that $[z] \in \mathfrak{S}(p,q)$.
  The points of $\con{\mathfrak{B}}(p,q)$ can be written in the form $[p + e^{i\phi} q + \lambda p \boxtimes q]$, where $\phi \in \RR$ and $\lambda \in \CC$. The points with this form belong to $\dh2c$ if and only if
  $|\lambda|^2 = \frac{2}{\sinh^2(r)}(1 + \cos(\phi)\cosh(r))$.
  In particular, $\cos(\phi) \geq \frac{-1}{\cosh(r)}$. Hence the points $[z] \in \mathfrak{S}(p,q)$ can be written in the form:
  \begin{equation*}
   [z] = \begin{bmatrix}
   e^{i\phi}\sinh(r) \\
   e^{i\psi} \sqrt{2 + 2\cos(\phi)\cosh(r)} \\
   1 + e^{i\phi} \cosh(r)
   \end{bmatrix}
  \end{equation*}
   where $\psi \in \RR$.  
  We want to compute now the tangent vector to the real geodesic passing by $[p]$ and $[z]$. In order to do it, we parametrize the geodesic. Normalize first $p$ and $q$, to have $\langle p , q \rangle$ and $\langle z, p \rangle \in \RR$.
  From now on, we choose as lifts of $[p]$ and $[q]$ the vectors 
  \begin{equation*}
  \begin{array}{rclcrcl}
    p &=& \begin{pmatrix}
   0 \\ 0 \\ 1 + e^{i \phi}\cosh(r)
\end{pmatrix} 
&\text{ and }&
q &=& (1 + e^{i \phi}\cosh(r)) \begin{pmatrix}
   \sinh(r) \\ 0 \\ \cosh(r)
\end{pmatrix}.
\end{array}
  \end{equation*}
  
  With this normalization, the unit tangent vector to $g_0$ at $[p]$ is equal to:
  
\begin{equation*}  
    u_0 = \frac{1}{\sqrt{1  + 2\cos(\phi)\cosh(r)+ \cosh^2(r))}} \begin{pmatrix}
  1 + e^{i \phi}\cosh(r) \\ 0 \\ 0
\end{pmatrix} 
\end{equation*}
  
  Notice that, if $t \in \RR$, we have  $\langle p + tz , p + tz \rangle = -(2t+1)(1  + 2\cos(\theta)\cosh(r)+ \cosh^2(r))$.
   For $t\in \RR$, let  $v_t = \frac{1}{\sqrt{- \langle p + tz , p + tz \rangle}} (p + tz)$. It is a parametrization of the real geodesic, with normalized vectors of norm $-1$.
   We compute 
\begin{equation*}
u_1 = 
\frac{\partial v_t}{\partial t} \restriction_{t=0}
 = 
 \frac{1}{\sqrt{1  + 2\cos(\phi)\cosh(r)+ \cosh^2(r))}}
 \begin{pmatrix}
   \sinh(r)e^{i\phi} \\
    \sqrt{2(1 + \cos(\phi)\cosh(r))}e^{i \psi} \\ 
    0
\end{pmatrix} 
\end{equation*}

In this case, we have $\langle u_1 , u_1 \rangle = 1$.
If $\frac{\theta}{2}$ is the angle between $u_0$ and $u_1$, it satisfies
$\cos(\frac{\theta}{2}) = \Re(\langle u_0 , u_1 \rangle)$.
Now,
\begin{equation*}
\Re(\langle u_0 , u_1 \rangle)
=
\Re \left(\frac{1 + e^{i\phi}\cosh(r)}{|1 + e^{i\phi}\cosh(r)|^2} e^{-i\phi} \sinh(r) \right) \\
=
 \frac{\sinh(r)(\cosh(r) + \cos(\phi))}{1  + 2\cos(\phi)\cosh(r)+ \cosh^2(r)}
\end{equation*}

 This real part is minimal if $\cos(\phi)$ is minimal. Since $\cos(\phi) \geq \frac{-1}{\cosh(r)}$, we deduce that
 \begin{equation*}
\Re(\langle u_0 , u_1 \rangle)
\geq
 \frac{\sinh(r)(\cosh(r) - \frac{1}{\cosh(r)})}{ \cosh^2(r) -1}
 =
 \tanh(r)
\end{equation*}
 
 Remark that this bound is reached when $[z]$ is one of the two ends of the real spine of $\mathfrak{B}(p,q)$. The maximal angle between two real geodesics passing by $[p]$ and by points of $\con{\mathfrak{B}}(p,q)$ is hence the angle $\theta_{\max}$ between the geodesics passing by the ends of the real spine. Since $g_0$ is the bisector of these two real geodesics, we have
\begin{equation*}    
    \cos\left(\frac{\theta_{\max}}{2}\right) = \tanh \left(\frac{d([p],[q] )}{2}\right).
    \end{equation*}
  \end{proof}
 
 In a more precise way, we will use the following corollary, giving an upper bound for the angular diameter that is easy to compute. 
 
 \begin{cor}\label{cor_diam_ang_reel_bisector}
  Let $p , q \in \CC \sm0$ be two distinct points satisfying $\langle p , p \rangle = \langle q ,q \rangle < 0$. If $\frac{\langle p, q \rangle \langle q ,p \rangle}{\langle p , p \rangle \langle q , q \rangle} > 4$, then the angular diameter of  $\mathfrak{B}(p,q)$ seen from $[p]$ is $< \frac{\pi}{3}$.
 \end{cor}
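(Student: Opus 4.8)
The plan is to combine Proposition~\ref{prop_diam_ang_reel_bisector} with the standard formula for the complex hyperbolic distance and then reduce the statement to an elementary inequality between real numbers. First I would recall that for $[p],[q]\in\h2c$ and any lifts $p,q$ one has
\[
\cosh^2\!\left(\frac{d([p],[q])}{2}\right) = \frac{\langle p,q\rangle\langle q,p\rangle}{\langle p,p\rangle\langle q,q\rangle}
\]
(see Chapter~3 of \cite{goldman}). Note that the right-hand side is exactly the quantity $t := \dfrac{\langle p,q\rangle\langle q,p\rangle}{\langle p,p\rangle\langle q,q\rangle}$ appearing in the hypothesis; since $\langle p,p\rangle=\langle q,q\rangle<0$, it is a well-defined finite real number, and it is $>1$ because $[p]\neq[q]$. (Incidentally this also confirms, via Proposition~\ref{lox_spinal_spheres} with $r=(1-t)\langle p,p\rangle\langle q,q\rangle<0$, that $\mathfrak{B}(p,q)$ is a genuine metric bisector, so that its angular diameter is meaningful.)

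Next I would set $r=\tfrac12 d([p],[q])$ and use the identity $\tanh^2 r = 1-\cosh^{-2} r = 1-\tfrac1t$. By Proposition~\ref{prop_diam_ang_reel_bisector}, $\cos(\theta_{\max}/2)=\tanh r\ge 0$, hence
\[
\cos^2\!\left(\frac{\theta_{\max}}{2}\right) = 1-\frac1t .
\]
The hypothesis $t>4$ then gives $\cos^2(\theta_{\max}/2) > \tfrac34$, so $\cos(\theta_{\max}/2) > \tfrac{\sqrt3}{2} = \cos(\pi/6)$. Since $\theta_{\max}/2\in[0,\pi/2]$ and $\cos$ is strictly decreasing there, this yields $\theta_{\max}/2 < \pi/6$, i.e. $\theta_{\max} < \pi/3$.

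There is essentially no obstacle here: the only external ingredient is the distance formula above, and the rest is monotonicity of $\cosh$, $\tanh$ and $\cos$ on the relevant ranges. The one point worth stating explicitly is that all inequalities remain strict, which is why the hypothesis is a strict inequality $t>4$; one should also note that $t$ is finite and $\theta_{\max}$ is well defined, both of which follow from $[p]\neq[q]$ in $\h2c$ together with Proposition~\ref{prop_diam_ang_reel_bisector}.
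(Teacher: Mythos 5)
Your argument is correct and follows essentially the same route as the paper: identify $\frac{\langle p,q\rangle\langle q,p\rangle}{\langle p,p\rangle\langle q,q\rangle}$ with $\cosh^2(r)$ for $r=\frac12 d([p],[q])$, deduce $\tanh^2(r)>\frac34$, and apply Proposition \ref{prop_diam_ang_reel_bisector} together with monotonicity of $\cos$ to get $\theta_{\max}<\frac{\pi}{3}$. Your conclusion is even stated correctly, whereas the paper's last line mistakenly writes $\theta_{\max}<\frac{\pi}{6}$ instead of $\theta_{\max}<\frac{\pi}{3}$.
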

 
 \begin{proof}
  If $r = \frac{d([p],[q])}{2}$, we have $\frac{\langle p, q \rangle \langle q ,p \rangle}{\langle p , p \rangle \langle q , q \rangle} = \cosh^2(r)$. If $\cosh^2(r) > 4$, then $\tanh^2(r) = 1 - \frac{1}{\cosh^2(r)} > \frac{3}{4}$. Denoting by $\theta_{\max}$ the real angular diameter of $\mathfrak{B}(p,q)$ seen from $[p]$, we have $\cos(\frac{\theta_{\max}}{2}) > \frac{\sqrt{3}}{2}$, and hence $\theta_{\max} < \frac{\pi}{6}$.
 \end{proof}

\subsection{Pairs of extors}
 We are going to consider some intersections of bisectors in order to study some Ford domains and their deformations. We consider here, in the same way that Goldman does in Chapter 8 of his book \cite{goldman}, the intersections of the corresponding extors before the intersections of bisectors. We begin by classifying the pairs of extors.

 \begin{defn}
 Let $\mathfrak{E}_1$ and $\mathfrak{E}_2$ be two extors with respective foci $[f_1]$ and $[f_2]$. We say that the pair $(\mathfrak{E}_1,\mathfrak{E}_2)$ is:
 \begin{itemize}
  \item \emph{Confolcal} if $[f_1] = [f_2]$.
  \item \emph{Balanced} if $[f_1] \neq [f_2]$ and $l_{[f_1],[f_2]} \subset \mathfrak{E}_1 \cap \mathfrak{E}_2 $.
  \item \emph{Semi-balanced} if $[f_1] \neq [f_2]$ and $l_{[f_1],[f_2]}$ is contained in exactly one of the two extors.
  \item \emph{Unbalanced} if $[f_1] \neq [f_2]$ and $l_{[f_1],[f_2]}$ is not contained in any of the two extors.
 \end{itemize}
 \end{defn}
 
 \begin{defn}
 We say that a pair of extors $(\mathfrak{E}_1,\mathfrak{E}_2)$ is \emph{coequidistant} if there exist distinct points $p,q,r \in \mathbb{C}^3 \setminus \{0\}$ such that $\mathfrak{E}_1 = \mathfrak{E}(p,q)$ and $\mathfrak{E}_2 = \mathfrak{E}(p,r)$.
 \end{defn}

 \begin{rem}
  A coequidistant pair of extors is either confocal or unbalanced.
 \end{rem}
 \begin{proof}
 Let $p,q,r \in \mathbb{C}^3 \setminus \{0\}$ be three distinct points. Let $\mathfrak{E}_1 = \mathfrak{E}(p,q)$ and $\mathfrak{E}_2 = \mathfrak{E}(p,r)$. If $[p],[q]$ and $[r]$ are collinear, then $\mathfrak{E}_1$ and $\mathfrak{E}_2$ have the same complex spine and hence the same focus. If not, let $[f_1] = [p \boxtimes q]$ and $[f_2]=[p \boxtimes r]$ be the foci of $\mathfrak{E}_1$ and $\mathfrak{E}_2$ respectively. On the one hand; it is trivial that $\langle f_1 , p \rangle = 0$, and on the other hand $\langle f_1 , r \rangle = \langle p \boxtimes q , r \rangle \neq 0$ since $p,q$ and $r$ are not collinear. We deduce that $[f_1] \notin \mathfrak{E}_2$. In the same way, $[f_2] \notin \mathfrak{E}_1$, which concludes the proof.
 \end{proof}

 When considering Ford domains and their deformations, we will only work with coequidistant bisectors with respect to normalized lifts.

 \begin{rem}
  An unbalanced pair of bisectors is coequidistant.
 \end{rem}
 \begin{proof}
  Let $(\mathfrak{E}_1,\mathfrak{E}_2)$ an unbalanced pair of extors. Let $[f_1]$ be the focus of $\mathfrak{E}_1$ and $[f_2]$ be the focus of $\mathfrak{E}_2$. Since $[f_1]$ and $[f_2]$ are distinct, then $[f_1]^{\perp} \cap [f_2]^{\perp}$ is reduced to a point in $\cp2$. Let $[p]$ be this point and $p \in \mathbb{C}^{3} \setminus \{0\}$ a lift. By Proposition \ref{prop_extor_equid}, there exist $q,r \in \mathbb{C}^{3} \setminus \{0\}$ such that $\mathfrak{E}_1 = \mathfrak{E}(p,q)$ and $\mathfrak{E}_2 = \mathfrak{E}(p,r)$.
 \end{proof}

 \begin{prop}
  Let $(\mathfrak{E}_1,\mathfrak{E}_2)$ be a confocal pair of distinct extors with focus $[f]$. Then $\mathfrak{E}_1 \cap \mathfrak{E}_2$ is either $\{[f]\}$, or a complex line passing by  $[f]$, or two complex lines passing by $[f]$. 
 \end{prop}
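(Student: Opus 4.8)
The plan is to reduce everything to linear algebra on the two-dimensional complex line $\Sigma = [f]^{\perp}$, where the two extors cut out their real spines. Since $(\mathfrak{E}_1,\mathfrak{E}_2)$ is confocal, both extors share the complex spine $\Sigma$, and by the slice decomposition each $\mathfrak{E}_i$ is the union of the complex lines through $[f]$ meeting $\Sigma$ at the points of its real spine $\sigma_i \subset \Sigma$. Thus a point $[z]\notin\{[f]\}$ lies in $\mathfrak{E}_1\cap\mathfrak{E}_2$ if and only if the line $l_{[f],[z]}$ meets both $\sigma_1$ and $\sigma_2$; equivalently, projecting from $[f]$, we have $\pi_{[f]}(\mathfrak{E}_1\cap\mathfrak{E}_2)\setminus\{[f]\}$ in bijection with $\sigma_1\cap\sigma_2$ inside $\Sigma\simeq\mathbb{CP}^1$. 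So the whole question becomes: what can the intersection of two distinct real circles in $\mathbb{CP}^1$ be?

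The key step is therefore the elementary planar fact that two distinct circles (in the generalized sense, circles-or-lines, which is what the real spines are in an affine chart of $\Sigma$) in $\mathbb{CP}^1$ meet in $\emptyset$, one point, or two points. I would phrase it via a chart: pick an affine coordinate on $\Sigma$ in which $\sigma_1$ is, say, the unit circle or a line; then $\sigma_2$ is described by an equation of the form $a|\lambda|^2 + 2\Re(\bar b\lambda) + c = 0$ with $(a,b,c)$ real and not all zero, and subtracting the two equations for $\sigma_1$ and $\sigma_2$ gives either an inconsistent system (empty intersection), a single linear equation in $\Re\lambda,\Im\lambda$ intersected with a circle (zero, one, or two points), or — if one of $\sigma_1,\sigma_2$ degenerates — the analogous line-circle or line-line analysis, always yielding at most two points. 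Feeding this back: if $\sigma_1\cap\sigma_2=\emptyset$ then $\mathfrak{E}_1\cap\mathfrak{E}_2$ is contained in... nothing through $[f]$, but one must not forget $[f]$ itself always lies in both extors, so the intersection is exactly $\{[f]\}$; if $\sigma_1\cap\sigma_2$ is one point $[s]$, then $\mathfrak{E}_1\cap\mathfrak{E}_2 = l_{[f],[s]}$, a single complex line through $[f]$; and if it is two points $[s_1],[s_2]$, then $\mathfrak{E}_1\cap\mathfrak{E}_2 = l_{[f],[s_1]}\cup l_{[f],[s_2]}$, two complex lines through $[f]$.

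One subtlety to address cleanly is that the real spine of a general extor (in the sense of Definition \ref{defn_extor}) is an arbitrary real circle in $\Sigma$, so I should not invoke Lemma \ref{lemme_echine_orthogonale} here, which only constrains the spines of the equidistant extors $\mathfrak{E}(p,q)$; the argument above works for all real circles and all confocal pairs. A second point is to verify that $\sigma_i$ is genuinely a \emph{circle} (not a single point or all of $\mathbb{CP}^1$): this is part of the definition of an extor, which requires $C$ to be a real circle in $L_{[f]}$, so $\sigma_i$ is a circle by the identification $L_{[f]}\simeq\Sigma$. With these two remarks in place, the proof is just the chart computation plus the bijection via $\pi_{[f]}$, and nothing is really an obstacle — the only mild care needed is the bookkeeping that $[f]$ always belongs to the intersection, so the ``empty'' case of $\sigma_1\cap\sigma_2$ produces $\{[f]\}$ rather than $\emptyset$, and that in the one- and two-point cases the lines $l_{[f],[s_j]}$ automatically pass through $[f]$ and already contain it.
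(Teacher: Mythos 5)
Your proof is correct and follows essentially the same route as the paper, whose entire argument is that the two extors are given by two real circles $C_1, C_2$ in the visual sphere $L_{[f]}\simeq\mathbb{CP}^1$, which, being distinct, meet in zero, one, or two points, yielding respectively $\{[f]\}$, one complex line, or two complex lines through $[f]$; your affine-chart computation merely spells out that elementary circle-intersection fact. The one caveat is your reduction via the complex spine: the criterion ``$[z]\in\mathfrak{E}_i$ iff $l_{[f],[z]}$ meets $\sigma_i\subset\Sigma$'' rests on the second clause of the slice decomposition, which the paper states only for $[f]\notin\dh2c$, and indeed when the focus lies on $\dh2c$ (as for fans) one has $[f]\in\Sigma=[f]^{\perp}$ and that geometric criterion degenerates -- so you should run the same argument directly with the circles $C_1,C_2$ in $L_{[f]}$ (as the paper does, and as your projection $\pi_{[f]}$ already allows), after which every step goes through verbatim for an arbitrary focus.
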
 
 \begin{proof}
  The extors $\mathfrak{E}_1$ and $\mathfrak{E}_2$ are given by two real circles $C_1$ and $C_2$ of $L_{[f]}$. The intersection is hence reduced to $\{[f]\}$ if $C_1 \cap C_2 = \emptyset$, it is a complex line if $C_1$ and $C_2$ are tangent, and it is formed by two complex lines if $C_1$ and $C_2$ intersect at two points.
\end{proof}  

 The two following results describe the intersection of a balanced or semi-balanced pair of extors. See Chapter 8 of \cite{goldman} for detailed proofs. 
 
  \begin{thm}(Theorem 8.3.1 of \cite{goldman})\label{thm_inter_extors_equilibres}
  Let $(\mathfrak{E}_1, \mathfrak{E}_2)$ be a balanced pair of extors. Then there exist a complex line $l$ and a real plane $P$ of $\cp2$ such that $\mathfrak{E}_1 \cap \mathfrak{E}_2 =  P \cup l$.
 \end{thm}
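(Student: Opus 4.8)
The plan is to replace each extor by the Hermitian form whose projectivised null cone it is, and then use the balanced hypothesis to put the pair of forms in an adapted basis. Since every extor has the shape $\mathfrak{E}(p,q)$ (the corollary following Proposition~\ref{prop_extor_equid}), write $\mathfrak{E}_i = \mathfrak{E}(p_i,q_i)$ with $p_i,q_i\in\CC^3\setminus\{0\}$ non-collinear. The quantity $|\langle z,p_i\rangle|^2-|\langle z,q_i\rangle|^2$ is a Hermitian form in $z$, so $\mathfrak{E}_i = \{[z]\in\cp2 \mid z^{*}M_iz = 0\}$ for a Hermitian matrix $M_i$ which, because $p_i$ and $q_i$ are independent, has rank $2$ and signature $(1,1)$; a short computation identifies $\ker M_i$ with the line $\CC\,(p_i\boxtimes q_i)$, i.e.\ with the focus $[f_i]$.

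Next I would exploit that the pair is balanced. The line $l:=l_{[f_1],[f_2]}$ lies in $\mathfrak{E}_1\cap\mathfrak{E}_2$, so $z^{*}M_1z$ and $z^{*}M_2z$ vanish on the complex $2$-plane $F:=\langle f_1,f_2\rangle$; polarising, $F$ is totally isotropic for both $M_1$ and $M_2$. Choose $n$ so that $(f_1,f_2,n)$ is a basis of $\CC^3$. The conditions ``$f_1\in\ker M_1$'', ``$F$ totally isotropic for $M_1$'' and ``$\mathrm{rank}\,M_1=2$'' then force the matrix of $M_1$ in this basis to have vanishing first row and column, vanishing $(2,2)$-entry and nonzero $(2,3)$-entry; symmetrically the matrix of $M_2$ has vanishing second row and column, vanishing $(1,1)$-entry and nonzero $(1,3)$-entry.

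Writing $z = xf_1+yf_2+wn$, a direct computation turns the two defining equations into $2\Re(b_1\bar y w)+c_1|w|^2=0$ and $2\Re(b_2\bar x w)+c_2|w|^2=0$ with $b_1,b_2\in\CC\setminus\{0\}$ and $c_1,c_2\in\RR$. The locus $w=0$ satisfies both equations and is exactly $l$. On the affine chart $w=1$ each equation is a non-degenerate real-affine condition, cutting a real line in the $x$-plane and a real line in the $y$-plane, so the solution set there is a real affine $2$-plane; writing an affine parametrisation $v_0+sv_1+tv_2$ one checks, by inspecting the $n$-coordinate, that $v_0,v_1,v_2$ span a totally real $3$-dimensional subspace of $\CC^3$. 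Hence the closure of this real $2$-plane in $\cp2$ is a genuine real plane $P$, meeting $l$ along an $\RR$-circle, and one concludes $\mathfrak{E}_1\cap\mathfrak{E}_2=P\cup l$.

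The step I expect to be the main obstacle is this last verification: showing that the real two-dimensional locus coming from $w=1$ is an $\RR$-plane of $\cp2$ in the precise sense of the paper (the fixed locus of an antiholomorphic involution), and not merely some real-analytic surface. The adapted basis $(f_1,f_2,n)$ is exactly what makes the total-reality check transparent; without such a normalisation the computation becomes unpleasant. A minor secondary point is to confirm that taking closures adds only points of $l$, so that no spurious component appears, which is again immediate in these coordinates.
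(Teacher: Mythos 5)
Your argument is correct, but note that the paper itself does not prove this statement: it is quoted as Theorem 8.3.1 of Goldman's book, with the reader referred to Chapter 8 of \cite{goldman} for the proof, so there is no in-paper argument to compare against. Your pencil-of-Hermitian-forms proof is a legitimate self-contained route. Writing $\mathfrak{E}_i$ as the projective null set of the Hermitian form $z \mapsto |\langle z,p_i\rangle|^2 - |\langle z,q_i\rangle|^2$ is sound: its kernel is exactly $\CC(p_i\boxtimes q_i)$, i.e. the focus, and the rank is $2$, which is what forces the entries you call $b_1,b_2$ to be nonzero in the adapted basis $(f_1,f_2,n)$ (for $M_1$ only $f_1\in\ker M_1$ and the vanishing of the quadratic form at $f_2$ are actually needed; the full isotropy of $\mathrm{span}(f_1,f_2)$ is automatic from these). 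In the chart $w=1$ the two conditions $2\Re(b_1\bar y)+c_1=0$ and $2\Re(b_2\bar x)+c_2=0$ indeed cut a product of two real affine lines, and the point you flag as the main obstacle goes through: the three vectors $v_0,v_1,v_2$ spanning it over $\RR$ are $\CC$-linearly independent (their coordinates in the basis $(f_1,f_2,n)$ make this visible), so their real span $V$ satisfies $V\cap iV=\{0\}$, its projectivization is the fixed set of the antiholomorphic involution of conjugation with respect to $V$, hence a genuine real plane $P$ of $\cp2$; the closure only adds the real projective line $\{[bv_1+cv_2]\}$, which lies in $l=\{w=0\}$, so no spurious component appears and $\mathfrak{E}_1\cap\mathfrak{E}_2 = P\cup l$ as claimed. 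Compared with Goldman's treatment, which develops the geometry of extors and their decompositions at length, your computation is shorter and more linear-algebraic, at the cost of giving less geometric information about how $P$ and $l$ sit inside the two extors.
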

 
 \begin{prop}(Section 8.3.3 of \cite{goldman})
  Let $(\mathfrak{E}_1, \mathfrak{E}_2)$ be a semi-balanced pair of extors, where $\mathfrak{E}_1$ contains $[f_2]$ and $\mathfrak{E}_2$ does not contain $[f_1]$. Then $\mathfrak{E}_1 \cap \mathfrak{E}_2 \setminus \{[f_2]\}$ is a real cylinder in $\cp2$, that can be compactified by adding $[f_2]$.
 \end{prop}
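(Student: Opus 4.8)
The plan is to follow Goldman's argument (Chapter 8 of \cite{goldman}) using the slice decomposition, organising $\mathfrak{E}_1\cap\mathfrak{E}_2$ as a family of plane curves. Write $\mathfrak{E}_1 = \mathfrak{E}(p_1,q_1)$ with focus $[f_1] = [p_1 \boxtimes q_1]$ and $\mathfrak{E}_2 = \mathfrak{E}(p_2,q_2)$ with focus $[f_2] = [p_2 \boxtimes q_2]$. By the slice decomposition, $\mathfrak{E}_1 \setminus \{[f_1]\}$ is foliated by the complex lines $m_\theta$, with $\theta$ ranging over the real spine $\sigma_1 \simeq S^1$, each $m_\theta$ passing through $[f_1]$; and among them there is exactly one, call it $m_{\theta_0} = l_{[f_1],[f_2]}$, passing through $[f_2]$ (here we use $[f_2] \in \mathfrak{E}_1$, so $[f_2]$ lies on a unique slice). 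Since $[f_1] \notin \mathfrak{E}_2$, the focus $[f_1]$ is not in $\mathfrak{E}_1 \cap \mathfrak{E}_2$, and as distinct slices of $\mathfrak{E}_1$ meet only at $[f_1]$ we obtain a disjoint decomposition
\[ \mathfrak{E}_1 \cap \mathfrak{E}_2 \;=\; \bigsqcup_{\theta \in \sigma_1}\bigl(m_\theta \cap \mathfrak{E}_2\bigr). \]

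The key computation is to identify $m_\theta \cap \mathfrak{E}_2$. Restricted to the projective line $m_\theta \simeq \ccp1$, the functionals $z \mapsto \langle z , p_2\rangle$ and $z \mapsto \langle z, q_2 \rangle$ vanish at $[a_\theta] = [p_2]^{\perp} \cap m_\theta$ and $[b_\theta] = [q_2]^{\perp} \cap m_\theta$, so in a suitable affine coordinate $m_\theta \cap \mathfrak{E}_2$ is the locus $|w-a| = k|w-b|$ for some $k>0$, which is a circle of $\ccp1$ whenever $[a_\theta]\neq[b_\theta]$. Now $[a_\theta] = [b_\theta]$ forces $m_\theta$ to pass through $[p_2]^{\perp}\cap[q_2]^{\perp} = [f_2]$, i.e. $\theta = \theta_0$; and for $\theta = \theta_0$ the semi-balanced hypothesis $m_{\theta_0}\not\subset\mathfrak{E}_2$ makes the constant $k$ differ from $1$, so $m_{\theta_0}\cap\mathfrak{E}_2$ collapses to the single point $[f_2]$. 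Hence for every $\theta \neq \theta_0$ the set $m_\theta \cap \mathfrak{E}_2$ is a genuine circle, contained in $m_\theta\setminus\{[f_1]\}$ and avoiding $[f_2]$; these circles vary real-analytically with $\theta$, and as $\theta \to \theta_0$ the points $[a_\theta],[b_\theta]$ converge to $[f_2]$ while $k$ stays bounded away from $1$, so the circles shrink to $[f_2]$.

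Assembling, $\mathfrak{E}_1 \cap \mathfrak{E}_2 \setminus \{[f_2]\} = \bigsqcup_{\theta \in \sigma_1 \setminus \{\theta_0\}}(m_\theta \cap \mathfrak{E}_2)$ is a real-analytically varying family of pairwise disjoint circles over the base $\sigma_1 \setminus \{\theta_0\} \simeq \RR$; being a locally trivial circle bundle over a contractible base it is trivial, hence an open cylinder $S^1 \times \RR$. Finally $\mathfrak{E}_1 \cap \mathfrak{E}_2$ is a closed, hence compact, subset of $\cp2$, and the shrinking of the circles to $[f_2]$ shows that adjoining the single point $[f_2]$ compactifies this cylinder, which is the assertion. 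The delicate steps — and the place where all three hypotheses enter — are the dichotomy for $m_\theta\cap\mathfrak{E}_2$ (a non-degenerate circle for $\theta\neq\theta_0$, collapsing to exactly $[f_2]$ at $\theta_0$), which uses $[f_2]\in\mathfrak{E}_1$, $[f_1]\notin\mathfrak{E}_2$ and $m_{\theta_0}\not\subset\mathfrak{E}_2$; and the global bookkeeping, namely local triviality of the circle family (so as to exclude an open Möbius band) together with the verification that $[f_2]$ is a one-point compactification, which rests on the analyticity of the slice decomposition and the triviality of fibre bundles over an interval.
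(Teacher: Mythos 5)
Your argument is correct, and there is nothing in the paper to measure it against: the proposition is stated with a citation to Section 8.3.3 of Goldman's book and the paper explicitly refers the reader to Chapter 8 of \cite{goldman} for the proof, so no proof is given here. Your route — decomposing $\mathfrak{E}_1\setminus\{[f_1]\}$ into its slices, observing that $[f_1]\notin\mathfrak{E}_2$ makes $\mathfrak{E}_1\cap\mathfrak{E}_2$ a disjoint union of the sets $m_\theta\cap\mathfrak{E}_2$, and identifying each such set as a circle except over the slice $l_{[f_1],[f_2]}$, where it degenerates to $\{[f_2]\}$ — is the natural one, and it is the same mechanism the paper uses for the unbalanced case (Proposition \ref{prop_param_inter_extors_deseq}, where both slice decompositions are played against each other); so your proof fits the paper's framework and is consistent with Goldman's analysis. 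Two small points of hygiene. First, your Apollonius bookkeeping tacitly assumes $l_{[f_1],[f_2]}$ is neither $[p_2]^{\perp}$ nor $[q_2]^{\perp}$; in that degenerate case the points $[a_\theta],[b_\theta]$ need not converge to $[f_2]$ and the ratio $k$ loses meaning at $\theta_0$, although the fiber over $\theta_0$ is still exactly $\{[f_2]\}$. The cleanest fix is to replace the shrinking estimate by compactness: any accumulation point of points of $m_{\theta}\cap\mathfrak{E}_2$ with $\theta\to\theta_0$ lies in $m_{\theta_0}\cap\mathfrak{E}_2=\{[f_2]\}$, so the circles converge to $[f_2]$ and the closure of the cylinder in $\cp2$ is the cylinder together with $[f_2]$ (alternatively, choose the pair $(p_2,q_2)$ so that neither polar line equals $l_{[f_1],[f_2]}$, as Proposition \ref{prop_extor_equid} allows). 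Second, the remark about excluding a Möbius band is not quite the right worry: your fibration has fiber $S^1$ over the contractible base $\sigma_1\setminus\{\theta_0\}\simeq\RR$, and any such locally trivial bundle is trivial, so once local triviality of the family of circles is granted the cylinder conclusion is automatic.
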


 The following proposition describes the intersection in $\cp2$ of two extors of an unbalanced pair. Goldman calls this intersection a "Clifford torus", but we chose to keep this term for the boundary at infinity of a Clifford cone.
 
 \begin{prop}\label{prop_param_inter_extors_deseq}
 Let $(\mathfrak{E}_1,\mathfrak{E}_2)$ be an unbalanced pair of extors. Then $\mathfrak{E}_1 \cap \mathfrak{E}_2$ is a torus in $\cp2$. If $\mathfrak{E}_1 = \mathfrak{E}(p,q)$ and $\mathfrak{E}_2 = \mathfrak{E}(p,r)$, then the intersection is parametrized by $[(q-\alpha p)\boxtimes (r- \beta p)]$ where $(\alpha,\beta) \in S^1 \times S^1$.
 \end{prop}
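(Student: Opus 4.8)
The plan is to combine the slice description of $\mathfrak{E}_1$ and $\mathfrak{E}_2$ with the fact that the pair is unbalanced, and then read off the parametrization directly from Proposition \ref{prop_param_un_extor}. First I would write, using Proposition \ref{prop_param_un_extor}, the two extors as unions of complex lines,
\[
\mathfrak{E}_1 = \mathfrak{E}(p,q) = \bigcup_{\alpha \in S^1} [q - \alpha p]^{\perp}, \qquad \mathfrak{E}_2 = \mathfrak{E}(p,r) = \bigcup_{\beta \in S^1} [r - \beta p]^{\perp}.
\]
A point $[z] \in \cp2$ lies in $\mathfrak{E}_1 \cap \mathfrak{E}_2$ if and only if there exist $\alpha, \beta \in S^1$ with $\langle z , q - \alpha p \rangle = 0$ and $\langle z, r - \beta p \rangle = 0$, i.e.\ $[z]$ lies on the complex line $[q-\alpha p]^\perp$ of the first slice decomposition and simultaneously on the complex line $[r - \beta p]^\perp$ of the second.

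Next I would argue that for each fixed $(\alpha, \beta)$ these two complex lines are distinct, so that their intersection is a single point. If they coincided we would have $[q - \alpha p] = [r - \beta p]$, hence $[p]$, $[q]$, $[r]$ collinear; but then $\mathfrak{E}_1$ and $\mathfrak{E}_2$ would share a complex spine $l_{[p],[q]} = l_{[p],[r]}$ and therefore a common focus, contradicting that the pair is unbalanced (in fact contradicting $[f_1] \neq [f_2]$). So the two lines meet in exactly one point, which is the pole of the line through $[q - \alpha p]$ and $[r - \beta p]$, namely $[(q - \alpha p) \boxtimes (r - \beta p)]$ by the properties of the box product recalled in the excerpt; this point is well defined since the two vectors are not collinear. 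This gives a surjection $S^1 \times S^1 \to \mathfrak{E}_1 \cap \mathfrak{E}_2$, $(\alpha, \beta) \mapsto [(q-\alpha p)\boxtimes(r - \beta p)]$.

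It then remains to check that this map is injective and a homeomorphism onto its image, so that the intersection is genuinely a torus and not some quotient of one. Injectivity: if $[(q - \alpha p)\boxtimes (r - \beta p)] = [(q - \alpha' p)\boxtimes(r - \beta' p)]$, then the corresponding polar lines agree, so $\{[q - \alpha p]^\perp, [r - \beta p]^\perp\}$ and $\{[q-\alpha' p]^\perp, [r-\beta' p]^\perp\}$ are the two slices through that common point; since the slices of $\mathfrak{E}_1$ through a given point are unique (slice decomposition), and likewise for $\mathfrak{E}_2$, we get $\alpha = \alpha'$ and $\beta = \beta'$. Continuity in both directions is clear since $\boxtimes$ is polynomial and the construction is projective; $S^1 \times S^1$ is compact, so the map is a homeomorphism onto its image. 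The main obstacle I anticipate is precisely this last bookkeeping step — making sure that distinct $(\alpha,\beta)$ give distinct points, which is where the unbalanced hypothesis is used a second time via the uniqueness of slices through a point, rather than any hard computation. One should also note in passing that $\mathfrak{E}_1 \cap \mathfrak{E}_2$ contains no complex line (again because a common slice would force a common focus), which rules out the degenerate possibilities and confirms the intersection is exactly a $2$-torus.
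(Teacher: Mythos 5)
Your proposal is correct and follows essentially the same route as the paper: both decompose the two extors into their slices, use the unbalanced hypothesis to see that each slice of $\mathfrak{E}_1$ meets each slice of $\mathfrak{E}_2$ in exactly one point, and identify that point as $[(q-\alpha p)\boxtimes (r-\beta p)]$ via Proposition \ref{prop_param_un_extor}. The only difference is that you also spell out the injectivity and homeomorphism bookkeeping (uniqueness of slices through a point, compactness), which the paper's proof leaves implicit.
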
 
 \begin{proof}
  Let $[f_1]$ and $[f_2]$ be the respective foci of $\mathfrak{E}_1$ and $\mathfrak{E}_2$.
  We know that $\mathfrak{E}_1$ can be written as a union of complex lines passing by $[f_1]$ and parametrized by $S^1$. We can therefore write:
  \begin{equation*}
   \mathfrak{E}_1 = \bigcup_{\alpha \in S^1} l_\alpha \text{ and, in the same way, } \mathfrak{E}_2 = \bigcup_{\beta \in S^1} l'_\beta.
  \end{equation*}
Since the pair of extors is unbalanced, we know that each $l_\alpha$ intersects each $l_{\beta}'$ exactly at one point. Hence, we have:
 \begin{equation*}
  \mathfrak{E}_1 \cap \mathfrak{E}_2 = \bigcup_{\alpha \in S^1}\bigcup_{\beta \in S^1} l_{\alpha} \cap l'_\beta ,
 \end{equation*}
 which is a torus. If $\mathfrak{E}_1 = \mathfrak{E}(p,q)$ and $\mathfrak{E}_2 = \mathfrak{E}(p,r)$, then, by Proposition \ref{prop_param_un_extor}, the complex lines $l_{\alpha}$ can be written as $[q-\alpha p]^{\perp}$ and the lines $l'_{\beta}$ as $[r-\beta p]^{\perp}$. In this case, $l_{\alpha} \cap l'_{\beta} = [q-\alpha p]^{\perp} \cap [r-\beta p]^{\perp} = \{[(q-\alpha p)\boxtimes (r- \beta p)]\}$.
 \end{proof}
 
\subsection{Pairs of coequidistant bisectors} \label{subsect_pairs_coeq_biss}

 From now on, we will consider unbalanced pairs of bisectors defined by normalized lifts, and we will be interested by their intersections.

 \begin{lemme}[Theorem 9.1.2 of \cite{goldman}]
  Let $\mathfrak{E}_1$ and $\mathfrak{E}_2$ be two extors with foci $[f_1]$ and $[f_2]$ respectively. Assume that they intersect at a point $[x_0] \neq [f_1], [f_2]$. Then, either the intersection $\mathfrak{E}_1 \cap \mathfrak{E}_2$ is transverse at $[x_0]$, or $\mathfrak{E}_1$ and $\mathfrak{E}_2$ have a common slice passing by $[x_0]$.
 \end{lemme}
 \begin{proof}
  The extor $\mathfrak{E}_1$ is a smooth real sub-manifold of $\cp2$ besides its focus. Its tangent space at $[x_0]$ is a real space of dimension 3. Hence it admits a maximal holomorphic subspace of complex dimension 1. This subspace is the tangent space to the line $l_{[f_1],[x_0]}$. It is the same for $\mathfrak{E_2}$. We deduce that either the intersection $\mathfrak{E}_1 \cap \mathfrak{E}_2$ is transverse at $[x_0]$, or $l_{[f_1],[x_0]} = l_{[f_2],[x_0]}$ and $\mathfrak{E}_1$ and $\mathfrak{E}_2$ have a common slice passing by $[x_0]$.
 \end{proof}

 \subsubsection{Goldman intersections}
In Chapter 9 of \cite{goldman}, Goldman considers pairs of bisectors coequidistant from points of $\h2c$ or $\dh2c$, and shows that their intersection is connected and a topological disk. We recall here some of the results, obtained by a study of tangencies of spinal spheres.

\begin{prop}[Lemma 9.1.5 of \cite{goldman}]
 Let $\mathfrak{B}_1$ and $\mathfrak{B}_2$ be two metric bisectors, with boundaries at infinity $\mathfrak{S}_1$ and $\mathfrak{S}_2$ respectively. Then, each connected component of $\mathfrak{S}_1 \cap \mathfrak{S}_2$ is a point or a circle, and each connected component of $\mathfrak{B}_1 \cap \mathfrak{B}_2$ is a disk.
\end{prop}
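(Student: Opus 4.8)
The statement to prove is: for two metric bisectors $\mathfrak{B}_1$, $\mathfrak{B}_2$ with spinal spheres $\mathfrak{S}_1$, $\mathfrak{S}_2$, each connected component of $\mathfrak{S}_1 \cap \mathfrak{S}_2$ is a point or a circle, and each connected component of $\mathfrak{B}_1 \cap \mathfrak{B}_2$ is a disk. Since this is stated as Lemma 9.1.5 of \cite{goldman}, the natural plan is to reduce to Goldman's analysis; but let me sketch a self-contained route that the paper's own machinery already makes available. The first step is to pass to the associated extors $\mathfrak{E}_1 \supset \mathfrak{B}_1$ and $\mathfrak{E}_2 \supset \mathfrak{B}_2$ and classify the pair. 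Using the box-product description, write $\mathfrak{B}_i$ via normalized lifts; then $\mathfrak{E}_1 \cap \mathfrak{E}_2$ falls into the confocal, balanced, semi-balanced, or unbalanced case. I would first dispose of the degenerate cases (confocal: the intersection is one or two complex lines, whose boundary circles meet $\dh2c$ in $\mathbb{C}$-circles; balanced and semi-balanced similarly yield $P \cup l$ or a cylinder by Theorem \ref{thm_inter_extors_equilibres} and the semi-balanced proposition), checking directly that intersecting with $\dh2c$ and with $\h2c$ gives the asserted components.

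The main case is the unbalanced one, which by Proposition \ref{prop_param_inter_extors_deseq} gives $\mathfrak{E}_1 \cap \mathfrak{E}_2$ a torus in $\cp2$, parametrized by $[(q-\alpha p)\boxtimes(r-\beta p)]$ with $(\alpha,\beta) \in S^1 \times S^1$. The plan is then to analyze the function on this torus that records the sign of $\Phi$ evaluated at the parametrizing vector, i.e. the function $(\alpha,\beta) \mapsto \Phi((q-\alpha p)\boxtimes(r-\beta p))$, which detects whether a point of the torus lies in $\h2c$, on $\dh2c$, or outside. The set $\mathfrak{S}_1 \cap \mathfrak{S}_2 = (\mathfrak{E}_1 \cap \mathfrak{E}_2) \cap \dh2c$ is the zero locus of this function on the torus, and $\mathfrak{B}_1 \cap \mathfrak{B}_2$ is the region where it is negative. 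So I would compute this function explicitly (it is a trigonometric polynomial in $\alpha, \beta$ of low degree once expanded using the box-product formulas of the earlier lemmas), and show its zero locus on $T^2$ consists of circles or isolated points, with the negative region cut out being a union of disks. Equivalently — and this is the route Goldman takes and probably the cleaner one — use the slice decomposition: each slice $l_\alpha$ of $\mathfrak{E}_1$ meets $\mathfrak{E}_2$ in a circle (an $S^1$-family of points $l_\alpha \cap l'_\beta$), and meets $\dh2c$ in a $\mathbb{C}$-circle; the intersection $l_\alpha \cap \mathfrak{S}_2$ is then the intersection of two circles in the $\mathbb{C}P^1$ that is $l_\alpha$, hence generically two points, one point (tangency), or empty, while $l_\alpha \cap \mathfrak{B}_2$ is an arc. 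Fibering over the spine parameter $\alpha$ and controlling how these arcs vary, one gets that each component of $\mathfrak{B}_1 \cap \mathfrak{B}_2$ is an arc-bundle over an interval, hence a disk, and its boundary is the circle/point component of $\mathfrak{S}_1 \cap \mathfrak{S}_2$.

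The hard part — and the reason Goldman devotes a chapter to it — is the global combinatorial control: showing that as the slice parameter $\alpha$ runs over the real spine circle, the tangency locus (where $l_\alpha$ is tangent to $\mathfrak{S}_2$, i.e. the zero set of a discriminant in $\alpha$) is itself well-behaved, so that the arcs assemble without pathology into disks and the circles of $\mathfrak{S}_1 \cap \mathfrak{S}_2$ close up properly rather than, say, spiraling. Concretely, one must show the discriminant function of $\alpha$ has only finitely many, and transverse, zeros, and that between consecutive zeros the number of intersection points of $l_\alpha \cap \mathfrak{S}_2$ is constant; this is where one invokes the transversality lemma (Theorem 9.1.2 of \cite{goldman}, quoted above) to know that away from common slices the intersection of extors is transverse, hence the circle components of $\mathfrak{S}_1 \cap \mathfrak{S}_2$ are embedded $1$-manifolds. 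Given the paper only needs the \emph{statement}, the most economical write-up is to cite Lemma 9.1.5 of \cite{goldman} directly, remarking that the pair is unbalanced (or degenerate) by the classification above and that Goldman's tangency analysis applies verbatim; I would include the slice-fibration picture as the conceptual reason, and defer the discriminant estimate to the reference.
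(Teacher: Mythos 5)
The paper gives no proof of this statement at all: it is simply quoted as Lemma 9.1.5 of \cite{goldman}, which is exactly the ``most economical write-up'' you recommend in your closing paragraph, and your slice-decomposition/tangency outline is indeed the shape of Goldman's own argument, so your treatment matches the paper's.

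One caveat if your sketch were ever meant to stand on its own: the crux — that in the unbalanced case the negative-norm region of the parameter torus (equivalently, the assembled arc-bundle) has only disk components — is asserted but not proven, and it is precisely the point where the hypothesis that the bisectors are \emph{metric} (normalized lifts of negative norm, foci outside $\con{\h2c}$) must be used. The paper's own Proposition \ref{prop_inter_biss_sym} shows that for the more general bisectors allowed in this article (Clifford cones, focus inside $\h2c$) the analogous coequidistant intersection can be a torus minus two disks, so no purely formal analysis of the trigonometric function on $S^1\times S^1$ can yield the disk conclusion without invoking that hypothesis; deferring this to \cite{goldman}, as you do, is the right call.
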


 We give a particular name to those disks, that will often appear in the constructions of Ford or Dirichlet domains.
 
\begin{defn}
 We call a \emph{Giraud disk} such a disk in the intersection of bisectors. We call a \emph{Giraud circle} its boundary at infinity.
\end{defn}

 At last, the following theorem ensures us that the intersection of two coequidistant metric bisectors is either a point or a Giraud disk, and that the intersection of the corresponding spinal spheres is either a point or a Giraud circle.
 
\begin{thm}[Theorem 9.2.6 of \cite{goldman}]
 Let $p,q,r \in \CC \sm0$ such that $\langle p,p \rangle = \langle q,q \rangle = \langle r,r \rangle \leq 0$. Then $\mathfrak{S}(p,q) \cap \mathfrak{S}(p,r)$ is connected.
\end{thm}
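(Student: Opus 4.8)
The statement to prove is Theorem 9.2.6 of Goldman: for $p,q,r \in \mathbb{C}^3\setminus\{0\}$ with $\langle p,p\rangle = \langle q,q\rangle = \langle r,r\rangle \leq 0$, the intersection $\mathfrak{S}(p,q) \cap \mathfrak{S}(p,r)$ is connected. Since this is a citation to Goldman's book, I would not reprove it from scratch, but I can sketch the natural line of argument. The plan is to work in the visual sphere $L_{[p]}$ of the common point $[p]$, using Proposition \ref{prop_proj_biss_sphere_visuelle}, which tells us that $\pi_{[p]}(\mathfrak{S}(p,q))$ is a closed round disk $D_q$ in $L_{[p]} \simeq \mathbb{CP}^1$ with distinguished "centres" $l_{[p],[q]}$ and $l_{[p],[p\boxtimes q]}$, and likewise $\pi_{[p]}(\mathfrak{S}(p,r))$ is a closed round disk $D_r$. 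The idea is to reduce the connectedness of $\mathfrak{S}(p,q)\cap\mathfrak{S}(p,r)$ to a statement about how these two disks, together with the fibration of $\pi_{[p]}$, meet.

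**Key steps.** First I would show that $\pi_{[p]}$ restricted to $\mathfrak{S}(p,q)$ behaves like a fibration over the interior $\mathring D_q$ with circle fibers and over $\partial D_q$ with point fibers — this is exactly the slice structure of the spinal sphere seen from $[p]$ (each complex line through $[p]$ meeting $\mathfrak{S}(p,q)$ does so either in a $\mathbb{C}$-circle, or tangentially in a point, according to whether its image is interior or boundary in $D_q$). Second, over a complex line $\ell = l_{[p],[z]}$ whose image lies in $\mathring D_q \cap \mathring D_r$, the fiber of $\mathfrak{S}(p,q)$ is a $\mathbb{C}$-circle $\gamma_q(\ell)$ and that of $\mathfrak{S}(p,r)$ is a $\mathbb{C}$-circle $\gamma_r(\ell)$, both lying in the $\mathbb{C}$-circle $\partial(\ell\cap\mathbb{H}^2_{\mathbb{C}})$; these are two round circles in a 2-sphere boundary, and the crux is to show their intersection is controlled — a single point, or empty, or the whole circle — varying continuously. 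Third, I would analyze the behavior over $\partial D_q$ and $\partial D_r$ and over $D_q \cap \partial D_r$ etc., i.e. the tangency locus, which is where Goldman's careful case analysis of tangencies of spinal spheres enters. Putting the fiberwise picture together with the base picture ($D_q \cap D_r$ being a "lens" region, hence connected and simply connected), a Mayer–Vietoris or direct path-lifting argument yields that $\mathfrak{S}(p,q)\cap\mathfrak{S}(p,r)$ is connected.

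**Main obstacle.** The hard part is the fiberwise intersection analysis: showing that for each complex line $\ell$ through $[p]$ in the overlap region, the two round circles $\gamma_q(\ell)$ and $\gamma_r(\ell)$ inside the boundary sphere of $\ell\cap\mathbb{H}^2_{\mathbb{C}}$ intersect in a set that is "not too big" (at most a point generically) and varies in a controlled way as $\ell$ moves — in particular ruling out the pathology where the intersection jumps between two points as one crosses a tangency, which would create two components. This is precisely the content of Goldman's tangency study (Lemmas 9.1.5 and the surrounding material), and rather than reproduce it I would invoke it: the connectedness follows from Lemma 9.1.5 of \cite{goldman} (each component of $\mathfrak{S}_1\cap\mathfrak{S}_2$ is a point or a circle, each component of $\mathfrak{B}_1\cap\mathfrak{B}_2$ is a disk) together with the coequidistance hypothesis $\langle p,p\rangle = \langle q,q\rangle = \langle r,r\rangle$, which forces the common point $[p]$ to lie "between" the two bisectors in a way that glues the components together. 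I therefore expect the proof to consist of a reduction to the visual-sphere picture followed by citation of Goldman's Chapter 9 tangency results for the remaining step.
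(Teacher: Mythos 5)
This statement is not proved in the paper at all: it is quoted as Theorem 9.2.6 of Goldman's book, where it is obtained by his study of tangencies of spinal spheres. So your decision to cite Goldman rather than reprove it is exactly what the paper does, and that part of your proposal is unobjectionable.

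The sketch you offer in support, however, is not a viable outline, because the fibred picture it rests on is incorrect. The slices of $\mathfrak{E}(p,q)$ are the complex lines through the focus $[p\boxtimes q]$, not through $[p]$; in fact no slice passes through $[p]$ (one checks $[p]\notin\mathfrak{E}(p,q)$ under the hypotheses), so no complex line through $[p]$ meets $\mathfrak{S}(p,q)$ in a full $\CC$-circle. A dimension count using Proposition \ref{prop_proj_biss_sphere_visuelle} makes the same point: $\mathfrak{S}(p,q)$ is a $2$-sphere mapped by $\pi_{[p]}$ onto a closed $2$-disk, so the fibre over an interior point is generically a pair of points (a single point over boundary points), never a circle; moreover $\partial_\infty(\ell\cap\h2c)$ is a circle, not a $2$-sphere, so the phrase ``two round circles in a $2$-sphere boundary'' does not describe anything in this setting. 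With finite fibres, the fibrewise circle-intersection analysis your argument hinges on does not exist, and Lemma 9.1.5 (each component of $\mathfrak{S}_1\cap\mathfrak{S}_2$ is a point or a circle) together with an unspecified ``betweenness'' of $[p]$ does not yield connectedness --- that is precisely the nontrivial content of Goldman's Theorem 9.2.6. If you wanted a self-contained argument in the spirit of this paper, the workable route is the one the paper itself uses for the symmetric case in Proposition \ref{prop_inter_biss_sym}: by Proposition \ref{prop_param_inter_extors_deseq}, parametrize $\mathfrak{E}(p,q)\cap\mathfrak{E}(p,r)$ by the torus $[(q-\alpha p)\boxtimes(r-\beta p)]$ with $(\alpha,\beta)\in S^1\times S^1$, compute the norm of that vector as a function on the torus, and show that the locus where it vanishes is connected when $\langle p,p\rangle=\langle q,q\rangle=\langle r,r\rangle\leq 0$; that computation is where the coequidistance hypothesis is actually used.
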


 \subsubsection{Other intersections}
We will need to consider more general intersections in order to deform a Ford domain. We describe an explicit example that will be useful later. It is a very symmetrical case, where the bisectors are equidistant from points in the same real plane, and have an order 3 symmetry. We will need the following lemma for a technical point about a sign in the proposition that we show below.

 \begin{lemme}\label{lemme_signe_prod_herm_sym}
  Let $p,q,r \in \CC^3 \sm0$. Assume that they belong to the same $\RR$-plane and that there exists $S \in \su21$ of order 3 such that $Sp = q$ and $Sq = r$. Then  $\langle p \boxtimes q, q \boxtimes r \rangle = \langle q \boxtimes r, r \boxtimes p \rangle = \langle r \boxtimes p, p \boxtimes q \rangle \in \RR^-$.
 \end{lemme}

\begin{proof}
 Since $p,q,r$ belong to the same $\RR$-plane, the points $p \boxtimes q$, $q \boxtimes r$ and $r \boxtimes p$ are also in the same $\RR$-plane. By the symmetry of order 3, we know that $\langle p \boxtimes q , p \boxtimes q \rangle = \langle q \boxtimes r , q \boxtimes r \rangle = \langle r \boxtimes p , r \boxtimes p \rangle$. Denote by $l\in \RR$ this quantity. On the other hand, we also know that $\langle p \boxtimes q, q \boxtimes r \rangle = \langle q \boxtimes r, r \boxtimes p \rangle = \langle r \boxtimes p, p \boxtimes q \rangle \in \RR$. Denote by $k$ this quantity.
 
 Consider the generic case, where $k \neq 0$ and $(p \boxtimes q , q \boxtimes r , r \boxtimes p)$ is a basis of $\CC^3$; the result will follow in the general case by density. In this basis, the matrix of the Hermitian form is:
\[
\begin{pmatrix}
l & k & k \\
k & l & k \\
k & k & l
\end{pmatrix}.
\] 
 It admits a double eigenvalue equal to $l-k$ and a simple eigenvalue equal to $l + 2k$. Since the Hermitian form has signature $(2,1)$, we deduce that $l-k > 0$ and $l+2k < 0$, which implies $k<0$.
\end{proof}

 \begin{prop}\label{prop_inter_biss_sym}
  Let $p,q,r \in \CC^3 \sm0$. Assume that they belong to the same $\RR$-plane and that there exists $S \in \su21$ of order 3 such that $Sp = q$ and $Sq = r$. We know that $p \boxtimes q$, $q \boxtimes r$ and $r \boxtimes p$ have the same norm and belong to the same $\RR$-plane.
 By Lemma \ref{lemme_signe_prod_herm_sym}, we know that $\langle p \boxtimes q, q \boxtimes r \rangle = \langle q \boxtimes r, r \boxtimes p \rangle = \langle r \boxtimes p, p \boxtimes q \rangle \in \RR^-$.
 Let $u = \frac{\langle p \boxtimes q, p \boxtimes q \rangle}{\langle p \boxtimes q, q \boxtimes r \rangle}$.
 
Then
 \begin{itemize}
  \item If $u  < \frac{2}{3}$, then $\mathfrak{B}(p,q) \cap \mathfrak{B}(p,r)$ is a disk, and its boundary at infinity is a smooth circle.
  \item If $u  = \frac{2}{3}$, then $\mathfrak{B}(p,q) \cap \mathfrak{B}(p,r)$ is a disk, and its boundary at infinity consists of three $\CC$-circles.
  \item If $u  > \frac{2}{3}$, then $\mathfrak{B}(p,q) \cap \mathfrak{B}(p,r)$ is a torus minus two disks, and its boundary at infinity consists of two smooth circles.
\end{itemize}  
 \end{prop}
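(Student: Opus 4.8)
The plan is to exploit the order-3 symmetry $S$ to reduce the study of $\mathfrak{B}(p,q) \cap \mathfrak{B}(p,r)$ to a one-parameter problem, and then to detect the topological transitions by tracking the tangencies of the spinal surfaces, as in Goldman's analysis of Giraud disks. First I would set up explicit coordinates adapted to the $\RR$-plane: since $p,q,r$ lie in a common $\RR$-plane and $S \in \su21$ has order $3$ with $Sp=q$, $Sq=r$, I can conjugate so that $S$ is the standard order-$3$ permutation-type element preserving the totally real Lagrangian plane $\mathbb{H}^2_\RR \subset \h2c$, and $p,q,r$ are its three images of a single vector. Then $p \boxtimes q$, $q \boxtimes r$, $r \boxtimes p$ are permuted cyclically by $S$ as well, have common norm, and (by Lemma \ref{lemme_signe_prod_herm_sym}) have pairwise Hermitian products equal to a common negative real number. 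The Gram matrix of $(p \boxtimes q, q \boxtimes r, r \boxtimes p)$ is the circulant $\begin{pmatrix} l & k & k \\ k & l & k \\ k & k & l \end{pmatrix}$ with $l = \langle p \boxtimes q, p \boxtimes q \rangle$, $k = \langle p \boxtimes q, q \boxtimes r \rangle < 0$, so that the whole configuration depends (projectively) only on $u = l/k$, and all three claimed thresholds should be expressible purely in terms of $u$.

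Next I would parametrize the intersection of the two extors using Proposition \ref{prop_param_inter_extors_deseq}: writing $\mathfrak{E}(p,q) = \mathfrak{E}_1$ and $\mathfrak{E}(p,r) = \mathfrak{E}_2$, the pair is unbalanced (it is coequidistant from $p$, hence confocal or unbalanced, and here the foci $[p\boxtimes q]$ and $[p\boxtimes r]$ differ), so $\mathfrak{E}_1 \cap \mathfrak{E}_2$ is the torus $\{[(q-\alpha p)\boxtimes(r-\beta p)] : (\alpha,\beta) \in S^1 \times S^1\}$. The intersection of the \emph{bisectors} $\mathfrak{B}(p,q) \cap \mathfrak{B}(p,r)$ is the subset of this torus lying in $\h2c$, i.e. where $\langle (q-\alpha p)\boxtimes(r-\beta p), (q-\alpha p)\boxtimes(r-\beta p)\rangle < 0$; its boundary at infinity $\mathfrak{S}(p,q)\cap\mathfrak{S}(p,r)$ is the zero locus of this function on the torus. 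So the problem becomes: understand the sublevel set $\{g < 0\}$ of a specific real-analytic function $g : S^1 \times S^1 \to \RR$, where $g(\alpha,\beta)$ is a Hermitian self-product that, after expanding the box products via bilinearity and collecting terms with the Gram data above, should be a trigonometric expression in $\arg\alpha, \arg\beta$ whose coefficients depend only on $u$. The $S$-symmetry forces $g$ to be invariant under a diagonal $\mathbb{Z}/3$ action on the torus, which constrains its Fourier support.

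Then I would identify the three regimes by counting the connected components of $\{g = 0\}$ and analyzing the critical points of $g$. For $u < \tfrac23$ I expect $\{g<0\}$ to be a single disk (one boundary circle), recovering a Giraud disk as in Goldman's Lemma 9.1.5 and Theorem 9.2.6; for $u > \tfrac23$ I expect $\{g\ge 0\}$ to split off two disks, so that $\{g<0\}$ is the torus minus two disks with two boundary circles; and $u = \tfrac23$ is the transition value, where the two nodal curves collide — here the degeneracy should manifest as the boundary becoming a union of three $\CC$-circles (the common slices of the two extors, which appear exactly when spinal spheres are tangent along complex geodesics). Concretely I would (i) compute $g$ and its gradient, (ii) show the number of solutions of $\nabla g = 0$ on the torus jumps at $u = \tfrac23$, locating this by a discriminant computation, and (iii) at $u = \tfrac23$ directly exhibit the three common slices by solving $l_{[p\boxtimes q],[x]} = l_{[p\boxtimes r],[x]}$, i.e. finding $[x]$ with $q - \alpha p$ and $r - \beta p$ proportional — the three solutions being permuted by $S$.

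The main obstacle will be step (iii) together with the precise bookkeeping in expanding $g$: controlling the Hermitian self-product of $(q-\alpha p)\boxtimes(r-\beta p)$ in terms of $u$ requires using the identity $\langle a \boxtimes b, c \boxtimes d\rangle = \langle a,c\rangle\langle b,d\rangle - \langle a,d\rangle\langle b,c\rangle$ (the Hermitian analogue of the Binet–Cauchy identity) repeatedly, and then reducing the resulting sum of $\langle p,p\rangle, \langle p,q\rangle$, etc. to the single parameter $u$ using the symmetry relations $\langle p,p\rangle = \langle q,q\rangle = \langle r,r\rangle$ and $\langle p,q\rangle = \langle q,r\rangle = \langle r,p\rangle$. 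The delicate point is verifying that at the threshold the three degenerate slices really are $\CC$-circles in the boundary (rather than, say, a single tangency), which is where the order-3 symmetry is essential: it guarantees there are either $0$ or $3$ such slices, never $1$ or $2$, so the transition is forced to be the symmetric ``three $\CC$-circles'' picture. I would also want to double-check the borderline behavior of the self-product (whether it is $\le 0$ or $<0$ on the relevant arcs) using the sign information $k < 0$ from Lemma \ref{lemme_signe_prod_herm_sym}.
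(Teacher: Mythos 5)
Your plan is essentially the paper's own proof: the paper parametrizes the intersection of the two extors as the torus $[(q+e^{i\theta}p)\boxtimes(r+e^{i\phi}p)]$, expands the Hermitian self-product using the Gram data $l,k$ of the box products, and finds that its sign is governed by $\tfrac{3}{2}u+\cos\theta+\cos\phi+\cos(\phi-\theta)$, whose level sets give exactly your three regimes, with the $u=\tfrac{2}{3}$ boundary being the three coordinate-type circles (slices, hence $\CC$-circles) permuted by the order-3 symmetry. So your proposal is correct and follows the same route, just phrased via critical points/discriminants where the paper reads the answer off the explicit level-set picture.
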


 \begin{proof}
  The intersection of the extors $\mathfrak{E}(p,q)$ and $\mathfrak{E}(p,r)$ is the torus parametrized by:
 
  \begin{equation*}
  \{ (q+e^{i\theta}p)\boxtimes(r + e^{i \phi}p) \mid (\theta, \phi) \in [\pi, \pi]^2 \}
\end{equation*}   
  The points of the intersection $\mathfrak{B}(p,q) \cap \mathfrak{B}(p,r)$ are exactly the points of this torus with negative norm.
  We compute the norm of $(q + e^{i\theta}p) \boxtimes (r + e^{i \phi}p)$:
  
 \begin{align*}
   \langle (q + e^{i\theta}p) \boxtimes (r + e^{i \phi}p),& (q + e^{i\theta}p) \boxtimes (r + e^{i \phi}p) \rangle &
   \\
  &= \langle q \boxtimes r + e^{i\theta} p \boxtimes r + e^{i\phi} q \boxtimes p , q \boxtimes r + e^{i\theta} p \boxtimes r + e^{i\phi} q \boxtimes p \rangle
   \\
 & = \langle  q \boxtimes r , q \boxtimes r \rangle + \langle  p \boxtimes r , p \boxtimes r \rangle + \langle  q \boxtimes p , q \boxtimes p \rangle
  \\
  &+ 2 \Re (
   e^{i \theta}\langle  q \boxtimes r , p \boxtimes r \rangle
  + e^{i \phi}\langle  q \boxtimes r , q \boxtimes p \rangle
  + e^{i (\phi - \theta)}\langle  p \boxtimes r , q \boxtimes p \rangle
  )
  \\
  &= 3 \langle  q \boxtimes r , q \boxtimes r \rangle
   + 2\langle  q \boxtimes r , p \boxtimes r \rangle (\cos(\theta) + \cos(\phi) + \cos(\phi - \theta))
    \\
    &= 2\langle  q \boxtimes r , p \boxtimes r \rangle
     (\frac{3}{2}u
     +
     \cos(\theta) + \cos(\phi) + \cos(\phi - \theta))
\end{align*}

Since $\langle  q \boxtimes r , p \boxtimes r \rangle < 0$, the sign of the expression above is the opposite of the sign of $\frac{3}{2}u + \cos(\theta) + \cos(\phi) + \cos(\phi - \theta)$.
 The level sets of the function $(\theta,\phi) \mapsto \cos(\theta) + \cos(\phi) + \cos(\phi - \theta)$ are traced in Figure \ref{fig_lignes_niveau_biss}. The set describing $\mathfrak{B}(p,q) \cap \mathfrak{B}(p,r)$ is hence given by the level sets of level $\geq -\frac{3}{2}u$, which are
\begin{itemize}
 \item a disk with smooth boundary if $u < \frac{2}{3}$
 \item a disk with boundary the circles of equations $\theta = 0$ , $\phi = 0$ and $\phi - \theta = \pi \mod 2\pi$ if $u = \frac{2}{3}$
 \item The torus minus two disks if $u > \frac{2}{3}$.
\end{itemize} 
  \end{proof}

\begin{figure}[htbp]
 \center
 \includegraphics[width = 6.5cm]{./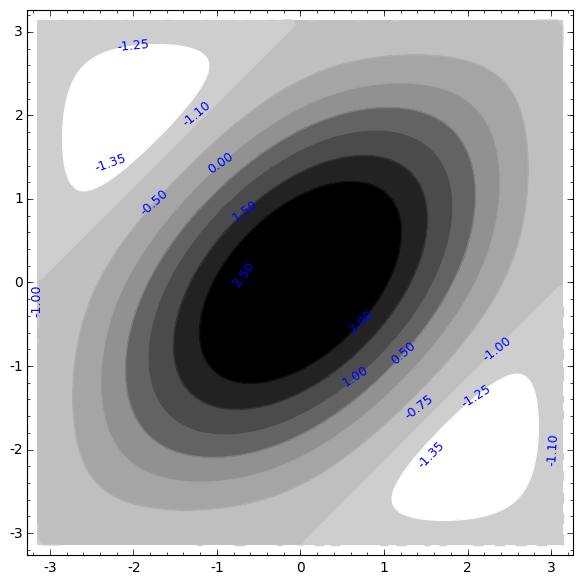}
 \caption{Level sets of the function $(\theta,\phi) \mapsto \cos(\theta) + \cos(\phi) + \cos(\phi - \theta)$ for $(\theta, \phi) \in [-\pi,\pi]^2$.} \label{fig_lignes_niveau_biss}
\end{figure}  
  
\newpage

\newpage
\part{Surgeries on the Whitehead link complement} \label{part_surgeries_wlc}
\section{Surgeries on the Whitehead link complement}\label{sect_ch_wlc}
 In this section, we will consider some \CR {} structures on the Whitehead link complement, and the Dehn surgeries of this link admitting a \CR {} structure. We will use the results of R. Schwartz, that can be found in his book \cite{schwartz} and of Parker and Will, given in the article \cite{parker_complex_2017a}.
 
\subsection{The Whitehead link complement} \label{subsect_description_wlc}
 The Whitehead link is the link given by the projection of Figure \ref{dessin_entrelacs_whitehead}. It has two components and a minimal crossing number of 5. Each component in an unknotted circle.
 
 \begin{figure}[H]
 \begin{center}
   \includegraphics[width=4cm]{./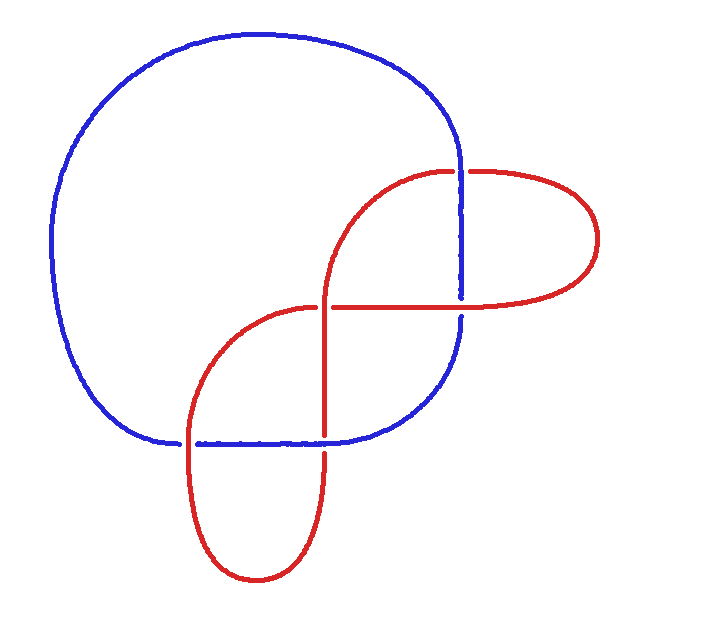}
   \caption{The Whitehead link (SnapPy)} \label{dessin_entrelacs_whitehead}
 \end{center}
 \end{figure}

\begin{rem}
 If we denote by $W$ the Whitehead link and $W'$ link obtained by exchanging its two components, then $W$ and $W'$ are isotopic. In other words, the two components play the same role. This fact will be reflected on the Parker-Will \CR {} structure, that we will see below.
  \end{rem} 

 We will denote by $WLC$ the Whitehead link complement in $S^3$. The complement of a tubular neighbourhood of the link in $S^3$ is a compact manifold with two torus boundaries that we denote by $T_1$ and $T_2$. Its interior is homeomorphic to $WLC$ ; we will identify $\con{WLC}$ with $WLC \cup T_1 \cup T_2$.
 %
 The fundamental group of $WLC$ is given by the following presentation: 
 
 \[\pi_1(WLC) = \langle u,v \mid [u,v][u,v^{-1}][u^{-1},v^{-1}][u^{-1},v] \rangle \]
 
Choosing as generators $s,t$ satisfying $u=st$ and $v=tst$, we obtain a new presentation: 
 
\[\pi_1(WLC) = \langle s,t \mid t s^{-1} t^{-3} s^{-2} t^{-1} s t^3 s^2 \rangle \] 
 
 \begin{rem}
  This presentation is the one given by SnapPy, with $t=a$ and $s^{-1}=b$.
 \end{rem}

 In this presentation, the couples longitude-meridian of the peripheral subgroups corresponding to $T_1$ and $T_2$ are given by: 
 
 \[ l_1 = t^{-2} s^{-1}ts^2 t^{-1} s^{-1} \hspace{2cm} m_1 = t^{-2}s^{-1} \]
 \[ l_2 = ststs^{-1}t^3s^{-1}t \hspace{2cm} m_2 = st \]
 
 \begin{rem}
  With this marking, the laces $m_i$ correspond to the actual meridians of the components of $WLC$ in $S^3$ and the longitudes are trivial in homology.
 \end{rem} 
 
 Notice, as Parker and Will do in \cite{parker_complex_2017a}, that by imposing $s^3 = t^3 = 1$, the group we obtain is the free product $\z3z3$ ; $\pi_1(WLC)$ admits a surjection onto this group.

\subsection{Deformation spaces}
 We are going to consider representations of $\pi_1(WLC)$ that factor through the quotient $\z3z3$, up to conjugacy.
 In \cite{acosta_character_2016}, we showed that the character variety $\mathcal{X}_{\sl3c}(\z3z3)$ has 16 irreducible components: 15 isolated points and an irreducible component $X_0$.
   In their article \cite{guilloux_will_2016}, Guilloux and Will show that the component $X_0$ is also an irreducible component of the character variety $\mathcal{X}_{\sl3c}(\pi_1(WLC))$. We will limit ourselves to this component $X_0$, and to its intersection with the character variety
$\mathcal{X}_{\su21}(\z3z3)$. This gives us a whole component of deformations of representations of $\pi_1(WLC)$ with values in $\su21$, considered up to conjugacy.

 This space can be parametrized by traces. 
More precisely, if $\z3z3 = \langle s,t \mid s^3 , t^3 \rangle$, the traces of $s,s^{-1},t,t^{-1},st,(st)^{-1},s^{-1}t,st^{-1}$ and of the commutator $[s,t]$ determine, up to conjugacy, an irreducible representation of $\z3z3$ into $\sl3c$. 
In Section 4 of his article \cite{will_generateurs}, Will considers the restriction to $\su21$ or $\mathrm{SU}(3)$. Notice that if $U \in \su21$, then $\mathrm{tr}(U^{-1}) = \con{\mathrm{tr}(U)}$; hence we will only consider the traces of $s,t,st,s^{-1}t$ and the commutator $[s,t]$.
  Furthermore, in the component $X_0$, the images of the elements $s$ and $t$ are regular elliptic of order 3, and hence have trace $0$. For $\rho \in \mathrm{Hom}(\z3z3, \su21)$, denote by $z_\rho = \mathrm{tr}(\rho(st))$, $w_\rho = \mathrm{tr}(\rho(st^{-1}))$ and $x_\rho = \mathrm{tr}(\rho([s,t]))$.
 As detailed in \cite{acosta_character_2016}, 
   the union of the two character varieties $\mathcal{X}_{\su21}(\z3z3)$ and $\mathcal{X}_{\mathrm{SU}(3)}(\z3z3)$ in $X_0$ is described with this coordinates by: 
\begin{equation*}
\{ (z,w,x) \in \mathbb{C}^3 \mid x + \con{x} = Q(z,w) , x\con{x} = P(z,w) \}
 \end{equation*}
where
\begin{eqnarray*}
 Q(z,w) & = & |z|^2 + |w|^2 - 3 \\
 \text{ and }
 P(z,w) & = & 2\Re(z^3) + 2\Re(w^3) + |z|^2|w|^2 - 6|z|^2 -6|w|^2 + 9.
\end{eqnarray*}

\subsection{Parker-Will representations}

 In their article \cite{parker_complex_2017a}, Parker and Will construct a two parameter family of representations $\rho$ with values in $\su21$, in such a way that $\rho(s)$ and $\rho(t)$ are regular elliptic of order $3$ and $\rho(st)$ and $\rho(ts)$ are unipotent. In terms of traces, they parameter the slice of $X_0 \cap \mathcal{X}_{\su21}(\z3z3)$ for which one of the coordinates equals $3$, i.e. a subset of
 \begin{equation*}
 \{ (3,w,x) \in \mathbb{C}^3 \mid , x + \con{x} = Q(3,w) , x\con{x} = P(3,w) \}.
\end{equation*}

 In the Siegel model, this family is explicitly parametrized by $(\alpha_1 , \alpha_2) \in ]-\frac{\pi}{2},\frac{\pi}{2}[^2$, in the following way:
 \begin{equation*}
\rho(s) = e^{-i\alpha_1/3} \left( \begin{matrix}
e^{i \alpha_1} & x_1e^{i\alpha_1 - i\alpha_2} & -1 \\
-x_1e^{i\alpha_2} & -e^{i\alpha_1} & 0 \\
-1 & 0 & 0
\end{matrix} \right),
\rho(t) = e^{i\alpha_1/3} \left( \begin{matrix}
0 & 0 & -1 \\
0 & -e^{-i \alpha_1} & -x_1e^{-i\alpha_1 - i\alpha_2} \\
-1 & x_1e^{i\alpha_2} & e^{-i\alpha_1}
\end{matrix} \right)
 \end{equation*}
 
 where $x_1 = \sqrt{2\cos(\alpha_1)}$. Since $\rho(s)^3 = \rho(t)^3 = \mathrm{Id}$, we have:
\begin{equation*}  
   \rho(l_1) = \rho(ts^{-1}ts^{-1}ts^{-1}) \hspace{2cm} \rho(m_1) = \rho(ts^{-1}) 
   \end{equation*}
   \begin{equation*}
 \rho(l_2) = \rho(ststst) \hspace{2cm} \rho(m_2) = \rho(st) 
 \end{equation*}
 
 Hence $\rho(l_1) = \rho(m_1)^3$ and $\rho(l_2) = \rho(m_2)^3$ in the whole family of representations. Furthermore, by construction, $\rho(m_2)$ is unipotent for all the representations parametrized by Parker and Will. 
For the other peripheral representation, the type of  $\rho(m_1) = \rho(ts^{-1})$ is given by the curve of Figure \ref{peche1}. If $(\alpha_1,\alpha_2)$ is on the curve, then $\rho(ts^{-1})$ is parabolic, if it is inside, $\rho(ts^{-1})$ is loxodromic, and if it is outside, elliptic. We denote these regions by $\mathcal{L}$ and $\mathcal{E}$ respectively. By setting $\alpha_2^{\lim} = \arccos(\sqrt{\frac{3}{8}})$, the curve has two singular points at $(0, \pm \alpha_2^{\lim})$, for which $\rho(ts^{-1})$ is unipotent.
  Moreover, Parker and Will define the region $\mathcal{Z}$ by the equation 
 \begin{equation*}
 D(4\cos^2(\alpha_1) , 4\cos^2(\alpha_2)) > 0,
\end{equation*} 
  where $D(x,y) = x^3y^3 - 9x^2y^2-27xy^2+81xy-27x-27$. They show, using the Poincaré polyhedron theorem, that the image of $\rho$ is discrete and faithful in the interior of the region $\mathcal{Z}$.
 
 \begin{figure}[htbp]
\center
\begin{subfigure}{0.4\textwidth}
 \includegraphics[width=6cm]{./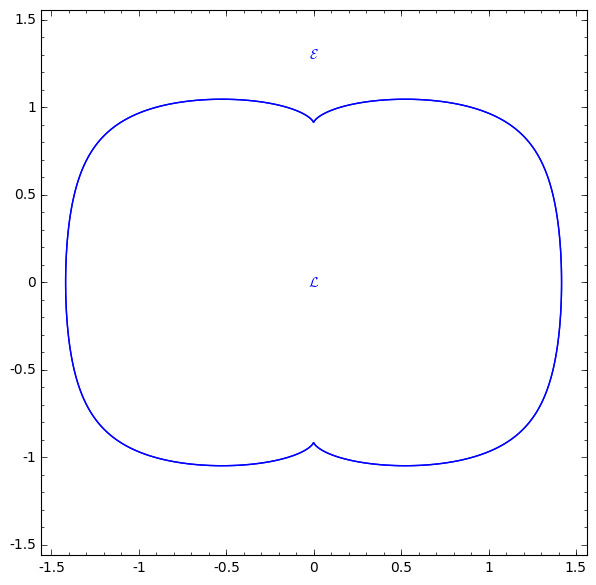}
   \caption{Curve of unipotents}\label{peche1}
 \end{subfigure} \hspace{1cm}
 \begin{subfigure}{0.4\textwidth}
 \includegraphics[width=6cm]{./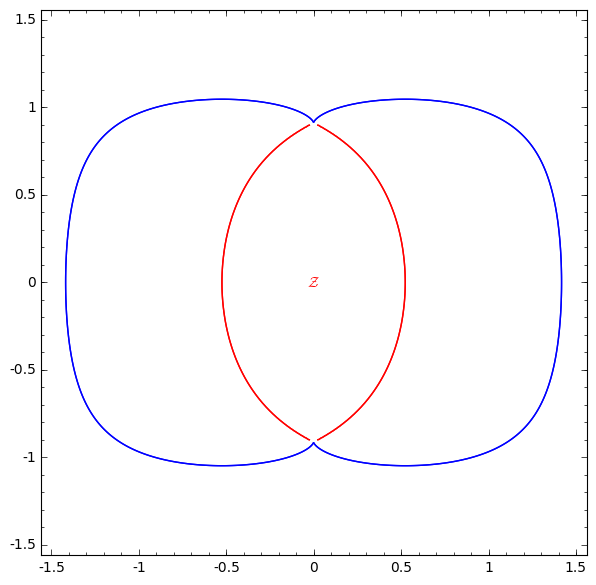}
   \caption{Region $\mathcal{Z}$}\label{peche2}
 \end{subfigure} 
\caption{Curve of unipotents and the region $\mathcal{Z}$ in the Parker-Will slice }
\end{figure}

 \begin{rem}
 Considering coordinates $(z,w,x)$ for the character variety $\mathcal{X}_{\su21}(\z3z3)$ described above, the projection on $w$ of the slice $z = 3$ is a double cover besides the red curve in Figure \ref{fig_peche_traces}, for which the fibres are singletons.

With the parametrization of Parker-Will, the image of the map $\rho \mapsto z_\rho$ is the union of the three lobes of Figure \ref{fig_peche_traces}. The type of $\rho(s^{-1}t)$ is then determined by the sign of $f(z_\rho)$, where $f(z) = |z|^4 - 8 \Re (z^3 ) + 18 |z|^2 - 27$ is the function defined by Goldman in \cite[Theorem 6.2.4]{goldman}. The regions $\mathcal{E}$ and $\mathcal{L}$ are then separated by the blue curve of Figure \ref{fig_peche_traces}, with equation $f(z) = 0$.
\end{rem}

   \begin{figure}[H] 
 \begin{center}
   \includegraphics[width=4cm]{./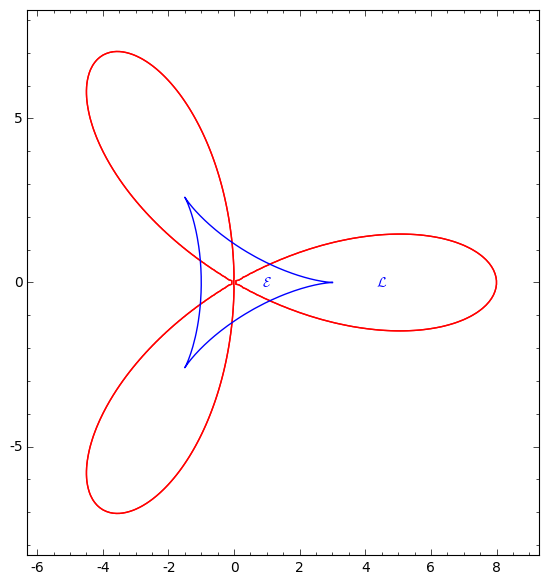}
   \includegraphics[height=4cm]{./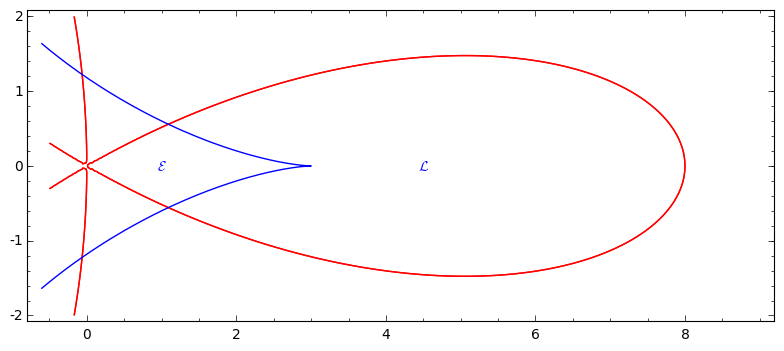}
   \caption{The Parker-Will slice in trace coordinates.}\label{fig_peche_traces}
 \end{center}
 \end{figure}

\subsection{Spherical CR structures with parabolic peripheral holonomy}
 We are going to consider \CR {} structures on the Whitehead link complement. Whenever a structure has a parabolic peripheral holonomy, we are going to deform it in order to obtain \CR {} structures either on Dehn surgeries of $WLC$ or on manifolds obtained similarly, by gluing a torus knot complement in a lens space. At first, we will apply the surgery theorem of \cite{acosta_spherical_2016}, and then give explicit bounds for deformations and wonder about the uniformizability of the structures obtained.
 
 We will then consider in detail the structure constructed by Parker and Will in \cite{parker_complex_2017a} which admits as holonomy representation the representation $\rho$ with coordinates $(0, \alpha_2^{\lim})$. We will also briefly cite the structure constructed by Schwartz in his book \cite{schwartz}, corresponding to the point $(\alpha_1^{\lim},0)$, where $\alpha_1^{\lim} = \arccos(\frac{\sqrt{3}}{4})$.

 \subsubsection{The Parker-Will structure}

 In \cite{parker_complex_2017a}, Parker and Will study the groups inside the region $\mathcal{Z}$. Using the Poincaré polyhedron theorem, they show that in this case they are faithful and discrete representations of $\z3z3$, and that they are holonomy representations for open manifolds of dimension 4 with a $( \pu21 , \h2c )$-structure. In Section 6 of the article, they study the group of parameter $(0,\alpha_2^{\lim})$ and show that it is the holonomy representation of a \CR {} uniformization of the Whitehead link complement. In this case, they compute the images of $s$ and $t$:
\[
\rho(s) = 
\left[
\begin{matrix}
 1 & \frac{\sqrt{3}}{2}   - i \frac{\sqrt{5}}{2}  & -1\\
- \frac{\sqrt{3}}{2} - i\frac{\sqrt{5}}{2} & -1 & 0    \\
-1 & 0 & 0
\end{matrix}
\right]
, \hspace{1cm}
\rho(t) = 
\left[
\begin{matrix}
0 & 0 & -1 \\
0 & -1 & - \frac{\sqrt{3}}{2} + i\frac{\sqrt{5}}{2}    \\
-1 & \frac{\sqrt{3}}{2}   + i \frac{\sqrt{5}}{2}  & 1
\end{matrix}
\right]
\] 
 
 They construct then a Ford domain invariant by the holonomy $\rho(m_2) = \rho(st)$ of the second cusp. This domain is a horotube (see Figure 11 and proposition 6.8 of \cite{parker_complex_2017a}).
 
 \begin{rem}
 We remark that, for the holonomy representation of the uniformization, the traces of the images of $s,t,st,s^{-1}t$ and $[s,t]$ are respectively $(0,0,3,3,x_0)$, where $x_0 = \frac{15}{2}- \frac{3}{2}i\sqrt{15}$. 
 \end{rem}

\begin{prop}\label{prop_involution_WLC}
 Consider $WLC$ endowed with the uniformizable \CR {} structure of Parker-Will. Then there is an anti-holomorphic involution $\iota$ of $WLC$ exchanging the two cusps.
\end{prop}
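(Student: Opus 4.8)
The plan is to produce the involution at the level of the holonomy representation and then argue that it descends to the manifold. First I would look for a matrix $J \in \mathrm{GL}_3(\mathbb{C})$ together with the complex conjugation map $Z \mapsto \overline{Z}$ on $\mathbb{C}^3$ such that the antiholomorphic map $[Z] \mapsto [J\overline{Z}]$ preserves $\h2c$ (equivalently, $J$ conjugates the Hermitian form $\Phi$ to $\overline{\Phi}$ up to a positive scalar), and such that conjugation by this map sends $\rho(s)$ to a conjugate of $\rho(t)^{-1}$ (or $\rho(t)$) and, crucially, swaps the two peripheral subgroups: it should send $\rho(m_1) = \rho(ts^{-1})$ to a conjugate of $\rho(m_2)^{\pm 1} = \rho(st)^{\pm 1}$ and vice versa. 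The natural guess, given the very symmetric form of the matrices $\rho(s)$ and $\rho(t)$ displayed just above the statement in the $(0,\alpha_2^{\lim})$ case, is that complex conjugation alone (possibly composed with the permutation matrix exchanging the first and third coordinates, which is exactly the symmetry of the Siegel Hermitian form) already does this: observe that the displayed $\rho(t)$ is obtained from the displayed $\rho(s)$ by complex-conjugating the entries and applying the coordinate flip $e_1 \leftrightarrow e_3$ together with a sign change. I would verify this identity $\overline{K\rho(s)K} = \rho(t)^{\pm1}$ (with $K$ the antidiagonal involution and the $\pm$ and any needed overall cube-root-of-unity scalar pinned down by a direct $3\times 3$ check), so that the antiholomorphic isometry $\iota_0 \colon [Z]\mapsto [K\overline{Z}]$ of $\con{\h2c}$ normalizes $\Gamma = \rho(\pi_1(WLC))$ and acts on it by an automorphism exchanging (conjugates of) $s$ and $t$.

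Next I would identify what this automorphism does to the peripheral structure. Recall from Section \ref{subsect_description_wlc} that $m_1 = t^{-2}s^{-1} = ts^{-1}$ modulo $s^3=t^3=1$ and $m_2 = st$, with $l_i = m_i^3$. The remark in \ref{subsect_description_wlc} that the Whitehead link is isotopic to the link with its two components exchanged tells us that there is a self-homeomorphism of $WLC$ swapping $T_1$ and $T_2$; on the level of $\pi_1(WLC) = \langle s,t \mid \cdots\rangle$ such a swap is realized (up to inner automorphism and inversion) by the obvious symmetry $s \leftrightarrow t$ of the presentation $\langle s,t \mid ts^{-1}t^{-3}s^{-2}t^{-1}st^3s^2\rangle$, which indeed sends $m_1 = ts^{-1}$ to $st^{-1} = m_2^{-1}$ (again modulo the cubing relations) and $m_2 = st$ to $ts = $ a conjugate of $m_2$... — so I would instead use the antiholomorphic conjugation to get the component swap, composing with inversion as needed: the key point is that the automorphism $\phi$ of $\Gamma$ induced by $\iota_0$ maps the first peripheral subgroup $\langle \rho(m_1)\rangle$ onto a $\Gamma$-conjugate of the second peripheral subgroup $\langle \rho(m_2)\rangle$. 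I would check this purely algebraically by computing $K\overline{\rho(ts^{-1})}K$ and comparing it (up to conjugacy in $\Gamma$ and up to inversion) with $\rho(st)$; since $\rho(m_2)$ is unipotent and, at the parameter $(0,\alpha_2^{\lim})$, $\rho(m_1)$ is also unipotent (this is the singular point of the curve in Figure \ref{peche1}), both have the same Jordan type, which is consistent with such an identification.

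Finally I would promote this to the manifold. The Parker--Will structure is uniformizable: $WLC \simeq \Gamma\backslash\Omega_\Gamma$ where $\Omega_\Gamma \subset \dh2c$ is the domain of discontinuity. The antiholomorphic isometry $\iota_0$ of $\dh2c$ normalizes $\Gamma$, hence preserves $\Omega_\Gamma$ (the domain of discontinuity is canonically attached to $\Gamma$), and therefore descends to an antiholomorphic involution $\iota$ of $\Gamma\backslash\Omega_\Gamma = WLC$ — after adjusting $\iota_0$ by an element of $\Gamma$ we can arrange $\iota_0^2 \in \Gamma$ and in fact $\iota_0^2 = \mathrm{Id}$ on the relevant component, giving a genuine involution. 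By the previous paragraph $\iota$ carries the cusp associated to $\langle\rho(m_1)\rangle$ to the cusp associated to $\langle\rho(m_2)\rangle$, i.e. it exchanges the two cusps $T_1$ and $T_2$. The main obstacle I anticipate is not conceptual but bookkeeping: pinning down the correct matrix $K$, the correct overall cube-root-of-unity normalization so that things land in $\mathrm{SU}(2,1)$ rather than just $\mathrm{GL}_3$, and tracking the conjugating element and the inversion needed so that $m_1$ goes exactly to (a conjugate of) $m_2$ and not merely to something in the wrong peripheral subgroup; once the matrix identity $\overline{K\rho(s)K} \sim \rho(t)^{\pm1}$ is nailed down, everything else is formal.
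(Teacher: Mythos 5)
Your final descent step (an antiholomorphic isometry normalizing $\Gamma$ preserves $\Omega_\Gamma$ and so induces an antiholomorphic involution of $WLC=\Gamma\backslash\Omega_\Gamma$) is exactly the mechanism the paper uses. The gap is in the choice of symmetry: the explicit map $\iota_0\colon [Z]\mapsto [K\overline{Z}]$ does satisfy $K\overline{\rho(s)}K=\rho(t)$ at the Parker--Will point, so it normalizes $\Gamma$, but the automorphism it induces is $s\leftrightarrow t$, and this automorphism does \emph{not} exchange the two peripheral classes. Indeed $\sigma(m_1)=\sigma(ts^{-1})=st^{-1}=(ts^{-1})^{-1}=m_1^{-1}$ --- here you miscomputed: $st^{-1}$ is $m_1^{-1}$, not $m_2^{-1}=t^{-1}s^{-1}$, and these are not conjugate in $\z3z3$ --- while $\sigma(m_2)=ts=s^{-1}(st)s$ is conjugate to $m_2$. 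Concretely, $\eta_0(\rho(m_1))=\rho(s)\rho(t)^{-1}=\rho(m_1)^{-1}$, so $\iota_0$ fixes the parabolic fixed point of $\rho(m_1)$ and sends the fixed point of $\rho(m_2)$ into its $\Gamma$-orbit; the induced involution of $WLC$ therefore preserves each cusp rather than exchanging them. Since the two peripheral subgroups are non-conjugate in $\Gamma$ (the quotient has two distinct cusps), the "crucial" check you defer --- that $\langle\rho(m_1)\rangle$ is carried to a $\Gamma$-conjugate of $\langle\rho(m_2)\rangle$ --- would simply fail, and "composing with inversion" cannot repair it, as inversion is an anti-automorphism, not an automorphism. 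You in fact brushed against the problem when you noticed $m_2\mapsto ts\sim m_2$, but the antiholomorphicity of $\iota_0$ does not change which abstract automorphism of $\Gamma$ it induces.

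The symmetry that does the job corresponds to $\varphi(s)=s^{-1}$, $\varphi(t)=t$ (equivalently $s\mapsto s$, $t\mapsto t^{-1}$): then $\varphi(ts^{-1})=ts$ is conjugate to $st=m_2$ and $\varphi(st)=s^{-1}t$ is conjugate to $ts^{-1}=m_1$, so the cusps are swapped. The paper realizes $\varphi$ antiholomorphically not by an explicit matrix but by a character argument: $\chi_\rho$ has trace coordinates $(0,0,3,3,x_0)$ with $x_0$ non-real, $\chi_{\rho\circ\varphi}$ has coordinates $(0,0,3,3,\overline{x_0})$, so $\rho$ and $\overline{\rho\circ\varphi}$ have equal characters; by the Lubotzky--Magid theorem and irreducibility they are conjugate in $\su21$, which produces the antiholomorphic pair $(\iota,\eta)$ with $\eta(\rho(s))=\rho(s)^{-1}$, $\eta(\rho(t))=\rho(t)$, and then one descends exactly as in your last paragraph. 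If you want to keep an explicit-matrix strategy, you must look for an antilinear map realizing $\varphi$; your coordinate flip $K$ exhibits a genuine antiholomorphic symmetry of the structure, but one that fixes each cusp.
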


\begin{proof}
Consider the automorphism $\varphi$ of $\z3z3$ given by 
 \[\varphi(s) = s^{-1} , \varphi(t) = t \]
 and the representation $\rho ' = \rho \circ \varphi$. Since $[s^{-1},t]$ is conjugate to $[s,t]^{-1}$, the character $\chi_\rho$ has coordinates $(0,0,3,3,x_0)$ and $\chi_{\rho '}$ has coordinates $(0,0,3,3,\con{x_0})$. By the Lubotzky-Magid theorem on characters of semi-simple representations (Theorem 1.28 of \cite{lubotzky_varieties_1985}), the representations $\rho$ and $\con{\rho '}$ are conjugate in $\slnc$. Since they are irreducible and with values in $\su21$, they are conjugate in $\su21$.
  Hence there exist anti-holomorphic involutions $\iota$ and $\eta$ of $\cp2$ and $\su21$ respectively such that:
 \begin{enumerate}
 \item $\iota$ stabilizes $\h2c$ and $\dh2c$
 \item For all $U \in \su21$ and $[Z] \in \cp2$ we have $[\eta(U)] [\iota(Z)]  = \iota([U][Z])$.
 \item $\eta(\rho(s)) = \rho'(s) = \rho(s)^{-1}$
 \item $\eta(\rho(t)) = \rho'(t) = \rho(t)$
\end{enumerate}   
   
 Since $\Im(\rho) = \Im(\rho ') = \Gamma$, the domain of discontinuity $\Omega_\Gamma$ is stabilized by $\iota$. Hence, the involution $\iota$ induces an anti-holomorphic involution of  $WLC = \Gamma \backslash \Omega_\Gamma$. 
 The holonomy representation of the structure given by $\iota(\Gamma \backslash \Omega_\Gamma)$ is then $\rho'$. Thus, $\rho'(st) = \rho(s^{-1}t) = \rho(t^{-1}) \rho(ts^{-1}) \rho(t)$ and $\rho'(s^{-1}t) = \rho(st)$. 
 Hence, the involution $\eta$ exchanges the peripheral holonomies of the two cusps: we deduce that $\iota$ exchanges the two cusps of $WLC$.
\end{proof}

 In particular, we deduce that there exists a neighborhood of the fist cusp whose image by the developing map is a horotube invariant by $\rho(m_1)  = \rho(t s^{-1})$. It is, indeed, the image by $\iota$ of a neighborhood of the second cusp, whose image by the developing map is a horotube invariant by $\rho(m_2)  = \eta(\rho(s)^{-1}\rho(m_1) \rho(s))$.

 \subsubsection{The Schwartz structure}\label{par_structure_schwartz}
 In his book \cite{schwartz}, Schwartz had already studied the groups corresponding to the real axis of the Parker-Will parametrization, constructing them as subgroups of triangle groups. He shows, in particular, that the representations are discrete in the segment $[-\alpha_1^{\lim},\alpha_1^{\lim}] \times \{0\}$, and that the representation with coordinates $(\alpha_1^{\lim},0)$ is the holonomy representation of a \CR {} uniformization of the Whitehead link complement. Furthermore, its results can be reformulated to establish that the image of a neighbourhood of each cusp by the developing map is a horotube. Schwartz also describes the two peripheral holonomies: the first one is horizontal unipotent and the second is generated by an ellipto-parabolic element $[P]$. By explicitly computing $[P]$ from the data in Chapter 4 of \cite{schwartz}, we obtain that the matrix $P$ is conjugate in $\su21$ to
   \[e^{i\theta}\begin{pmatrix}
1 & 0 & -\frac{i}{2} \\
0 & e^{-3i\theta} & 0 \\
0 & 0 &  1 \\
\end{pmatrix} \] 

where $\theta = \frac{1}{3} \arccos(-\frac{7}{8})$. In the trace coordinates of $\mathcal{X}_{\su21}(\z3z3)$, this representation is at the point $(3,w_{\mathrm{sch}})$, where $w_{\mathrm{sch}} = 2e^{i\theta} + e^{-2i\theta} \thickapprox 1,09062813494126 + 0,557252430478823i$. It is the intersection point of the red and blue curves of the Parker-Will slice, in Figure \ref{fig_peche_traces}.

\subsection{Spherical CR surgeries}\label{sect_ch_cr_wlc}
 We will apply the \CR {} surgery theorem of \cite{acosta_spherical_2016} to the Parker-Will uniformization of the Whitehead link complement.
 We keep the notation of \cite{acosta_spherical_2016} for the boundary thickening in order to state the result on a simpler way. We denote by $(\Dev_0 , \rho_0)$ the \CR {} structure on $WLC$ given by the Parker-Will uniformization. We will use the abusive notation of identifying $\rho : \mathbb{Z}/3\mathbb{Z} * \mathbb{Z}/3\mathbb{Z} \rightarrow \su21$ with representations $\rho : \pi_1(WLC) \rightarrow \pu21$.

\subsubsection{Applying the surgery theorem} 

We have seen below that the hypothesis of the surgery theorem of \cite{acosta_spherical_2016} are satisfied:
 the images of the two peripheral holonomies are generated by the unipotent elements $\rho(m_1) = \rho(st)$ and $\rho(m_1) = \rho(ts^{-1})$, and there exists $s \in [0,1[$ such that $\Dev_0(\widetilde{T_1}_{[s,1[})$ and $\Dev_0(\widetilde{T_2}_{[s,1[})$ are horotubes invariant under $\rho_0(m_1)$ and $\rho_0(m_2)$ respectively.

 \begin{rem}
  For the representations of $\pi_1(WLC)$ coming from representations of $\z3z3$, the relations $\rho(l_1) = \rho(m_1)^3$ and $\rho(l_2) = \rho(m_2)^3$ hold. Hence, the space parametrized by Parker and Will is contained in $\mathcal{R}_1(\pi_1(WLC) , \pu21)$. These relations are rigid: they are satisfied in the whole component of the $\slnc$-character variety of $WLC$. This fact is showed, with other techniques, by Guilloux and Will in \cite{guilloux_will_2016}.
 \end{rem} 

 Applying the surgery theorem of \cite{acosta_spherical_2016}, we obtain:
 
 \begin{prop}\label{prop_ouvert_ch_wlc}
  There exists an open neighborhood $\Omega$ of $\rho_0$ in $\mathcal{R}_1(\pi_1(WLC), \pu21)$ such that, for all $\rho \in \Omega$, there exist a \CR {} structure $(\Dev_\rho , \rho)$ on $WLC$ and for $i=1,2$:
  
  \begin{enumerate}
  \item If $\rho(m_i)$ is loxodromic, then the structure $(\Dev_\rho,\rho)$ extends to a structure on the Dehn surgery of $WLC$ of type $(-1,3)$ on $T_i$.
  \item If $\rho(m_i)$ is elliptic of type $(\frac{p}{n},\frac{1}{n})$, then the structure $(\Dev_\rho,\rho)$ extends to a structure on the Dehn surgery of $WLC$ of type $(-p,n+3p)$ on $T_i$.
  \item If $\rho(m_i)$ is elliptic of type $(\frac{p}{n},\frac{q}{n})$, then the structure $(\Dev_\rho,\rho)$ extends to a structure on the gluing of $WLC$ with the manifold $V(p,q,n)$ along $T_i$.
 \end{enumerate}   
 \end{prop}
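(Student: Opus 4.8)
The proof is an application of the spherical CR Dehn surgery theorem of \cite{acosta_spherical_2016}, so the plan is to check its hypotheses for the Parker--Will structure $(\Dev_0,\rho_0)$ on $WLC$ and then to translate the abstract conclusion of that theorem into an explicit Dehn surgery statement relative to the meridian--longitude markings $(m_i,l_i)$ of the two cusps.

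First I would recall what the surgery theorem of \cite{acosta_spherical_2016} requires: the peripheral holonomies of $\rho_0$ must be generated by parabolic elements, and each cusp of $WLC$ must admit a neighbourhood whose image under $\Dev_0$ is a horotube invariant under the corresponding peripheral parabolic. Both conditions have essentially been collected in the preceding subsection. By construction, $\rho_0(m_1)=\rho_0(ts^{-1})$ and $\rho_0(m_2)=\rho_0(st)$ are unipotent; Parker and Will exhibit for the parameter $(0,\alpha_2^{\lim})$ a Ford domain invariant under $\rho_0(m_2)$ which is a horotube (see Figure 11 and Proposition 6.8 of \cite{parker_complex_2017a}), and the anti-holomorphic involution $\iota$ of Proposition \ref{prop_involution_WLC}, which exchanges the two cusps, carries this horotube neighbourhood of the second cusp onto a horotube neighbourhood of the first, invariant under $\rho_0(m_1)$. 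Hence all hypotheses are met. Applying the theorem then yields an open neighbourhood $\Omega$ of $\rho_0$ in $\mathcal{R}_1(\pi_1(WLC),\pu21)$ and, for each $\rho\in\Omega$, a developing map $\Dev_\rho$ defining a \CR{} structure on the manifold obtained from $WLC$ by capping, along each $T_i$ where $\rho(m_i)$ has ceased to be parabolic, the piece prescribed by the theorem: a solid torus when $\rho(m_i)$ is loxodromic or elliptic of type $(\frac{p}{n},\frac{1}{n})$, and the manifold $V(p,q,n)$ when $\rho(m_i)$ is elliptic of type $(\frac{p}{n},\frac{q}{n})$.

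It remains to identify these capped manifolds as the stated Dehn surgeries of $WLC$. The decisive ingredient is the rigid relation $\rho(l_i)=\rho(m_i)^3$, which holds on all of $\mathcal{R}_1(\pi_1(WLC),\pu21)$ and hence throughout $\Omega$. When the capped piece is a solid torus, its meridian $\gamma_i$ becomes trivial in the filled manifold, so $\rho(\gamma_i)=\mathrm{Id}$; writing $\gamma_i=a\,l_i+b\,m_i$ this reads $\rho(m_i)^{3a+b}=\mathrm{Id}$. In the loxodromic case $3a+b=0$ with $\gcd(a,b)=1$ forces the slope $(-1,3)$, and the compatibility of the filling with $\Dev_\rho$ rules out the other primitive curve. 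In the elliptic case of type $(\frac{p}{n},\frac{1}{n})$, where $\rho(m_i)$ has order $n$, the slope selected by the surgery theorem is the one with $3a+b=n$, i.e. $(a,b)=(-p,\,n+3p)$. In the remaining elliptic case the capped piece is directly $V(p,q,n)$, glued along $T_i$. Combining the three cases for $i=1,2$ gives the proposition.

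The main obstacle is this last translation step: one must keep careful track of how the filling curve produced by the surgery theorem of \cite{acosta_spherical_2016} — which is phrased relative to a generator of the horotube — sits with respect to the $(m_i,l_i)$-basis of $WLC$, and in particular record the geometrically correct primitive slope rather than another peripheral curve with the same holonomy image. The only other non-routine point is the horotube condition for the first cusp, which does not appear in \cite{parker_complex_2017a} and relies essentially on the symmetry $\iota$ of Proposition \ref{prop_involution_WLC}.
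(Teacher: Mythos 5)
Your proposal is correct and follows essentially the same route as the paper: verify the hypotheses of the surgery theorem of \cite{acosta_spherical_2016} (unipotent peripheral holonomies, the Parker--Will horotube at the second cusp, and the horotube at the first cusp obtained from the involution $\iota$ of Proposition \ref{prop_involution_WLC}), apply the theorem, and then translate its conclusion into the $(l_i,m_i)$-marking using the rigid relation $\rho(l_i)=\rho(m_i)^3$. The bookkeeping you single out as the main obstacle is exactly what the paper does in Remark \ref{rem_marquage_wlc}, by applying the theorem with the marking $l_0=m_1$, $m_0=3m_1-l_1\in\ker\rho$, which pins down the slopes $(-1,3)$ and $(-p,n+3p)$ rather than leaving them determined only modulo the order of $\rho(m_i)$.
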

 
 \begin{rem}\label{rem_marquage_wlc}
 The marking $(l_0,m_0)$ of the surgery theorem of \cite{acosta_spherical_2016} does not correspond to the marking $(l_1,m_1)$ that we consider here.
  We know that $3m_1 - l_1 \in \ker (\rho)$ and that the image of each peripheral holonomy of $T_1$ is generated by $\rho(m_1)$. Hence the conclusions of the surgery theorem apply to the marking $l_0 = m_1$ and $m_0 = 3m_1 - l_1$. In particular, $nl_0 + pm_0 = nm_1 + p(3m_1-l_1) = -3 l_1 + (n+3p)m_1$. The same relation holds for the peripheral holonomy of $T_2$.
\end{rem}  

 In particular, in the region parametrized by Parker and Will, the peripheral holonomy of $T_2$ is always unipotent, and in a neighborhood of $\rho_0$, there exist open sets for which the peripheral holonomy of $T_1$ is loxodromic and elliptic respectively.
 Hence we obtain:
 
 \begin{cor}
  There are infinitely many \CR {} structures on the Dehn surgery of $WLC$ of type $(-1,3)$ on $T_1$.
 \end{cor}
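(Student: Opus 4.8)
The plan is to read this off from Proposition~\ref{prop_ouvert_ch_wlc}, using the loxodromic part of the Parker--Will family together with the fact that the holonomy representation of a $(G,X)$-structure, taken up to conjugacy in $G$, is an invariant of the structure up to isomorphism.

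First I would fix once and for all the topological manifold $M$ obtained from $WLC$ by the Dehn surgery of type $(-1,3)$ on $T_1$; it is a one-cusped $3$-manifold, independent of any choice of representation. By Proposition~\ref{prop_ouvert_ch_wlc} there is an open neighbourhood $\Omega$ of $\rho_0$ in $\mathcal{R}_1(\pi_1(WLC),\pu21)$ such that, for every $\rho\in\Omega$ with $\rho(m_1)$ loxodromic, the \CR{} structure $(\Dev_\rho,\rho)$ extends to a \CR{} structure on $M$; here the end corresponding to $T_2$ stays a cusp, since $\rho(m_2)=\rho(st)$ is unipotent throughout the Parker--Will family. So it suffices to produce infinitely many pairwise non-conjugate representations in $\Omega$ whose value on $m_1$ is loxodromic.

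For this I would use the one-parameter subfamily $\alpha_2\mapsto\rho_{(0,\alpha_2)}$, which is a continuous path in $\mathcal{R}_1(\pi_1(WLC),\pu21)$ passing through $\rho_0$ at $\alpha_2=\alpha_2^{\lim}$. Since $\rho_0(m_1)=\rho_0(ts^{-1})$ is unipotent, $\rho_0$ lies on the boundary of the loxodromic region $\mathcal{L}$ of Figure~\ref{peche1}, and the sub-arc $\{0\}\times]0,\alpha_2^{\lim}[$ lies in $\mathcal{L}$. As $\Omega$ is open, the set of $\alpha_2$ with $\rho_{(0,\alpha_2)}\in\Omega$ contains an interval $]\alpha_2^{\lim}-\varepsilon,\alpha_2^{\lim}]$, so for $\alpha_2\in]\alpha_2^{\lim}-\varepsilon,\alpha_2^{\lim}[$ all the representations $\rho_{(0,\alpha_2)}$ lie in $\Omega$ and have loxodromic value on $m_1$. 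They are moreover pairwise non-conjugate: the modulus of the loxodromic eigenvalue of $\rho_{(0,\alpha_2)}(ts^{-1})$ is a conjugacy invariant, it is different from $1$ on $]0,\alpha_2^{\lim}[$ but tends to $1$ as $\alpha_2\to\alpha_2^{\lim}$, hence it is not locally constant. Thus $\Omega$ contains infinitely many conjugacy classes of representations with loxodromic $m_1$-holonomy.

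Each such $\rho$ then yields a \CR{} structure on $M$ with holonomy $\rho$, and two $(G,X)$-structures with non-conjugate holonomy representations cannot be isomorphic; this produces infinitely many pairwise non-isomorphic \CR{} structures on $M$ (their holonomies being among the loxodromic-$m_1$ representations later treated in Theorem~\ref{thm_ch_dehn_eff_lox}). The only delicate point is this last distinctness step: one must certify that varying $\alpha_2$ genuinely changes the isomorphism type of the structure, and not merely the chosen lift of its holonomy, which is why I would argue through the conjugacy-invariant eigenvalue moduli rather than through the parametrization itself. Everything else follows immediately from Proposition~\ref{prop_ouvert_ch_wlc}, the openness of $\Omega$, and the position of $\rho_0$ on the boundary between the elliptic and loxodromic regions.
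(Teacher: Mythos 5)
Your proposal is correct and follows essentially the same route as the paper: the paper deduces the corollary directly from Proposition \ref{prop_ouvert_ch_wlc} together with the observation that near $\rho_0$ there is an open set of Parker--Will parameters (containing the arc $\{0\}\times]\alpha_2^{\lim}-\varepsilon,\alpha_2^{\lim}[$) for which $\rho(m_1)$ is loxodromic, each such representation giving a structure on the type $(-1,3)$ surgery. Your additional care in distinguishing the structures via the conjugacy-invariant eigenvalue modulus of $\rho(m_1)$ is a reasonable sharpening of a point the paper leaves implicit, not a different argument.
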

 
 \begin{rem}
 Using SnapPy, we know that the Dehn surgery of $WLC$ of type $(-1,3)$ on $T_1$ is a manifold with a torus boundary and fundamental group $\langle a,b \mid a^3b^3 \rangle$ which is not hyperbolic. The Dehn surgery on $T_2$ of type $(0,1)$ of this last manifold is the lens space $L(3,1)$.
 \end{rem}

\subsubsection{Expected Dehn surgeries}\label{sect_ch_attendues}
 In order to explicit the third point of Proposition \ref{prop_ouvert_ch_wlc} and determine the Dehn surgeries on $T_1$ admitting \CR {} structures extending $(\Dev_\rho, \rho)$, we need to identify the type of the elliptic element which generates the peripheral holonomy through the deformation. 
 Outside from the curve of unipotents of the region parametrized by Parker and Will, $\rho(m_1)$ is elliptic. 
  At first, consider the point $\rho_1$ with coordinates $(\alpha_1,\alpha_2) = (0, \frac{2\pi}{3})$. Denoting by $\omega = e^{\frac{2i\pi}{3}}$, the element $\rho_1(m_1)$ is conjugate to

\[ \rho_1(s^{-1}t) = 
\left(\begin{array}{ccc}
1 & -\sqrt{2}\omega & -1 \\

-\sqrt{2}\omega & 1+2\omega^2 & - \sqrt{2} \\

-1 & - \sqrt{2} & -2\omega^2
\end{array}\right)
\]

The eigenvalues of this matrix are $-\omega^2 $, $-\omega $ and $1$, of respective eigenvectors 

\[
V_1 = \begin{pmatrix}
1 \\
-\sqrt{2} \omega  \\
 \omega^2
\end{pmatrix} ,
\hspace{0.5cm}
V_2 = \begin{pmatrix}
1 \\
0 \\
 -\omega
\end{pmatrix} ,
\hspace{0.5cm}
V_3 = \begin{pmatrix}
\sqrt{2} \\
- \omega  \\
 \sqrt{2}\omega^2
\end{pmatrix} 
.\]
Moreover, this vectors have norms $\Phi(V_1) = \Phi(V_2) = 1$ and $\Phi(V_3) = -1$. Hence the fixed point of $\rho(s^{-1}t)$ in $\h2c$ is $[V_3]$. After a quick computation, we deduce that $\rho_1(m_1)$ is of type $(\frac{1}{9}, \frac{-1}{9})$. If a regular elliptic element admits as eigenvalues of its positive eigenvectors $e^{2i\pi\alpha}$ and $e^{2i\pi\beta}$, then $2\alpha-\beta$ and $2\beta - \alpha$ are nonzero. If, in addition, the element is of type $(\frac{p}{n}, \frac{q}{n})$, up to exchanging $\alpha$ and $\beta$, we have $2\alpha - \beta = \frac{p}{n}$ and $2\beta - \alpha = \frac{q}{n}$. Since $\mathcal{E}$ is connected and the eigenvectors and eigenvalues of a matrix are continuous, if $\rho \in \mathcal{E}$ has type $(\frac{p}{n},\frac{q}{n})$ with $p \geq q$, then $p>0>q$.

Since $\mathrm{tr}(m_1)$ is a local parameter of the region parametrized by Parker and Will, there exists an open neighborhood of $3 \in \mathbb{C}$ of traces reached by $\mathrm{tr}(m_1)$.
  But if $\rho(m_1)$ is in $\mathcal{E}$ and its trace is of the form $e^{2i\pi\frac{2p-1}{n}} + e^{2i\pi\frac{2-p}{n}} + e^{2i\pi\frac{-p-1}{n}}$ with $1>0>p>-n$, then $\rho(m_1)$ is elliptic of type $(\frac{p}{n},\frac{1}{n})$. By exchanging $p$ into $-p$ to have a more clear statement, we obtain the following proposition:
  
\begin{prop}\label{prop_ch_dehn_attendues_wlc}
 There exists $\delta > 0$ such that, for all relatively prime integers $p,n$ satisfying $0<p<n$ and $\frac{p}{n} < \delta$, there exists a deformation of the structure $(\Dev_0,\rho_0)$ which extends into a \CR {} structure on the Dehn surgery of $WLC$ of type $(p,n-3p)$ on $T_1$.
\end{prop}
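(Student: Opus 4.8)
The plan is to deduce the statement from Proposition~\ref{prop_ouvert_ch_wlc} by counting traces. Its second item says that an element $\rho(m_i)$ which is elliptic of type $\left(\tfrac{k}{n},\tfrac1n\right)$ gives the Dehn surgery of type $(-k,n+3k)$ on $T_i$; so, using also the remarking of Remark~\ref{rem_marquage_wlc}, it is enough to produce, for every pair of coprime integers $(p,n)$ in the stated range, a representation $\rho$ in the open set $\Omega$ lying in the two-parameter family of Parker and Will --- so that $\rho(m_2)=\rho(st)$ stays unipotent and no surgery is performed on $T_2$ --- with $\rho(m_1)$ regular elliptic of type $\left(\tfrac{-p}{n},\tfrac1n\right)$; that item, with $k=-p$, then extends $(\Dev_\rho,\rho)$ over $T_1$ to the Dehn surgery of type $(p,n-3p)$.

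First I would use, as recalled above, that $\mathrm{tr}(\rho(m_1))=\mathrm{tr}(\rho(ts^{-1}))$ is a local parameter of the Parker--Will family, so that the traces $\mathrm{tr}(\rho(m_1))$ attained by the representations $\rho\in\Omega$ fill an open neighbourhood $N$ of $3$ in $\mathbb{C}$. Next I would invoke the eigenvalue computation carried out just before the statement: a regular elliptic element of type $\left(\tfrac{-p}{n},\tfrac1n\right)$ has a trace $\tau_{p,n}$ which is a sum of three pairwise distinct unit complex numbers, all tending to $1$ as $\tfrac pn\to0$, so $\tau_{p,n}\to3$; and since $\tau_{p,n}$ is, by construction, the trace of a regular elliptic element, $f(\tau_{p,n})<0$, hence any element of $\su21$ of trace $\tau_{p,n}$ is regular elliptic. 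I would then fix $\delta>0$ with $\tau_{p,n}\in N$ for every admissible $(p,n)$ with $\tfrac pn<\delta$, and for each such $(p,n)$ pick $\rho\in\Omega$ in the Parker--Will family with $\mathrm{tr}(\rho(m_1))=\tau_{p,n}$. Then $\rho(m_1)$ is regular elliptic, so the parameter of $\rho$ lies in the region $\mathcal{E}$; by the sign and continuity argument preceding the statement --- $\mathcal{E}$ is connected and carries the unipotent $\rho_0(m_1)$ on its boundary, so an ordered type $(\tfrac an,\tfrac bn)$ on $\mathcal{E}$ has $a>0>b$ with one numerator equal to $1$ --- the type of $\rho(m_1)$, which has trace $\tau_{p,n}$, is forced to be $\left(\tfrac{-p}{n},\tfrac1n\right)$. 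Applying the second item of Proposition~\ref{prop_ouvert_ch_wlc} then yields the \CR{} structure on the Dehn surgery of $WLC$ of type $(p,n-3p)$ on $T_1$.

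The delicate point is this last identification of the type: one must make sure that $\tau_{p,n}$ is realised, among the elliptic elements near $\rho_0$, only by the type $\left(\tfrac{-p}{n},\tfrac1n\right)$ and not by some other elliptic type with the same trace but a larger denominator, and that the choice of lift to $\su21$ and the marking $l_0=m_1$, $m_0=3m_1-l_1$ of Remark~\ref{rem_marquage_wlc} really give the slope $(p,n-3p)$ rather than one of the other natural candidates. Both are controlled by the connectedness of $\mathcal{E}$ and by the explicit sample computation at the parameter $(0,\tfrac{2\pi}{3})$ performed before the statement, which anchors the correspondence between the traces near $3$ and the types $\left(\tfrac{-p}{n},\tfrac1n\right)$.
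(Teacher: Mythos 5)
Your argument is essentially the paper's own proof: you use $\mathrm{tr}(\rho(m_1))$ as a local parameter to realise the traces $\tau_{p,n}$ near $3$, combine Goldman's trace criterion with the connectedness of $\mathcal{E}$ and the anchor computation at $(0,\tfrac{2\pi}{3})$ to force the type $\left(\tfrac{-p}{n},\tfrac{1}{n}\right)$, and then apply Proposition \ref{prop_ouvert_ch_wlc} with the marking of Remark \ref{rem_marquage_wlc}, exactly as in Section \ref{sect_ch_attendues}. The only slip is attributing ``one numerator equal to $1$'' to the connectedness argument, which in fact only yields the sign constraint $a>0>b$; the numerator $1$ comes from the eigenvalue triple determined by $\tau_{p,n}$ once that sign constraint fixes which eigenvector is the negative one, and this does not affect the validity of the proof.
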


\begin{rem}
 If the open set for which Proposition \ref{prop_ch_dehn_attendues_wlc} holds is large enough, we will recognize some \CR {} structures on manifolds that had already been studied. By checking with SnapPy, we can notice that for the parameters $(p,n) = (1,4)$, the obtained surgery is the Figure eight knot complement, and the corresponding representation is the holonomy representation of the Deraux-Falbel structure constructed in \cite{falbel}. 
 Moreover, when $(p,n) = (1,5)$, the obtained Dehn surgery is the manifold \texttt{m009} of the census of Falbel, Koseleff and Rouillier of \cite{falbel_kosleff_rouillier}, and the representation is the one studied by Deraux in \cite{deraux_spherical_2015}, where he shows that it gives a \CR {} uniformization of the manifold \texttt{m009}.
 
 Hence we expect that these structures can be obtained as spherical CR Dehn surgeries of the Parker-Will structure on $WLC$. We will prove this fact in Part \ref{part_eff_defor}.
\end{rem}

\subsubsection{Surgeries on the Schwartz structure}
 We can also apply the \CR {} surgery theorem of \cite{acosta_spherical_2016} to the Schwartz uniformization  of $WLC$ that we briefly described in paragraph \ref{par_structure_schwartz}. 
 In the rest of this article we will not consider this case any more, and we will take the Parker-Will uniformiztion as a starting point.
 
 The Schwartz structure satisfies the hypotheses of the surgery theorem of \cite{acosta_spherical_2016} for the cusp of unipotent peripheral holonomy; we only need to describe a space of representations where the non-unipotent cusp has constant peripheral holonomy and the image of the holonomy of the unipotent cusp becomes elliptic or loxodromic. 
 This is equivalent to study the points of $\mathcal{X}_{\su21}(\z3z3)$ with coordinates $(z,w_{\mathrm{sch}})$, taking as starting point the coordinates $(3,w_{\mathrm{sch}})$ of the Schwartz representation describes in paragraph \ref{par_structure_schwartz}.

\begin{figure}[htb] 
 \begin{center}
   \includegraphics[width=5cm]{./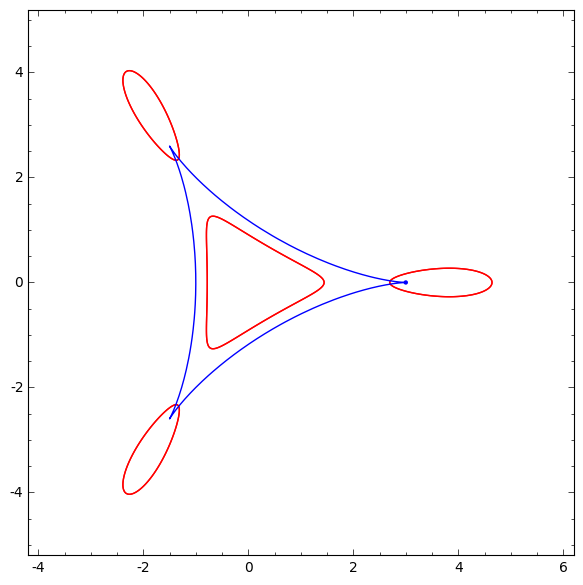}
   \includegraphics[height=3.5cm]{./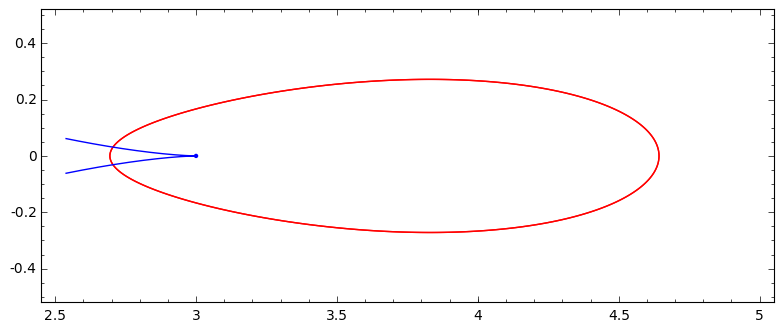}
   \caption{The Schwartz slice, $w = w_{\mathrm{sch}}$ of $\mathcal{X}_{\su21}(\z3z3)$.}\label{fig_tranche_schwartz}
 \end{center}
\end{figure} 

 These points correspond to the interior of the red lobes of Figure \ref{fig_tranche_schwartz}, in which there is also traced the curve of non-regular elements. We remark that in a neighbourhood of the point with coordinate $3$, there are representations with loxodromic peripheral holonomy as well as elliptic. In order to know the type of the elliptic when deforming the continuity argument in paragraph \ref{sect_ch_attendues} still holds. Hence, the elliptic elements that appear are of type $(\frac{p}{n},\frac{q}{n})$, where $p>0>q$. By applying the surgery theorem of \cite{acosta_spherical_2016}, and denoting by $\rho_{\mathrm{sch}}$ the holonomy representation of the Schwartz structure on $WLC$ we obtain:

 \begin{prop}
  There is an open neighborhood $\Omega$ of $\rho_{\mathrm{sch}}$ in $\mathcal{R}_1(\pi_1(WLC), \pu21)$ such that for all $\rho \in \Omega$, there exists a \CR {} structure $(\Dev_\rho , \rho)$ on $WLC$, close to the Schwartz uniformization, and such that:
  
  \begin{enumerate}
  \item If $\rho(m_1)$ is loxodromic, then the structure $(\Dev_\rho,\rho)$ extends to a structure on the Dehn surgery of $WLC$ of type $(-1,3)$ on $T_1$.
  \item If $\rho(m_1)$ is elliptic of type $(\frac{p}{n},\frac{1}{n})$, then the structure $(\Dev_\rho,\rho)$ extends to a structure on the Dehn surgery of $WLC$ of type  $(-p,n+3p)$ on $T_1$.
  \item If $\rho(m_1)$ is elliptic of type $(\frac{p}{n},\frac{q}{n})$, then the structure $(\Dev_\rho,\rho)$ extends to a structure on the gluing of $WLC$ with the manifold $V(p,q,n)$ along $T_1$.
 \end{enumerate}   
 \end{prop}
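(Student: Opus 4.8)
The plan is to transfer, essentially word for word, the reasoning behind Proposition~\ref{prop_ouvert_ch_wlc} to the Schwartz uniformization, applying the \CR{} surgery theorem of \cite{acosta_spherical_2016} at the cusp $T_1$. Write $(\Dev_0,\rho_{\mathrm{sch}})$ for the Schwartz structure on $WLC$. As recalled in paragraph~\ref{par_structure_schwartz}, the peripheral holonomy of $T_1$ is generated by the horizontal unipotent element $\rho_{\mathrm{sch}}(m_1)$, and Schwartz's explicit combinatorial construction in \cite{schwartz} can be reformulated to furnish some $s\in[0,1[$ for which $\Dev_0(\widetilde{T_1}_{[s,1[})$ is a horotube invariant under $\rho_{\mathrm{sch}}(m_1)$ (and likewise near $T_2$, whose ellipto-parabolic holonomy lies in a one-parameter parabolic subgroup, so that a nice horotube is available there as well); this is precisely the input required by the surgery theorem. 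Moreover the relation $\rho(l_1)=\rho(m_1)^3$ is rigid throughout the component of the $\slnc$-character variety of $WLC$ (Guilloux--Will, \cite{guilloux_will_2016}), so the slice $w=w_{\mathrm{sch}}$ along which we deform lies inside $\mathcal{R}_1(\pi_1(WLC),\pu21)$ and contains $\rho_{\mathrm{sch}}$.

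Granting these hypotheses, the surgery theorem of \cite{acosta_spherical_2016} applied at $\rho_{\mathrm{sch}}$ for the cusp $T_1$ yields an open neighbourhood $\Omega$ of $\rho_{\mathrm{sch}}$ in $\mathcal{R}_1(\pi_1(WLC),\pu21)$ and, for each $\rho\in\Omega$, a \CR{} structure $(\Dev_\rho,\rho)$ on $WLC$ close to the Schwartz uniformization, whose behaviour near $T_1$ is dictated by the dynamical type of $\rho(m_1)$. It then remains to rephrase the conclusion in the marking $(l_1,m_1)$. Exactly as in Remark~\ref{rem_marquage_wlc}, one has $3m_1-l_1\in\ker(\rho)$ and the peripheral holonomy of $T_1$ is generated by $\rho(m_1)$, so the surgery theorem applies with $l_0=m_1$ and $m_0=3m_1-l_1$, whence $n\,l_0+p\,m_0=(n+3p)\,m_1-p\,l_1$. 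Thus: if $\rho(m_1)$ is loxodromic, $(\Dev_\rho,\rho)$ extends over the Dehn surgery of slope $m_0=3m_1-l_1$, i.e. of type $(-1,3)$ on $T_1$; if $\rho(m_1)$ is elliptic of type $(\frac{p}{n},\frac{1}{n})$, it extends over the Dehn surgery of type $(-p,n+3p)$ on $T_1$; and if $\rho(m_1)$ is elliptic of type $(\frac{p}{n},\frac{q}{n})$, it extends over the gluing of $WLC$ with $V(p,q,n)$ along $T_1$.

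Finally, the continuity argument of paragraph~\ref{sect_ch_attendues} carries over verbatim to the slice $w=w_{\mathrm{sch}}$: the locus where $\rho(m_1)$ is elliptic is connected and meets a neighbourhood of the Schwartz point, while the eigenvectors and eigenvalues of $\rho(m_1)$ depend continuously on $\rho$, so the elliptic elements that actually occur have type $(\frac{p}{n},\frac{q}{n})$ with $p>0>q$, which is what makes the third conclusion effective. I expect the single genuinely delicate point to be the verification of the horotube hypothesis for the Schwartz structure: it is exactly this kind of control — in its stronger, \emph{porosity}, form — that obstructs a direct appeal to the surgery theorem of \cite{schwartz}, whereas the weaker statement of \cite{acosta_spherical_2016} only needs the horotube itself, which Schwartz's description of the Ford domain supplies.
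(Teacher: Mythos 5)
Your proposal is correct and follows essentially the same route as the paper: verify the horotube hypothesis at the unipotent cusp $T_1$ from Schwartz's construction, deform inside the slice $w=w_{\mathrm{sch}}$ (which lies in $\mathcal{R}_1$ by the rigid relation $l_1=3m_1$ in the relevant component), apply the surgery theorem of \cite{acosta_spherical_2016} at $T_1$, and translate the conclusion via the marking $l_0=m_1$, $m_0=3m_1-l_1$ together with the continuity argument of paragraph \ref{sect_ch_attendues} to pin down the elliptic types. The paper's own proof is exactly this transfer of Proposition \ref{prop_ouvert_ch_wlc}, so no further comparison is needed.
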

 
 More precisely, for the Dehn surgeries, we obtain the two following propositions:
 
\begin{prop}
 There is an open set of real dimension 2 parametrizing \CR {} structures on the Dehn surgery of $WLC$ of type $(-1,3)$ on $T_1$, which are obtained by deforming the Schwartz uniformization.
\end{prop}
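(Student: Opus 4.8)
The plan is to derive the statement by combining the proposition above, the one that already produces an open neighbourhood $\Omega$ of $\rho_{\mathrm{sch}}$ in $\mathcal{R}_1(\pi_1(WLC),\pu21)$ with its three cases, with the description of the Schwartz slice around Figure~\ref{fig_tranche_schwartz}. That proposition says that any $\rho\in\Omega$ for which the deforming peripheral holonomy is loxodromic is the holonomy of a \CR {} structure, deforming the Schwartz uniformization, on the Dehn surgery of $WLC$ of type $(-1,3)$ on $T_1$. Hence all that is left to prove is that these representations form a nonempty open subset of real dimension $2$.

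First I would recall that the Schwartz slice, namely the set of points of $\mathcal{X}_{\su21}(\z3z3)\cap X_0$ with coordinates $(z,w_{\mathrm{sch}})$, is obtained from the real surface $\{ x+\overline{x}=Q(z,w),\; x\overline{x}=P(z,w) \}$ by fixing the coordinate $w$, so it is of real dimension $2$ and is parametrized by the coordinate $z$ near the value $3$ attained at $\rho_{\mathrm{sch}}$. Then I would use that the type of the deforming peripheral holonomy, whose trace is $z$, is read off the sign of Goldman's function $f(z)=|z|^4-8\Re(z^3)+18|z|^2-27$: the holonomy is loxodromic exactly where $f(z)>0$. As recorded around Figure~\ref{fig_tranche_schwartz}, this locus is nonempty in every neighbourhood of $z=3$; for instance along the real axis one has the factorization $f(x)=(x-3)^3(x+1)$, which is positive for $x$ slightly larger than $3$. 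Since loxodromicity is an open condition, this produces a nonempty open subset of the Schwartz slice, still of real dimension $2$ after intersecting with $\Omega$; applying the preceding proposition to each of its points yields the \CR {} structures. Distinct points give distinct structures, since a point of the character variety determines the holonomy up to conjugacy (the representations being irreducible, hence semisimple) and the holonomy is an invariant of the \CR {} structure.

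I expect the one delicate point to be the behaviour of the $\su21$-slice exactly at $\rho_{\mathrm{sch}}$. Because $|w_{\mathrm{sch}}|^2=\frac{3}{2}$ one computes $Q(3,w_{\mathrm{sch}})^2-4P(3,w_{\mathrm{sch}})=0$, so the Schwartz point lies precisely on the locus where the $\su21$- and $\mathrm{SU}(3)$-character varieties meet and the two equations determining $x$ become tangent; one therefore has to check that the loxodromic locus of the genuine $\su21$-slice, and not merely of the ambient real surface, is still of full real dimension $2$ near $z=3$, which is what Figure~\ref{fig_tranche_schwartz} displays (equivalently, one follows the signs of $4P-Q^2$ and of $f$ in a punctured neighbourhood of $z=3$). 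Lastly, the deformation keeps the holonomy of one cusp fixed while moving that of the other, and as the two cusps of $WLC$ are exchanged by a self-homeomorphism the surgered manifold does not depend on which of them is called $T_1$.
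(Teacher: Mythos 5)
Your argument is correct and follows essentially the same route as the paper, which likewise deduces this proposition directly from the preceding surgery statement together with the observation that the Schwartz slice $w=w_{\mathrm{sch}}$ is a real two-dimensional family in which loxodromic peripheral holonomy occurs arbitrarily close to $z=3$ (the paper reads this off Figure \ref{fig_tranche_schwartz}). Your added checks — the factorization $f(x)=(x-3)^3(x+1)$, the tangency $Q^2-4P=0$ at the Schwartz character, and the sign analysis of $4P-Q^2$ guaranteeing the loxodromic points lie in the genuine $\su21$-slice — are consistent with the paper and simply make explicit what it leaves to the figure.
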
 
 
\begin{prop}
 There exists $\delta > 0$ such that for all relatively prime integers $p,n$ satisfying $0<p<n$ and $\frac{p}{n} < \delta$, there exists a deformation of the Schwartz uniformization which extends to a \CR {} structure on the Dehn surgery of $WLC$ of type $(p,n-3p)$ on $T_1$.
\end{prop}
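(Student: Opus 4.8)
The plan is to run, \emph{mutatis mutandis}, the argument that proved Proposition~\ref{prop_ch_dehn_attendues_wlc}, with the Parker--Will slice $\{z=3\}$ replaced by the Schwartz slice $\{w=w_{\mathrm{sch}}\}$ of $\mathcal{X}_{\su21}(\z3z3)$ and the base point $\rho_0$ replaced by $\rho_{\mathrm{sch}}$. The preceding proposition already provides an open neighbourhood $\Omega$ of $\rho_{\mathrm{sch}}$ in $\mathcal{R}_1(\pi_1(WLC),\pu21)$ on which the surgery theorem of \cite{acosta_spherical_2016} applies; in particular, for $\rho\in\Omega$ with $\rho(m_1)$ elliptic of type $(\frac{p}{n},\frac{1}{n})$, the structure $(\Dev_\rho,\rho)$ extends to the Dehn surgery of $WLC$ of type $(-p,n+3p)$ on $T_1$, taking into account the change of marking of Remark~\ref{rem_marquage_wlc} (so that $nl_0+pm_0=-3l_1+(n+3p)m_1$).

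First I would observe that, just as in paragraph~\ref{sect_ch_attendues}, the trace $\mathrm{tr}(\rho(m_1))$ is a local parameter of the Schwartz slice near its value $3$ at $\rho_{\mathrm{sch}}$, so its image contains an open neighbourhood of $3\in\mathbb{C}$; and, as already recorded for the Schwartz slice, the continuity argument of paragraph~\ref{sect_ch_attendues} carries over unchanged, so every elliptic $\rho(m_1)$ appearing there has type $(\frac{p}{n},\frac{q}{n})$ with $p>0>q$. Next, if $\mathrm{tr}(\rho(m_1))$ has the explicit form $e^{2i\pi\frac{2p-1}{n}}+e^{2i\pi\frac{2-p}{n}}+e^{2i\pi\frac{-p-1}{n}}$ with $1>0>p>-n$, then $\rho(m_1)$ is elliptic of type $(\frac{p}{n},\frac{1}{n})$; choosing $\delta>0$ small enough, all such traces with $\frac{|p|}{n}<\delta$ are realised on the slice by representations lying in $\Omega$. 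Finally, applying the surgery-theorem conclusion from the first step and relabelling $p$ into $-p$ exactly as in Proposition~\ref{prop_ch_dehn_attendues_wlc} yields, for every pair of coprime integers $0<p<n$ with $\frac{p}{n}<\delta$, a deformation of the Schwartz uniformization extending to a \CR {} structure on the Dehn surgery of $WLC$ of type $(p,n-3p)$ on $T_1$.

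The only genuinely non-formal point — and the place where I expect to need slightly more care than in the Parker--Will case — is the matching of the two neighbourhoods: choosing $\delta$ so that every trace of the prescribed arithmetic form with $\frac{|p|}{n}<\delta$ is simultaneously in the range of $\mathrm{tr}(\rho(m_1))$ on the Schwartz slice, attained by a representation inside $\Omega$, and such that the associated element lies in the elliptic region where the type computation is valid. This combines the local-parameter property of the trace on the slice with the openness of $\Omega$ and the local description of $\mathcal{E}$ near the point with coordinate $3$ visible in Figure~\ref{fig_tranche_schwartz}; the rest is a line-by-line transcription of the proof of Proposition~\ref{prop_ch_dehn_attendues_wlc}.
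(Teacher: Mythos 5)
Your proposal is correct and matches the paper's own (largely implicit) argument: the paper obtains this proposition exactly by transporting the proof of Proposition \ref{prop_ch_dehn_attendues_wlc} to the Schwartz slice $w=w_{\mathrm{sch}}$, i.e. by combining the preceding proposition (the open set $\Omega$ around $\rho_{\mathrm{sch}}$ from the surgery theorem of \cite{acosta_spherical_2016}), the continuity argument of paragraph \ref{sect_ch_attendues} giving the sign pattern $p>0>q$ for the elliptic types, the local-parameter property of the trace on the slice to realize the traces $e^{2i\pi\frac{2p-1}{n}}+e^{2i\pi\frac{2-p}{n}}+e^{2i\pi\frac{-p-1}{n}}$ yielding type $(\frac{p}{n},\frac{1}{n})$, and the marking change of Remark \ref{rem_marquage_wlc} followed by the relabeling $p\mapsto -p$. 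So you take essentially the same route as the paper, including the care needed to choose $\delta$ so that the relevant traces are attained inside $\Omega$ and in the elliptic region.
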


\subsubsection{The Figure eight knot complement}
 At last, we make some remarks about the following observation, made by Parker and Will in \cite{parker_complex_2017a}:
 
 \begin{rem}
  At the point of coordinates $(\alpha_1,\alpha_2) = (0,\arctan(\sqrt{7}))$, by setting $G_1 = \rho(st)$, $G_2= \rho((tst)^3)$ and $G_3 = \rho(ts)$, we obtain the representation $\rho_2$ of the fundamental group of the Figure eight knot complement found by Falbel in \cite{falbel_spherical}. This representation is also the holonomy representation of the uniformization of Deraux-Falbel, constructed in \cite{falbel}.
 \end{rem}

 \begin{rem}
  At the point of coordinates $(\alpha_1,\alpha_2) = (0,\arctan(\sqrt{7}))$, the element $\rho(m_1) = \rho(ts^{-1})$ is elliptic of type $(\frac{-1}{4}, \frac{1}{4})$. If the open set $\Omega$ given by proposition \ref{prop_ouvert_ch_wlc} contains the point $(0,\arctan(\sqrt{7}))$, then the expected Dehn surgery is of type $(1,-1)$ on $T_1$. It is, indeed, the Figure eight knot complement.
 \end{rem}

 By noticing that $tst$ is conjugate to $st^{-1}$, we obtain the following proposition:
 
 \begin{prop}
  Let $\rho \in \mathrm{Hom}(\z3z3, \su21)$ be a representation. The following assertions are equivalent:
 \begin{enumerate}
  \item $\rho(tst)$ has order 4
  \item $\mathrm{tr}(\rho(st^{-1})) = 1$
  \item $\chi_\rho$ has coordinates $(0,0,z,1,x)$ in $\mathcal{X}_{\su21}(\z3z3)$ with $z,x \in \mathbb{C}$
  \item  By setting $G_1 = \rho(st)$, $G_2= \rho((tst)^3)$ and $G_3 = \rho(ts)$, we obtain a representation of $\pi_1(M_8)$.
\end{enumerate}   
 \end{prop}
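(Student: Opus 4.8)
The plan is to push everything onto the single element $\rho(st^{-1})$ and then reduce to elementary linear algebra in $\su21$. The identity that drives the whole argument is that, in $\z3z3$, $t^{-1}(tst)t = st\cdot t = st^{2} = st^{-1}$ (using $t^{3}=1$), so $\rho(tst)$ and $\rho(st^{-1})$ are conjugate in $\mathrm{Im}(\rho)$; in particular they have the same trace and the same order. Likewise $st^{-1}=(ts^{-1})^{-1}$ and $s(s^{-1}t)s^{-1}=ts^{-1}$, so $\mathrm{tr}\,\rho(st^{-1})=\overline{\mathrm{tr}\,\rho(s^{-1}t)}$ by $\mathrm{tr}(U^{-1})=\overline{\mathrm{tr}(U)}$ for $U\in\su21$; hence the condition ``$\mathrm{tr}\,\rho(st^{-1})=1$'' is literally the same as ``the fourth trace coordinate $\mathrm{tr}\,\rho(s^{-1}t)$ of $\chi_\rho$ equals $1$''.

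For (2)$\Leftrightarrow$(3) the implication from (3) to (2) is then immediate. For (2)$\Rightarrow$(3) I would show that $\mathrm{tr}\,\rho(st^{-1})=1$ forces $\rho(s)$ and $\rho(t)$ to be regular elliptic of order $3$, so $\mathrm{tr}\,\rho(s)=\mathrm{tr}\,\rho(t)=0$. Indeed, since $\rho(s)^{3}=\rho(t)^{3}=\mathrm{Id}$ and the only multisets of cube roots of unity with product $1$ are $\{1,1,1\}$, $\{\omega,\omega,\omega\}$, $\{\omega^{2},\omega^{2},\omega^{2}\}$ and $\{1,\omega,\omega^{2}\}$, each of $\rho(s),\rho(t)$ is either a cube root of unity times $\mathrm{Id}$ or regular elliptic of order $3$ (trace $0$); and if, say, $\rho(s)=\lambda\,\mathrm{Id}$ then $\mathrm{tr}\,\rho(st^{-1})=\lambda\,\overline{\mathrm{tr}\,\rho(t)}$ has modulus $|\mathrm{tr}\,\rho(t)|\in\{0,3\}$, never $1$; the same rules out $\rho(t)$ scalar. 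This gives (3) with $z=\mathrm{tr}\,\rho(st)$ and $x=\mathrm{tr}\,\rho([s,t])$.

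For (1)$\Leftrightarrow$(2) I would work with $U=\rho(st^{-1})$, whose characteristic polynomial is $X^{3}-\mathrm{tr}(U)X^{2}+\overline{\mathrm{tr}(U)}X-1$ (Cayley--Hamilton together with $\mathrm{tr}(U^{-1})=\overline{\mathrm{tr}(U)}$). If $\mathrm{tr}(U)=1$ this factors as $(X-1)(X^{2}+1)$, so $U$ is diagonalisable with eigenvalues $1,i,-i$; hence $U^{4}=\mathrm{Id}$ and $U$, and therefore $\rho(tst)$, has order exactly $4$. Conversely, if $\rho(tst)$ has order $4$, then $\rho(s),\rho(t)$ are again regular elliptic of order $3$ (if $\rho(s)=\lambda\,\mathrm{Id}$, then $\rho(tst)=\lambda\,\rho(t)^{2}$ has order dividing $3$); since $U^{4}=\mathrm{Id}$ its eigenvalue multiset, having product $1$ and containing $\pm i$, is one of $\{1,i,-i\}$, $\{i,i,-1\}$, $\{-i,-i,-1\}$, and it remains to exclude the last two — equivalently, to show that a product $\rho(s)\rho(t)^{-1}$ of two elliptic elements of order $3$ cannot be a complex reflection of order $4$ — after which $\mathrm{tr}(U)=1$. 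I expect this elimination of the two non-regular configurations to be the genuinely delicate point; it should follow either from a trace identity in $\mathrm{SL}_{3}(\mathbb{C})$ expressing $\mathrm{tr}\,\rho(st^{-1})$ through $z=\mathrm{tr}\,\rho(st)$ and $x=\mathrm{tr}\,\rho([s,t])$ on the relevant component, or from a direct analysis of the fixed-point configuration of $\rho(s)\rho(t)^{-1}$.

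Finally, for (1)$\Leftrightarrow$(4) I would invoke the presentation of $\pi_{1}(M_{8})$ in the generators $G_{1},G_{2},G_{3}$ used by Falbel and by Parker and Will: under $g_{i}\mapsto G_{i}(\rho)$, all the defining relators but one become words that are already trivial in $\z3z3$ (consequences of $s^{3}=t^{3}=1$), for every $\rho\in\mathrm{Hom}(\z3z3,\su21)$, while the remaining relator, which involves $G_{2}=\rho((tst)^{3})$, is satisfied precisely when $\rho(tst)^{4}=\mathrm{Id}$ (so that $\rho((tst)^{3})=\rho(tst)^{-1}$). Hence $(G_{1},G_{2},G_{3})$ defines a representation of $\pi_{1}(M_{8})$ if and only if $\rho(tst)$ has order $4$, closing the chain of equivalences; checking that the first relators are indeed automatic and isolating which one encodes the order-$4$ condition is the bookkeeping step to be carried out carefully here.
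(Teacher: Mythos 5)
Your starting point is exactly the paper's: the whole proposition hangs on the observation that $t^{-1}(tst)t = st^{-1}$ in $\z3z3$, so $\rho(tst)$ and $\rho(st^{-1})$ share order and trace, and your treatment of (2)$\Leftrightarrow$(3) (ruling out scalar $\rho(s)$ or $\rho(t)$ by a modulus count) and of (2)$\Rightarrow$(1) (characteristic polynomial $X^3-X^2+X-1=(X-1)(X^2+1)$) is correct and complete. The genuine gap is the step you yourself flag in (1)$\Rightarrow$(2): an order-$4$ element of $\su21$ need not have trace $1$; it can have eigenvalues $\{-1,i,i\}$ or $\{-1,-i,-i\}$, and you only say you "expect" these to be excluded. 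As written this direction is not proved. The exclusion is true but requires an actual argument: first, if $\rho(s)$ or $\rho(t)$ were scalar then $\rho(tst)$ would have order dividing $3$, so both are regular of order $3$ and have trace $0$; then one must show that $\rho(s)\rho(t)^{-1}$ cannot have a repeated eigenvalue $\pm i$. This can be done by hand: put $C=\rho(st^{-1})$ in diagonal form adapted to the signature-$(2,1)$ form (either the repeated eigenspace is positive definite and the $-1$-eigenvector negative, or the repeated eigenspace has signature $(1,1)$ and the $-1$-eigenvector is positive), and write $\rho(s)=C\rho(t)$ with $\mathrm{tr}\,\rho(t)=\mathrm{tr}(C\rho(t))=0$. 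In the first case this forces the $(3,3)$ entry of $\rho(t)$ to vanish, which is impossible because the image of a negative vector under an isometry cannot be orthogonal to a negative vector; in the second case the vanishing of two diagonal entries together with the unitarity relations among the columns of $\rho(t)$ leads to an identity of the form $-|a|^2=|e|^2|p|^2$, again impossible. Some argument of this kind (or an appeal to the explicit description of the $\su21$ locus of the character variety) is needed before you may conclude $\mathrm{tr}\,\rho(st^{-1})=1$.

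The equivalence (1)$\Leftrightarrow$(4) is also only a plan: you invoke "the presentation of $\pi_1(M_8)$ in the generators $G_1,G_2,G_3$" without specifying it, and you defer the verification that all relators except one hold identically in $\z3z3$ while the remaining one is equivalent to $\rho(tst)^4=\mathrm{Id}$. That bookkeeping is precisely the content of this implication (it is what Falbel's and Parker--Will's identification of the figure-eight representation rests on), so as a blind proof this part is not established either; you would need to write down the chosen presentation of $\pi_1(M_8)$ and carry out the substitution $G_1=\rho(st)$, $G_2=\rho((tst)^3)$, $G_3=\rho(ts)$ explicitly.
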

 
\begin{rem}
 In this case, the character $\chi_\rho$ has trace coordinates of the form $(0,0,z_\rho , 1 , x_\rho)$. The map $[\rho] \mapsto z_\rho$ is a double cover over its image, besides the boundary curve where the fibres are singletons. 
 We have then the parametrization of the component of the character variety of the Figure eight knot given by  Falbel, Guilloux, Koseleff, Rouillier and Thistlethwaite in \cite{character_sl3c},
 studied in \cite{acosta_character_2016}.
\end{rem}

\newpage
\part{Effective deformation of a Ford domain} \label{part_eff_defor}

\section{Statements and strategy of proof}\label{sect_strategie_de_preuve}
 We will give an explicit bound, at least in one direction, to Theorem \ref{prop_ouvert_ch_wlc}, by deforming the Ford domain of the Parker-Will uniformization. We are going to consider the representations parametrized by Parker and Will in \cite{parker_complex_2017a} with parameter $(0, \alpha_2)$, where $\alpha_2 \in ]-\frac{\pi}{2}, \frac{\pi}{2}[$. We take as starting point the point with parameter $(0, \alpha_2^{\lim})$, corresponding to the holonomy representation of the uniformization.

  \subsection{Spherical CR structures: statements}
\begin{notat}
If $\rho$ is a representation with parameter $(0, \alpha_2)$ in the Parker-Will parametrization,  we denote by $\Gamma(\alpha_2) = \Im (\rho)$ its image. In order to avoid heavy notation, we will often use $\Gamma$ instead of $\Gamma(\alpha_2)$ if there is not ambiguity for the parameter.
 For $\alpha_2^{\lim}$, we denote $\rho_\infty = \rho(\alpha_2^{\lim})$ and $\Gamma_\infty = \Gamma(\alpha_2^{\lim})$ 
  We finally denote by $\rho_n$, for $n\geq 4$,  the representation $\rho(\alpha_2)$ such that $8 \cos^2(\alpha_2) = 2\cos(\frac{2\pi}{n}) + 1$.
\end{notat}

   We will show the two following theorems:
  
\begin{thm}\label{thm_ch_dehn_eff_ell}
 Let $n\geq 4$. Let $\rho_n$ be the representation with parameter $(0, \alpha_2)$ such that $8 \cos^2(\alpha_2) = 2\cos(\frac{2\pi}{n}) + 1$ in the Parker-Will parametrization. Then, $\rho_n$ is the holonomy representation of a \CR {} structure on the Dehn surgery of the Whitehead link complement on $T_1$ of type $(1,n-3)$ (i.e. of slope $\frac{1}{n-3}$). 
\end{thm}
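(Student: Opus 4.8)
The strategy is to carry the Parker--Will Ford domain of \cite{parker_complex_2017a} along the one-parameter family of representations with parameter $(0,\alpha_2)$ and to apply the Poincaré polyhedron theorem, in the version used in \cite{parker_complex_2017a}, to the deformed polyhedron. Write $[U]$ for the distinguished element of $\Gamma(\alpha_2)$ conjugate to $\rho(m_1) = \rho(t s^{-1})$. In the Parker--Will parametrization one has $\mathrm{tr}([U]) = 8\cos^2(\alpha_2)$, so the equation $8\cos^2(\alpha_2) = 2\cos(\frac{2\pi}{n})+1$ says exactly that $[U]$ has eigenvalue multiset $\{1,e^{2\pi i/n},e^{-2\pi i/n}\}$; a computation of which eigenvector carries negative norm, analogous to the one performed for $\rho_1(s^{-1}t)$ in Section~\ref{sect_ch_attendues}, then shows that $[U]$ is regular elliptic of type $(-\frac1n,\frac1n)$, hence of order exactly $n$. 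For $\alpha_2 = \alpha_2^{\lim}$, informally $n=\infty$, $[U]$ degenerates to the unipotent element of the Parker--Will uniformization.

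I would first recall from \cite{parker_complex_2017a} the finite family of side-pairing maps in $\Gamma_\infty$ bounding $D_\infty$, and for each $\alpha_2$ define the corresponding bisectors $\mathfrak{B}(p,q)$ attached to the analogous elements of $\Gamma(\alpha_2)$, always with normalized lifts, so that the classification of Proposition~\ref{lox_spinal_spheres} and the results on coequidistant bisectors of Section~\ref{subsect_pairs_coeq_biss} apply; let $D(\alpha_2)$ be the region they cut out in $\h2c$. Since $D_\infty$ is noncompact precisely along the end accounting for the cusp $T_1$ (the parabolic fixed point of $[U]$), the qualitative change as $\alpha_2$ leaves $\alpha_2^{\lim}$ is that the fixed point of the now-elliptic $[U]$ enters $\h2c$ and the faces surrounding that end close up around it. The core of the proof is to verify the hypotheses of the Poincaré polyhedron theorem for $D(\alpha_2)$, which I would organise into the three checks announced in the introduction: (i) \emph{topology of the faces} --- each face is a topological ball bounded by a spinal sphere, a spinal fan, or a Giraud-type disk, which follows from Propositions~\ref{lox_spinal_spheres} and~\ref{prop_inter_biss_sym} together with the angular-diameter estimate of Corollary~\ref{cor_diam_ang_reel_bisector}, used to rule out spurious intersections; (ii) \emph{local combinatorics} --- at every ridge the incident faces and cycle transformations are the expected ones, and in particular the cycle transformation at the ridges through the fixed point of $[U]$ is elliptic of order $n$ with dihedral angles summing to $\frac{2\pi}{n}$, which is where the hypothesis $n\ge4$ and the precise value of $\alpha_2$ enter, and where the link with the $(3,3,n)$-triangle groups of \cite{parker_wang_xie} appears; (iii) \emph{global combinatorics} --- the pattern of pairwise intersections of the faces is the one inherited by continuity from $D_\infty$, so that the side-pairing cycles close and $\overline{D(\alpha_2)}$ tiles $\h2c$.

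Granting (i)--(iii), the Poincaré polyhedron theorem gives that $\Gamma_n = \rho_n(\z3z3)$ is discrete, that $\overline{D(\alpha_2)}\cap\h2c$ is a fundamental polyhedron, and that its side-pairing endows the boundary at infinity with a uniformizable \CR{} structure of holonomy $\rho_n$ on a $3$-manifold $M_n$, still carrying a cusp coming from the unchanged unipotent $\rho_n(m_2)$. To identify $M_n$ it suffices to note that at $\alpha_2^{\lim}$ the quotient at infinity is $WLC$ and that, along the deformation, the only change is that the end corresponding to $T_1$ gets Dehn filled in the way dictated by the rotation of the elliptic element $[U]$ of type $(-\frac1n,\frac1n)$. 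Using the marking correspondence of Remark~\ref{rem_marquage_wlc} ($l_0 = m_1$, $m_0 = 3m_1-l_1$), such an element kills the slope $n l_0 - m_0 = l_1 + (n-3)m_1$, i.e. it produces the surgery of type $(1,n-3)$ on $T_1$; for $n=4$ this is the Figure Eight knot complement and the structure is the Deraux--Falbel uniformization of \cite{falbel}.

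The main difficulty is the joint verification of (ii) and (iii) near the end that gets filled: one must show that no new face intersections are created as the bisectors move, that the faces wrapping around the cusp of $D_\infty$ reorganise into a finite polyhedral cone around the fixed point of $[U]$, and that the cone angle there is exactly $\frac{2\pi}{n}$, so that the Poincaré cycle condition is met with finite order $n$. This is the content of the technical sections on the topology of the faces, the local combinatorics, and the global combinatorics, and it is where the special value of $\alpha_2$ does its work.
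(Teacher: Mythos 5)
Your overall strategy is essentially the paper's: deform the Parker--Will Ford domain along the slice $(0,\alpha_2)$, reduce everything to the three verifications on the topology of the faces, the local combinatorics and the global combinatorics, and then identify the filled slope through the marking $l_0=m_1$, $m_0=3m_1-l_1$ of Remark~\ref{rem_marquage_wlc}; your computation that $[U]$ is regular elliptic of type $(\frac{1}{n},-\frac{1}{n})$ and that the killed curve is $l_1+(n-3)m_1$ agrees with Section~\ref{sect_fin_preuve}. One structural difference: you route the existence of the \CR{} structure through the Poincar\'e polyhedron theorem, whereas the paper proves Theorem~\ref{thm_ch_dehn_eff_ell} directly from the conditions (TF), (LC), (GC), by splitting $\partial_\infty D(\alpha_2)$ into a thickening $\mathcal{V}(\alpha_2)$ of the bigons and quadrilaterals (whose quotient by $\Gamma(\alpha_2)$ is $WLC$ minus the cusp $T_1$) and a complementary piece $N(\alpha_2)$ whose quotient by $\langle[U]\rangle$ is the filling solid torus; the Poincar\'e theorem is reserved for the separate uniformizability statement (Theorem~\ref{thm_unif_ch_dehn_ell}). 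On the elliptic side the bisectors are metric bisectors and the faces are balls, so your route is viable and would in fact give uniformizability as well; but note that your picture of ``ridges through the fixed point of $[U]$ with dihedral angles summing to $\frac{2\pi}{n}$'' is not what occurs: the deformed domain is a Dirichlet-type domain \emph{centred at} $[p_U]$, invariant under $\langle[U]\rangle$, so the fixed point lies in its interior, and the order $n$ enters through the faces $\mathcal{F}_k^{\pm}$ being indexed modulo $n$, not through an angle-sum condition at a vertex of the polyhedron.

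The genuine gap is in your step (iii): the claim that the global pattern of face intersections ``is the one inherited by continuity from $D_\infty$'' is precisely what continuity does not give. As the bisectors move, faces that were disjoint at $\alpha_2^{\lim}$ could a priori start intersecting at some intermediate parameter, and excluding this is the core technical content of the paper (Section~\ref{sect_combi_globale}): one projects the bisectors $\mathcal{J}_k^{\pm}$ to the visual sphere of $[p_U]$, shows each projected disk has angular diameter less than $4\beta$ (with $\beta=\frac{2\pi}{n}$), and combines this with a real angular-diameter bound to prove Proposition~\ref{prop_inter_biss_ell}. These estimates only close up when $n\geq 9$; for $4\leq n\leq 8$ the paper itself cannot verify (GC) with these tools and must import the combinatorial result of Parker, Wang and Xie \cite{parker_wang_xie} (their Theorem 4.3, which matches $\mathcal{J}_k^{+}=\mathcal{B}_{-2k}$, $\mathcal{J}_k^{-}=\mathcal{B}_{-2k-1}$). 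So, as written, your argument is missing the quantitative control that rules out new intersections, and any complete proof must either supply such estimates or cite \cite{parker_wang_xie} for the small values of $n$.
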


\begin{thm}\label{thm_ch_dehn_eff_lox}
 Let $\alpha_2 \in ]0 , \alpha_2^{\lim}[$. Let $\rho$ be the representation with parameter $(0, \alpha_2)$ in the Parker-Will parametrization. Then $\rho$ is the holonomy representation of a \CR {} structure on the Dehn surgery of the Whitehead link complement on $T_1$ of type $(1,-3)$ (i.e. of slope $-\frac{1}{3}$).
\end{thm}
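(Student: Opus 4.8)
The plan is to run, for every $\alpha_2\in\,]0,\alpha_2^{\lim}[\,$, the deformation of the Parker--Will Ford domain outlined in the introduction, and then read off the topology of the manifold at infinity. Write $\rho=\rho(\alpha_2)$ with the explicit Parker--Will matrices at $\alpha_1=0$, $\Gamma=\Gamma(\alpha_2)$, and let $[U]=\rho(ts^{-1})=\rho(m_1)$; on this slice the parameter $z=8\cos^2(\alpha_2)$ lies in $]3,8[$, so that $f(z)=(z-3)^3(z+1)>0$ and $[U]$ is loxodromic throughout the range (this is the regime complementary to that of Theorem~\ref{thm_ch_dehn_eff_ell}, where $[U]$ is elliptic of finite order). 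First I would write down the one-parameter family of bisectors $\mathfrak B(p,\gamma p)$ that continues the faces of the Parker--Will domain at $(0,\alpha_2^{\lim})$ --- with $\gamma$ ranging over the generators and the relevant short words and $p$ the appropriate isometric-sphere centre --- and, using the box-product formulas, compute their foci $[p\boxtimes\gamma p]$ together with the signs of the quantities $r$ and the ratios $u$ of Propositions~\ref{lox_spinal_spheres} and~\ref{prop_inter_biss_sym}. This produces a candidate polyhedron $D(\alpha_2)\subset\h2c$ equipped with the $\Gamma$-equivariant side pairing inherited from Parker--Will.

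The core of the argument is to prove that $D(\alpha_2)$ is an honest polyhedron of a fixed combinatorial type for all $\alpha_2\in\,]0,\alpha_2^{\lim}[\,$, in three steps matching the technical sections of Part~\ref{part_eff_defor}. First, the topology of the faces: each face is a bisector of the expected homeomorphism type (metric bisector, fan, or Clifford cone according to the sign of its $r$), by Proposition~\ref{lox_spinal_spheres}. Second, the local combinatorics: adjacent faces meet in a Giraud disk or, in the order-$3$ symmetric configurations, in the intersection of Proposition~\ref{prop_inter_biss_sym} with parameter $u<\frac{2}{3}$, so that the faces glue along ridges and the cycle conditions of the Poincar\'e polyhedron theorem can be checked; here the angular-diameter bound of Corollary~\ref{cor_diam_ang_reel_bisector} and the visual-sphere projections of Proposition~\ref{prop_proj_biss_sphere_visuelle} are the tools showing that, seen from the centre of an isometric sphere, the competing faces subtend small real visual diameter. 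Third, the global combinatorics: the union of the faces is exactly $\partial D(\alpha_2)$, with no unexpected intersections anywhere in the family. Once this is in place, checking the ridge cycles produces a developing map, hence a spherical CR structure with holonomy $\rho$ on the quotient $M(\alpha_2)$ of $\overline{D(\alpha_2)}\cap\dh2c$ by the side pairing.

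It then remains to identify $M(\alpha_2)$. The only change from the Parker--Will situation is that the peripheral holonomy $\rho(m_1)$ has passed from parabolic to loxodromic, so over the cusp $T_1$ the horotube in the developing image is replaced by a neighbourhood of the complement of the axis of $[U]$, whose quotient by $\langle[U]\rangle$ is a solid torus; the marking translation of Remark~\ref{rem_marquage_wlc} pins the gluing down to the slope $-\frac13$, in agreement with Proposition~\ref{prop_ouvert_ch_wlc}. Tracking this framing through the face identifications shows $M(\alpha_2)$ is the Dehn surgery of $WLC$ on $T_1$ of type $(1,-3)$, which is the assertion. The main obstacle is making steps two and three \emph{uniform} in $\alpha_2$: one must rule out that a new tangency or a collapsing Giraud disk appears as $\alpha_2\to 0$ (where $[U]$ becomes increasingly loxodromic), so that the combinatorial type of $D(\alpha_2)$ is constant and equals the known one at $\alpha_2^{\lim}$; this constancy is what lets the cycle conditions be verified once and for all, and it is where essentially all of the box-product, visual-sphere and angular-diameter estimates are spent. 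A secondary subtlety is that, unlike at the parabolic limit, the developing image over $T_1$ is no longer a horotube, so the faces of $D(\alpha_2)$ near the filled cusp and the edge cycles around the axis of $[U]$ must be analysed separately.
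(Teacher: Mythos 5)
Your outline follows the same route as the paper: pass to the domain invariant under $[U]=\rho(m_1)$ (centred at the fixed point $[p_U]$, which in this loxodromic range lies in $\cp2\setminus\con{\h2c}$), deform its faces as bisectors given by box-products, establish the three conditions on face topology, local combinatorics and global combinatorics via the visual-sphere projections and the angular-diameter bound, and finally identify the manifold at infinity by splitting off the region around the axis of $[U]$, whose quotient by $\langle[U]\rangle$ is a solid torus, the slope being fixed by the marking translation of Remark \ref{rem_marquage_wlc}. Up to one step, this is the proof given in Sections \ref{sect_fin_preuve}--\ref{sect_combi_globale}.

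The step that does not work as you describe it is the production of the structure itself. You propose to verify the ridge-cycle conditions of the Poincaré polyhedron theorem and you assert that the combinatorial type of $D(\alpha_2)$ is constant on $]0,\alpha_2^{\lim}[$. Both claims fail on $]0,\frac{\pi}{6}[$, which is a substantial part of the range of the theorem: at $\alpha_2=\frac{\pi}{6}$ the bisectors $\mathcal{J}_k^{\pm}$ pass from metric bisectors through fans to Clifford cones (Lemma \ref{lemme_nature_bissecteurs}), for $\alpha_2<\frac{\pi}{6}$ the faces $\mathcal{F}_k^{\pm}$ are no longer homeomorphic to balls and the quadrilaterals $Q_k^{\pm}$ acquire genus one (Proposition \ref{prop_faces_def1}), and intersections such as $\mathcal{J}_0^{-}\cap\mathcal{J}_{-1}^{-}$ can even become disconnected. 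Consequently the Poincaré polyhedron theorem, as stated in \cite{parker_complex_2017a}, cannot be applied there; this is exactly why uniformizability in the loxodromic range is only Conjecture \ref{conj_unif_ch_dehn_lox}, whereas the elliptic case yields Theorem \ref{thm_unif_ch_dehn_ell}. What persists through the deformation is only the incidence pattern recorded in (TF), (LC), (GC) — bi-tangency of the two Giraud disks at $[p_A]$, $[p_B]$ and their $[U]$-orbit, and the absence of unexpected intersections — not the homeomorphism type of the faces. The paper therefore obtains the spherical CR structure without any Poincaré-type theorem: by continuity of the deformation from the Parker–Will uniformization, a thickening of the bigons and quadrilaterals descends to $WLC$ minus the $T_1$-cusp (Lemma \ref{lemme_struct_defor_loin_pointe}), while the complementary region $N(\alpha_2)$ has quotient a solid torus in which $l_1^{-1}m_1^{3}$ is homotopically trivial, with an additional continuity argument when $\alpha_2<\frac{\pi}{6}$, where the boundary of $N(\alpha_2)$ degenerates to a surface of infinite genus. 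To make your argument cover all of $]0,\alpha_2^{\lim}[$ you must either replace the cycle-checking step by this continuity argument, or restrict the Poincaré step to $\alpha_2\geq\frac{\pi}{6}$ and treat $]0,\frac{\pi}{6}[$ separately; as written, the proposal leaves the small-$\alpha_2$ part of the theorem unproved.
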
  
  
  \begin{rem}
 We will give a complete proof of Theorem \ref{thm_ch_dehn_eff_ell} only for  $n \geq 9$. The techniques used for the proof do not let us treat the last five cases. However, the global combinatorics of bisectors that would be needed to conclude for $n \geq 4$ are shown by Parker, Wang and Xie in \cite{parker_wang_xie}, but with other techniques, using in particular a parametrization of a family of triangle groups. The result on the global combinatorics corresponds to the statement of Theorem 4.3 in \cite{parker_wang_xie}; the link between their notation and ours is given by $U = I_1I_2$, $S^{-1}=I_1I_3$ and $T = I_3I_2$.
 \end{rem}

\subsection{Strategy of proof}
 By studying the image $\Gamma_\infty$ of the holonomy representation $\rho_\infty : \pi_1(WLC) \rightarrow \pu21$, Parker and Will construct a Ford domain for the set of left cosets $[A] \backslash \Gamma_{\infty}$ for some unipotent element $[A]$, which generates the image of one of the peripheral holonomy. This structure has a symmetry exchanging the two cusps, seen in Proposition \ref{prop_involution_WLC}; we denote by $U$ the image of $A$ by the corresponding involution $\eta$.

 We will consider some deformations of the holonomy representation $\rho$ given by Parker and Will and deform the Ford domain for $[U] \backslash \Gamma$ into a domain centred at a fixed point of $[U]$ and invariant by $[U]$, with face identifications given by elements of $\Gamma$ and with the same local combinatorics as the Parker-Will domain. If $[U]$ is elliptic, it will be a Dirichlet domain, like the one given by Deraux and Falbel in \cite{falbel} for the Figure eight knot complement. If $[U]$ is loxodromic, the domain will be centred \emph{outside from} $\h2c$.
 
 Considering separately a neighbourhood of the cusp and the rest of the structure, we identify the obtained structures as Dehn surgeries, like in the surgery theorem of \cite{acosta_spherical_2016}.

 In order to deform the Ford domain of Parker and Will,
we will deform their construction by defining a one parameter family of bisectors $(\mathcal{J}_k^{\pm})_{k \in \ZZ} \subset \con{\h2c}$ invariant by the action of $[U]$.
 We will define the 3-faces of our domain $\mathcal{F}_k^{\pm} \subset \mathcal{J}_k^{\pm}$ by cutting off a part of the bisector $\mathcal{J}_{k}^+$ by the bisectors $\mathcal{J}_{k}^{-}$ and $\mathcal{J}_{k+1}^{-}$ to define the face $\mathcal{F}_{k}^+$ and by cutting off a part of the bisector  $\mathcal{J}_{k}^-$ by the bisectors $\mathcal{J}_{k}^{+}$ and $\mathcal{J}_{k-1}^{+}$ to define the face $\mathcal{F}_{k}^-$. See Figure \ref{fig_3faces_1} in order to have in mind the shape of these 3-faces. We will give a more precise definition in section \ref{sect_notat_combi_initiale}, as well as the notation that we will use from that point on.

We will need to check three conditions to establish our results:
  a condition on the topology of faces, that we will denote by (TF),
  a condition on the local combinatorics of intersections, that we will denote by (LC) and
  a condition on the global combinatorics of intersections, that we will denote by  (GC).
More precisely, we can state them in the following way:
\begin{notat}
 We call conditions (TF), (LC) and (GC) the following conditions:

\begin{itemize}
 \item[(TF)] The intersections of the form $\mathcal{F}_k^+ \cap \mathcal{F}_k^-$ and $\mathcal{F}_k^+ \cap \mathcal{F}_{k-1}^-$ are bi-tangent Giraud disks. In particular, $\partial_\infty \mathcal{F}_k^{+}$ is bounded by two bi-tangent circles, defining a <<bigon>> and a <<quadrilateral>>. The same holds for intersections of the form $\mathcal{F}_k^- \cap \mathcal{F}_k^+$ and $\mathcal{F}_k^- \cap \mathcal{F}_{k+1}^-$.
 \item[(LC)] The intersections of the form $\mathcal{F}_k^+ \cap \mathcal{F}_{k+1}^+$ and $\mathcal{F}_k^- \cap \mathcal{F}_{k+1}^-$ contain exactly two points; the ones of the form $\mathcal{F}_k^+ \cap \mathcal{F}_{k+1}^-$ contain exactly one point; all these points are in $\dh2c$.
 \item[(GC)] The face $\mathcal{F}_k^{\pm}$ intersects $\mathcal{F}_{k+l}^{\pm}$ if and only if $l \in \{-1,0,1\}$. The face $\mathcal{F}_k^{+}$ intersects $\mathcal{F}_{k+l}^-$ if and only if $l \in \{-1,0,1\}$.
 The indexes are modulo $n$ whenever $[U]$ is elliptic of order $n$. 
\end{itemize} 

 \end{notat}

\begin{figure}[htbp]
\center
 \includegraphics[width = 4cm]{./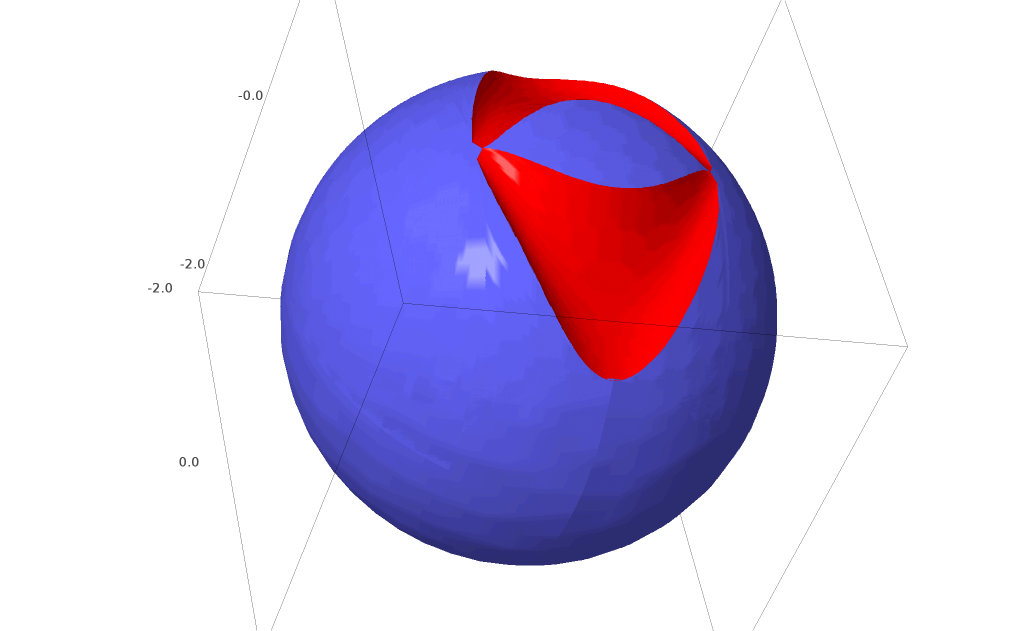}
 \includegraphics[width = 4cm]{./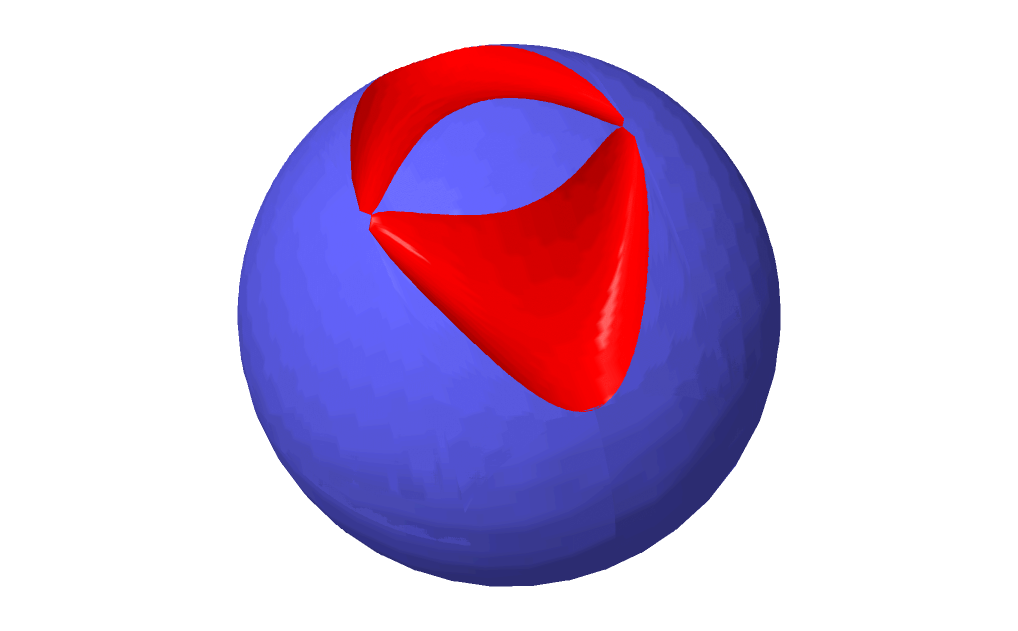}
 \includegraphics[width = 4cm]{./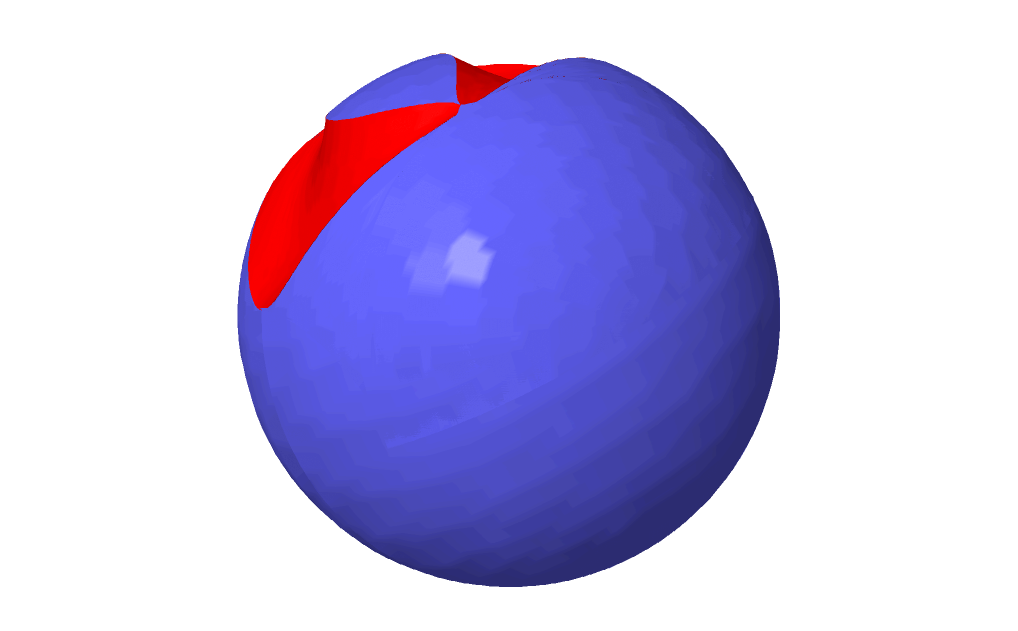}
 \caption{Simplified picture of a face $\mathcal{F}_k^{\pm}$. The blue region is in $\dh2c$; the two red regions are Giraud disks in $\h2c$. The blue region consists of a bigon $B_k^{\pm}$ and a quadrilateral $Q_k^{\pm}$.}\label{fig_3faces_1}
\end{figure}

 If these three conditions are satisfied, then the faces $\mathcal{F}_k^{\pm}$ are well defined and border a domain in $\h2c$, which has the same side pairing as the one given by Parker and Will in \cite{parker_complex_2017a}. The same holds for the boundary at infinity of the domain, which is bordered by the bigons and quadrilaterals of $\partial_\infty \mathcal{F}_k^{\pm}$. A simplifyed picture of the topology of the faces is given in Figure \ref{fig_3faces_1}. We will come back later to the details of the picture below.
 If the three conditions are satisfied, then the domain defined in $\dh2c$ together with the side pairings determines a \CR {} structure on $WLC$, which extends to the surgery expected by Theorem \ref{prop_ouvert_ch_wlc}, more precisely:
 
 \begin{enumerate}
  \item If $[U]$ is loxodromic, then the structure $(\Dev_\rho,\rho)$ extends to a structure on the Dehn surgery on $WLC$ of type $(-1,3)$ on $T_1$.
  \item If $[U]$ is elliptic of type $(\frac{p}{n},\frac{1}{n})$, then the structure $(\Dev_\rho,\rho)$ extends to a structure on the Dehn surgery on $WLC$ of type $(-p,n+3p)$ on $T_1$.
  \item If $[U]$ is elliptic of type $(\frac{p}{n},\frac{q}{n})$, then the structure $(\Dev_\rho,\rho)$  extends to a structure on the gluing of $WLC$ with the manifold $V(p,q,n)$ along $T_1$.
 \end{enumerate}

We are going to show conditions (TF), (LC) and (GC) in the particular case of deformations with parameter $(0,\alpha_2)$ for $\alpha_2 \in ]0,\frac{\pi}{2} [$ in the Parker-Will parametrization. This will give Theorems \ref{thm_ch_dehn_eff_lox} and \ref{thm_ch_dehn_eff_ell}.
 We will begin by setting notation and recalling the initial combinatorics in Section \ref{sect_notat_combi_initiale}. Then, assuming conditions (TF), (LC) and (GC), we will prove the statements on \CR {} structures in section \ref{sect_fin_preuve}. Thereafter, we will check the three conditions, which is mostly technical work. We will show the condition (TF) of the topology of the faces in Section \ref{sect_topo_faces}, then the condition (LC) of local combinatorics in Section  \ref{sect_combi_locale} and finally we will consider the condition (GC) of global combinatorics in two steps, in Section \ref{sect_combi_globale} for a global strategy of proof, and in Subsections \ref{subsect_combi_lox} for the case where $[U]$ is loxodromic and \ref{subsect_combi_ell} for the case where $[U]$ is elliptic.
 
 \subsection{Results involving the Poincaré polyhedron theorem}
 
 We can wonder if the spherical CR structures given by Theorems
 \ref{thm_ch_dehn_eff_ell} and \ref{thm_ch_dehn_eff_lox} are uniformizable. In order to prove such a result, we will need to apply a Poincaré polyhedron theorem in $\h2c$, as stated for example in \cite{parker_complex_2017a}. A complete proof of this theorem will appear in the book of Parker \cite{parker_complex_Toappear}.
  By applying the Poincaré polyhedron theorem in $\h2c$ from Theorem  \ref{thm_ch_dehn_eff_ell}, we obtain the following theorem:
  
  \begin{thm}\label{thm_unif_ch_dehn_ell}
 Let $n \geq 4$. Then the Dehn surgery of the Whitehead link complement on $T_1$ of slope $\frac{1}{n-3}$ admits a \CR {} uniformization, given by the group $\Gamma_n$.
\end{thm}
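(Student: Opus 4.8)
The plan is to feed the combinatorial data produced for Theorem \ref{thm_ch_dehn_eff_ell} into the Poincaré polyhedron theorem in $\h2c$, exactly as it is stated in \cite{parker_complex_2017a}. For $n \geq 4$ the element $[U]$ is elliptic of finite order $n$, so the bisectors $\mathcal{J}_k^{\pm}$ (with $k$ taken modulo $n$) bound a compact polyhedron $D$ in $\con{\h2c}$ centred at the fixed point $[V_3] \in \h2c$ of $[U]$; this is the Dirichlet-type domain described in Section \ref{sect_strategie_de_preuve}. First I would record that, by conditions (TF), (LC) and (GC), the $2n$ faces $\mathcal{F}_k^{\pm}$ of $D$, together with the face identifications coming from the Parker--Will side pairing (the same maps realizing $\mathcal{F}_k^+ \leftrightarrow \mathcal{F}_k^-$ as in \cite{parker_complex_2017a}) and with the cyclic $\langle [U]\rangle$-symmetry, give a genuine side-pairing of $D$: each identification map lies in $\Gamma_n = \Gamma(\alpha_2)$, sends a face isometrically to another face, and matches the combinatorial incidence of lower-dimensional faces because the global combinatorics (GC) says the only non-empty intersections are the prescribed ones.

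The heart of the verification is the ridge cycle condition. For each codimension-$2$ face (a Giraud disk of type $\mathcal{F}_k^+\cap\mathcal{F}_k^-$ or $\mathcal{F}_k^+\cap\mathcal{F}_{k-1}^-$, by (TF)), I would follow the orbit of that ridge under the groupoid generated by the side-pairing maps, check that the cycle closes up after finitely many steps, that the total dihedral angle around the ridge is $2\pi/m$ for an integer $m\geq 1$, and that the product of the side-pairing elements along the cycle is an elliptic element of order $m$ (or the identity). Since these ridges are precisely the bi-tangent Giraud disks of (TF) and the tessellation near each of them is the expected one by (LC)–(GC), the cycle transformations are exactly those already used by Parker and Will; the new input relative to \cite{parker_complex_2017a} is only the extra relation imposed by the finite order of $[U]$, which contributes the cycle around the ``axis'' ridges fixed by $[U]$ and gives the Dehn-filling relation $m_1 \mapsto$ (torsion of order $n$). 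One also checks the ridges at infinity and the cusp: the cusp of $T_2$ is still parabolic with a horotube neighbourhood, so the consistency conditions there are inherited verbatim from \cite{parker_complex_2017a}. I would also need the tessellation condition, i.e.\ that $D$ and its $\Gamma_n$-images cover $\con{\h2c}$ with disjoint interiors near each face and ridge; again this is local and follows from (TF), (LC), (GC).

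Granting all cycle conditions, the Poincaré polyhedron theorem of \cite{parker_complex_2017a} yields that $\Gamma_n$ is discrete, that $D$ is a fundamental domain for its action on $\h2c$, and that the quotient orbifold is obtained by gluing the faces of $D$; restricting to $\dh2c$, the boundary $\partial_\infty D$ (the bigons $B_k^\pm$ and quadrilaterals $Q_k^\pm$) with the induced side-pairing is a fundamental domain for $\Gamma_n$ acting on its domain of discontinuity $\Omega_{\Gamma_n}$, and the quotient $\Gamma_n\backslash\Omega_{\Gamma_n}$ is homeomorphic to the manifold at infinity, which by the proof of Theorem \ref{thm_ch_dehn_eff_ell} is the Dehn surgery of $WLC$ on $T_1$ of slope $\frac{1}{n-3}$. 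Hence that manifold carries a uniformizable \CR{} structure with holonomy $\rho_n$. I expect the main obstacle to be the bookkeeping of the ridge cycles around the faces meeting the fixed locus of $[U]$: one has to be sure the cyclic symmetry interacts correctly with the Parker--Will face identifications so that the cycle transformations have the claimed finite orders, and that no unexpected ridge appears — but this is controlled precisely by conditions (TF), (LC) and (GC), which by hypothesis we have already established (for $n\geq 9$ here, and for the remaining cases via the combinatorics of \cite{parker_wang_xie}).
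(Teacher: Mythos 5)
Your plan coincides with the paper's own proof: given (TF), (LC) and (GC), one applies the Poincaré polyhedron theorem in $\h2c$ as stated in \cite{parker_complex_2017a} to the $\langle [U]\rangle$-invariant domain $D(\alpha_2)$ with the side pairing of the faces $\mathcal{F}_k^{\pm}$, obtains discreteness and uniformizability, identifies the manifold at infinity via Theorem \ref{thm_ch_dehn_eff_ell}, and handles $n=4,\dots,8$ by importing the global combinatorics of \cite{parker_wang_xie}. Only cosmetic slips: the centre of the domain is $[p_U]$ (not the point $[V_3]$, which was computed at one special parameter), and $D$ is not compact since it meets $\dh2c$ — neither affects the argument.
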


Once again, our proof only holds for $n \geq 9$; and the use of the Poincaré polyhedron theorem is essentially the same as the one done by Parker, Wang and Xie in \cite{parker_wang_xie}. 
Considering the results on the combinatorics of the intersections shown in \cite{parker_wang_xie}, we can complete the proof for the five last cases.
For $\alpha_2 \in [ \frac{\pi}{6} , \alpha_2^{\lim} [$, the Poincaré polyhedron theorem can still be applied, and apart from the condition of being a \emph{polyhedron} (where all faces are homeomorphic to balls), we check the hypothesis for $\alpha_2 \in ]0 , \frac{\pi}{6} [$. See Lemma \ref{lemme_nature_bissecteurs} for more details. This allows us to conjecture the following result:

\begin{conj}\label{conj_unif_ch_dehn_lox}
 Let $\alpha_2 \in ]0 , \alpha_2^{\lim}[$. Then the group $\Gamma(\alpha_2)$ gives a \CR {} uniformization on the Dehn surgery of the Whitehead link complement on $T_1$ of slope $-\frac{1}{3}$.
\end{conj}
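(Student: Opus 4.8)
\noindent\emph{Proof strategy for Conjecture \ref{conj_unif_ch_dehn_lox}.}
The plan is to feed the domain produced in the proof of Theorem \ref{thm_ch_dehn_eff_lox} into the Poincaré polyhedron theorem for $\h2c$ (the version stated in \cite{parker_complex_2017a}, whose complete proof will appear in \cite{parker_complex_Toappear}). For $\rho$ with parameter $(0,\alpha_2)$, that proof already supplies a subset $D \subset \h2c$, bounded by the 3-faces $\mathcal{F}_k^{\pm} \subset \mathcal{J}_k^{\pm}$, invariant under the loxodromic $[U]$, together with a side pairing by elements of $\Gamma(\alpha_2)$; conditions (TF), (LC) and (GC) control its combinatorial skeleton: the ridges are the bi-tangent Giraud disks $\mathcal{F}_k^+ \cap \mathcal{F}_k^-$ and $\mathcal{F}_k^+ \cap \mathcal{F}_{k-1}^-$, and the remaining incidences are the isolated tangency points of (LC), all at infinity. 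One then checks, one by one, the hypotheses of the polyhedron theorem: that $D$ is locally finite along $\partial D$, that the side-pairing maps are the expected elements of $\Gamma(\alpha_2)$, that each ridge cycle closes up with cycle transformation the identity (the angles around each bi-tangent Giraud disk adding up to $2\pi$, those around the infinite tangency points behaving correctly), and that the structure induced on the cusp coming from the parabolic $\rho(st)$ is complete --- for the last point one reuses the horotube description provided by Proposition \ref{prop_involution_WLC}, exactly as in \cite{schwartz} and \cite{acosta_spherical_2016}. The conclusion of the theorem then gives discreteness of $\Gamma(\alpha_2)$, that $D$ is a fundamental polyhedron for its action on $\h2c$, and that $\Gamma(\alpha_2) \backslash \Omega_{\Gamma(\alpha_2)}$ is the manifold at infinity identified in Theorem \ref{thm_ch_dehn_eff_lox}, namely the Dehn surgery of $WLC$ on $T_1$ of slope $-\frac{1}{3}$; this is the desired \CR {} uniformization.

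Concretely, first I would fix the normalization and list, as in Section \ref{sect_notat_combi_initiale}, the faces $\mathcal{F}_k^{\pm}$, the side-pairing transformations, and for each ridge the cycle of faces through it together with the ordered product of side pairings around it. Then I would verify that each such cycle transformation is trivial: this is a bounded computation once (GC) guarantees that no unexpected incidences arise. Next I would verify the completeness condition at the cusp: restricting the face pairing of $\partial_\infty D$ to a neighbourhood of the parabolic fixed point of $\rho(st)$, one sees a horotube quotient, so the link is a torus and the induced structure is complete there. Finally I would assemble the output: discreteness of $\Gamma(\alpha_2)$, the fact that $D$ is a fundamental polyhedron, and the identification of the quotient of the domain of discontinuity.

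The main obstacle --- and the reason this is stated as a conjecture rather than a theorem --- is the requirement that $D$ and its boundary faces actually form a \emph{polyhedron}, i.e. that each $\mathcal{F}_k^{\pm}$ is homeomorphic to a closed ball. By Proposition \ref{lox_spinal_spheres}, whether a wall $\mathcal{J}_k^{\pm}$ is an honest bisector (with ball-like intersection with $\h2c$) or a Clifford cone is governed by the sign of $r = \langle p,p \rangle \langle q,q \rangle - \langle p,q \rangle \langle q,p \rangle$ for its defining vectors, and Lemma \ref{lemme_nature_bissecteurs} shows that for $\alpha_2 \in \, ]0, \frac{\pi}{6}[\,$ some of the walls $\mathcal{J}_k^{\pm}$ degenerate to Clifford cones, so the corresponding faces are no longer cells and the topology of $D$ is not controlled by the naive picture of Figure \ref{fig_3faces_1}. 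For $\alpha_2 \in [\frac{\pi}{6}, \alpha_2^{\lim}[$ all the walls are genuine bisectors and the argument above goes through verbatim, which is precisely why the uniformization is a theorem on that subinterval; closing the gap for $\alpha_2 \in \, ]0, \frac{\pi}{6}[\,$ would require either a different combinatorial model for the fundamental domain --- in the spirit of the triangle-group parametrization of \cite{parker_wang_xie} --- or a direct topological analysis of the degenerate walls.
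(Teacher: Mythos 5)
Your proposal is not a complete proof of the conjecture, and that is exactly the paper's own position: the author applies the Poincaré polyhedron theorem of \cite{parker_complex_2017a} to $D(\alpha_2)$ with the side pairing of the faces $\mathcal{F}_k^{\pm}$, which yields discreteness and uniformizability for $\alpha_2 \in [\frac{\pi}{6},\alpha_2^{\lim}[$ (a part of Conjecture \ref{conj_unif_ch_dehn_lox}), and identifies the same obstruction you do for $\alpha_2 \in \,]0,\frac{\pi}{6}[$, namely that by Lemma \ref{lemme_nature_bissecteurs} the walls $\mathcal{J}_k^{\pm}$ become Clifford cones, the faces are no longer homeomorphic to balls, and the polyhedron hypothesis fails, which is precisely why the statement is left as a conjecture. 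So your strategy is essentially the paper's; the only minor inaccuracy is at the endpoint $\alpha_2=\frac{\pi}{6}$, where the walls are fans rather than metric bisectors (still homeomorphic to balls, so the application of the theorem there is unaffected).
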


\section{Notation and initial combinatorics}\label{sect_notat_combi_initiale}

 With the notation and the tools related to bisectors, extors and the intersections that we studied in Subsection \ref{subsect_pairs_coeq_biss}, we will describe a deformation of the Ford domain constructed by Parker and Will in \cite{parker_complex_2017a}.
 Let us begin by recalling the combinatorics of the domain and fix a notation for some remarkable elements of the group as well as some points of $\cp2$.

\subsection{Notation - remarkable points} \label{subsect_notat_points}

 We set at first notation for the elements of the group and some remarkable points that we will use below.

\begin{notat}[Elements] In the family of representations  $\rho$ of $\z3z3 = \langle s, t \rangle$ with values in $\su21 $ parametrized by Parker and Will, we denote by $S = \rho(s)$ and $T = \rho(t)$. They are two regular elliptic elements of order 3.

 In the same way as Parker and Will, we denote by $A = ST$ and $B = TS$. These two elements, conjugate by $S$, are unipotent in the family of representations, parametrized by $(\alpha_1,\alpha_2) \in ]\frac{-\pi}{2} , \frac{\pi}{2} [^2$.
 At last, we denote by $U = S^{-1} T$, $V = SUS^{-1} = T S^{-1}$ and $W = SVS^{-1} = STS$.
\end{notat}

\begin{notat}[Fixed points] If $[G] \in \pu21$ is unipotent or regular, we will denote by $[p_G] \in \cp2$ a particular fixed point. 

\begin{itemize}
 \item If $[G]$ is parabolic, denote by $[p_G]$ its unique fixed point in $\dh2c$.
 \item If $[G]$ is regular elliptic, denote by $[p_G]$ its unique fixed point in $\h2c$.
 \item If $[G]$ is loxodromic, denote by $[p_G]$ its unique fixed point in $\cp2 \setminus \con{\h2c}$.
\end{itemize}
\end{notat}

\begin{rem}
 When $[G]$ is unipotent or regular, the map $[G] \mapsto [p_G]$ is continuous.
\end{rem}

In this way, in the region parametrized by Parker and Will, we have:

\[[p_A] = \begin{bmatrix} 1 \\ 0 \\ 0 \end{bmatrix}
\text{ and }
[p_B] = \begin{bmatrix} 0 \\ 0 \\ 1 \end{bmatrix}. \]

We will limit ourselves, to the deformations of the Parker-Will structure with coordinate $\alpha_1 = 0$. In this case, we have:

\[S = \begin{pmatrix}
1 &  \sqrt{2} e^{- i \alpha_2} & -1 \\
-\sqrt{2}e^{i\alpha_2} & -1 & 0 \\
-1 & 0 & 0
\end{pmatrix}  
\text{, }
T = \begin{pmatrix}
0 &  0 & -1 \\
0 & -1 & -\sqrt{2} e^{- i \alpha_2} \\
-1 & \sqrt{2}e^{i\alpha_2} & 1
\end{pmatrix} 
\]
\[\text{ and }U = \begin{pmatrix}
1 & - \sqrt{2} e^{i \alpha_2} & -1 \\
-\sqrt{2}e^{i\alpha_2} & 1 + 2 e^{2 i \alpha_2} & 2 \sqrt{2} \cos(\alpha_2) \\
-1 & 2 \sqrt{2} \cos(\alpha_2) & 2+2 e^{-2i \alpha_2}
\end{pmatrix}. \] 

Then 
$[p_U] = \begin{bmatrix} 1 \\ -\frac{\sqrt{2}}{2} e^{i \alpha_2} \\ e^{2 i \alpha_2} \end{bmatrix}$
,
$[p_V] = [S p_U] = \begin{bmatrix} -e^{2 i \alpha_2} \\ -\frac{\sqrt{2}}{2} e^{i \alpha_2} \\ -1 \end{bmatrix}$
 and 
$[p_W] = [S p_V] = \begin{bmatrix} -e^{2 i \alpha_2} \\ \sqrt{2}e^{3 i \alpha_2} + \frac{\sqrt{2}}{2} e^{i \alpha_2} \\ e^{2 i \alpha_2} \end{bmatrix}$.

\begin{notat}
If $U$ is not unipotent, it has three different eigenvalues. We will denote by $p'_U$ and $p''_U$ two eigenvectors associated to the eigenvalues different from 1. Denoting by $\delta$ a square root of $(8\cos^2(\alpha_2) - 3)(8\cos^2(\alpha_2) + 1)$, we have:
\[
[p'_U] = \begin{bmatrix}
2(2e^{2i\alpha_2}+1) \\
 -\sqrt{2}e^{i\alpha_2}(2e^{2i\alpha_2}+1 + \delta) \\
 -(8\cos^2(\alpha_2)+1) -\delta
\end{bmatrix}
\text{ and }
[p''_U] = \begin{bmatrix}
2(2e^{2i\alpha_2}+1) \\
 -\sqrt{2}e^{i\alpha_2}(2e^{2i\alpha_2}+1 - \delta) \\
 -(8\cos^2(\alpha_2)+1) + \delta
\end{bmatrix}.
\]

\end{notat}

\begin{rem}\label{rem_alpha2_ell}
 If $\alpha_2 > \alpha_2^{\lim}$, then $U$ is a regular elliptic element with eigenvalues $1, e^{i\beta}$ and $e^{-i\beta}$ for some $\beta \in ]0,\frac{\pi}{2}[$. The respective eigenvectors are then $p_U, p'_U$ and $p''_U$.
  In this case, $\mathrm{tr}(U) = 2\cos(\beta) +1 $ and $(8\cos^2(\alpha_2) - 3)(8\cos^2(\alpha_2) + 1) = (\mathrm{tr}(U)-3)(\mathrm{tr}(U)+1) = -4\sin^2(\beta)$. We will take $\delta = 2i\sin(\beta)$. 
\end{rem}

\begin{rem}\label{rem_alpha2_lox}
 If $\alpha_2 < \alpha_2^{\lim}$, then $U$ is a loxodromic element with eigenvalues $1, e^{l}$ and $e^{-l}$ for some $l \in \mathbb{R}^+$. The respective eigenvectors are then $p_U, p'_U$ and $p''_U$.
  In this case, $\mathrm{tr}(U) = 2\cosh(l) +1 $ and $(8\cos^2(\alpha_2) - 3)(8\cos^2(\alpha_2) + 1) = (\mathrm{tr}(U)-3)(\mathrm{tr}(U)+1) = 4\sinh^2(l)$. We will take then $\delta = 2\sinh(l)$.
\end{rem}

\subsection{Combinatorics of the Parker-Will uniformization}\label{subsect_combi_parker_complex_2017a}
\begin{notat}
 Following the article of Parker and Will \cite{parker_complex_2017a}, let
 $\mathcal{I}_0^{+} = \con{\mathfrak{B}}(p_A, S^{-1} p_A)$, and, for $k \in \mathbb{Z}$, let $\mathcal{I}_k^{+} = A^k \mathcal{I}_0^{+} = \con{\mathfrak{B}}(p_A, A^k S^{-1} p_A)$.
  Similarly, we denote by
  $\mathcal{I}_0^{-} = \con{\mathfrak{B}}(p_A, S p_A) $ and $\mathcal{I}_k^{-}= A^k \mathcal{I}_0^{-} = \con{\mathfrak{B}}(p_A, A^k S p_A)$.
  These sets are closed bisectors in $\con{\h2c}$.
\end{notat}

Parker and Will show that, for the representation with parameter $(0, \alpha_2^{\lim})$, the bisectors $\mathcal{I}_k^{\pm}$ bound an infinite polyhedron in $\h2c$, locally finite and invariant by $[A]$, which is endowed with a side pairing. Its boundary at infinity is the region of $\dh2c$ containing $[p_A]$ and limited by the spinal spheres $\partial_\infty \mathcal{I}_k^{+}$ and $\partial_\infty \mathcal{I}_k^{-}$. Parker and Will show that this region is a Ford domain invariant by $[A]$ for the \CR {} uniformization of the Whitehead link complement.  

The domain in $\dh2c$ has four classes of faces: quadrilaterals $Q_k^{+}$ and $Q_k^{-}$, contained in $\partial_\infty \mathcal{I}_k^{+}$ and $\partial_\infty \mathcal{I}_k^{-}$ respectively, and bigons $B_k^{+}$ and $B_k^{-}$, contained in $\partial_\infty \mathcal{I}_k^{+}$ and $\partial_\infty \mathcal{I}_k^{-}$ respectively. We can see the incidences, combinatorially, in Figure \ref{combi_ford_wlc_avant}.

\begin{figure}[ht]
 \center
 \includegraphics[width=8cm]{./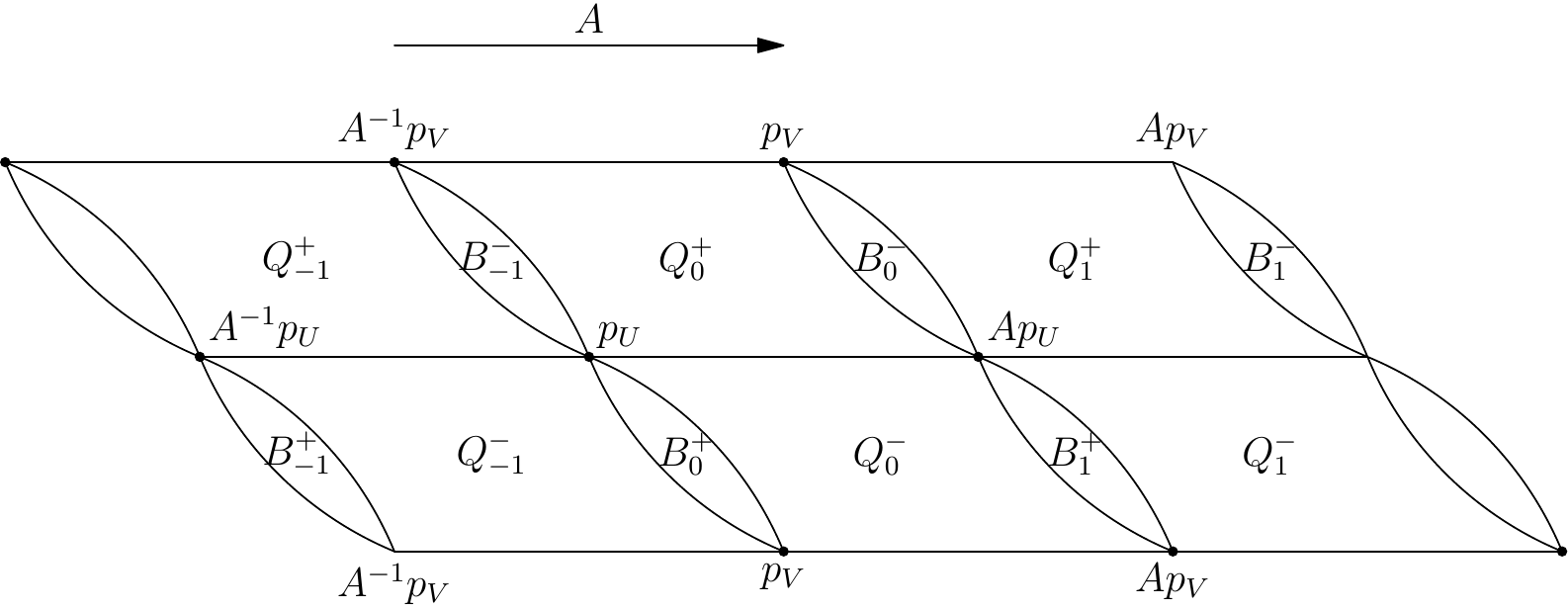}
 \caption{Combinatorics of the boundary at infinity of the Ford domain for $\Gamma_{\infty}$ construced by Parker and Will. The two horizontal boundaries are identifyed by a vertical translation.} \label{combi_ford_wlc_avant}
\end{figure}

In this way, the quadrilateral $Q_0^{+}$ has vertices $[p_U]$, $[A^{-1}p_V]$, $[p_V]$ and $[A p_U]$
 and the bigon $B_0^{+}$ has vertices $[p_U]$ and $[p_V]$. The sides of $Q_0^{+}$ and $B_0^{+}$ are arcs of the Giraud circles $\partial_\infty \mathcal{I}_0^{+} \cap \partial_\infty\mathcal{I}_{-1}^{+}$ and $\partial_\infty\mathcal{I}_0^{+} \cap \partial_\infty\mathcal{I}_{0}^{-}$, which contain  $\{[A^{-1}p_V] , [p_U] , [p_V] \}$ and $\{[A p_V] , [p_U] , [p_V] \}$ respectively.

\begin{rem} 
 This configuration gives indeed a bigon and a quadrilateral since the two Giraud circles are tangent at $[p_U]$ and $[p_V]$. The reader can find a detailed proof in \cite{parker_complex_2017a}, and we will show this fact again when deforming the domains.
\end{rem}

However, we will not use this domain for the deformation, but its image by the involution $\iota$ defined in Proposition \ref{prop_involution_WLC}. Recall that $\iota$ is compatible with the involution $\eta : \Im(\rho) \rightarrow \Im(\rho)$ given by $\eta(T) = T$ and $\eta(S) = S^{-1}$. Hence, $\eta(A) = U$ and $\eta(B) = V$. 

\begin{notat} We will denote by $\mathcal{J}_0^{\pm} = \iota(\mathcal{I}_0^{\pm})$. Hence,
 $\mathcal{J}_k^{+} = U^k \con{\mathfrak{B}}(p_U,p_V)$  and  $\mathcal{J}_k^{-} = U^k \con{\mathfrak{B}}(p_U,p_W)$.
 Furthermore, we will use some abusive language and still denote by $Q^{\pm}_k$ and $B^{\pm}_k$ the images of the quadrilaterals and the bigons by the involution $\iota$.
\end{notat}

The incidences and the combinatorics of the boundary of the new Ford domain in $\dh2c$ that we consider are given in Figure \ref{combi_ford_wlc_apres}. The quadrilateral $Q_0^{+}$ has vertices $[p_A]$, $[U^{-1}p_B]$, $[p_B]$ and $[U p_A]$
 and the bigon $B_0^{+}$ has vertices $[p_A]$ and $[p_B]$. The sides of $Q_0^{+}$ and $B_0^{+}$ are arcs of the Giraud circles $\partial_\infty\mathcal{J}_0^{+} \cap \partial_\infty\mathcal{J}_{-1}^{+}$ and $\partial_\infty\mathcal{J}_0^{+} \cap \partial_\infty\mathcal{J}_{0}^{-}$, which contain $\{[U^{-1}p_B] , [p_A] , [p_B] \}$ and $\{[U p_A] , [p_A] , [p_B] \}$ respectively. Furthermore, we denote the faces of the domain contained in $\h2c$ in the following way:

\begin{figure}[ht]
 \center
 \includegraphics[width=8cm]{./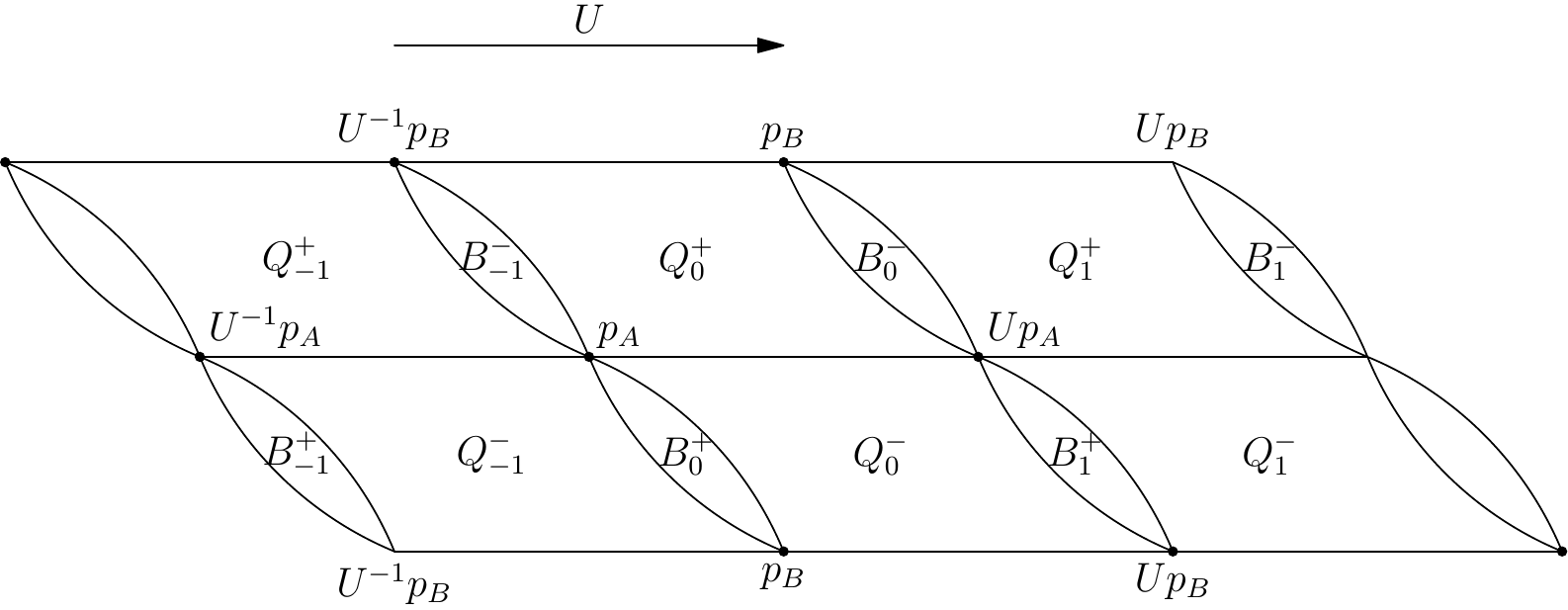}
 \caption{Combinatorics of the boundary at infinity of the Ford domain after the involution. The two horizontal boundaries are identifyed by a vertical translation.}\label{combi_ford_wlc_apres}
\end{figure}
 
\begin{notat}
 For $k \in \ZZ$, we define the 3-face $\mathcal{F}_k^+$ as
\begin{equation*}
 \mathcal{F}_k^+ = \left\{ [z] \in \mathcal{J}_k^+ \mid |\langle z, p_U \rangle| \leq \min (|\langle z, U^k p_W \rangle| , |\langle z, U^{k-1}p_W \rangle| )  \right\}.
\end{equation*} 
  Its boundary in $\mathcal{J}_k^+$ is then given by $\mathcal{J}_k^+ \cap (\mathcal{J}_k^- \cup \mathcal{J}_{k-1}^-)$. We also define the face $\mathcal{F}_k^-$ as
\begin{equation*}
 \mathcal{F}_k^- = \left\{ [z] \in \mathcal{J}_k^- \mid |\langle z, p_U \rangle| \leq \min (|\langle z, U^k p_V \rangle| , |\langle z, U^{k+1}p_V \rangle| )  \right\}.
\end{equation*} 
  Its boundary in $\mathcal{J}_k^-$ is then given by $\mathcal{J}_k^- \cap (\mathcal{J}_k^+ \cup \mathcal{J}_{k+1}^+)$.
\end{notat}

\begin{rem}
 The boundary of the face $\mathcal{F}_k^+$ is, a priori, the union of the two Giraud disks $\mathcal{J}_k^+ \cap \mathcal{J}_k^- $ and $\mathcal{J}_k^+ \cap  \mathcal{J}_{k-1}^-$. We are going to show, in Section \ref{sect_topo_faces}, that it is the case, and that these two disks are bi-tangent  at infinity during all the deformation.
\end{rem}

 We are going to deform the Ford domain of Parker-Will into a domain with the same local combinatorics. It will be either a Dirichlet domain, or a Ford domain centred \emph{outside} from $\h2c$ depending on whether $[U]$ is elliptic or loxodromic. From now on, all points and elements of the representation of $\z3z3$ with values in $\su21$ depend on the parameter $\alpha_2$, that will vary.
 
 Recall that the representations with parameter $\alpha_2 \in [-\alpha_2^{\lim} , \alpha_2^{\lim} ]$ have been studied by Parker and Will in \cite{parker_complex_2017a} by considering a Ford domain in $\h2c$ centred at the point $[p_A]$. They show, using the Poincaré polyhedron theorem, that the representation of $\z3z3$ is then discrete and faithful, and that the quotient of $\h2c$ by its image is a manifold. Furthermore, when $\alpha_2 = \alpha_2^{\lim}$, they show that the manifold at infinity is the Whitehead link complement, as described above.
 
 On the other hand, Parker, Wang and Xie study the representations with parameter $\alpha_2 \in ]\alpha_2^{\lim}, \frac{\pi}{2} [$ for which $[U]$ is of finite order $\geq 4$. They obtain these groups as the index 2 subgroup of some $(3,3,n)$ triangle group generated by involutions $I_1 , I_2 , I_3$. With our notation, we have
 $S = I_3 I_1$,
 $T = I_3 I_2$
 and $U = I_1 I_2$.
  They construct a Dirichlet domain in $\h2c$ for these groups, and show, using the Poincaré polyhedron theorem, that they are discrete, and that the quotients of $\h2c$ by these groups are manifolds. Whenever $n\geq 9$ , these Dirichlet domains are the same that the ones that we will construct below, but we will study the combinatorics of the intersections with different techniques, hoping for them to be also useful for representations not coming from triangle groups. We will discuss this with more details in Subsection  \ref{subsect_combi_ell}.

\section{Effective deformation: Proof} \label{sect_fin_preuve}
 We will assume in this section that the conditions (TF) on the topology of the faces, (LC) of local combinatorics and (GC) of global combinatorics are satisfied, and we will show Theorems \ref{thm_ch_dehn_eff_ell} and \ref{thm_ch_dehn_eff_lox}. Then, considering the Poincaré polyhedron theorem, we will discuss Theorem \ref{thm_unif_ch_dehn_ell} and Conjecture \ref{conj_unif_ch_dehn_lox}. We will show thereafter the conditions in Sections \ref{sect_topo_faces}, \ref{sect_combi_locale} and \ref{sect_combi_globale} respectively.
 From now on, we will assume that the combinatorics of the incidence of the faces $\mathcal{F}_k^{\pm}$, as well as their boundaries at infinity, is the one expected for $\alpha_2 \in ]0 , \alpha_2^{\lim} ]$
  when $[U]$ is loxodromic or unipotent, and for $\alpha_2 \in ]\alpha_{2}^{\lim} , \frac{\pi}{2}[$ when $[U]$ is elliptic of finite order $\geq 9$.

\begin{notat}
 We say that a parameter $\alpha_2$ is \emph{admissible} if $\alpha_2 \in ]0 , \alpha_2^{\lim}]$ or $\alpha_2 \in ]\alpha_2^{\lim} , \frac{\pi}{2}[$ and $[U]$ is elliptic of type $(\frac{1}{n},\frac{-1}{n})$ with $n \geq 9$.
\end{notat}

\begin{rem}
The parameters for which $[U]$ is of order $4,5,6,7$ or $8$ are also suitable for the following, but our approach using visual spheres to control global intersections does not allow us to conclude for these parameters. 
 However, the result is still true; a proof can be found in  \cite{parker_wang_xie}, where Parker, Wang and Xie show, using other techniques and parametrizing some triangle groups, that the global combinatorics of the bisectors $\mathcal{J}_k^\pm$ is the expected one when $[U]$ is elliptic of type $(\frac{1}{n},\frac{-1}{n})$ with $n \geq 4$.
\end{rem}

We begin by fixing some notation and recalling the uniformization result of Parker and Will, which gives a \CR {} structure on $WLC$ for the parameter $\alpha_2 = \alpha_2^{\lim}$.

\begin{notat}
 Denote by $D_0 \subset \h2c$ the Ford domain given by Parker and Will in \cite{parker_complex_2017a}. It corresponds to the parameter $\alpha_2 = \alpha_2^{\lim}$. Its boundary is given by the faces $\mathcal{F}_k^\pm$. Let $\partial_\infty D_0 \subset \dh2c$ be its boundary at infinity. The boundary of this last set in $\dh2c$ consists of the bigons $B_k^\pm$ and the quadrilaterals $Q_k^\pm$.
\end{notat}

  In \cite{parker_complex_2017a}, Parker and Will show that the side pairing of the faces $\mathcal{F}_k^\pm$ of $D_0$ are given by the group $\Gamma_\infty$ modulo the action of $[U]$: using the Poincaré polyhedron theorem, they show that in that case, the group $\Gamma_\infty$ is discrete and that the manifold at infinity, which is homeomorphic to $WLC$, is uniformizable.
 Hence, we know that $\Gamma_\infty \backslash \partial_\infty D_0$ is homeomorphic to the Whitehead link complement. We have seen, in Section \ref{sect_ch_cr_wlc}, that the image by the developing map of a neighbourhood of the cusp corresponding to $T_1$ is a horotube for the action of $[U]$. If we only consider a thickening of the quadrilaterals and the bigons before taking the quotient of $\partial_\infty D_0$ by $\Gamma_\infty$, we obtain the structure on $WLC$ besides a neighbourhood of the cusp number 1.

 Using the conditions (LC) and (GC) about the combinatorics, we are going to show the following lemma, which ensures that the domain $D_0$ can be deformed into a domain in $\h2c$ with boundary the faces $\mathcal{F}_k^\pm$.

\begin{lemme}
 If $\alpha_2$ is an admissible parameter, then the faces $\mathcal{F}_k^\pm$ border a domain $D(\alpha_2)$ in $\h2c$, obtained as a deformation of $D_0$.
\end{lemme}
\begin{proof}
 Let 
\[ 
 D(\alpha_2) = \left\{ [z] \in \h2c
  \mid
   \forall k \in \ZZ : 
   |\langle z, p_U \rangle|
    \leq 
    \min (|\langle z, U^k p_V  \rangle| , |\langle z, U^k p_W  \rangle|)
    \right\} .
\]
 
 It is the set of points of $\h2c$ which are "nearest" from $[p_U]$ than to the orbits by $[U]$ of $[p_V]$ and $[p_W]$. When $\alpha_2 = \alpha_2^{\lim}$, this domain is exactly the Ford domain $D_0$ of Parker and Will. It is then bordered by the faces $\mathcal{F}_k^{\pm}$ which are contained in the bisectors $\mathcal{J}_k^{\pm}$ for $k \in \ZZ$.
  If $\alpha_2 \in ]0, \frac{\pi}{2}[$ is an admissible parameter, the element $[U]$ generates a discrete subgroup and the boundary of the domain $D(\alpha_2)$ is contained in the bisectors $\mathcal{J}_k^{\pm}$. By the global combinatorics condition (GC), the bisectors only intersect their neighbours in the local combinatorics, and they determine the faces $\mathcal{F}_k^{\pm}$. Hence, these faces form the boundary of a domain $D(\alpha_2)$, which is obtained as a deformation of $D_0$.
\end{proof}

 The same holds for the boundary at infinity: the quadrilaterals and the bigons border a domain of $\dh2c$ which is obtained by deforming $\partial_\infty D_0$.

\begin{lemme}
 If $\alpha_2$ is an admissible parameter, the bigons and quadrilaterals $B_k^\pm$ and $Q_k^\pm$ border a domain in $\dh2c$, obtained as a deformation of $\partial_\infty D'_0$. It is the boundary at infinity of $D(\alpha_2)$; we will denote it by $\partial_\infty D(\alpha_2)$.
\end{lemme}

 We are now going to study the topology of the manifold $\Gamma(\alpha_2) \backslash \partial_\infty D(\alpha_2)$ for the admissible parameters, and we are going to show that it corresponds to the one expected by the surgery theorem of \cite{acosta_spherical_2016}, that we identified in Section \ref{sect_ch_attendues}. In order to do it, we are going to cut the domain $\partial_\infty D(\alpha_2)$ into two pieces: the first "near" the faces, that will give the structure besides the cusp associated to $T_1$, and the second "far" from the faces, that will give the solid torus glued to $T_1$ in order to obtain a Dehn surgery. 

 \begin{notat}
  Let $\mathcal{V}_0$ be a thickening of $\bigcup_{k \in \ZZ} (Q_k^+ \cup B_k^+ \cup Q_k^- \cup B_k^-)$ in $\partial_\infty D_0$, and $N_0$ its complement. 
 \end{notat}

\begin{lemme}\label{lemme_struct_defor_loin_pointe}
 If $\alpha_2$ is an admissible parameter, then $\mathcal{V}_0$ deforms into a thickening  $\mathcal{V}(\alpha_2)$ of $\bigcup_{k \in \ZZ} (Q_k^+ \cup B_k^+ \cup Q_k^- \cup B_k^-)$ in $\partial_\infty D(\alpha_2)$.
  The quotient $\Gamma(\alpha_2) \backslash \mathcal{V}(\alpha_2)$ is homeomorphic to $WLC$ minus the cusp corresponding to $T_1$.
\end{lemme}

\begin{proof}
 By the conditions on local and global combinatorics (LC) and (GC), the bigons and the quadrilaterals $B_k^\pm$ and $Q_k^\pm$ intersect with the same local combinatorics as for the parameter $\alpha_2^{\lim}$, and form a surface in $\dh2c$. Hence, we can consider a thickening $\mathcal{V}(\alpha_2)$ of this surface, which is a deformation of $\mathcal{V}_0$. A fundamental domain in $\mathcal{V}(\alpha_2)$ for the action of $\langle [U] \rangle$ is then given by a thickening of $B_0^+ \cup Q_0^+ \cup B_0^- \cup Q_0^+$. Since the side pairings are given by the same elements of the group, the topology of the quotient $\Gamma(\alpha_2) \backslash \mathcal{V}(\alpha_2)$ is the same as the one of $\Gamma_\infty \backslash \mathcal{V}_0$, which is precisely the Parker-Will structure on $WLC$ minus the cusp corresponding to $T_1$.
\end{proof}

 We will focus now on the other part of the structure, which allows to identify the manifold on which the \CR {} structure on $WLC$ minus a cusp can be extended by Lemma \ref{lemme_struct_defor_loin_pointe}. It will always be the expected Dehn surgery, as described in Section \ref{sect_ch_attendues}.

\begin{notat}
 Let $N(\alpha_2) = \partial_\infty D(\alpha_2) \setminus \mathcal{V}(\alpha_2)$.
\end{notat}

 The two following propositions complete the proof of Theorems \ref{thm_ch_dehn_eff_ell} and \ref{thm_ch_dehn_eff_lox}, by identifying the topology of the manifold $\langle [U] \rangle \backslash N(\alpha_2)$, glued to the \CR {} structure on $WLC$ given by $\langle [U] \rangle \backslash D(\alpha_2)$

\begin{prop}
 If $\alpha_2 \in ]0,\alpha_2^{\lim}[$, then the quotient $\langle [U] \rangle \backslash N(\alpha_2)$ is a solid torus, in which the curve $l_1^{-1}m_1^3$ is homotopically trivial.
\end{prop}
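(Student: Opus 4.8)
The plan is to analyze $N(\alpha_2) = \partial_\infty D(\alpha_2) \setminus \mathcal{V}(\alpha_2)$ directly as the ``part far from the faces'' and to show that its quotient by $\langle [U]\rangle$ is a solid torus, exactly mirroring the structure of the complement of a horotube in the surgery theorem of \cite{acosta_spherical_2016}. First I would recall that $\partial_\infty D(\alpha_2)$ is the region of $\dh2c$ containing $[p_U]$ and bounded by the spinal spheres $\partial_\infty \mathcal{J}_k^{\pm}$, and that by condition (GC) these spinal spheres intersect only their immediate neighbours. The key observation is that $[p_U]$ is a fixed point of $[U]$ in $\dh2c$ when $[U]$ is loxodromic, but that (unlike in the elliptic or unipotent case) this point is a genuine attracting/repelling fixed point of a loxodromic element, not a parabolic fixed point. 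Consequently a neighbourhood of $[p_U]$ inside $\partial_\infty D(\alpha_2)$, after removing the thickening $\mathcal{V}(\alpha_2)$ of the bigons and quadrilaterals, is a $[U]$-invariant set whose quotient is compact --- in fact a solid torus, since the local model near the fixed point of a loxodromic element acting on a ball-like region of $\dh2c \cong S^3$ is $D^2 \times \mathbb{R}$ with $[U]$ acting by translation in the $\mathbb{R}$ direction composed with a rotation in $D^2$.

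Concretely I would proceed as follows. Using condition (TF), each face $\mathcal{F}_k^{\pm}$ meets its two neighbours in bi-tangent Giraud disks, so $\partial_\infty \mathcal{F}_k^{\pm}$ splits into a bigon $B_k^{\pm}$ and a quadrilateral $Q_k^{\pm}$, with the bigons clustering around the tangency points which (by the vertex data recalled in Section \ref{sect_notat_combi_initiale}) are precisely the $[U]$-orbits of $[p_A]$ and $[p_B]$. Removing the thickening $\mathcal{V}(\alpha_2)$ of all bigons and quadrilaterals leaves $N(\alpha_2)$ as a $[U]$-invariant neighbourhood of $[p_U]$ in $\dh2c$, capped off along its boundary by a collar of $\partial_\infty D(\alpha_2)$. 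Since $\alpha_2 \in \,]0,\alpha_2^{\lim}[$, by Remark \ref{rem_alpha2_lox} the element $[U]$ is loxodromic with eigenvalues $1, e^l, e^{-l}$, so in suitable coordinates near $[p_U]$ the action of $[U]$ is conjugate to a loxodromic map; a fundamental domain for $\langle [U]\rangle$ acting on $N(\alpha_2)$ is then the region between two ``parallel'' copies of a slice, identified by $[U]$, which is manifestly a solid torus $D^2 \times S^1$. The core circle of this solid torus is the image of the axis-like curve fixed by $[U]$, and its meridian disk is a slice transverse to the flow direction of $[U]$.

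It remains to identify which curve bounds the meridian disk, i.e.\ to check that $l_1^{-1}m_1^3$ is homotopically trivial in $\langle [U]\rangle \backslash N(\alpha_2)$. For this I would use the identifications $\rho(m_1) = \rho(ts^{-1}) = V$ and $\rho(l_1) = \rho(m_1)^3$ established in Section \ref{sect_ch_wlc}, together with the symmetry $\iota$ of Proposition \ref{prop_involution_WLC}: under $\eta$ one has $\eta(A) = U$, so the peripheral holonomy of $T_1$ that we are surgering is generated by $[U]$ itself (after conjugation), and the surgery curve is the one killed when we cap off $N(\alpha_2)$ by the solid torus. Since $N(\alpha_2)$ is glued to the cusp of $WLC$ along $T_1$ and the loop going once around the core of the solid torus corresponds to $m_1$ while the longitude $l_1$ equals $m_1^3$ in the image group, the curve that becomes trivial after filling is $l_1^{-1}m_1^3 = m_1^{-3}m_1^3$; more precisely, it is the curve on $T_1$ that bounds a disk in $\langle [U]\rangle \backslash N(\alpha_2)$, and tracking the marking through Remark \ref{rem_marquage_wlc} shows this is exactly $l_1^{-1}m_1^3$, giving the slope $(-1,3)$, i.e.\ $-\tfrac13$, as in Theorem \ref{thm_ch_dehn_eff_lox}. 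The main obstacle is the bookkeeping: making the collar structure of $N(\alpha_2)$ precise enough to read off that the quotient is a solid torus (as opposed to, say, a twisted $D^2$-bundle or a torus times an interval), which requires carefully invoking (TF), (LC), (GC) to control how the thickened bigons and quadrilaterals sit inside $\partial_\infty D(\alpha_2)$ near $[p_U]$, and then correctly matching the resulting meridian curve to the word $l_1^{-1}m_1^3$ in $\pi_1(WLC)$ under the marking conventions of Section \ref{sect_ch_wlc}.
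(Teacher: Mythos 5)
Your overall strategy coincides with the paper's: open up the horotube of the unipotent case, use the loxodromic dynamics of $[U]$ to see that $\langle [U]\rangle\backslash N(\alpha_2)$ is a solid torus, and identify the meridian from $\rho(l_1)=\rho(m_1)^3$ together with the marking bookkeeping of Remark \ref{rem_marquage_wlc}. However, two steps as written do not go through. First, your ``key observation'' misidentifies the fixed--point structure: with the paper's conventions, when $[U]$ is loxodromic the point $[p_U]$ is its fixed point in $\cp2\setminus\con{\h2c}$, \emph{not} a point of $\dh2c$ (see the notation of Section \ref{subsect_notat_points} and Remark \ref{rem_alpha2_lox}); the fixed points of $[U]$ on $\dh2c$ are $[p'_U]$ and $[p''_U]$. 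So $\partial_\infty D(\alpha_2)$ cannot ``contain $[p_U]$'', and ``a neighbourhood of $[p_U]$'' is not the right local model: a neighbourhood of a single attracting (or repelling) fixed point is not invariant under the full group $\langle[U]\rangle$. The correct invariant model, which is what the paper imports from the surgery theorem of \cite{acosta_spherical_2016}, is a region whose two ends accumulate on the pair $[p'_U],[p''_U]$; that region (fixed points removed) is the $D^2\times\RR$ on which $[U]$ acts by translation.

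Second, your ``manifestly a solid torus'' step tacitly assumes that the boundary of $N(\alpha_2)$ remains a bi-infinite cylinder of bigons and quadrilaterals throughout $]0,\alpha_2^{\lim}[$. This fails for $\alpha_2<\frac{\pi}{6}$: by Lemma \ref{lemme_nature_bissecteurs} the bisectors $\mathcal{J}_k^\pm$ become Clifford cones, and by Proposition \ref{prop_faces_def1} each quadrilateral $Q_k^\pm$ acquires genus one, so the bounding surface has infinite genus and the naive fundamental-domain picture breaks down. The paper flags exactly this transition and handles the small-$\alpha_2$ range by a continuity argument (the curve $l_1^{-1}m_1^3$ stays in a fixed compact set where it already bounds), rather than by exhibiting $N(\alpha_2)$ directly as $D^2\times\RR$; without some such supplement your argument only covers $\alpha_2\in[\frac{\pi}{6},\alpha_2^{\lim}[$. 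Relatedly, the identity ``$l_1^{-1}m_1^3=m_1^{-3}m_1^3$'' holds only after applying $\rho$; on $T_1$ the meridian has to be identified topologically (as the primitive peripheral class killed in the filling, via Remark \ref{rem_marquage_wlc}), not by a computation in the image group, since membership of $l_1^{-1}m_1^3$ in $\ker\rho$ alone does not show it bounds a disk in the solid torus.
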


\begin{proof}
 The domain $\partial_\infty D_0$ is a horotube, bordered by the quadrilaterals $Q_k^{\pm}$ and the bigons $B_k^{\pm}$ when $\alpha_2 = \alpha_2^{\lim}$. If $\alpha_2 < \alpha_2^{\lim}$, the element $[U]$ is loxodromic. In this case, $N(\alpha_2) $ becomes homeomorphic to a cylinder in which the curve $l_1^{-1}m_1^3$ is homotopically trivial.
  This verification is analogous to the one made in the proof of the surgery theorem of \cite{acosta_spherical_2016}.
  
If $\alpha_2 < \frac{\pi}{6}$, then the cylinder becomes a surface of infinite genus, but, by the continuity of the deformation, the same curve is homotopically trivial, since it stays in a fixed compact set.
\end{proof}

\begin{prop}
 Let $\alpha_2 \in  ]\alpha_2^{\lim},\frac{\pi}{2}[$ be an admissible parameter. Then $[U]$ is elliptic of order $n \geq 9$. The quotient $\langle [U] \rangle \backslash N(\alpha_2)$ is then a solid torus, in which the curve $l_1m_1^{n-3}$ is homotopically trivial.
\end{prop}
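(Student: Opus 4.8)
The plan is to run the argument in exact parallel with the loxodromic proposition, substituting the topology of a neighbourhood of an elliptic fixed point in $\h2c$ for the loxodromic one. First I would recall that when $\alpha_2 \in ]\alpha_2^{\lim},\frac{\pi}{2}[$ is admissible, Remark \ref{rem_alpha2_ell} gives that $[U]$ is regular elliptic with eigenvalues $1,e^{i\beta},e^{-i\beta}$; admissibility forces $[U]$ to be of type $(\tfrac1n,\tfrac{-1}{n})$, hence of finite order $n\geq 9$. The key point is that $N(\alpha_2) = \partial_\infty D(\alpha_2)\setminus\mathcal{V}(\alpha_2)$ is the portion of the boundary of the deformed domain lying "far" from the faces, i.e. near the $\CC$-circles fixed by the one-parameter subgroup through $[U]$. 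By the combinatorics conditions (TF), (LC) and (GC), which hold by admissibility, the bigons and quadrilaterals $B_k^\pm$, $Q_k^\pm$ fit together with exactly the same incidence pattern as for $\alpha_2^{\lim}$; since $[U]$ now has finite order $n$, the indices are cyclic modulo $n$ and $\partial_\infty D(\alpha_2)$ is a compact region of $\dh2c$ rather than a horotube.

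Next I would describe $\langle[U]\rangle\backslash N(\alpha_2)$ concretely. Because $[U]$ lies in a one-parameter elliptic subgroup stabilising two linked $\CC$-circles $C_1$, $C_2$, and the faces $\mathcal{J}_k^\pm$ are obtained from $\mathcal{J}_0^\pm$ by powers of $[U]$, the set $N(\alpha_2)$ is a neighbourhood in $\dh2c$ of (a piece of) one of these $\CC$-circles, cut out by the $n$ translates of the spinal spheres. Such a neighbourhood is a solid torus $S^1\times D^2$, and the action of $\langle[U]\rangle\cong\ZZ/n\ZZ$ is free (it is a subgroup acting by isometries on $\dh2c$ with fixed points only on the $\CC$-circles, which are not in $N(\alpha_2)$ after removing $\mathcal{V}(\alpha_2)$), so the quotient is again a solid torus. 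This is again analogous to the verification in the proof of the surgery theorem of \cite{acosta_spherical_2016}, with the loxodromic annulus replaced by the elliptic solid torus; by continuity of the deformation from $\alpha_2^{\lim}$, the combinatorial type of the gluing data does not change, only the conformal/geometric shape.

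Finally I would identify the homotopically trivial curve. On the cusp side, Remark \ref{rem_marquage_wlc} tells us that the marking used by the surgery theorem of \cite{acosta_spherical_2016} relates to $(l_1,m_1)$ by $l_0=m_1$, $m_0=3m_1-l_1$, and that $nl_0+pm_0 = -3l_1+(n+3p)m_1$. Here $[U]=\rho(m_1)$ is elliptic of type $(\tfrac1n,\tfrac{-1}{n})$, so $p=1$: the core curve of the solid torus $\langle[U]\rangle\backslash N(\alpha_2)$, which bounds a disk, is precisely the image of $nl_0+m_0 = l_1+(n-3)m_1$, i.e. the curve $l_1 m_1^{n-3}$ becomes homotopically trivial. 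Combined with Lemma \ref{lemme_struct_defor_loin_pointe}, gluing $\langle[U]\rangle\backslash N(\alpha_2)$ to the $WLC$-minus-a-cusp structure $\langle[U]\rangle\backslash D(\alpha_2)$ along the peripheral torus $T_1$ realises exactly the Dehn filling killing $l_1 m_1^{n-3}$, which is the surgery of type $(1,n-3)$ on $T_1$. The main obstacle I expect is the careful verification that $N(\alpha_2)$ really is an (unknotted, untwisted) solid torus and that the gluing curve is identified correctly: this requires tracking the vertex identifications $[p_A]\leftrightarrow[U p_A]$, $[p_B]\leftrightarrow[U^{-1}p_B]$ etc. through the face pairings to see which loop in $\partial(\langle[U]\rangle\backslash N(\alpha_2))$ bounds, and this is precisely where the global combinatorics condition (GC) and the finite-order hypothesis $n\geq 9$ are used essentially.
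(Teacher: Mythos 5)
Your overall outline does follow the paper's route (deform the horotube, invoke (TF), (LC), (GC), see that $N(\alpha_2)$ is a solid torus around an invariant $\CC$-circle, quotient by $\langle[U]\rangle$, read the slope off the marking of Remark \ref{rem_marquage_wlc}), but the one step that the proposition actually asserts --- that the trivial curve is $l_1m_1^{n-3}$ --- is both miscomputed and deferred in your write-up. With $l_0=m_1$ and $m_0=3m_1-l_1$ one has $nl_0+m_0=(n+3)m_1-l_1$, not $l_1+(n-3)m_1$; the type here is $(\tfrac1n,\tfrac{-1}{n})$, i.e.\ $(\tfrac{p}{n},\tfrac1n)$ with $p=-1$, so the relevant curve is $nl_0-m_0=l_1+(n-3)m_1$. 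As written, your choice $p=1$ together with the asserted identity would produce the slope $(-1,n+3)$ rather than $(1,n-3)$. Moreover you call this the ``core curve \dots which bounds a disk'': the core of a solid torus never bounds a disk; the curve to be identified is the meridian. Finally, you explicitly postpone the verification (``tracking the vertex identifications through the face pairings''), which is exactly the content of the proposition, so the proposal as it stands does not prove it.

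The paper closes this step with a short direct argument that you could adopt: since $U^n=\mathrm{Id}$, the curves $l_0^n=m_1^n$ and the kernel element $m_0=l_1^{-1}m_1^{3}$ both lift to closed curves in $N(\alpha_2)$, and each is homotopic there to one of the two $\CC$-circles invariant by $[U]$; hence they are homotopic to each other, and (the peripheral group being abelian) $m_1^{n}\bigl(l_1^{-1}m_1^{3}\bigr)^{-1}=l_1m_1^{n-3}$ is already nullhomotopic in $N(\alpha_2)$, hence in the quotient. The unknottedness of the invariant torus, needed to see that $N(\alpha_2)$ is a solid torus, is where the type $(\tfrac1n,\tfrac{-1}{n})$ is used. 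One more small correction: the quotient map is indeed free, but not for the reason you give --- for $0<k<n$ the fixed points of $U^{k}$ in $\cp2$ are $[p_U]\in\h2c$ and $[p'_U],[p''_U]\notin\con{\h2c}$, so $[U]$ has no fixed points on $\dh2c$ at all (it rotates the invariant $\CC$-circles), rather than having fixed points ``only on the $\CC$-circles''.
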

\begin{proof}
 The domain $\partial_\infty D_0$ is a horotube, bordered by the quadrilaterals $Q_k^{\pm}$ and the bigons $B_k^{\pm}$ when $\alpha_2 = \alpha_2^{\lim}$. If $\alpha_2 > \alpha_2^{\lim}$ is an admissible parameter, the element $[U]$ is elliptic and $\partial_\infty D(\alpha_2) $ is a torus invariant by $[U]$. Since $[U] = \rho(m_1)$ has type $(\frac{1}{n} , \frac{-1}{n})$, this torus is not knotted, and $N(\alpha_2)$ is a solid torus invariant by $[U]$ in which the curve $l_1m_1^{n-3}$ is homotopically trivial. Indeed, with the notation of Remark \ref{rem_marquage_wlc}, the curves $l_0^n = m_1^n$ and $m_0 = l_1^{-1}m_1^{3}$ are homotopic in $N(\alpha_2)$ to one of the two $\CC$-circles invariant by $[U]$: the curve $l_1m_1^{n-3}$ is homotopically trivial in $N(\alpha_2)$. 
  We deduce that the quotient $\langle [U] \rangle \backslash N(\alpha_2)$ is also a solid torus in which the curve $l_1m_1^{n-3}$ is homotopically trivial.
This verification is analogous to the one made in the proof of the surgery theorem of \cite{acosta_spherical_2016}.
\end{proof}

 The two last propositions complete the proof of Theorems  \ref{thm_ch_dehn_eff_ell} and \ref{thm_ch_dehn_eff_lox}. It remains to discuss the use of the Poincaré polyhedron theorem for $\h2c$ to obtain Theorem \ref{thm_unif_ch_dehn_ell} and a part of Conjecture \ref{conj_unif_ch_dehn_lox}.
 If $\alpha_2 \in ] \frac{\pi}{6} , \frac{\pi}{2} [$ is an admissible parameter, the domain $D(\alpha_2)$ is a polyhedron invariant by the action of $[U]$, and has a side pairing given by $S$ and its conjugates by $[U]$. The hypotheses of the Poincaré polyhedron theorem for $\h2c$, as stated in \cite{parker_complex_2017a}, are satisfied. A complete proof of the theorem will appear in the book of Parker \cite{parker_complex_Toappear}.
  By applying this theorem to the domain $D(\alpha_2)$ and the side pairing between $\mathcal{F}_k^\pm$ in $\h2c$, we deduce that for the admissible parameters $\alpha_2$, the group $\Gamma(\alpha_2)$ is discrete, and that the structure on the manifold at infinity $\Gamma(\alpha_2) \backslash D(\alpha_2)$ is uniformizable. 
 This shows Theorem \ref{thm_unif_ch_dehn_ell} and a part of Conjecture \ref{conj_unif_ch_dehn_lox}.
 
  When $\alpha_2 \in ] 0, \frac{\pi}{6} [$, the domain $D(\alpha_2)$ has the same side pairings, but the faces $\mathcal{F}_k^{\pm}$ are no longer homeomorphic to a three dimensional ball: so we are no longer in the conditions to apply the Poincaré polyhedron theorem for $\h2c$. However, we can expect that a similar statement can be applied, for faces that are not balls, but we won't go in that direction, and limit ourselves to state Conjecture \ref{conj_unif_ch_dehn_lox}.

 In order to complete the proofs above, it remains to show the conditions on the topology of the faces (TF), local combinatorics (LC) and global combinatorics (GC). The rest of the article will consist of a rather technical proof of these conditions. Indeed, it is not trivial to reduce the global considerations to a finite number of verifications, and we will need to use the techniques involving visual spheres described in Part \ref{part_geom_background}.
\section{Topology of faces during the deformation (TF)}\label{sect_topo_faces}
 In this section, we are going to show the condition (TF) on the topology of the faces and we will give almost all the tools for showing the local combinatorics condition (LC) given in the strategy of proof of Section \ref{sect_strategie_de_preuve}.
First of all, remark that there is a change in the topology of bisectors at the point $\alpha_2 = \frac{\pi}{6}$. 

\begin{lemme}\label{lemme_nature_bissecteurs}
 If $\alpha_2 \in ]\frac{\pi}{6} , \frac{\pi}{2}[$, then $\mathcal{J}_k^{+}$ and $\mathcal{J}_k^{-}$ are metric bisectors. If $\alpha_2 = \frac{\pi}{6}$, then $\mathcal{J}_k^{+}$ and $\mathcal{J}_k^{-}$ are fans. If $\alpha_2 \in ]0, \frac{\pi}{6}[$, then $\mathcal{J}_k^{+}$ and $\mathcal{J}_k^{-}$ are Clifford cones.
\end{lemme}
\begin{proof}
By Proposition \ref{lox_spinal_spheres}, we only need to consider the signature of the restriction of the Hermitian form to the plane generated by $p_U$ and $p_V$. We have:
\begin{itemize}
 \item $\langle p_U , p_U \rangle = \langle p_V , p_V \rangle = 4 \cos^2(\alpha_2) - \frac{3}{2}$
 \item $\langle p_U , p_V \rangle = -\frac{3}{2}$
\end{itemize}
In the basis $(p_U,p_V)$, the determinant of the Hermitian form equals 
\[\left(4 \cos^2(\alpha_2) - \frac{3}{2}\right)^2 - \left(\frac{3}{2}\right)^2 = 4 \cos^2(\alpha_2)(4 \cos^2(\alpha_2) - 3).\]

Hence it is positive if $\alpha_2 \in [0, \frac{\pi}{6}[$, zero if $\alpha_2 =\frac{\pi}{6}$ and negative if $\alpha_2 \in ] \frac{\pi}{6} , \frac{\pi}{2} [$.
\end{proof}

\subsection{Incidence of points and bisectors}

We begin by checking the incidence of the points and the bisectors: the points will be the vertices of the faces which lie on spinal surfaces.

\begin{lemme}\label{lemme_incidences_pa}
 The point $[p_A]$ is contained in the bisectors $\mathcal{J}_0^{+}$,  $\mathcal{J}_0^{-}$, $\mathcal{J}_{-1}^{+}$ and $\mathcal{J}_{-1}^{-}$.
\end{lemme}

\begin{proof}
 It is enough to compute the Hermitian products of $p_A$ with $p_U, p_V, p_W , U^{-1}p_V$ and $U^{-1}p_W$ and check that they have the same modulus.
 We compute: $\langle p_A, p_U \rangle = e^{2i\alpha_2}$, $\langle p_A, p_V \rangle = -1$ and $\langle p_A, p_W \rangle = e^{2i\alpha_2}$.
  Furthermore, $\langle p_A, U^{-1}p_V \rangle = \langle U p_A, p_V \rangle = e^{2i\alpha_2}$ and $\langle p_A, U^{-1}p_W \rangle = \langle U p_A, p_W \rangle = -1$. All these products have modulus 1, which completes the proof.
\end{proof}

\begin{lemme}\label{lemme_incidence_pb}
 The point $[p_B]$ is contained in the bisectors $\mathcal{J}_0^{+}$, $\mathcal{J}_0^{-}$,$\mathcal{J}_{1}^{+}$ and $\mathcal{J}_{-1}^{-}$.
\end{lemme}

\begin{proof}
 It is enough to compute the Hermitian products of $p_B$ with $p_U, p_V, p_W , Up_V$ and $U^{-1}p_W$ and check that they have the same modulus.
 We compute: $\langle p_B, p_U \rangle = 1$, $\langle p_B, p_V \rangle = -e^{-2i\alpha_2}$ and $\langle p_B, p_W \rangle = 1$. Furthermore, $\langle p_B, U p_V \rangle = 1$ and $\langle p_B, U^{-1}p_W \rangle = \langle U p_B, p_W \rangle = 1$. All these products have modulus 1, which completes the proof.
\end{proof}

Considering the translation by $U^k$, we obtain the following corollary:

\begin{cor}\label{cor_incidences} For $k\in \mathbb{Z}$ we have the following incidences:
\begin{itemize}
 \item The bisector $\mathcal{J}_k^{+}$ contains the points $U^{k}p_A$, $U^{k+1}p_A$, $U^{k}p_B$ and $U^{k-1}p_B$.
  \item The bisector $\mathcal{J}_k^{-}$ contains the points $U^{k}p_A$, $U^{k+1}p_A$, $U^{k}p_B$ and $U^{k+1}p_B$.
\end{itemize}
\end{cor}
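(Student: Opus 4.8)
The plan is to deduce the corollary directly from Lemmas \ref{lemme_incidences_pa} and \ref{lemme_incidence_pb} by transporting the incidences along the isometry $[U]$, keeping careful track of the index shifts.

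First I would record the elementary fact that for any $G \in \su21$ and any $p,q \in \CC^3 \setminus \{0\}$ with $\langle p,p \rangle = \langle q,q \rangle$ one has $[G]\,\con{\mathfrak{B}}(p,q) = \con{\mathfrak{B}}(Gp,Gq)$: indeed $G$ preserves $\Phi$, so $\langle Gp,Gp \rangle = \langle Gq,Gq \rangle$ and $|\langle Gz,Gp \rangle| = |\langle z,p \rangle|$, hence $[z]$ lies on $\con{\mathfrak{B}}(p,q)$ if and only if $[Gz]$ lies on $\con{\mathfrak{B}}(Gp,Gq)$. Recalling that $\mathcal{J}_m^{+} = U^m\,\con{\mathfrak{B}}(p_U,p_V)$ and $\mathcal{J}_m^{-} = U^m\,\con{\mathfrak{B}}(p_U,p_W)$, this gives: for every $k \in \ZZ$, a point $[x]$ belongs to $\mathcal{J}_m^{\pm}$ if and only if $[U^k x]$ belongs to $U^k \mathcal{J}_m^{\pm} = \mathcal{J}_{m+k}^{\pm}$.

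Then I would simply translate the incidences. From Lemma \ref{lemme_incidences_pa}, $[p_A]$ belongs to $\mathcal{J}_0^{+}$, $\mathcal{J}_0^{-}$, $\mathcal{J}_{-1}^{+}$ and $\mathcal{J}_{-1}^{-}$. Applying $U^k$ to $[p_A] \in \mathcal{J}_0^{\pm}$ yields $[U^k p_A] \in \mathcal{J}_k^{\pm}$, and applying $U^{k+1}$ to $[p_A] \in \mathcal{J}_{-1}^{\pm}$ yields $[U^{k+1} p_A] \in \mathcal{J}_k^{\pm}$. Likewise, from Lemma \ref{lemme_incidence_pb}, $[p_B]$ belongs to $\mathcal{J}_0^{+}$, $\mathcal{J}_0^{-}$, $\mathcal{J}_{1}^{+}$ and $\mathcal{J}_{-1}^{-}$. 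Applying $U^k$ to $[p_B] \in \mathcal{J}_0^{\pm}$ yields $[U^k p_B] \in \mathcal{J}_k^{\pm}$; applying $U^{k-1}$ to $[p_B] \in \mathcal{J}_{1}^{+}$ yields $[U^{k-1} p_B] \in \mathcal{J}_k^{+}$; and applying $U^{k+1}$ to $[p_B] \in \mathcal{J}_{-1}^{-}$ yields $[U^{k+1} p_B] \in \mathcal{J}_k^{-}$. Collecting these statements gives exactly the two asserted lists: $\mathcal{J}_k^{+}$ contains $U^k p_A$, $U^{k+1} p_A$, $U^k p_B$, $U^{k-1} p_B$, and $\mathcal{J}_k^{-}$ contains $U^k p_A$, $U^{k+1} p_A$, $U^k p_B$, $U^{k+1} p_B$.

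Since this is a purely formal transport of the two preceding lemmas, there is no genuine obstacle in the argument; the only thing to watch is choosing the correct power of $U$ to bring each incidence to the index $k$, together with the one-line verification that $[U]$, being a holomorphic isometry of $\h2c$, carries $\con{\mathfrak{B}}(p,q)$ onto $\con{\mathfrak{B}}(Up,Uq)$.
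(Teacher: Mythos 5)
Your proof is correct and matches the paper's argument: the paper obtains the corollary precisely by translating the incidences of Lemmas \ref{lemme_incidences_pa} and \ref{lemme_incidence_pb} by powers of $U$, using that $U^k\mathcal{J}_m^{\pm}=\mathcal{J}_{m+k}^{\pm}$. Your index bookkeeping and the auxiliary equivariance fact $[G]\,\con{\mathfrak{B}}(p,q)=\con{\mathfrak{B}}(Gp,Gq)$ are both accurate.
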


\begin{lemme}\label{lemme_inter_jk+_jk-}
 The intersections $\mathcal{J}_k^{+} \cap \mathcal{J}_k^{-} $ and $\mathcal{J}_k^{-} \cap \mathcal{J}_{k+1}^{+} $ are Giraud disks. If $\alpha_2 \neq 0$, their boundary is a smooth circle in $\dh2c$.
\end{lemme}
\begin{proof}
The hypotheses of Proposition \ref{prop_inter_biss_sym} are satisfied for the intersection $\mathcal{J}_k^{+} \cap \mathcal{J}_k^{-} $, since $p_V = Sp_U$ and $p_W = S^2p_U$. It is the same for $\mathcal{J}_k^{-} \cap \mathcal{J}_{k+1}^{+} $ since $p_V = Tp_U$ and $p_W = T^2p_U$. Hence, it is enough to compute $\frac{\langle p_U \boxtimes p_V , p_U \boxtimes p_V \rangle}{\langle p_U \boxtimes p_V , p_W \boxtimes p_U \rangle}$ and compare it to $\frac{2}{3}$; the other case is analogous.

We compute:
\begin{eqnarray*}
 \langle p_U \boxtimes p_V , p_U \boxtimes p_V \rangle = -4\cos^2(\alpha_2)(4\cos^2(\alpha_2) - 3) \\
 \langle p_U \boxtimes p_V , p_W \boxtimes p_U \rangle = -6\cos^2(\alpha_2)
\end{eqnarray*}
 The quotient is then equal to $\frac{2}{3}(4\cos^2(\alpha_2)-3)$, which is $\leq \frac{2}{3}$, with an equality if and only if $\alpha_2 = 0$.

\end{proof}

\begin{cor} For all $k \in \mathbb{Z}$  we have: 
\begin{itemize}

 \item $\mathcal{J}_k^{+} \cap \mathcal{J}_k^{-} $ is a Giraud disk containing the points $U^{k}p_A , U^{k}p_B$ and $U^{k+1}p_A$.
 
 \item $\mathcal{J}_k^{-} \cap \mathcal{J}_{k+1}^{+} $ is a Giraud disk containing the points $U^{k}p_B , U^{k+1}p_A$ and $U^{k+1}p_B$.
 \end{itemize}
\end{cor}

\begin{rem}
 If $\alpha_2 = 0$ then $\frac{2}{3}(4\cos^2(\alpha_2)-3) =  \frac{2}{3}$, and the intersections $\mathcal{J}_k^{+} \cap \mathcal{J}_k^{-} $ and $\mathcal{J}_k^{-} \cap \mathcal{J}_{k+1}^{+} $ are hexagons with their opposite vertices identified. It is the limit case of Proposition \ref{prop_inter_biss_sym}; the boundary of each hexagon consists of three $\CC$-circles.
\end{rem}
 
 We are going to recall the definition of the three dimensional faces of the Ford domain in $\h2c$. They are contained in the bisectors $\mathcal{J}_k^{\pm}$ and will be deformed. Their boundary at infinity will consist of bigons and quadrilaterals, that will border the domain $\partial_{\infty}D(\alpha_2)$ of $\dh2c$. This domain, endowed with the side pairing given by the group $\Gamma$, gives \CR {} structures on some Dehn surgeries on the Whitehead link complement.

\begin{defn}
 For $k \in \ZZ$, we define the 3-face $\mathcal{F}_k^+$ as
\begin{equation*}
 \mathcal{F}_k^+ = \left\{ [z] \in \mathcal{J}_k^+ \mid |\langle z, p_U \rangle| \leq \min (|\langle z, U^k p_W \rangle| , |\langle z, U^{k-1}p_W \rangle| )  \right\}.
\end{equation*} 
  Its boundary in $\mathcal{J}_k^+$ is then given by $\mathcal{J}_k^+ \cap (\mathcal{J}_k^- \cup \mathcal{J}_{k-1}^-)$. We also define the 3-face $\mathcal{F}_k^-$ as
\begin{equation*}
 \mathcal{F}_k^- = \left\{ [z] \in \mathcal{J}_k^- \mid |\langle z, p_U \rangle| \leq \min (|\langle z, U^k p_V \rangle| , |\langle z, U^{k+1}p_V \rangle| )  \right\}.
\end{equation*} 
  Its boundary in $\mathcal{J}_k^-$ is then given by $\mathcal{J}_k^- \cap (\mathcal{J}_k^+ \cup \mathcal{J}_{k+1}^+)$.
\end{defn}

\begin{rem}
The boundary of the face $\mathcal{F}_k^+$ is, a priori, the union of the two Giraud disks $\mathcal{J}_k^+ \cap \mathcal{J}_k^- $ and $\mathcal{J}_k^+ \cap  \mathcal{J}_{k-1}^-$. We are going to show, in the following section, that it is indeed the case, and that, during the whole deformation, these two disks are bi-tangent at infinity.
\end{rem}

\subsection{Symmetry}\label{subsect_symetrie}
 The domain that we are going to deform admits a symmetry exchanging the faces $\mathcal{F}_k^+$ and $\mathcal{F}_k^-$. Thanks to this symmetry and to the invariance of the domain by $[U]$, it will be enough to check most of the statements only for the bisector $\mathcal{J}_0^+$ or $\mathcal{J}_0^-$ to prove them for the whole family.

Consider the involution $I$ of $\mathrm{U}(2,1)$ given by:
\[ I =
\begin{pmatrix}
1 & 0 & 0 \\
-\sqrt{2}e^{i\alpha_2} & -1 & 0 \\
-1 & -\sqrt{2}e^{-i\alpha_2} & 1
\end{pmatrix}
.\]

It satisfies $Ip_U = p_U$, $Ip_V = p_W$ and $Ip_W = p_V$. Furthermore, we have $IUI = U^{-1}$; we deudce that for all $k \in \ZZ$, $IU^k p_V = IU^kI p_W = U^{-k}p_W$, and hence:

 \begin{equation*}
  \forall k \in \ZZ \hspace{0.5cm} I\mathcal{J}_k^+ = \mathcal{J}_{-k}^-
 \end{equation*}

 In particular, the action of $[I]$ on $\cp2$ exchanges the bisectors $ \mathcal{J}_0^+$ and $ \mathcal{J}_0^-$.

\subsection{The intersection $\mathcal{F}_0^- \cap \mathcal{F}_{-1}^-$}

 We begin by studying the intersection of 3-faces of the form $\mathcal{F}_k^\pm \cap \mathcal{F}_{k+1}^\pm$ in $\con{\h2c}$. We are going to show that these two faces are bi-tangent in $\dh2c$, and that they don't intersect elsewhere. This will show the condition on the topology of faces (TF) of Section \ref{sect_strategie_de_preuve}. By symmetry, it is enough to study the intersection $\mathcal{F}_0^- \cap \mathcal{F}_{-1}^-$. We will use the following lemma, which determines the intersection $\mathfrak{E}(p_U , p_V) \cap \mathfrak{E}(p_W , U^{-1}p_W)$, in order to study the intersection of the bisectors $\mathcal{J}_0^-$ and $\mathcal{J}_{-1}^-$.

\begin{lemme}\label{lemme_extors_equilibres}
 The extors $\mathfrak{E}(p_U , p_V)$ and $\mathfrak{E}(p_W , U^{-1}p_W)$ form a balanced pair. Their intersection is the union of a real plane $\mathfrak{m}$ and a complex line $l$ given by:
 
 \begin{equation*}
  \mathfrak{m} = \left\{
 \begin{bmatrix}
 z_1 \\ z_2 \\ 1
\end{bmatrix}
  \in \cp2 \mid z_1 \in \RR , z_2 \in i \RR
   \right\}
   \cup 
   \left\{
   \begin{bmatrix}
 1 \\ z_2 \\ 0
\end{bmatrix}
  \in \cp2 \mid z_2 \in i \RR
   \right\}
    \cup 
   \left\{
   \begin{bmatrix}
 0 \\ 1 \\ 0
\end{bmatrix}
   \right\}
\end{equation*}  

\begin{equation*}
 l = \begin{bmatrix}
 \sin(\alpha_2) \\ -i \frac{\sqrt{2}}{2} \\ - \sin(\alpha_2)
\end{bmatrix}^{\perp}
\end{equation*}
 
\end{lemme}

\begin{proof}

 Consider the vectors 
\begin{equation*}
 f = \begin{pmatrix}
 -1 \\ -i \frac{\sqrt{2}}{2}\sin(\alpha_2) \\ 1
\end{pmatrix}
\text{ and }
 f' = \begin{pmatrix}
 1 \\ 0 \\ 1
\end{pmatrix}.
\end{equation*} 

 We have $\langle f, p_U \rangle = \langle f , p_V \rangle = \langle f', p_W \rangle = \langle f' , U^{-1}p_W \rangle = 0$, hence $[f]$ is the focus of $\mathfrak{E}(p_U , p_V)$ and $[f']$ is the focus of $\mathfrak{E}(p_W , U^{-1}p_W)$. We compute:
 
 \begin{eqnarray*}
  \langle p_u , f' \rangle &=& 2 \cos(\alpha_2)e^{-i\alpha_2} \\
  \langle p_V , f' \rangle &=& -2 \cos(\alpha_2)e^{-i\alpha_2} \\
  \langle p_W , f \rangle &=& (i \sin(\alpha_2)(4e^{-2i\alpha_2}+2)-2e^{-i\alpha_2})e^{-i\alpha_2}\\
  \langle U^{-1}p_W , f \rangle &=& (i \sin(\alpha_2)(4e^{2i\alpha_2}+2)+2e^{i\alpha_2})e^{-i\alpha_2}\\
  &=& -e^{-2i\alpha_2} \con{\langle p_W , f \rangle}
 \end{eqnarray*}

 Consequently $[f'] \in \mathfrak{E}(p_U,p_V)$ and $[f] \in \mathfrak{E}(p_W,U^{-1}p_W)$. Hence it is a balanced pair. By Theorem \ref{thm_inter_extors_equilibres}, the intersection of the two extors is given by a complex line $l$ and an $\RR$-plane. 
 The complex line is given by $l = l_{[f],[f']}$; we easily check that we have
 \begin{equation*}
 l = l_{[f],[f']} = \begin{bmatrix}
 \sin(\alpha_2) \\ -i \frac{\sqrt{2}}{2} \\ - \sin(\alpha_2)
\end{bmatrix}^{\perp}.
\end{equation*}
 
 We know that the points $[f],[f']$ and $[p_A]$ lie in the $\RR$-plane $\mathfrak{m}$ given in the statement. Since these points are not aligned, there is at most one $\RR$-plane containing them. By Corollary \ref{cor_incidences}, we know that $[p_A] \in \mathfrak{E}(p_U,p_V) \cap \mathfrak{E}(p_U,p_W) \cap \mathfrak{E}(p_U , U^{-1}p_W)$, and hence that $[p_A]$ is in $\mathfrak{E}(p_U,p_V) \cap \mathfrak{E}(p_W , U^{-1}p_W)$. Since $[p_A] \notin l$, the $\RR$-plane $\mathfrak{m}$ passing by $[f],[f']$ and $[p_A]$ is the $\RR$-plane of the intersection of the balanced pair of extors.
\end{proof}

 In order to understand the intersection of the faces $\mathcal{F}_0^{-}$ and $\mathcal{F}_{-1}^{-}$, we begin by considering the triple intersection of extors $\mathfrak{E}_0^+ \cap \mathfrak{E}_0^{-} \cap \mathfrak{E}_{-1}^{-}$. We will use in a crucial way the following lemma. 

\begin{lemme}\label{lemme_triple_inter_extors}
 The triple intersection of extors $\mathfrak{E}_0^+ \cap \mathfrak{E}_0^{-} \cap \mathfrak{E}_{-1}^{-}$ consists of two topological circles, one contained in the $\RR$-plane $\mathfrak{m}$ and the other contained in the complex line $l$. The triple intersection of bisectors $\mathcal{J}_0^+ \cap \mathcal{J}_0^{-} \cap \mathcal{J}_{-1}^{-}$ is the set $\{[p_A] , [p_B]\}$.
\end{lemme}
\begin{proof}
 By Corollary \ref{cor_incidences}, we know that $[p_A]$ and $[p_B]$ are in the triple intersection. Consider the triple intersection of extors $\mathfrak{E}_0^+ \cap \mathfrak{E}_0^{-} \cap \mathfrak{E}_{-1}^{-}$.
  We have:
\begin{eqnarray*}
\mathfrak{E}_0^+ \cap \mathfrak{E}_0^{-} \cap \mathfrak{E}_{-1}^{-} &
 = & \mathfrak{E}(p_U,p_V) \cap \mathfrak{E}(p_U,p_W) \cap \mathfrak{E}(p_U,U^{-1}p_W) \\
 &=& \mathfrak{E}(p_U,p_V) \cap \mathfrak{E}(p_U,p_W) \cap \mathfrak{E}(p_W,U^{-1}p_W) \\
 &=& \mathfrak{E}(p_U,p_W) \cap (\mathfrak{m} \cup l)
\end{eqnarray*}  
  Where $\mathfrak{m}$ and $l$ are the real plane and the complex line of Lemma \ref{lemme_extors_equilibres}. We consider first the intersection $\mathfrak{E}(p_U,p_W) \cap \mathfrak{m}$ with the real plane.
 For $r,s \in \mathbb{R}$, let
 \[q_{r,s} = \begin{pmatrix}
 r \\ i\sqrt{2}s \\ 1
\end{pmatrix} 
\text{ and }
 q_{r} = \begin{pmatrix}
 r \\ i\sqrt{2} \\ 0
\end{pmatrix} .\] 
 
The $\RR$-plane $\mathfrak{m}$ is then equal to the set
$ \{[p_A]\} \cup \left\{\right [q_{r}] \mid r \in \mathbb{R}  \} \cup \left\{ [q_{r,s}] \mid r,s \in \mathbb{R} \right\}.$
 Let us show that $\mathfrak{m} \cap \mathcal{J}_0^{-} = \{[p_A] , [p_B]\}$. Let $r,s \in \mathbb{R}$. 
 We have:  
 \begin{eqnarray*}
 | \langle p_U , q_{r} \rangle |^2 &=&  r^2 + 2 r\sin (\alpha_2)+ 1 \\
 | \langle p_W , q_{r} \rangle |^2 &=&  r^2 + 2 r\sin (\alpha_2)+ 1 + 8\cos^2(\alpha_2) \\
 \end{eqnarray*}

Hence, the points of the form $[q_r]$ are not in the triple intersection.
  We also compute:
 \begin{eqnarray*}
  |\langle p_U , q_{r,s} \rangle|^2 &=& r^2 + s^2 + 1 + 2r(2\cos(\alpha_2)^2 - 1) + 2(r-1)s\sin(\alpha_2) \\
| \langle p_W , q_{r,s} \rangle |^2 &=& (8  \cos (\alpha_2)^2 + 1) s^2 + r^2 + 2 (r-1) s \sin (\alpha_2) - 2r + 1.
\end{eqnarray*} 
 We deduce that $| \langle p_W , q_{r,s} \rangle |^2 = | \langle p_U , q_{r,s} \rangle |^2$ if and only if $ 4\cos^2(\alpha_2)r = 8\cos^2(\alpha_2)s^2$. Since $\alpha_2 \neq \pm \frac{\pi}{2}$, this condition is equivalent to $2s^2 - r = 0$. But we know that $\langle q_{r,s} , q_{r,s} \rangle = 2s^2 + 2r$. If this point lies in $\mathcal{J}_0^{-}$, then $s^2 + r \leq 0$ and $2s^2 - r \geq 3s^2 \geq 0$, with equality if and only if $r=s=0$.
  Hence $\mathfrak{m}  \cap \mathcal{J}_0^{-} = \{[p_A] , [p_B]\}$.
 
 Consider now the intersection with the complex line. 
 The vectors 
$
\begin{bmatrix}
1 \\ 0 \\ 1
\end{bmatrix}$
 and 
$\begin{bmatrix}
-1 \\ 2\sqrt{2}i\sin(\alpha_2) \\ 1
\end{bmatrix}
$ 
 form a basis of $\begin{bmatrix}
 \sin(\alpha_2) \\  -i\frac{\sqrt{2}}{2} \\ - \sin(\alpha_2)
\end{bmatrix}^{\perp} $. For $\mu \in \CC$, let
$q_\mu = \begin{pmatrix}
 -1 + \mu \\  2\sqrt{2}i\sin(\alpha_2) \\ 1+ \mu
\end{pmatrix} $.
 We have:
 \begin{eqnarray*}
  |\langle p_U , q_{\mu} \rangle|^2 &=& 4 \cos^2(\alpha_2) |\mu|^2 \\
| \langle p_W , q_{\mu} \rangle |^2 &=& 4(9 - 8\cos^2(\alpha_2))\cos^2(\alpha_2).
\end{eqnarray*} 
 The point $[q_\mu]$ is hence in the triple intersection if and only if $|\mu|^2 = 9 - 8\cos^2(\alpha_2)$.
But  $\langle q_{\mu} , q_{\mu} \rangle = 2(|\mu|^2 - 4\cos^2(\alpha_2) + 3)$, so for the points in the triple intersection this quantity equals $24\sin^2(\alpha_2)$. Hence, these points never lie in $\con{\h2c}$.
 
 We deduce that the triple intersection $\mathcal{J}_0^+ \cap \mathcal{J}_0^{-} \cap \mathcal{J}_{-1}^{-}$ is the set $\{[p_A] , [p_B]\}$.
\end{proof}

\begin{figure}[htbp]
\center
\begin{subfigure}{0.4\textwidth}
 \includegraphics[width= 6cm]{./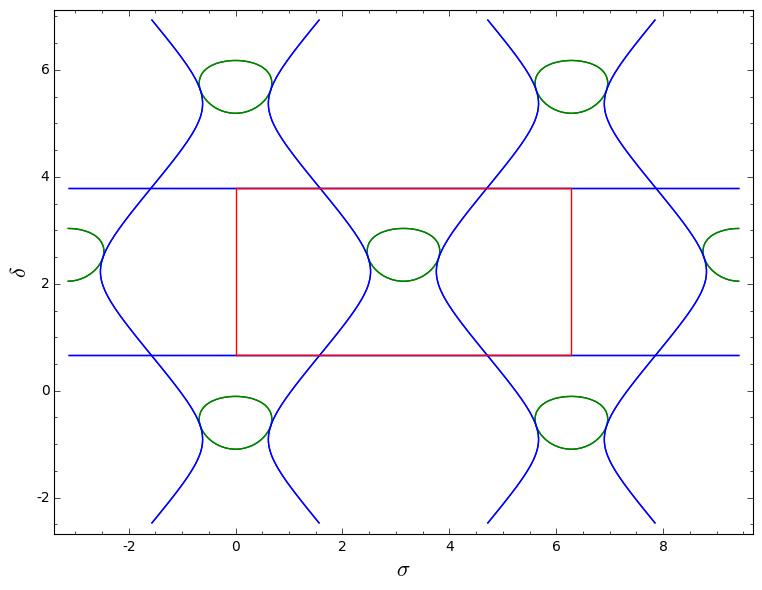}
      \caption{$\alpha_2 = \alpha_2^{\lim} \simeq 0,91$}
 \end{subfigure} \hspace{1cm}
 \begin{subfigure}{0.4\textwidth}
 \includegraphics[width= 6cm]{./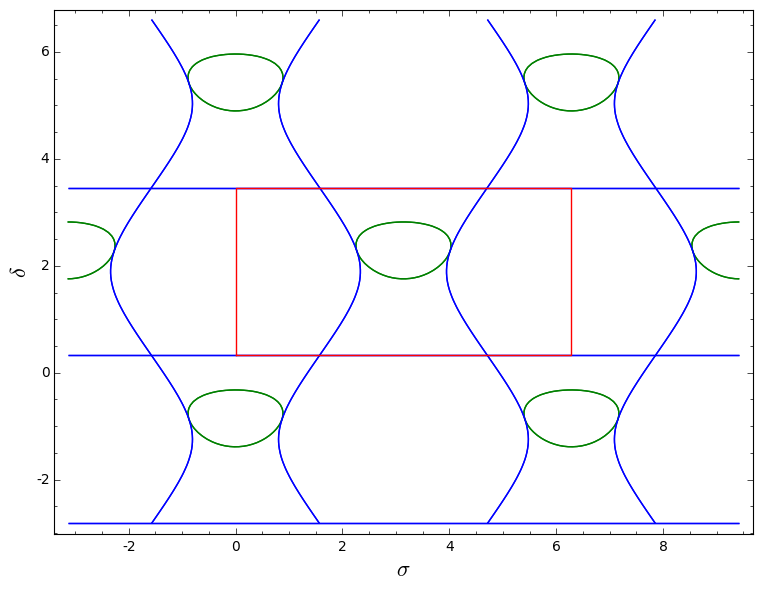}
      \caption{$\alpha_2 = 0,7$}
 \end{subfigure}  \\
 \begin{subfigure}{0.4\textwidth}
 \includegraphics[width= 6cm]{./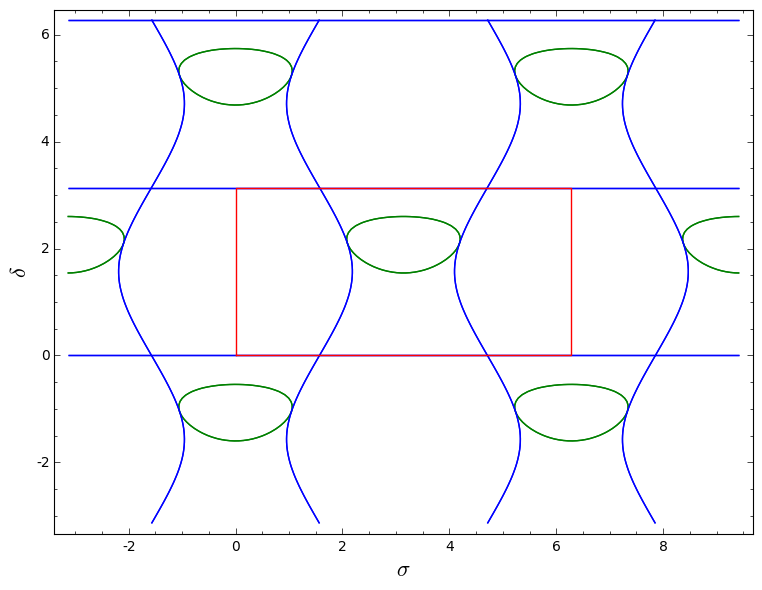}
      \caption{$\alpha_2 = \frac{\pi}{6}$}
 \end{subfigure} \hspace{1cm}
\begin{subfigure}{0.4\textwidth}
 \includegraphics[width= 6cm]{./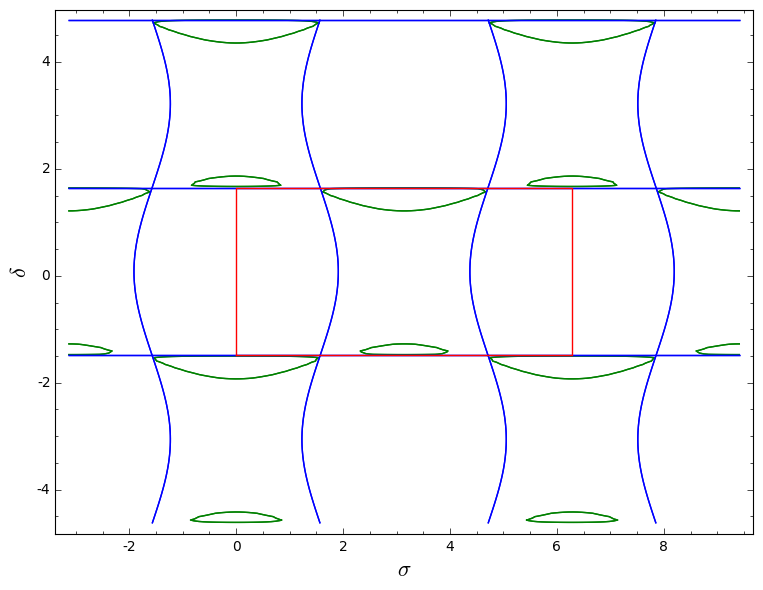}
      \caption{$\alpha_2 = 0,02$}\label{fig_inter_biss_non_connexe}
 \end{subfigure} \\
 \begin{subfigure}{0.4\textwidth}
 \includegraphics[width= 6cm]{./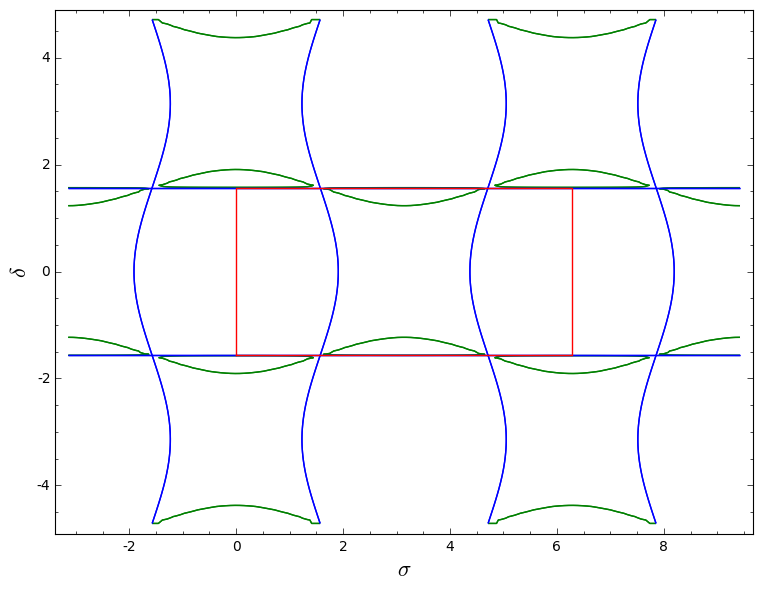}
      \caption{$\alpha_2 = 0$}
 \end{subfigure}
\caption{The curves of equations (\ref{eqn_inter_r_plan}) and (\ref{eqn_inter_c_plan}) (blue), the curve of the intersection with $\dh2c$ (green) and the rectangle of study (red) for $\alpha_2 \in \{0 ; 0.02 ; \frac{\pi}{6} ; 0.7 ; \alpha_2^{\lim}\}$. \label{fig_inter_biss_bon_cote}}
\end{figure}

By studying the torus $\mathfrak{T} = \mathfrak{E}_0^{-} \cap \mathfrak{E}_{-1}^{-} \subset \cp2$, and its intersections with $\mathfrak{E}_0^{+}$ and $\con{\h2c}$,
we obtain at last the following proposition. The combinatorics of these intersections is given by Figure \ref{fig_inter_biss_bon_cote}.

\begin{prop}\label{prop_inter_faces_f0-f-1-}
 The intersection $\mathcal{F}_0^{-} \cap \mathcal{F}_{-1}^{-}$ is reduced to $\{[p_A] , [p_B]\}$.
\end{prop}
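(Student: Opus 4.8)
The strategy is to parametrize the torus $\mathfrak{T} = \mathfrak{E}_0^- \cap \mathfrak{E}_{-1}^- = \mathfrak{E}(p_U,p_W) \cap \mathfrak{E}(p_U,U^{-1}p_W)$ explicitly (it is an unbalanced pair of extors with common focus-line, so Proposition \ref{prop_param_inter_extors_deseq} applies with $p = p_U$, $q = p_W$, $r = U^{-1}p_W$), then cut it first by the extor $\mathfrak{E}_0^+ = \mathfrak{E}(p_U,p_V)$ and then by $\con{\h2c}$. By Lemma \ref{lemme_triple_inter_extors} we already know that $\mathfrak{E}_0^+ \cap \mathfrak{E}_0^- \cap \mathfrak{E}_{-1}^- = (\mathfrak{m} \cup l) \cap \mathfrak{E}(p_U,p_W)$ consists of two topological circles, one inside the real plane $\mathfrak{m}$ and one inside the complex line $l$, and that the only points of this set lying in $\con{\h2c}$ are $[p_A]$ and $[p_B]$. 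Since $\mathcal{F}_0^- \cap \mathcal{F}_{-1}^-$ is contained in $\mathcal{J}_0^- \cap \mathcal{J}_{-1}^- \subset \mathfrak{E}_0^- \cap \mathfrak{E}_{-1}^-$ and, by the defining inequalities of the faces, also in $\mathcal{J}_0^+ = \mathfrak{E}_0^+ \cap \h2c$ (a point in $\mathcal{F}_0^-$ with $|\langle z,p_U\rangle| = |\langle z,U^{-1}p_W\rangle|$ satisfies $|\langle z,p_U\rangle| \le |\langle z,p_V\rangle|$, and being also in $\mathcal{F}_{-1}^-$ gives the reverse inequality), one obtains $\mathcal{F}_0^- \cap \mathcal{F}_{-1}^- \subset \mathcal{J}_0^+ \cap \mathcal{J}_0^- \cap \mathcal{J}_{-1}^- = \{[p_A],[p_B]\}$. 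The reverse inclusion $\{[p_A],[p_B]\} \subset \mathcal{F}_0^- \cap \mathcal{F}_{-1}^-$ follows from Corollary \ref{cor_incidences}, which places both points on all the relevant bisectors, together with a check that they satisfy the (non-strict) face inequalities; in fact by Lemmas \ref{lemme_incidences_pa} and \ref{lemme_incidence_pb} the Hermitian products all have modulus $1$, so the minima are attained.

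First I would write $\mathfrak{T} = \{[(p_W - \alpha p_U) \boxtimes (U^{-1}p_W - \beta p_U)] \mid (\alpha,\beta) \in S^1 \times S^1\}$, and substitute into the quadratic form $\langle \cdot,\cdot\rangle$ exactly as in the proof of Proposition \ref{prop_inter_biss_sym}, obtaining an expression of the shape $c_0 + c_1\Re(\alpha) + c_2\Re(\beta) + c_3\Re(\alpha\bar\beta)$ with real coefficients computed from the Hermitian products among $p_U, p_W, U^{-1}p_W$. The set of $(\alpha,\beta)$ giving points in $\con{\h2c}$ is then a level region of a function on the torus $[-\pi,\pi]^2$; this is the green curve in Figure \ref{fig_inter_biss_bon_cote}. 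Likewise, imposing $|\langle z,p_U\rangle| = |\langle z,p_V\rangle|$ on $\mathfrak{T}$ gives a second real equation, splitting (by Lemma \ref{lemme_triple_inter_extors}) into the two curves corresponding to $\mathfrak{m}$ and to $l$ — these are the blue curves (\ref{eqn_inter_r_plan}) and (\ref{eqn_inter_c_plan}). The decisive point, already contained in Lemma \ref{lemme_triple_inter_extors}, is that the ``$\mathfrak{m}$-branch'' meets $\con{\h2c}$ only at $[p_A],[p_B]$ and the ``$l$-branch'' never meets $\con{\h2c}$ (there the norm equals $24\sin^2(\alpha_2) > 0$). So the intersection of the green curve with the union of the two blue curves on the torus is exactly the two points $[p_A],[p_B]$, which is what the picture shows: throughout the deformation the blue and green curves stay in the configuration where they meet only at those two points.

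To make the figure into a rigorous argument I would restrict attention to the fundamental rectangle of study (the red rectangle) — using the order-$3$ symmetry relating $p_U,p_V,p_W$ and the symmetry $I$ of Subsection \ref{subsect_symetrie} to reduce to one chart of the torus — and verify, as a function of $\alpha_2 \in ]0,\alpha_2^{\lim}]$, that the curve $\con{\h2c} \cap \mathfrak{T}$ lies entirely on the ``$\geq$'' side of $\mathfrak{E}_0^+ \cap \mathfrak{T}$ except at $[p_A],[p_B]$; equivalently, that on $\mathfrak{T} \cap \con{\h2c} \setminus \{[p_A],[p_B]\}$ one has the strict inequality $|\langle z,p_U\rangle| < |\langle z,p_V\rangle|$ (or its reverse, whichever places $z$ outside $\mathcal{F}_0^- \cap \mathcal{F}_{-1}^-$). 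I expect the \textbf{main obstacle} to be precisely this last step: controlling the sign of a trigonometric function of $(\alpha,\beta,\alpha_2)$ uniformly over the whole parameter interval, including the degenerate cases $\alpha_2 = \frac{\pi}{6}$ (where the bisectors become fans) and $\alpha_2 \to 0$ (where, as Figure \ref{fig_inter_biss_non_connexe} suggests, the intersection $\mathcal{J}_0^- \cap \mathcal{J}_{-1}^-$ becomes disconnected); one must check that even when the intersection of bisectors breaks into several components, none of the new components meets $\mathfrak{E}_0^+ \cap \con{\h2c}$ away from $[p_A],[p_B]$. This is a bounded, explicit computation, but it is the place where the ``simplified picture'' has to be replaced by an honest estimate.
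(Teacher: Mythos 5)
Your global plan (parametrize the torus $\mathfrak{T}=\mathfrak{E}_0^-\cap\mathfrak{E}_{-1}^-$, cut it by $\mathfrak{E}_0^+$ and by $\con{\h2c}$, invoke Lemma \ref{lemme_triple_inter_extors}) is the paper's, but the shortcut in your first paragraph is not valid. Membership in $\mathcal{F}_0^-$ requires $|\langle z,p_U\rangle|\le\min(|\langle z,p_V\rangle|,|\langle z,Up_V\rangle|)$ and membership in $\mathcal{F}_{-1}^-$ requires $|\langle z,p_U\rangle|\le\min(|\langle z,U^{-1}p_V\rangle|,|\langle z,p_V\rangle|)$: both conditions impose the \emph{same} inequality $|\langle z,p_U\rangle|\le|\langle z,p_V\rangle|$, not opposite ones, so there is no forced equality and no inclusion $\mathcal{F}_0^-\cap\mathcal{F}_{-1}^-\subset\mathcal{J}_0^+$. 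Hence you cannot reduce the proposition to the triple intersection $\mathcal{J}_0^+\cap\mathcal{J}_0^-\cap\mathcal{J}_{-1}^-=\{[p_A],[p_B]\}$: a priori the two faces could meet at points of $\mathcal{J}_0^-\cap\mathcal{J}_{-1}^-$ lying strictly on the $p_U$-side of $\mathcal{J}_0^+$, about which Lemma \ref{lemme_triple_inter_extors} says nothing. (If this reduction worked, none of the subsequent analysis would be needed.)

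What is actually required — and what you defer as the ``main obstacle'' — is exactly the content of the paper's proof: showing that $\mathfrak{T}\cap\con{\h2c}\setminus\{[p_A],[p_B]\}$ lies entirely in the component of $\mathfrak{T}\setminus\mathfrak{E}_0^+$ where $|\langle z,p_U\rangle|>|\langle z,p_V\rangle|$. The paper does this not by a uniform trigonometric estimate but by a monotonicity-plus-continuity argument: in the coordinates $\sigma=\frac{\theta+\phi}{2}$, $\delta=\frac{\theta-\phi}{2}$ one computes $\frac{\partial}{\partial\sigma}\langle v,v\rangle=-12\sin(\sigma)\bigl(2\cos(2\alpha_2-\delta)-\cos(\delta)\bigr)$, so for each fixed $\delta$ strictly between the roots of (\ref{eqn_inter_c_plan}) the set $\{\sigma \mid \langle v,v\rangle\le 0\}$ is an interval centred at $\sigma=\pi$; hence the negative-norm part of $\mathfrak{T}$, minus $[p_A]$ and $[p_B]$, stays in a single component of $\mathfrak{T}\setminus\mathfrak{E}_0^+$, and which component it is follows by continuity in $\alpha_2$ from the case $\alpha_2=\alpha_2^{\lim}$, where Parker and Will have already established $\mathcal{F}_0^-\cap\mathcal{F}_{-1}^-=\{[p_A],[p_B]\}$. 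Note also that the check you formulate at the end is the wrong criterion: a component of $\mathfrak{T}\cap\con{\h2c}$ that avoids $\mathfrak{E}_0^+$ altogether would still be fatal if it sat on the $p_U$-side, so ``no component meets $\mathfrak{E}_0^+$ away from $[p_A],[p_B]$'' does not suffice — one must control sides, which is what the interval-centred-at-$\pi$ argument and the continuity step accomplish. As written, your proposal identifies the right objects but supplies neither a correct inequality bookkeeping nor the decisive connectedness-and-side argument.
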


\begin{proof}
 We are going to study the torus $\mathfrak{T} = \mathfrak{E}_0^{-} \cap \mathfrak{E}_{-1}^{-} \subset \cp2$, and its intersections with $\mathfrak{E}_0^{+}$ and $\con{\h2c}$. We know, by Lemma \ref{lemme_triple_inter_extors}, that $\mathfrak{T} \cap \mathfrak{E}_0^{+}$ is the union of two circles cutting $\mathfrak{T}$ into two pieces. We are going to show that $\mathfrak{T} \cap \con{\h2c}$ is always in the same side of these two circles, excepted the points $[p_A]$ and $[p_B]$; we will conclude the proof by a continuity argument.
 
 We begin by parametrizing the torus $\mathfrak{T}$. We know that $\mathfrak{T} = \mathfrak{E}(p_U , p_W) \cap \mathfrak{E}(p_U , U^{-1}p_W)$, hence we can parametrize it, by Proposition \ref{prop_param_inter_extors_deseq}, by 
 \begin{equation*}
 \mathfrak{T} = \left\{ [(p_W -e^{i\theta}p_U)\boxtimes (U^{-1}p_W -e^{i\phi}p_U)] \mid (\theta, \phi) \in (\RR/2\pi\ZZ)^2 \right\}
\end{equation*}  

 Let us compute the coordinates of these points. We have:
 \begin{equation*}
 (p_W -e^{i\theta}p_U)\boxtimes (U^{-1}p_W -e^{i\phi}p_U) = 
 p_W\boxtimes U^{-1}p_W -e^{-i\theta}p_U\boxtimes U^{-1}p_W -e^{-i\phi} p_W\boxtimes p_U
 \end{equation*}
 We compute:

\begin{eqnarray*}
 p_W\boxtimes U^{-1}p_W 
 &=&
  \sqrt{2}\cos(\alpha_2)e^{-2i\alpha_2}
  \begin{pmatrix}
 -3 \\ 0 \\ -3
\end{pmatrix} \\
  p_U\boxtimes U^{-1}p_W 
 &=&
  \sqrt{2}\cos(\alpha_2)e^{-2i\alpha_2}
  \begin{pmatrix}
 2e^{2i\alpha_2} \\ \sqrt{2}e^{i\alpha_2} \\ -1
\end{pmatrix} \\
 p_W\boxtimes p_U 
 &=&
  \sqrt{2}\cos(\alpha_2)e^{-2i\alpha_2}
  \begin{pmatrix}
 -1 \\ \sqrt{2}e^{-i\alpha_2} \\ 2e^{-2i\alpha_2}
\end{pmatrix}
\end{eqnarray*} 
 
 Let, for $(\theta,\phi) \in \RR^2/\ZZ^2$,
 \begin{eqnarray*}
  v(\theta , \phi) 
  &=& 
  \begin{pmatrix}
 3 \\ 0 \\ 3
\end{pmatrix}
- e^{-i\theta}
\begin{pmatrix}
 2e^{2i\alpha_2} \\ \sqrt{2}e^{i\alpha_2} \\ -1
\end{pmatrix}
-e^{-i\phi}
\begin{pmatrix}
 -1 \\ \sqrt{2}e^{-i\alpha_2} \\ 2e^{-2i\alpha_2}
\end{pmatrix} \\
&=&
\begin{pmatrix}
 -3 -2e^{i(2\alpha_2 - \theta)} + e^{-i\phi}\\
  -\sqrt{2}(e^{i(\alpha_2 - \theta)} + e^{i(-\alpha_2 - \phi )}) \\
   -3 + e^{-i\theta} -2e^{i(-2\alpha_2 - \phi)}
\end{pmatrix}
 \end{eqnarray*}
 The vector $v(\theta,\phi)$ is a multiple of $(p_W -e^{i\theta}p_U)\boxtimes (U^{-1}p_W -e^{i\phi}p_U)$. Hence, we have $\mathfrak{T} = \left\{ [v(\theta,\phi)] \mid (\theta, \phi) \in \RR^2/\ZZ^2 \right\}$.
 
 In order to simplify the following computations, we are going to change variables. Let
\[
  \sigma = \frac{\theta + \phi}{2} \text{ and }
  \delta = \frac{\theta - \phi}{2}.
\]
 With these new variables, we have
 \begin{equation*}
  v(\theta,\phi) = 
  e^{-i\sigma}
  \begin{pmatrix}
  -3e^{i\sigma} -2e^{i(2\alpha_2 - \delta)} + e^{i\delta}\\
  -2\sqrt{2}\cos(\alpha_2 - \delta) \\
   -3e^{i\sigma} -2e^{i(-2\alpha_2 + \delta)} + e^{-i\delta}
\end{pmatrix}.
 \end{equation*}
 
 We study now the intersection of $\mathfrak{T}$ with $\mathfrak{E}_0^+$. This intersection is given by the intersection of $\mathfrak{T}$ with the real plane $\mathfrak{m}$ and the complex line $l$ of Lemma \ref{lemme_extors_equilibres}.
 
  Consider first the intersection with $\mathfrak{m}$. The point $[v(\theta,\phi)]$ is in the $\RR$-plane $\mathfrak{m}$ if and only if the quotients of the first and third coefficients of the vector by the second one are imaginary, which is equivalent to:
  \begin{equation}\label{eqn_inter_r_plan}
  -3\cos(\sigma) - 2\cos(2\alpha_2 - \delta) + \cos(\delta) = 0
  \end{equation}
 Notice that if $\alpha_2 \in ]0, \frac{\pi}{2}[$, this equation has no solutions in $\delta$ if $\sigma = 0$.  
  Consider now the intersection with the complex line $l$. The point $[v(\theta,\phi)]$ is in the complex line $l$ if and only if
  $\left\langle v , \begin{bmatrix}
 \sin(\alpha_2) \\  -i\frac{\sqrt{2}}{2} \\ - \sin(\alpha_2)
\end{bmatrix} \right\rangle = 0$, which can be written in coordinates as:

\begin{equation*}
 2i( \cos(\alpha_2 - \delta) - \sin(\alpha_2)(2\sin(2\alpha_2 - \delta)-\sin(\delta)) = 0
\end{equation*}
but
\begin{eqnarray*}
 & & ( \cos(\alpha_2 - \delta) - \sin(\alpha_2)(2\sin(2\alpha_2 - \delta)-\sin(\delta))   
   \\
  &=&
 \cos(\alpha_2)\cos(\delta) + 2\sin(\alpha_2)\sin(\delta) - 2\sin(\alpha_2)(\sin(2\alpha_2)\cos(\delta) - \cos(2\alpha)\sin(\delta)) \\
   &=&
 \cos(\alpha_2)\cos(\delta)(1-4\sin^2(\alpha_2)) + 4\cos^2(\alpha_2)\sin(\alpha_2)\sin(\delta)  \\
 &=&
 \cos(\alpha_2)
 (-\cos(\delta) +2\cos(\delta)\cos(2\alpha_2) + 2\sin(2\alpha_2)\sin(\delta) ) \\
 &=&
 \cos(\alpha_2)
 (-\cos(\delta) +2\cos(2\alpha_2-\delta) )  \\
\end{eqnarray*}

 Hence the point $v(\theta,\phi)$ is in the complex line $l$ if and only if
 \begin{equation}\label{eqn_inter_c_plan}
 2\cos(2\alpha_2 - \delta) - \cos(\delta) = 0
 \end{equation}
 
If $\alpha_2 \neq 0$, we can re-write this condition is the following way:
\begin{equation}
 \tan(\delta) = \frac{1-2\cos(2\alpha_2)}{2\sin(2\alpha_2)}
\end{equation}
 
 Denoting by $\delta_0 = \arctan \left(\frac{1-2\cos(2\alpha_2)}{2\sin(2\alpha_2)} \right)$, a point of $\mathfrak{T}$ is in the complex line $l$ if and only if $\delta = \delta_0 \mod \pi$.
 From now on, we will consider the domain $\{(\delta,\sigma)\in \RR^2 \mid \delta_0 \leq \delta \leq \delta_0 + \pi , 0 \leq \sigma \leq 2\pi\}$ as a chart to study $\mathfrak{T}$, and we will denote by $v(\sigma,\delta)$ the parametrization.
 
 In order to study the intersection of $\con{\h2c}$ with $\mathfrak{T}$, we are going to study the function
 \begin{equation*}
 h : (\sigma,\delta) \mapsto \langle v(\sigma,\delta), v(\sigma,\delta) \rangle
 \end{equation*}

We have:
 \begin{eqnarray*}
 \frac{\partial h}{\partial \sigma}(\sigma , \delta)
 &=&
 2 \Re \left( \left\langle \frac{\partial(e^{i\sigma}v(\sigma,\delta))}{\partial \sigma} , e^{i\sigma}v(\sigma,\delta) \right\rangle \right) \\
 &=&
 2 \Re \left( \left\langle
 \begin{pmatrix}
  -3ie^{i\sigma}\\
  0 \\
   -3ie^{i\sigma} 
\end{pmatrix}
,
\begin{pmatrix}
  -3e^{i\sigma} -2e^{i(2\alpha_2 - \delta)} + e^{i\delta}\\
  -2\sqrt{2}\cos(\alpha_2 - \delta) \\
   -3e^{i\sigma} -2e^{i(-2\alpha_2 + \delta)} + e^{-i\delta}
\end{pmatrix}
 \right\rangle \right) \\
 &=&
 6 \left( 
 2(\sin(2\alpha_2-\delta-\sigma) + \sin(-2\alpha_2+\delta-\sigma))
 + \sin(\sigma - \delta) + \sin(\delta + \sigma)
  \right) \\
 &=&
 - 12 \sin(\sigma) (2\cos(2\alpha_2 - \delta) - \cos(\delta))
\end{eqnarray*}  
 
 Hence, this partial derivative is zero if and inly if $\delta \in \{ \delta_0 , \delta_0 + \pi \}$ where $\sigma \in \{0 , \pm \pi \} $.
 
 Fix $\delta_1 \in ]\delta_0 , \delta_0 + \pi[$. We have, in this case, $2\cos(2\alpha_2 - \delta_1) - \cos(\delta_1) > 0$. 
 
 Consider the function 
 \begin{equation*}
 h_{\delta_1} : 
 \begin{array}{rcl}
 [-\pi,\pi] & \rightarrow & \RR \\
 \sigma & \mapsto & h(\sigma,\delta_1)
 \end{array}
 \end{equation*}

Since $h_{\delta_1}'(\sigma) = \frac{\partial h}{\partial \sigma}(\sigma , \delta_1)$ has the same sign that $-\sin(\sigma)$, the function $h_{\delta_1}$ is decreasing on $[0 , \pi]$ and increasing on $[\pi,2\pi]$. Hence the values of $\sigma$ for which $h_{\delta_1}(\sigma) \leq 0$ form an interval (eventually empty) centred at $\pi$. 
 By Lemma \ref{lemme_triple_inter_extors}, we know that $\mathfrak{T} \cap \mathfrak{E}_0^+ \cap \con{\h2c}$ is reduced to $\{[p_A],[p_B]\}$, which are in the $\RR$-plane $\mathfrak{m}$. Hence the set $\mathfrak{T}  \cap \con{\h2c} \setminus \{[p_A],[p_B]\}$ is in the same connected component of $\mathfrak{T} \setminus \mathfrak{E}_0^+$ that the interval of points $v(\pi,\delta)$ for $\delta_0 < \delta < \delta_0 + \pi$. 
 
 When $\alpha_2 = \alpha_2^{\lim}$, Parker and Will show in \cite{parker_complex_2017a} that $\mathcal{F}_0^{-} \cap \mathcal{F}_{-1}^{-}$ is reduced to $\{[p_A] , [p_B]\}$. Hence, we know that for this parameter $\alpha_2$ the set $\mathfrak{T}  \cap \con{\h2c} \setminus \{[p_A],[p_B]\}$ is in the connected component of $\mathfrak{T} \setminus \mathfrak{E}_0^+$ that is not contained in $\mathcal{F}_0^-$. By continuity of the deformation, it is also true for all the parameters $\alpha_2 \in ]0 , \frac{\pi}{2}[$.
 
 We deduce that the intersection $\mathcal{F}_0^{-} \cap \mathcal{F}_{-1}^{-}$ is reduced to $\{[p_A] , [p_B]\}$.
\end{proof}

\begin{rem}
 The intersection of the bisectors $\mathcal{J}_0^{-}$ and $\mathcal{J}_{-1}^{-}$ is not always connected. For $\alpha_2$ close to $0$, it has two connected components, as shown in Figure \ref{fig_inter_biss_non_connexe}.
\end{rem}

\subsection{The faces in $\dh2c$ are well defined}
 Now, we are going to show that the 2-faces in $\dh2c$ are well defined and that the local incidences of the bisectors are the same as the ones of the Parker-Will structure for $\alpha_2 \in ]0 , \frac{\pi}{2}[ $. This fact will almost show the local combinatorics condition (LC) stated in Section \ref{sect_strategie_de_preuve}. We need to show that each spinal surface of the form $\partial_\infty \mathcal{F}_k^{\pm}$ is cut into a quadrilateral and a bigon with vertices in the orbits of $[p_A]$ and $[p_B]$ by powers of $[U]$.
 We take as starting point and inspiration the proof of Parker and Will in \cite{parker_complex_2017a}.
 Proposition \ref{prop_inter_faces_f0-f-1-} gives immediately the following Lemma:

\begin{lemme}\label{lemme_tangences_faces}
 The Giraud disks $\mathcal{J}_0^+ \cap \mathcal{J}_0^{-}$ and $\mathcal{J}_0^+ \cap \mathcal{J}_{-1}^{-}$ are tangent at $[p_A]$ and at $[p_B]$.
\end{lemme}

\begin{figure}[ht]
 \center
 \includegraphics[width = 8cm]{./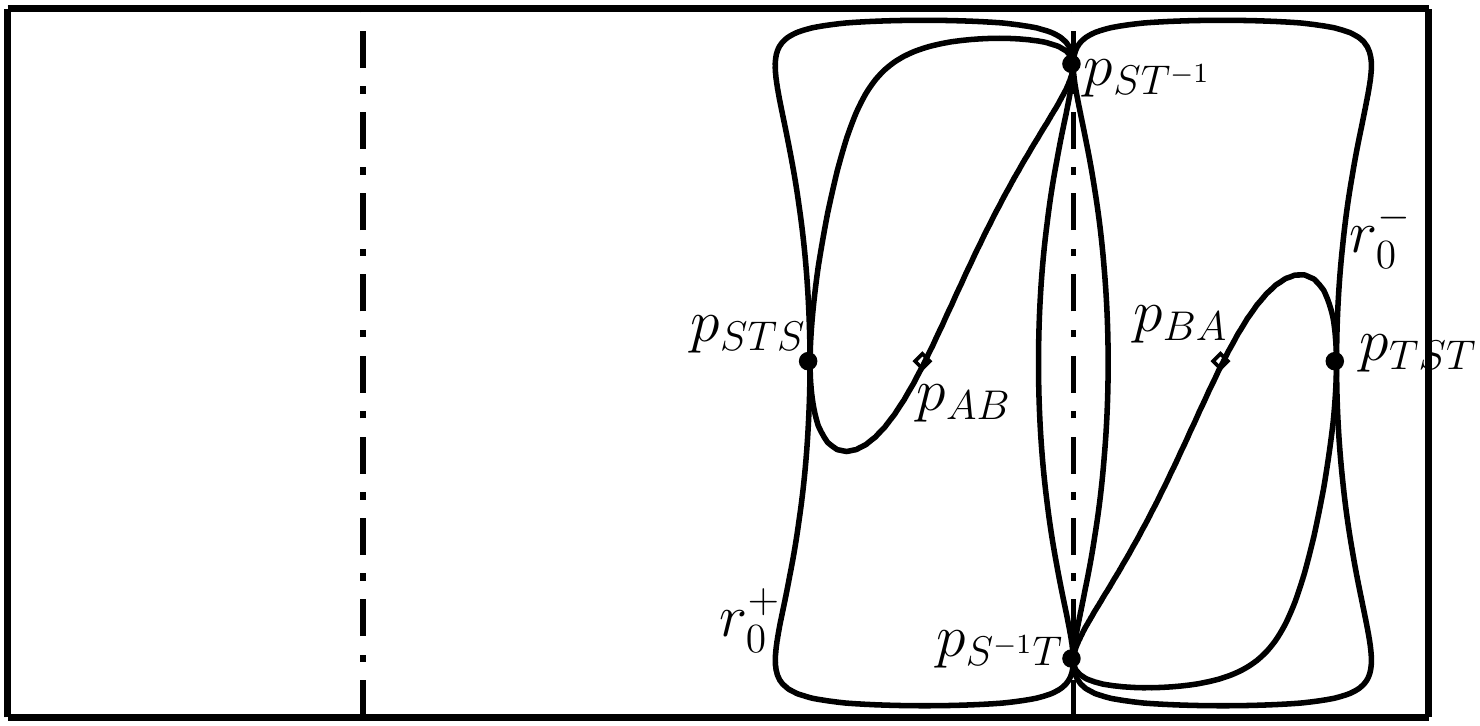}
 \caption{(From \cite{parker_complex_2017a}) The traces of the bisectors near $\mathcal{I}_0^+$ on $\partial_\infty \mathcal{I}_0^+$, in geographical coordinates on the sphere $\partial_\infty \mathcal{I}_0^+$. We see a bigon with vertices $p_{ST^{-1}}$ and $p_{S^{-1}T}$ and a quadrilateral with vertices $p_{ST^{-1}}$, $p_{TST}$, $p_{S^{-1}T}$ and $p_{STS}$.} \label{figure_ParkerWill}
\end{figure}

\begin{prop}\label{prop_faces_def1}
 For all $\alpha_2 \in ]0, \frac{\pi}{2}[$ and for all $k \in \mathbb{Z}$, the bigons $B_k^+$ and $B_k^-$, as well as the quadrilaterals $Q_k^+$ and $Q_k^-$, are well defined. If $\alpha_2 \in ]0, \frac{\pi}{6}[$, then the quadrilateral $Q_k^\pm$ is of genus $1$, i.e. that it is diffeomorphic to a torus minus a disk. 
\end{prop}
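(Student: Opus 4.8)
The plan is to work on a single spinal surface, say $\partial_\infty \mathcal{F}_0^+ \subset \partial_\infty \mathcal{J}_0^+$, and show that the traces of the neighbouring bisectors cut it into exactly a bigon and a quadrilateral; all other cases follow from the symmetry $[I]$ of Subsection~\ref{subsect_symetrie} and the $[U]$-invariance. By definition, the boundary in $\mathcal{J}_0^+$ of the face $\mathcal{F}_0^+$ is $\mathcal{J}_0^+ \cap (\mathcal{J}_0^- \cup \mathcal{J}_{-1}^-)$, so on $\partial_\infty \mathcal{J}_0^+$ the relevant curves are the two Giraud circles $\partial_\infty(\mathcal{J}_0^+\cap\mathcal{J}_0^-)$ and $\partial_\infty(\mathcal{J}_0^+\cap\mathcal{J}_{-1}^-)$. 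By Lemma~\ref{lemme_inter_jk+_jk-} these are smooth circles (for $\alpha_2 \neq 0$), by Corollary to Lemma~\ref{lemme_inter_jk+_jk-} the first contains $\{[p_A],[p_B],[Up_A]\}$ and the second contains (after applying the appropriate power of $[U]$ and the face definition) $\{[p_A],[p_B],[U^{-1}p_B]\}$, and by Lemma~\ref{lemme_tangences_faces} the two circles are tangent at $[p_A]$ and at $[p_B]$. Two smooth circles on a sphere that are tangent at exactly two points divide the sphere into three regions: the two ``lune'' regions between the circles near each tangency, which glue at the tangency points into a single bigon-shaped region, and the ``lens'' region enclosed by both circles. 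Thus $\partial_\infty \mathcal{F}_0^+$ — the part of $\partial_\infty \mathcal{J}_0^+$ on the $[p_A]$ side of both circles — consists of the bigon $B_0^+$ with vertices $[p_A],[p_B]$ and the quadrilateral $Q_0^+$ with vertices $[p_A],[U^{-1}p_B],[p_B],[Up_A]$, matching Figure~\ref{figure_ParkerWill}.

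First I would make precise the claim that two circles tangent at two points on $S^2$ produce this combinatorial picture: this is a purely topological statement once one knows the two circles meet only at $[p_A]$ and $[p_B]$ and are tangent (not transverse) there. The fact that $\partial_\infty(\mathcal{J}_0^+\cap\mathcal{J}_0^-)$ and $\partial_\infty(\mathcal{J}_0^+\cap\mathcal{J}_{-1}^-)$ meet exactly at $\{[p_A],[p_B]\}$ is exactly the content of Proposition~\ref{prop_inter_faces_f0-f-1-} (intersected with $\dh2c$), since a point common to both Giraud circles of $\partial_\infty\mathcal{J}_0^+$ lies in $\mathcal{J}_0^+\cap\mathcal{J}_0^-\cap\mathcal{J}_{-1}^-$; and tangency there is Lemma~\ref{lemme_tangences_faces}. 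Then I would invoke continuity of the whole deformation: at $\alpha_2 = \alpha_2^{\lim}$ Parker and Will have proved that $B_k^\pm$ and $Q_k^\pm$ are well-defined with the stated vertices, and since for $\alpha_2 \in ]0,\frac{\pi}{2}[$ the qualitative picture (two smooth tangent-at-two-points circles) persists, the faces remain well-defined throughout.

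For the genus statement when $\alpha_2 \in ]0,\frac{\pi}{6}[$: here Lemma~\ref{lemme_nature_bissecteurs} tells us $\mathcal{J}_0^+$ is a Clifford cone, so $\partial_\infty\mathcal{J}_0^+$ is no longer a sphere but a Clifford torus. The same two Giraud circles still cut out $\partial_\infty\mathcal{F}_0^+$; the bigon $B_0^+$ remains a disk, but the complementary piece $Q_0^+$ is now a torus with a disk removed rather than a disk. To see this I would track what happens at the transition value $\alpha_2 = \frac{\pi}{6}$, where $\langle p_U,p_U\rangle = 0$ and the focus $[p_U\boxtimes p_V]$ passes through $\dh2c$ (the bisector becomes a fan); past this value the ``quadrilateral'' region acquires the extra handle coming from the toroidal topology of the Clifford torus, while the bigon, being the small region near the two tangency points, stays a disk. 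The Euler-characteristic bookkeeping ($\chi(\text{torus}) = 0 = \chi(B_0^+) + \chi(Q_0^+) - \chi(\text{two circles}) $ with $B_0^+$ a disk forces $Q_0^+$ to be a once-punctured torus) confirms the count.

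The main obstacle is the passage through $\alpha_2 = \frac{\pi}{6}$ and the need to argue carefully that the combinatorial identification of $B_k^\pm$ and $Q_k^\pm$ (which vertices lie on which arc, which region is the bigon) is preserved even as the ambient topology of $\partial_\infty\mathcal{J}_k^\pm$ changes from sphere to torus: one must check that the tangency points $[p_A],[p_B]$ and the Giraud circles vary continuously across this value and that no collision or degeneration of vertices occurs, so that the continuity argument bridging $\alpha_2^{\lim}$ down to $0$ is legitimate on both sides of $\frac{\pi}{6}$. The explicit Hermitian-product computations of Lemmas~\ref{lemme_incidences_pa}, \ref{lemme_incidence_pb} and \ref{lemme_inter_jk+_jk-}, which are polynomial in $\cos\alpha_2$ with no singularity at $\frac{\pi}{6}$, are what make this continuity rigorous.
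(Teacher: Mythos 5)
Your first part (the well-definedness of $B_k^\pm$ and $Q_k^\pm$) follows the paper's route: bi-tangency of the two Giraud circles on $\partial_\infty\mathcal{J}_0^+$ plus the $[U]$- and $[I]$-symmetries, with the Parker--Will picture at $\alpha_2^{\lim}$ as reference. (Minor point: the fact that the two Giraud circles meet only at $[p_A],[p_B]$ should be cited from Lemma \ref{lemme_triple_inter_extors}, which concerns the full triple intersection of \emph{bisectors}, rather than from Proposition \ref{prop_inter_faces_f0-f-1-}, which only concerns the faces $\mathcal{F}_0^-\cap\mathcal{F}_{-1}^-$; a common point of the two circles need not lie in those faces a priori.)

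The genuine gap is in the genus-$1$ statement. When $\alpha_2\in\left]0,\frac{\pi}{6}\right[$ the two bi-tangent circles cut the Clifford torus $\partial_\infty\mathcal{J}_0^+$ into \emph{four} regions, of which exactly one is a torus minus a disk; your argument never establishes that this region is $Q_0^+$ rather than $B_0^+$ or one of the two complementary regions lying outside the face. The assertion that ``the bigon, being the small region near the two tangency points, stays a disk'' is unsupported, and the Euler-characteristic bookkeeping as written is not valid: $\partial_\infty\mathcal{J}_0^+$ is not the union of $B_0^+$ and $Q_0^+$ alone, and even a correct count over all four regions only shows that one of them carries the handle, without identifying which. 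The paper closes exactly this point by a computation at the transition value $\alpha_2=\frac{\pi}{6}$: there the spinal surface is a spinal fan whose unique singular point is the focus $[f]$ of $\mathfrak{E}_0^+$, and one checks that $[f]$, $[Up_A]$ and $[U^{-1}p_B]$ lie on a common slice, parametrizes the corresponding $\CC$-circle $[q_\theta]$, and verifies via the explicit Hermitian products that the arc of this circle contained in $\mathcal{J}_0^+$ contains $[f]=q_{3\pi/2}$ strictly between $Up_A=q_{4\pi/3}$ and $U^{-1}p_B=q_{11\pi/6}$, so the singular point lies in the interior of $Q_0^+$; by continuity the handle that appears for $\alpha_2<\frac{\pi}{6}$ is therefore attached to the quadrilateral. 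Some argument of this kind (locating the focus at $\alpha_2=\frac{\pi}{6}$, or an equivalent way of deciding which region becomes non-simply-connected) is needed to complete your proof.
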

\begin{proof}
 By symmetry, it is enough to show that the bigon $B_0^+$ and the quadrilateral $Q_0^+$ are well defined through the deformation. By Lemma \ref{lemme_tangences_faces}, the Giraud circles $\partial_\infty\mathcal{J}_0^+ \cap \partial_\infty\mathcal{J}_0^{-}$ and $\partial_\infty\mathcal{J}_0^+ \cap \partial_\infty\mathcal{J}_{-1}^{-}$ are bi-tangent and cut the spinal surface $\partial_\infty\mathcal{J}_0^+$ in four connected components, as in Figure \ref{figure_ParkerWill}, which is taken from \cite{parker_complex_2017a} and traced for the parameter $\alpha_2 = \alpha_2^{\lim} $.
  The points $[p_A]$ and $[p_B]$ cut those Giraud circles into two arcs each. The arc of $\partial_\infty\mathcal{J}_0^+ \cap \partial_\infty\mathcal{J}_0^{-}$ containing $[Up_A]$ and the arc of  $\partial_\infty\mathcal{J}_0^+ \cap \partial_\infty\mathcal{J}_{-1}^{-}$ containing $[U^{-1}p_B]$ border a "quadrilateral" with vertices $[p_A], [U p_A], [p_B]$ and $[U^{-1}p_B]$. The two other arcs border a "bigon" with vertices $[p_A]$ and $[p_B]$.
  
  It remains to do a topological verification. Indeed, if  $\alpha_2 \in ]\frac{\pi}{6} , \frac{\pi}{2}[$, then the spinal surface $\partial_\infty J_0^+$ is a smooth sphere; if $\alpha_2 = \frac{\pi}{6}$, then $\partial_\infty J_0^+$ is a sphere with a singular point (which is the focus of the bisector); and if $\alpha_2 \in ]0 , \frac{\pi}{6}[ $, then $\partial_\infty J_0^+$ is a torus. In the two first cases, the two bi-tangent Giraud circles cut $\partial_\infty J_0^+$ into four topological disks, but in the last case we obtain three disks and a torus minus a disk. An ideal picture is given in Figures \ref{fig_paniers_1} and \ref{fig_paniers_2}. In order to identify the component that becomes of genus $1$ while deforming, it is enough to check that, when $\alpha_2 = \frac{\pi}{6}$, the singular point is in the interior of the quadrilateral $Q_0^+$.
  
  When $\alpha_2 = \frac{\pi}{6}$, we have:
  \begin{equation*}
  [p_U] = 
  \begin{bmatrix}
  1 \\ -\frac{\sqrt{2}}{2}e^{i\frac{\pi}{6}} \\ e^{i\frac{\pi}{3}}
  \end{bmatrix}
  \text{ , }
  [p_V] = 
  \begin{bmatrix}
  -e^{i\frac{\pi}{3}} \\ -\frac{\sqrt{2}}{2}e^{i\frac{\pi}{6}} \\ -1
  \end{bmatrix}
  \text{ , }
  [Up_A] = 
  \begin{bmatrix}
  1 \\ -\sqrt{2}e^{i\frac{\pi}{6}} \\ -1
  \end{bmatrix}
  \text{ and }
  [U^{-1}p_B] = 
  \begin{bmatrix}
  1 \\ \sqrt{2}e^{-i\frac{\pi}{6}} \\ -1
  \end{bmatrix}.
  \end{equation*}
  
  Hence, the focus of $\mathfrak{E}_0^+ = \mathfrak{E}(p_U,p_V)$ is the point
  \begin{equation*}
   [f] = \begin{bmatrix}
  1 \\ -i\sqrt{2} \\ -1
  \end{bmatrix}.
  \end{equation*}
  
  The points $[f]$, $[Up_A]$ and $[U^{-1}p_B]$ are hence aligned, in the same slice of the extor $\mathfrak{E}_0^+$. Consider the intersection of this complex line with $\dh2c$. It is the $\CC$-circle $\{ [q_\theta] \mid \theta \in [0,2\pi]\}$, where
  \begin{equation*}
   q_\theta = 
   \begin{pmatrix}
  1 \\ \sqrt{2}e^{i\theta} \\ -1
  \end{pmatrix}.
  \end{equation*}
  
  We have
  \begin{eqnarray*}
  \langle p_U , q_\theta \rangle = \langle p_V , q_\theta \rangle 
  &=& -1 - e^{i (\theta - \frac{\pi}{6})} + e^{-i \frac{\pi}{3}}
  \\
  &=& -e^{i\frac{\pi}{3}} - e^{i (\theta - \frac{\pi}{6})} \\
  &=& -2\cos(\frac{\pi}{4} + \frac{\theta}{2})e^{i(\frac{\pi}{12} + \frac{\theta}{2})}
  \end{eqnarray*}
 And hence:
 \begin{equation*}
 |\langle p_U , q_\theta \rangle|^2 = |\langle p_V , q_\theta \rangle|^2 = 4\cos^2(\frac{\pi}{4} + \frac{\theta}{2}) = 2(1 + \sin(\theta))
\end{equation*}   
  
 Furthermore, we have,
 
\begin{equation*}
 p_W = e^{i\frac{\pi}{3}}
 \begin{pmatrix}
  -1 \\ \sqrt{2} e^{i\frac{\pi}{6}} + \frac{\sqrt{2}}{2}e^{-i\frac{\pi}{6}} \\ 1
  \end{pmatrix}
  \text{ and }
   U^{-1}p_W = 
 \begin{pmatrix}
  1 \\ \frac{\sqrt{2}}{2}e^{i\frac{\pi}{6}} + \sqrt{2}e^{-i\frac{\pi}{6}} \\ -1
   \end{pmatrix}.
\end{equation*} 

We compute then 
  \begin{eqnarray*}
  |\langle p_W , q_\theta \rangle |^2
  &=& 
  |2 + 2e^{i(\theta - \frac{\pi}{6})} + e^{i(\theta + \frac{\pi}{6})}  |^2 \\
  &=&
  6\sqrt{3}\cos(\theta) + 2\sin(\theta) + 11
  \\
  |\langle U^{-1}p_W , q_\theta \rangle |^2
  &=& 
  |-2 + e^{i(\theta - \frac{\pi}{6})} + 2e^{i(\theta + \frac{\pi}{6})}  |^2 \\
  &=& 
  -6\sqrt{3}\cos(\theta) + 2\sin(\theta) + 11
  \end{eqnarray*}
  
  We deduce that $[q_\theta] \in \mathcal{J}_0^+$ if and only if $\theta \in [\frac{\pi}{6} , \frac{2\pi}{3}] \cup [\frac{4\pi}{3} , \frac{11\pi}{6}]$. But $Up_A = q_{\frac{4\pi}{3}}$, $U^{-1}p_B = q_{\frac{11\pi}{6}}$ and $f = q_{\frac{3\pi}{2}}$ hence $[f]$ is in the interior of $Q_0^+$.
\end{proof}

\begin{figure}[htbp]
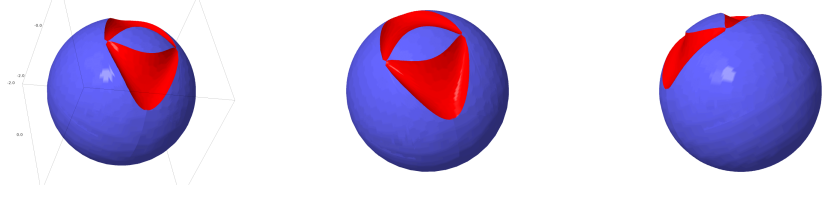

\center
 \includegraphics[width = 4cm]{./img/panier1.png}
 \includegraphics[width = 4cm]{./img/panier2.png}
 \includegraphics[width = 4cm]{./img/panier3.png}
 \caption{Ideal pictures of a face $\mathcal{F}_k^{\pm}$. The blue region is in $\dh2c$; the two red regions are Giraud disks in $\h2c$. The blue region is formed by a bigon $B_k^{\pm}$ and a quadrilateral $Q_k^{\pm}$. \label{fig_paniers_1}}
\end{figure}

\begin{figure}[htbp]
\center
 \includegraphics[width = 4cm]{./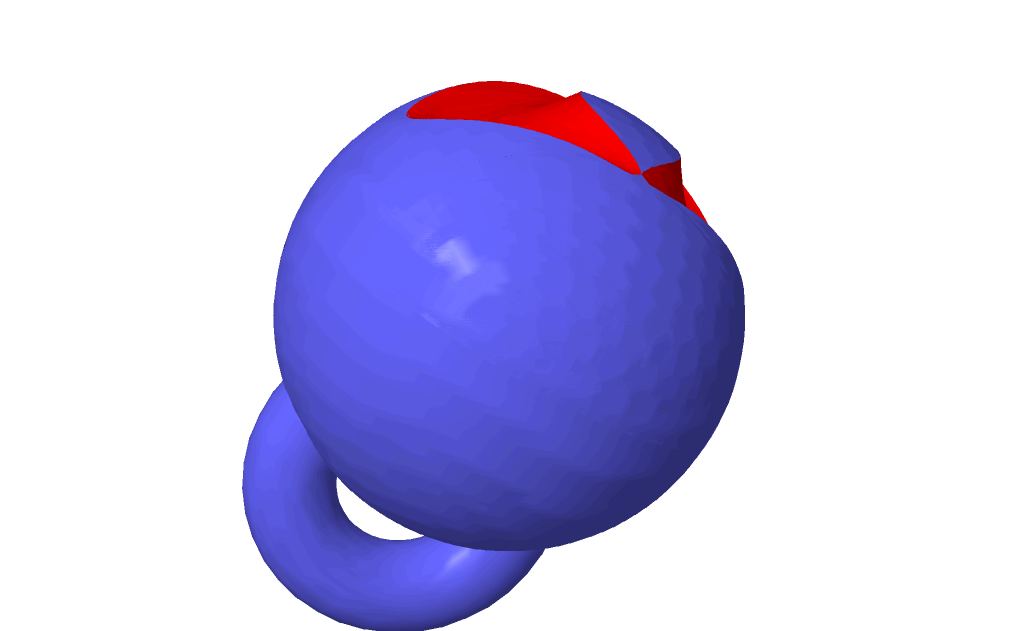}
 \includegraphics[width = 4cm]{./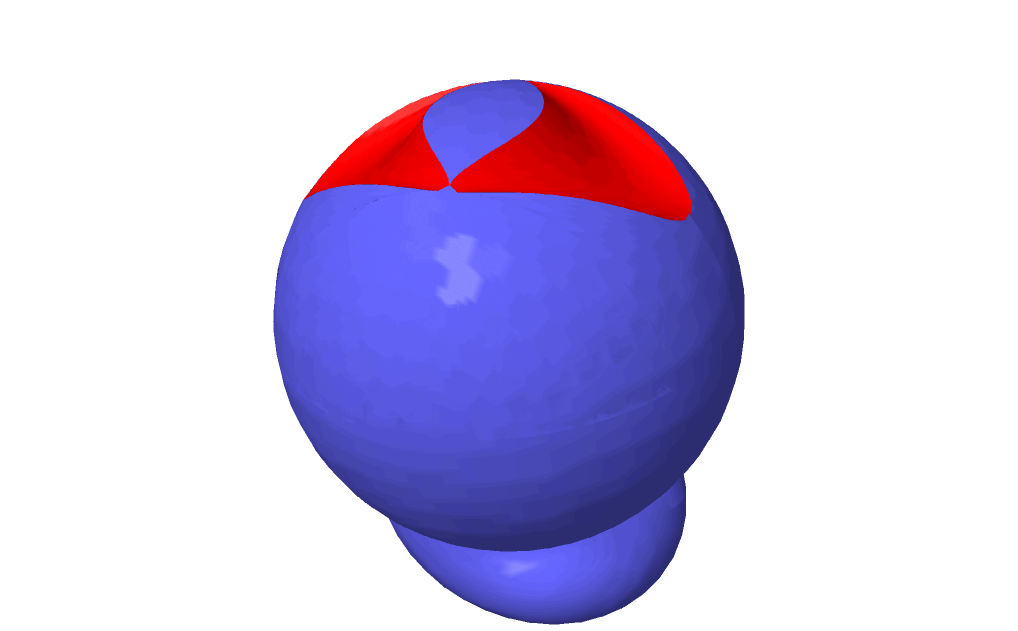} 
 \includegraphics[width = 4cm]{./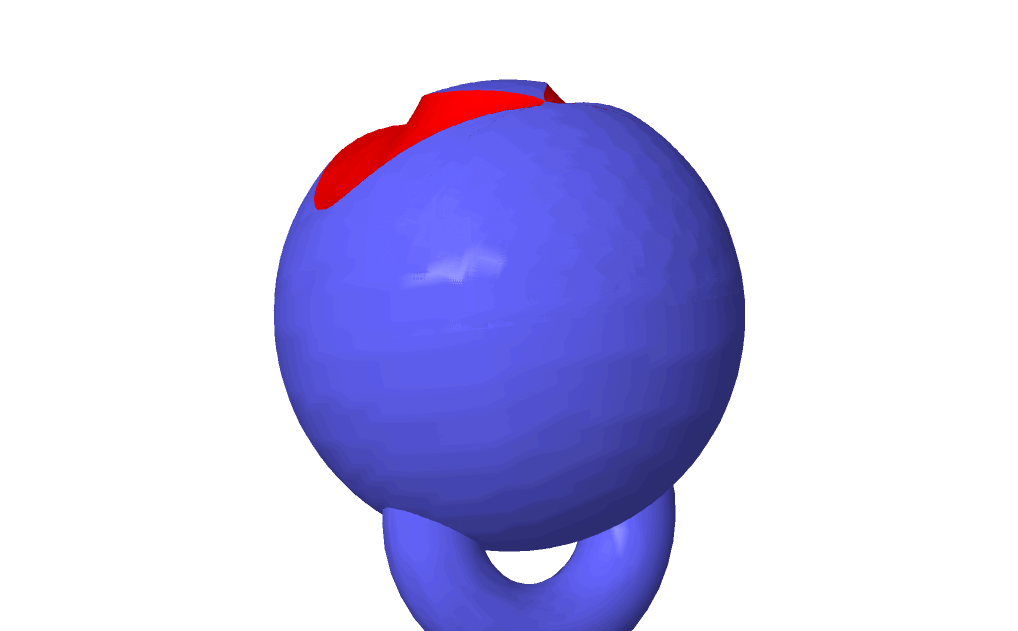}
 \caption{Ideal pictures of a face $\mathcal{F}_k^{\pm}$ for $\alpha_2 \in ] 0 , \frac{\pi}{6} [$. The blue region is in $\dh2c$; the two red regions are Giraud disks in $\h2c$. This time, the boundary at infinity of the face is on a torus, and we have a singular point in $\h2c$ which is not in the picture. The quadrilateral $Q_k^{\pm}$ has now a handle. \label{fig_paniers_2}}
\end{figure}

\section{Local combinatorics (LC)}\label{sect_combi_locale}
By combining the results of the last section, we can now show the local combinatorics condition (LC) stated in Section \ref{sect_strategie_de_preuve}. We are going to show that the local combinatorics of the faces stays constant through the deformation. More precisely, we have the two following propositions.
 The first is about the three-dimensional faces $\mathcal{F}_k^{\pm} \subset \h2c$; the second about their boundary at infinity, composed by bigons and quadrilaterals in $\dh2c$.
 
\begin{prop} We have the following intersections of the $3$-faces:
\begin{enumerate}
 \item $\mathcal{F}_0^+$ intersects $\mathcal{F}_0^-$ and $\mathcal{F}_{-1}^-$ along Giraud disks.
 
  \item $\mathcal{F}_0^-$ intersects $\mathcal{F}_0^+$ and $\mathcal{F}_{1}^+$ along Giraud disks.
 
 \item $\mathcal{F}_0^+ \cap \mathcal{F}_1^+$ is reduced to $\{[p_B], [Up_A]\}$
 
  \item $\mathcal{F}_0^- \cap \mathcal{F}_{-1}^-$ is reduced to $\{[p_B], [p_A]\}$
 \end{enumerate}
 \end{prop}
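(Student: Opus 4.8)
The plan is to derive all four statements from the results of Section~\ref{sect_topo_faces}, using the involution $[I]$ of Subsection~\ref{subsect_symetrie} and the $[U]$--invariance of the configuration to reduce everything to a couple of cases already handled there.

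I would start with (3) and (4), the quickest ones. Statement (4) is nothing but Proposition~\ref{prop_inter_faces_f0-f-1-}. For (3), recall that $I$ fixes $p_U$, exchanges $p_V$ and $p_W$, satisfies $IUI=U^{-1}$ and preserves Hermitian moduli; plugging this into the definitions of the $3$--faces one gets $[I]\mathcal{F}_0^+=\mathcal{F}_0^-$ and $[I]\mathcal{F}_1^+=\mathcal{F}_{-1}^-$, hence $[I](\mathcal{F}_0^+\cap\mathcal{F}_1^+)=\mathcal{F}_0^-\cap\mathcal{F}_{-1}^-$, which has exactly two points by Proposition~\ref{prop_inter_faces_f0-f-1-}; therefore so does $\mathcal{F}_0^+\cap\mathcal{F}_1^+$. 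By Corollary~\ref{cor_incidences} the points $[Up_A]$ and $[p_B]$ lie on $\mathcal{J}_0^+\cap\mathcal{J}_1^+$, so it only remains to check, by a direct computation with the coordinates of Subsection~\ref{subsect_notat_points} and the Hermitian products already computed in Lemmas~\ref{lemme_incidences_pa} and \ref{lemme_incidence_pb}, that both points satisfy the face inequalities defining $\mathcal{F}_0^+$ and $\mathcal{F}_1^+$. This yields $\mathcal{F}_0^+\cap\mathcal{F}_1^+=\{[Up_A],[p_B]\}$.

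For (1) and (2), by $[U]$--invariance it suffices to treat $\mathcal{F}_0^+\cap\mathcal{F}_0^-$ and $\mathcal{F}_0^+\cap\mathcal{F}_{-1}^-$, since $\mathcal{F}_0^-\cap\mathcal{F}_1^+$ is the $[U]$--image of $\mathcal{F}_{-1}^-\cap\mathcal{F}_0^+$. In both cases the ambient bisectors already meet along a Giraud disk $G$ by Lemma~\ref{lemme_inter_jk+_jk-}, and the inclusion of the face intersection in $G$ is immediate, so the task is the reverse inclusion. On $G=\mathcal{J}_0^+\cap\mathcal{J}_0^-$ one has $|\langle z,p_U\rangle|=|\langle z,p_V\rangle|=|\langle z,p_W\rangle|$, so membership of $z\in G$ in $\mathcal{F}_0^+\cap\mathcal{F}_0^-$ reduces to the two inequalities $|\langle z,p_U\rangle|\le|\langle z,U^{-1}p_W\rangle|$ and $|\langle z,p_U\rangle|\le|\langle z,Up_V\rangle|$. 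The locus on $G$ where the first is an equality is $G\cap\mathcal{J}_{-1}^-=\{[p_A],[p_B]\}$ by Lemma~\ref{lemme_triple_inter_extors}, and the locus where the second is an equality is $G\cap\mathcal{J}_1^+$, which by the symmetry $[I]$ equals $[I]\{[p_A],[p_B]\}$, again two points of $\dh2c$ lying on the Giraud circle $\partial_\infty G$. Since two points of $\dh2c$ lie on $\partial_\infty G$, the set $G\setminus\{[p_A],[p_B]\}$ (and likewise $G\setminus[I]\{[p_A],[p_B]\}$) is connected, so each of the two continuous functions above has constant sign there; as $G$ and these functions depend continuously on $\alpha_2$ and, by Lemma~\ref{lemme_triple_inter_extors} and its $[I]$--image, never vanish off those two vertices for any $\alpha_2\in\,]0,\tfrac{\pi}{2}[$, the sign is the same throughout the deformation, and at $\alpha_2=\alpha_2^{\lim}$ it is the one that makes $G\subset\mathcal{F}_0^+\cap\mathcal{F}_0^-$ because Parker and Will proved there that $\mathcal{F}_0^+\cap\mathcal{F}_0^-$ is a full Giraud disk. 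The case $\mathcal{F}_0^+\cap\mathcal{F}_{-1}^-$ is identical, using Lemma~\ref{lemme_triple_inter_extors} for the triple intersection $\mathcal{J}_0^+\cap\mathcal{J}_{-1}^-\cap\mathcal{J}_0^-$ and its $[I][U]$--translate for $\mathcal{J}_0^+\cap\mathcal{J}_{-1}^-\cap\mathcal{J}_{-1}^+$.

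The main obstacle is this constant--sign argument in (1)--(2): one must be sure that, for every $\alpha_2\in\,]0,\tfrac{\pi}{2}[$, each neighbouring bisector meets the Giraud disk $G$ only in its two vertices in $\dh2c$, with no extra intersection component appearing in $\h2c$. This is exactly what Lemma~\ref{lemme_triple_inter_extors} and its symmetric images provide, so the continuity argument closes just as in the proof of Proposition~\ref{prop_inter_faces_f0-f-1-}; everything else is routine bookkeeping with the explicit Hermitian products.
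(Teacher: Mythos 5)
Your proposal is correct and takes essentially the same route as the paper: items (3)--(4) follow from Proposition \ref{prop_inter_faces_f0-f-1-} together with the involution $[I]$, and items (1)--(2) rest on Lemma \ref{lemme_inter_jk+_jk-}, exactly as in the paper's (one-line) proof. The only difference is that you make explicit, via Lemma \ref{lemme_triple_inter_extors}, connectedness of the Giraud disk minus its two vertices, and continuity in $\alpha_2$ back to the Parker--Will parameter, why the \emph{full} Giraud disks $\mathcal{J}_0^+\cap\mathcal{J}_0^-$ and $\mathcal{J}_0^+\cap\mathcal{J}_{-1}^-$ are contained in the faces -- a containment the paper leaves implicit in its citation but which is entirely in the spirit of the (TF) analysis of Section \ref{sect_topo_faces}.
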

 
\begin{proof}
 The two first points are given by Lemma \ref{lemme_inter_jk+_jk-}. The third and the fourth are symmetrical, and given by Proposition \ref{prop_inter_faces_f0-f-1-}. 
 \end{proof}

 By considering the boundary at infinity, the next proposition follows:
\begin{prop}
 The faces $Q_0^+$ and $B_0^+$ intersect the faces contained in $\mathcal{J}_0^-$, $\mathcal{J}_{-1}^-$, $\mathcal{J}_{-1}^+$ and $\mathcal{J}_1^+$ exactly as in Figure \ref{combi_ford_wlc_apres}. More precisely:
 \begin{enumerate}
  \item $B_0^+$ intersects $Q_0^-$ and $Q_{-1}^-$ in the two arcs of its boundary.
  \item $Q_0^+$ intersects $Q_0^-$ , $B_0^-$, $Q_{-1}^-$ and $B_{-1}^-$ in the four arcs of its boundary.
  \item The intersection of $B_0^+$ and $Q_0^+$ with $B_{-1}^+$ and $Q_{-1}^+$ is reduced to $\{[U^{-1}p_B], [p_A]\}$.
    \item The intersection of $B_0^+$ and $Q_0^+$ with $B_{1}^+$ and $Q_{1}^+$ is reduced to $\{[p_B], [Up_A]\}$
  
\end{enumerate}  
\end{prop}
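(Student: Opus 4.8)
The statement is the boundary-at-infinity version of the previous proposition, so the strategy is to descend systematically from the three-dimensional statements in $\h2c$ to their traces on $\dh2c$, using that $\partial_\infty \mathcal{F}_k^{\pm}$ is exactly the union of a bigon $B_k^{\pm}$ and a quadrilateral $Q_k^{\pm}$ (this decomposition having been established in Proposition \ref{prop_faces_def1}). First I would recall that, by the symmetry $[I]$ of Subsection \ref{subsect_symetrie} together with the invariance under $[U]$, it suffices to analyze the intersections of $B_0^+$ and $Q_0^+$ with the finitely many neighbouring faces; everything else follows by applying powers of $[U]$ and the involution $[I]$.

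\textbf{Items (1) and (2): the Giraud circles.} By Lemma \ref{lemme_inter_jk+_jk-}, the intersections $\mathcal{J}_0^+ \cap \mathcal{J}_0^-$ and $\mathcal{J}_0^+ \cap \mathcal{J}_{-1}^-$ are Giraud disks whose boundaries at infinity are Giraud circles, and by Lemma \ref{lemme_tangences_faces} these two circles are bi-tangent at $[p_A]$ and $[p_B]$. In the proof of Proposition \ref{prop_faces_def1} it was shown that these two bi-tangent Giraud circles cut $\partial_\infty \mathcal{J}_0^+$ into the bigon $B_0^+$ (vertices $[p_A],[p_B]$) and the quadrilateral $Q_0^+$ (vertices $[p_A],[Up_A],[p_B],[U^{-1}p_B]$), the other pieces being discarded. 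The boundary of $B_0^+$ consists of the two arcs between $[p_A]$ and $[p_B]$ that do not contain $[Up_A]$ or $[U^{-1}p_B]$; one arc lies on $\partial_\infty\mathcal{J}_0^-$ and the other on $\partial_\infty\mathcal{J}_{-1}^-$. Tracking which spinal surfaces these arcs lie on — and remembering that $B_0^+\subset\partial_\infty\mathcal{F}_0^+$ requires $|\langle z,p_U\rangle|\le\min(|\langle z, U^0p_W\rangle|,|\langle z,U^{-1}p_W\rangle|)$, so the relevant faces are $Q_0^-$ and $Q_{-1}^-$ rather than the bigons — gives assertion (1). For (2), the boundary of $Q_0^+$ is made of four arcs, one on each of $\partial_\infty\mathcal{J}_0^-$, $\partial_\infty\mathcal{J}_{-1}^-$, $\partial_\infty\mathcal{J}_{-1}^+$? — no: by the definition of $\mathcal{F}_0^+$ the boundary in $\mathcal{J}_0^+$ is only $\mathcal{J}_0^+\cap(\mathcal{J}_0^-\cup\mathcal{J}_{-1}^-)$, so all four arcs of $\partial Q_0^+$ lie on $\partial_\infty\mathcal{J}_0^-\cup\partial_\infty\mathcal{J}_{-1}^-$; splitting each of the two Giraud circles into its two sub-arcs by the tangency points $[p_A],[p_B]$ and matching each sub-arc against the defining inequality of $Q_0^-, B_0^-, Q_{-1}^-, B_{-1}^-$ yields (2). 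Concretely, the arc of $\partial_\infty\mathcal{J}_0^+\cap\partial_\infty\mathcal{J}_0^-$ containing $[Up_A]$ bounds $Q_0^+$ against $Q_0^-$, the complementary arc bounds $B_0^+$ against $Q_0^-$ as well? — this is where one must use the values of $|\langle q_\theta, U^kp_V\rangle|$, $|\langle q_\theta, U^kp_W\rangle|$ computed as in Proposition \ref{prop_faces_def1} to decide the bigon-versus-quadrilateral alternative on each side; this bookkeeping is routine but must be done carefully to match Figure \ref{combi_ford_wlc_apres}.

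\textbf{Items (3) and (4): the two-point intersections.} For (4), the intersection $B_0^+\cap(B_{-1}^+\cup Q_{-1}^+)$ and $Q_0^+\cap(B_{-1}^+\cup Q_{-1}^+)$ together make up $\partial_\infty\mathcal{F}_0^+\cap\partial_\infty\mathcal{F}_{-1}^+$, which is the boundary at infinity of $\mathcal{F}_0^+\cap\mathcal{F}_{-1}^+$. Applying $[U^{-1}]$ to the statement $\mathcal{F}_0^+\cap\mathcal{F}_1^+=\{[p_B],[Up_A]\}$ (item (3) of the preceding proposition, itself Proposition \ref{prop_inter_faces_f0-f-1-} transported by symmetry) gives $\mathcal{F}_{-1}^+\cap\mathcal{F}_0^+=\{[U^{-1}p_B],[p_A]\}$, and since both points lie in $\dh2c$ they are already equal to their own boundary at infinity; this proves (4). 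Assertion (3) is exactly Proposition \ref{prop_inter_faces_f0-f-1-} transported by the order-$3$ symmetry relating $p_V=Tp_U$, $p_W=T^2p_U$ (as used in the proof of Lemma \ref{lemme_inter_jk+_jk-}), or equivalently obtained by applying $[U]$ and $[I]$; again both points lie in $\dh2c$, so no further argument is needed. I would assemble (3) and (4) into the statement with a one-line remark that the two points are shared vertices, consistent with the combinatorics of Figure \ref{combi_ford_wlc_apres}.

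\textbf{Main obstacle.} The only non-mechanical point is the correct matching in (1) and (2) of each boundary arc of $B_0^+$ and $Q_0^+$ with the specific neighbouring face ($Q$ versus $B$, index $0$ versus $-1$), since a priori an arc of $\partial_\infty\mathcal{J}_0^+\cap\partial_\infty\mathcal{J}_0^-$ could border either $B_0^-$ or $Q_0^-$ on the other side. This is settled by re-using the explicit inequalities from the proof of Proposition \ref{prop_faces_def1} — i.e.\ by determining, along each Giraud circle, the sub-arc on which $[z]$ additionally satisfies the inequality defining $B_0^-$ rather than $Q_0^-$ (and symmetrically) — and by the continuity/deformation argument showing the combinatorics cannot change for $\alpha_2\in]0,\frac{\pi}{2}[$, the case $\alpha_2=\alpha_2^{\lim}$ being the one worked out by Parker and Will. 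Everything else reduces to Lemmas \ref{lemme_inter_jk+_jk-}, \ref{lemme_tangences_faces} and Propositions \ref{prop_faces_def1}, \ref{prop_inter_faces_f0-f-1-} together with the symmetries.
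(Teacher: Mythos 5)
Your proposal is correct and takes essentially the same route as the paper, which obtains this proposition by passing to the boundary at infinity of the preceding $3$-face proposition (itself reduced to Lemma \ref{lemme_inter_jk+_jk-} and Proposition \ref{prop_inter_faces_f0-f-1-}) together with the bigon/quadrilateral decomposition of Proposition \ref{prop_faces_def1} and the $[U]$- and $[I]$-symmetries. The only slip is that you have interchanged items (3) and (4): the intersection with $B_{-1}^+\cup Q_{-1}^+$ is the set $\{[U^{-1}p_B],[p_A]\}$ of item (3), while the intersection with $B_{1}^+\cup Q_{1}^+$ is $\{[p_B],[Up_A]\}$ of item (4); this is harmless since the two cases are exchanged by applying $[U^{\pm 1}]$, exactly as you do.
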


 By symmetry, we obtain the local combinatorics for the faces contained in $\mathcal{J}_0^-$:
 \begin{prop}
 The faces $Q_0^-$ and $B_0^-$ intersect the faces contained in $\mathcal{J}_0^+$, $\mathcal{J}_{1}^+$, $\mathcal{J}_{-1}^-$ and $\mathcal{J}_1^-$ exactly as in Figure \ref{combi_ford_wlc_apres}. More precisely:
 \begin{enumerate}
  \item $B_0^-$ intersects $Q_0^+$ and $Q_{1}^+$ in the two arcs of its boundary.
  \item $Q_0^-$ intersects $Q_0^+$ , $B_0^+$, $Q_{1}^+$ and $B_{1}^+$ in the four arcs of its boundary.
  \item The intersection of $B_0^-$ and $Q_0^-$ with $B_{-1}^-$ and $Q_{-1}^-$ is reduced to $\{[p_A], [p_B]\}$.
    \item The intersection of $B_0^-$ and $Q_0^-$ with $B_{1}^-$ and $Q_{1}^-$ is reduced to $\{[Up_A], [Up_B]\}$
  
\end{enumerate}  
\end{prop}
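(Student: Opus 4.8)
The plan is to deduce this final proposition entirely from its symmetric counterpart, using the involution $[I]$ introduced in Subsection \ref{subsect_symetrie}. Recall that $[I]$ satisfies $Ip_U = p_U$, $Ip_V = p_W$, $Ip_W = p_V$ and $IUI = U^{-1}$, so that $I\mathcal{J}_k^{+} = \mathcal{J}_{-k}^{-}$ for all $k \in \ZZ$. Under this identification, the defining inequalities for $\mathcal{F}_k^{+}$ (comparing $|\langle z, p_U\rangle|$ with $|\langle z, U^kp_W\rangle|$ and $|\langle z, U^{k-1}p_W\rangle|$) are carried exactly to the defining inequalities for $\mathcal{F}_{-k}^{-}$ (comparing $|\langle z, p_U\rangle|$ with $|\langle z, U^{-k}p_V\rangle|$ and $|\langle z, U^{-k+1}p_V\rangle|$), so $I\mathcal{F}_k^{+} = \mathcal{F}_{-k}^{-}$; similarly $[I]$ carries $B_k^{+}$ to $B_{-k}^{-}$ and $Q_k^{+}$ to $Q_{-k}^{-}$. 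First I would record this fact precisely, including the effect on the distinguished vertices: since $Ip_A$ and $Ip_B$ are among the orbit points, one checks on the three explicit matrices that $[I]$ sends $[p_A]$ to $[p_B]$ and $[p_B]$ to $[p_A]$ (both lie on $\mathcal{J}_0^{+}\cap\mathcal{J}_0^{-}$ which is $[I]$-invariant), hence $[I]$ sends $[U^{k}p_A]$ to $[U^{-k}p_B]$ and $[U^{k}p_B]$ to $[U^{-k}p_A]$.

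Next I would simply apply $[I]$ to each of the four assertions of the preceding proposition about $B_0^{+}$ and $Q_0^{+}$. Assertion (1) there states that $B_0^{+}$ meets $Q_0^{-}$ and $Q_{-1}^{-}$ in the two arcs of its boundary; applying $[I]$ and using $IB_0^{+}=B_0^{-}$, $IQ_0^{-}=Q_0^{+}$, $IQ_{-1}^{-}=Q_{1}^{+}$ yields exactly assertion (1) of the target proposition, namely that $B_0^{-}$ meets $Q_0^{+}$ and $Q_1^{+}$ in the two arcs of its boundary. Assertion (2) is handled the same way, with $IB_{-1}^{-}=B_{1}^{+}$ and $IB_0^{-}=B_0^{+}$ accounting for the four faces $Q_0^{+}, B_0^{+}, Q_1^{+}, B_1^{+}$. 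For assertion (3) of the target, $B_0^{-}$ and $Q_0^{-}$ meeting $B_{-1}^{-}$ and $Q_{-1}^{-}$: this is the $[I]$-image of the statement that $B_0^{+}$ and $Q_0^{+}$ meet $B_{1}^{+}$ and $Q_{1}^{+}$ in $\{[p_B],[Up_A]\}$, i.e. assertion (4) of the preceding proposition; since $[I]$ maps $\{[p_B],[Up_A]\}$ to $\{[p_A],[U^{-1}p_B]\}$, and $U^{-1}p_B$ lies on $\mathcal{J}_{-1}^{-}$ and equals $p_B$ up to the identification appearing in Corollary \ref{cor_incidences}, a short bookkeeping check reconciles this with the claimed value $\{[p_A],[p_B]\}$. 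Likewise assertion (4) of the target is the $[I]$-image of assertion (3) of the preceding proposition.

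The only genuine content, then, is verifying that the index shifts and vertex labels transform correctly under $[I]$; everything else is transport of structure. Concretely I would organize the proof as: (a) state and prove the face-level identities $I\mathcal{F}_k^{+}=\mathcal{F}_{-k}^{-}$, $IB_k^{+}=B_{-k}^{-}$, $IQ_k^{+}=Q_{-k}^{-}$ and the vertex identities $I[U^{k}p_A]=[U^{-k}p_B]$, $I[U^{k}p_B]=[U^{-k}p_A]$; (b) apply these to the four statements of the previous proposition, one by one, reading off the four statements of the present one. The main obstacle — really the only place a careful reader might stumble — is matching the ``reduced to'' vertex sets in assertions (3) and (4): one must be careful that the points obtained as $[I]$-images of $\{[p_B],[Up_A]\}$ and $\{[U^{-1}p_B],[p_A]\}$ are correctly re-expressed in the normalized orbit notation (e.g. checking whether $[I]$ fixes or permutes the relevant $U^{k}$-translates, using $IU^{k}I=U^{-k}$). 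This is a finite, explicit check on the coordinates of $[p_A]$, $[p_B]$ and the matrix $I$ from Subsection \ref{subsect_symetrie}, and presents no real difficulty beyond attention to signs and indices.
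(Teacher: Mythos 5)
Your route is exactly the paper's: the paper obtains this proposition from the preceding one ``by symmetry'', i.e.\ by transporting it through the involution $[I]$ of Subsection \ref{subsect_symetrie} (together with $[U]$-equivariance), and your steps (a)--(b) are a faithful expansion of that. The problem is that the explicit vertex identities you rely on are false, and this is precisely the point you yourself single out as the delicate one. The matrix $I$ does \emph{not} exchange $[p_A]$ and $[p_B]$: its third column gives $Ip_B = p_B$, and its first column coincides with the first column of $U$, so $Ip_A = Up_A$. Consequently the action on the orbit is $I[U^k p_A] = [U^{1-k}p_A]$ and $I[U^k p_B] = [U^{-k}p_B]$, not $I[U^k p_A]=[U^{-k}p_B]$, $I[U^k p_B]=[U^{-k}p_A]$. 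With the correct action everything matches on the nose: applying $[I]$ to item (4) of the previous proposition sends $\{[p_B],[Up_A]\}$ to $\{[p_B],[p_A]\}$, which is item (3) here, and applying it to item (3) sends $\{[U^{-1}p_B],[p_A]\}$ to $\{[Up_B],[Up_A]\}$, which is item (4); no reconciliation is needed. The same computation confirms at the level of vertices that $IB_k^{+}=B_{-k}^{-}$ and $IQ_k^{+}=Q_{-k}^{-}$ (e.g.\ $B_0^{-}$ has vertices $[p_B]=Ip_B$ and $[Up_A]=Ip_A$).

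By contrast, the patch you propose for your incorrect images --- that $[U^{-1}p_B]$ ``equals $p_B$ up to the identification appearing in Corollary \ref{cor_incidences}'' --- is not valid: $[U^{-1}p_B]$ and $[p_B]$ are distinct points of the $[U]$-orbit (for the parameters considered $[U]$ does not fix $[p_B]$), and Corollary \ref{cor_incidences} only records which bisectors contain which orbit points; it identifies nothing. As written, your argument would deliver the vertex set $\{[p_A],[U^{-1}p_B]\}$ in item (3), which is not what the proposition asserts. The repair is the one-line recomputation of $Ip_A$ and $Ip_B$ above; with it, your proof coincides with the paper's symmetry argument.
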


\section{Global combinatorics (GC)}\label{sect_combi_globale}
 At last, it remains to check the global combinatorics condition (GC) of the strategy of proof of Section \ref{sect_strategie_de_preuve}. This point is more technical that the two preceding ones: we are going to use explicit projections on visual spheres in order to show it. We begin by setting a strategy of proof for the condition (GC).

\subsection{Strategy}
 We want to show that, through the deformation, if $[U]$ is loxodromic or elliptic of type $(\frac{1}{n},\frac{-1}{n})$ with $n \geq 9$, then the intersections of the
faces $\mathcal{F}_k^+$ and $\mathcal{F}_k^-$ and of their boundaries at infinity are exactly the ones described by the local combinatorics. Since the domain is invariant by $[U]$ and since we have described the combinatorics of the intersections of the faces $\mathcal{F}_k^\pm$ as well as of
   $B_0^{\pm}$ and $Q_0^{\pm}$
    with the faces contained in the neighboring bisectors,  it is enough to show the two following propositions to obtain the global combinatorics of the intersections of the faces.
 
 \begin{prop}\label{prop_inter_biss_lox}
  If $[U]$ is loxodromic, then
\begin{itemize}
 \item $\mathcal{J}_0^+$ intersects $\mathcal{J}_k^+$ if and only if $k \in \{-1,0,1\}$.
 \item $\mathcal{J}_0^-$ intersects $\mathcal{J}_k^-$ if and only if $k \in \{-1,0,1\}$.
  \item $\mathcal{J}_0^+$ intersects $\mathcal{J}_k^-$ if and only if $k \in \{-1,0,1\}$. Furthermore, $\mathcal{J}_0^+ \cap \mathcal{J}_1^- = \{[Up_A]\}$ and $\mathcal{J}_0^+ \cap \mathcal{J}_{-1}^- = \{[U^{-1}p_B]\}$.
\end{itemize}    
 \end{prop}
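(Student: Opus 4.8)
\textbf{Plan of proof for Proposition \ref{prop_inter_biss_lox}.}
The strategy is to work in the visual sphere $L_{[p_U]}$ of the fixed point $[p_U]$, which is common to all the bisectors $\mathcal{J}_k^{\pm} = U^k\con{\mathfrak{B}}(p_U,p_V)$ and $U^k\con{\mathfrak{B}}(p_U,p_W)$. By Proposition \ref{prop_proj_biss_sphere_visuelle}, the projection $\pi_{[p_U]}(\mathfrak{S}(p_U,q))$ of each spinal surface is a closed disk in $L_{[p_U]}\simeq [p_U]^{\perp}\simeq\ccp1$, and $\pi_{[p_U]}(\mathfrak{B}(p_U,q))$ its interior; the relevant disks here are $D_k^+ := \pi_{[p_U]}(\mathcal{J}_k^+)$ with ``centres'' $l_{[p_U],[U^kp_V]}$ and $l_{[p_U],[U^kp_V\boxtimes p_U]}$, and similarly $D_k^-$. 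Since $[U]$ fixes $[p_U]$ it acts on $L_{[p_U]}$, and because $[U]$ is loxodromic this action is a loxodromic (hyperbolic or loxodromic) Möbius transformation of $\ccp1$ with two fixed points, namely $l_{[p_U],[p'_U]}$ and $l_{[p_U],[p''_U]}$. Two bisectors $\mathcal{J}_k^{\epsilon}$ and $\mathcal{J}_l^{\epsilon'}$ can meet inside $\con{\h2c}$ only if their projected disks meet in $L_{[p_U]}$, so it suffices to control the disks $D_k^{\pm}$ and show that $D_0^+$ meets $D_k^+$ only for $k\in\{-1,0,1\}$, and likewise for the mixed intersections; one then checks that the ``extra'' intersection points $\mathcal{J}_0^+\cap\mathcal{J}_1^-$ and $\mathcal{J}_0^+\cap\mathcal{J}_{-1}^-$ reduce to the single points $\{[Up_A]\}$ and $\{[U^{-1}p_B]\}$ respectively (these already lie on the relevant spinal surfaces by Corollary \ref{cor_incidences}).

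The concrete steps I would carry out are: (i) compute explicitly, in the coordinates of Section \ref{subsect_notat_points}, the disk $D_0^+$ — its two centres and its radius (equivalently the angular diameter of $\mathfrak{B}(p_U,p_V)$ seen from $[p_U]$, which is governed by Proposition \ref{prop_diam_ang_reel_bisector} and Corollary \ref{cor_diam_ang_reel_bisector} via the quantity $\frac{\langle p_U,p_V\rangle\langle p_V,p_U\rangle}{\langle p_U,p_U\rangle\langle p_V,p_V\rangle}$, which from Lemma \ref{lemme_nature_bissecteurs} equals $\frac{9/4}{4\cos^2(\alpha_2)(4\cos^2(\alpha_2)-3)+9/4}$ up to rewriting — note here $\langle p_U,p_U\rangle$ may be positive, so one uses the general extor picture rather than the metric-bisector corollary directly); (ii) apply the loxodromic map $[U]$ on $L_{[p_U]}$ to get $D_k^+=[U]^kD_0^+$ and $D_k^-=[U]^kD_0^-$, using the symmetry $I$ of Section \ref{subsect_symetrie} to relate the $+$ and $-$ families; (iii) since $[U]^k$ pushes points towards the attracting fixed point $l_{[p_U],[p'_U]}$ (say) as $k\to+\infty$ and towards $l_{[p_U],[p''_U]}$ as $k\to-\infty$, show that for $|k|\geq 2$ the disk $D_k^+$ is pushed far enough that it is disjoint from $D_0^+$; this is a quantitative estimate comparing the size of $D_0^+$ to the translation length of $[U]$ acting on $L_{[p_U]}$ and to the position of $D_0^+$ relative to the two fixed points. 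For $k=\pm 1$ one already knows from Lemma \ref{lemme_inter_jk+_jk-} and Section \ref{sect_topo_faces} that the intersections are Giraud disks (or, at $\alpha_2=0$, hexagons), so nothing new is needed there, and for the mixed case $\mathcal{J}_0^+\cap\mathcal{J}_{\pm1}^-$ a direct computation of the Hermitian products, starting from the balanced-pair analysis of Lemma \ref{lemme_extors_equilibres} and Lemma \ref{lemme_triple_inter_extors}, pins the intersection down to the claimed single point.

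A subtlety is that for $\alpha_2\in]0,\frac{\pi}{6}[$ the bisectors $\mathcal{J}_k^{\pm}$ are Clifford cones (Lemma \ref{lemme_nature_bissecteurs}), so the projection $\pi_{[p_U]}$ must be handled with the $r>0$ case of Proposition \ref{prop_proj_biss_sphere_visuelle}; the disk picture in $L_{[p_U]}$ still holds, with the two centres $l_{[p_U],[U^kp_V]}$ and $l_{[p_U],[U^kp_V\boxtimes p_U]}$ now flanking the disk, so the argument is uniform in $\alpha_2\in]0,\alpha_2^{\lim}[$. As always in these face-pairing constructions one should also exhibit a continuity/compactness argument: the disjointness $D_0^+\cap D_k^+=\emptyset$ for $|k|\geq 2$ is an open condition, it holds at $\alpha_2=\alpha_2^{\lim}$ by Parker–Will \cite{parker_complex_2017a}, and one wants it on the whole interval; here the loxodromic dynamics actually makes the estimate \emph{improve} as $\alpha_2$ decreases (the translation length of $[U]$ on $L_{[p_U]}$ grows, since from Remark \ref{rem_alpha2_lox} the eigenvalue gap $\delta=2\sinh(l)$ increases), so the $\alpha_2=\alpha_2^{\lim}$ limit is the worst case and a direct estimate there, plus monotonicity, closes the argument. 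The main obstacle I anticipate is \emph{not} the counting of which disks overlap — the loxodromic north-south dynamics makes that morally clear — but rather making the size estimate for $D_0^+$ genuinely explicit and uniform: one must bound the angular radius of $D_0^+$ seen from $[p_U]$ \emph{and} its eccentric position between the two fixed points $l_{[p_U],[p'_U]}, l_{[p_U],[p''_U]}$ of the $[U]$-action, and compare this against the displacement induced by a single power of $[U]$, all as functions of the single real parameter $\alpha_2$ through the formulas of Section \ref{subsect_notat_points}. Once these three scalar quantities (radius, off-centre displacement, translation length) are written down and the inequality radius $+$ (displacement after $\pm 2$ steps) $<$ (gap) is verified, the proposition follows, with the mixed intersections dispatched by the explicit Hermitian-product computations seeded by Lemmas \ref{lemme_extors_equilibres}--\ref{lemme_triple_inter_extors}.
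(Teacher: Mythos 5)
Your overall route is the paper's route: project the $\mathcal{J}_k^{\pm}$ to the visual sphere $L_{[p_U]}$, note that in the chart $\psi_{p'_U,p''_U}$ of Section \ref{subsect_symetrie} and the following subsections the element $U$ acts as the homothety $z\mapsto e^{2l}z$ and the involution $I$ as $z\mapsto 1/z$, and reduce Proposition \ref{prop_inter_biss_lox} to a statement about the disks $D_k^{\pm}$. The genuine gap is that the decisive quantitative step is only announced, never carried out, and the shortcut you offer in its place fails. The paper's proof hinges on the uniform estimate that $D_0^+$ is contained in the open annulus centred at $0$ with radii $e^{-5l/2}$ and $e^{3l/2}$, valid for every $l>0$: one shows that no point $[q]$ whose image under $\psi_{p'_U,p''_U}$ lies on the circle of radius $e^{3l/2}$ or $e^{-5l/2}$ can satisfy both $\langle q,q\rangle\leq 0$ and $|\langle q,p_U\rangle|=|\langle q,p_V\rangle|$, which after computing $\langle p'_U,p_V\rangle$ and $\langle p''_U,p_V\rangle$ explicitly reduces to an inequality in $\cosh(kl)$ contradicted by convexity of $\cosh$. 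Combined with $\psi(U^k q)=e^{2kl}\psi(q)$ and the symmetry $I$, this places $D_k^{+}$ and $D_k^{-}$ in pairwise disjoint annuli as soon as $|k|\geq 2$, which is exactly the ``radius plus displacement versus gap'' inequality you yourself identify as the main obstacle; you do not supply it.

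Your fallback --- disjointness is an open condition, holds at $\alpha_2=\alpha_2^{\lim}$ by Parker--Will, and improves monotonically as $\alpha_2$ decreases because the translation length grows --- does not close this gap. At $\alpha_2^{\lim}$ the element $U$ is unipotent: $l=0$, the points $[p'_U]$ and $[p''_U]$ collide, so the chart $\psi_{p'_U,p''_U}$ and the homothety degenerate and there is no ``direct estimate there'' to propagate; openness would at best give the conclusion on an unquantified neighbourhood of $\alpha_2^{\lim}$, not on all of $]0,\alpha_2^{\lim}[$. Nor is the claimed monotonicity justified: the size of $D_0^+$ measured in the modulus coordinate also grows with $l$ (the paper's annulus has logarithmic width $4l$), so what must be controlled is a ratio, and controlling it is precisely the computation you have deferred. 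Two smaller points. First, in the loxodromic range $[p_U]\notin\con{\h2c}$, so the angular-diameter bound of Proposition \ref{prop_diam_ang_reel_bisector} is unavailable (as you note), and nothing in your plan replaces it except the explicit Hermitian computation above. Second, the paper obtains $\mathcal{J}_0^+\cap\mathcal{J}_1^-=\{[Up_A]\}$ and $\mathcal{J}_0^+\cap\mathcal{J}_{-1}^-=\{[U^{-1}p_B]\}$ not from Lemmas \ref{lemme_extors_equilibres}--\ref{lemme_triple_inter_extors} (those concern the same-family intersection $\mathcal{J}_0^-\cap\mathcal{J}_{-1}^-$), but from the tangency of $D_0^+$ and $D_1^-$ at $\psi(Up_A)$ (conformality of $z\mapsto 1/z$ together with the homothety, Proposition \ref{prop_tangence_disques_lox}) combined with the tangency criterion of Lemma \ref{lemme_critere_tangence}, which shows that the line $l_{[p_U],[Up_A]}$ meets $\mathfrak{S}(p_U,p_V)$ only at $[Up_A]$; this part of your plan is repairable along those lines, but as written it points to the wrong lemmas.
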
  

 \begin{prop}\label{prop_inter_biss_ell}
  If $[U]$ is elliptic of order $\geq 9$, then
\begin{itemize}
 \item $\mathcal{J}_0^+$ intersects $\mathcal{J}_k^+$ if and only if $k \equiv -1,0,1 \mod n$.
 \item $\mathcal{J}_0^-$ intersects $\mathcal{J}_k^-$ if and only if $k \equiv -1,0,1 \mod n$.
  \item $\mathcal{J}_0^+$ intersects $\mathcal{J}_k^-$ if and only if $k \equiv -1,0,1 \mod n$. Furthermore, $\mathcal{J}_0^+ \cap \mathcal{J}_1^- = \{[Up_A]\}$ and $\mathcal{J}_0^+ \cap \mathcal{J}_{-1}^- = \{[U^{-1}p_B]\}$.
\end{itemize}    
 \end{prop}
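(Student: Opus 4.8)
The plan is to project from the point $[p_U]\in\h2c$, which is the fixed point of the regular elliptic element $[U]$ in the admissible range $\alpha_2\in\ ]\alpha_2^{\lim},\frac{\pi}{2}[$ (here $\langle p_U,p_U\rangle = 4\cos^2(\alpha_2)-\frac32<0$, so $[p_U]\in\h2c$ and, by Lemma \ref{lemme_nature_bissecteurs}, all the $\mathcal{J}_k^{\pm}$ are metric bisectors), onto the visual sphere $L_{[p_U]}$, reducing the statement to a configuration problem about a round disk and its rotates. Since $Up_U$ is proportional to $p_U$, one has $\mathcal{J}_k^+ = U^k\con{\mathfrak{B}}(p_U,p_V) = \con{\mathfrak{B}}(p_U,U^kp_V)$ and likewise $\mathcal{J}_k^- = \con{\mathfrak{B}}(p_U,U^kp_W)$; that is, all of these bisectors are coequidistant from $[p_U]$. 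As $|\langle p_U,p_U\rangle| = \frac32-4\cos^2(\alpha_2)\neq\frac32 = |\langle p_U,p_V\rangle| = |\langle p_U,p_W\rangle|$, the point $[p_U]$ lies on none of them, so Proposition \ref{prop_proj_biss_sphere_visuelle} applies: each spinal sphere $\partial_\infty\mathcal{J}_k^\pm$ projects under $\pi_{[p_U]}$ to a closed round disk $D_k^\pm\subset L_{[p_U]}$, with $\pi_{[p_U]}(\mathcal{J}_k^\pm\cap\h2c)$ its interior. Moreover $D_k^\pm = [U]^kD_0^\pm$ because $[U]$ fixes $[p_U]$, and $[U]$ acts on $L_{[p_U]}\simeq\ccp1$ as a rotation whose centre is the pair $l_{[p_U],[p'_U]},l_{[p_U],[p''_U]}$ and whose angle is the multiple of $\frac{2\pi}{n}$ read off from Remark \ref{rem_alpha2_ell}.

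Next I would convert disjointness of bisectors into disjointness of disks. If a point $[z]\neq[p_U]$ lies in $\mathcal{J}_j^\epsilon\cap\mathcal{J}_k^\delta$ then $l_{[p_U],[z]}$ lies in $D_j^\epsilon\cap D_k^\delta$; conversely the incidences of Corollary \ref{cor_incidences} already give the intersections for $k\in\{-1,0,1\}$. The only loophole — two closed bisectors meeting while the corresponding open disks are disjoint — forces, by the slice lemma (Theorem 9.1.2 of \cite{goldman}), the closed disks to be tangent at the image of a common slice, which is exactly the situation of the ``Furthermore'' clause; there a one–variable computation inside the slice $l_{[p_U],[Up_A]}$ (resp. $l_{[p_U],[U^{-1}p_B]}$) pins the intersection down to the single point $[Up_A]$ (resp. $[U^{-1}p_B]$). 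So the proposition reduces to showing $\mathrm{int}(D_0^\pm)$ disjoint from $[U]^{\pm2}D_0^\pm$ and from $[U]^{\pm2}D_0^\mp$: indeed, for a rotation of $\ccp1$ the function $\theta\mapsto d\bigl(c,\mathrm{rot}_\theta(c)\bigr)$ is unimodal on $[0,2\pi]$, so once $D_0^\pm$ is known to meet its $1$–step rotate (which follows from the local combinatorics) and to miss its $2$–step rotate, it misses every further rotate. The $I$–symmetry of Subsection \ref{subsect_symetrie} ($Ip_U=p_U$, $Ip_V=p_W$, $IUI=U^{-1}$, hence $ID_k^+=D_{-k}^-$ on $L_{[p_U]}$) reduces the mixed pairs to the pure ones.

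For the remaining quantitative point I would work in the chart $\psi_{p'_U,p''_U}$ attached to the auto–polar triangle $[p_U],[p'_U],[p''_U]$, in which $[U]$ is a genuine rotation about $0$ and $\infty$, and compute $D_0^+$ explicitly: its two antipodal centres are $\frac{\langle p'_U,p_V\rangle}{\langle p''_U,p_V\rangle}$ and $\frac{\langle p'_U,p_U\boxtimes p_V\rangle}{\langle p''_U,p_U\boxtimes p_V\rangle}$, and one boundary point fixes its spherical radius; similarly for $D_0^-$ with $p_W$. By Lemma \ref{lemme_tangences_faces} the chain $[U]^{-1}D_0^+,D_0^+,[U]D_0^+$ is linked consecutively at $\pi_{[p_U]}([p_A])$ and $\pi_{[p_U]}([p_B])$, and since $\frac{\langle p_U,p_V\rangle\langle p_V,p_U\rangle}{\langle p_U,p_U\rangle\langle p_V,p_V\rangle} = \frac{9/4}{(4\cos^2(\alpha_2)-\frac32)^2}>4$ for every admissible parameter, Corollary \ref{cor_diam_ang_reel_bisector} bounds the real visual diameter of each bisector from $[p_U]$ by $\frac{\pi}{3}$, hence (via the Hopf submersion $S^3\to L_{[p_U]}$) bounds the spherical size of $D_0^\pm$. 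I would set the comparison of this size with the rotation angle up so that the estimate is monotone in $n$, reducing it to the single case $n=9$ together with the limit $\alpha_2\to\alpha_2^{\lim}$, where the configuration degenerates to the horotube picture of \cite{parker_complex_2017a}; both verifications would give that the $2$–step rotate of $D_0^\pm$ is disjoint from $D_0^\pm$ precisely when $n\geq9$.

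The main obstacle is exactly this last estimate. It is delicate because $D_0^\pm$ is not small — its spherical size is comparable to the rotation angle — so the crude bound of Corollary \ref{cor_diam_ang_reel_bisector} is not by itself decisive; one must control simultaneously the radius of $D_0^\pm$ and the colatitude of its centres relative to the rotation axis, and do so uniformly in $n$, the hard regime being $\alpha_2\to\alpha_2^{\lim}$ where $[p_U]\to\dh2c$. This is also genuinely why only $n\geq9$ is reached: for $4\le n\le8$ the disks $D_0^+$ and $[U]^2D_0^+$ do overlap in $L_{[p_U]}$, so the visual–sphere criterion becomes inconclusive and one must appeal instead to the triangle–group computation of \cite{parker_wang_xie}. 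A final point of care is that when $n$ is even the induced rotation on $L_{[p_U]}$ has order only $n/2$, so $[U]^{n/2}$ is a complex reflection in $[p_U]^\perp$ and $\mathcal{J}_0^\pm$, $\mathcal{J}_{n/2}^\pm$ have the same projected disk; the disjointness of these two bisectors is then obtained directly, inside that common disk, using that $\mathcal{J}_0^\pm$ and $\mathcal{J}_{n/2}^\pm$ are interchanged by that reflection. The argument for the loxodromic Proposition \ref{prop_inter_biss_lox} runs along the same lines, projecting instead from the point $[p_U]\in\cp2\setminus\con{\h2c}$.
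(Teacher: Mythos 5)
Your setup is the same as the paper's: project the $\mathcal{J}_k^{\pm}$ from $[p_U]$ to disks $D_k^{\pm}\subset L_{[p_U]}$, use the chart $\psi_{p'_U,p''_U}$ in which $[U]$ rotates about $0$ and $[I]$ acts by $z\mapsto\frac1z$, and settle the ``Furthermore'' clause by a tangency argument at $[Up_A]$ and $[U^{-1}p_B]$. The gap is in your reduction ``meets the $1$-step rotate, misses the $2$-step rotate, hence misses every further rotate by unimodality of $\theta\mapsto d(c,\mathrm{rot}_\theta(c))$''. The element $U$ has eigenvalue angle $\beta=\frac{2\pi}{n}$, but it acts on $L_{[p_U]}$ as $z\mapsto e^{2i\beta}z$, so the effective rotation angles $2k\beta$ wrap around the circle \emph{twice} as $k$ runs through $\mathbb{Z}/n\mathbb{Z}$. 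For $k$ near $\frac n2$ the angle $2k\beta\bmod 2\pi$ is close to $0$, hence outside $[4\beta,2\pi-4\beta]$, and unimodality gives nothing; worse, the projected disks really do meet there: for $n$ even, $D_{n/2}^{\pm}=D_0^{\pm}$ and $D_{n/2\pm1}^{\pm}$ coincides with the $1$-step rotate (which you have already granted meets $D_0^{\pm}$, e.g.\ $\psi(p_B)$ and $\psi(Up_A)$ lie on both $D_0^+$ and $D_1^+$), and for $n$ odd, $D_{(n\pm1)/2}^{\pm}$ is the rotate by only $\beta$. You flag only the exact coincidence at $k=\frac n2$ for even $n$, and the reflection trick you sketch there does not touch $k=\frac n2\pm1$ nor the odd case. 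This is exactly why the paper's proof of Proposition \ref{prop_inter_globale_ell} splits in two: the sector bound on $D_0^+$ disposes only of $s\in[2,\frac n2-2]\cup[\frac n2+2,n-2]$, and the range $s\in\,]\frac n2-2,\frac n2+2[$ is handled by a different mechanism — real cones over $[U^s]\mathcal{J}_0^+$ with vertex $[p_U]$, the real angular diameter bound $<\frac\pi3$ of Lemma \ref{lemme_diam_ang_reel_ordre_geq_5}, the disjointness of $\mathcal{J}_0^+$ from $[U^{n/2}]\mathcal{J}_0^+$ obtained by projecting orthogonally to the complex geodesic $l_{[p_U],[p_V]}$, and a continuity argument along the one-parameter group $U^s$. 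Without some substitute for this step your argument does not prove the proposition.

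A secondary weakness: the quantitative input that makes $n\geq 9$ appear — in the paper, the fact that the half-lines of arguments $\frac{3\beta}{2}$ and $-\frac{5\beta}{2}$ miss $D_0^+$, proved by a discriminant computation that works exactly when $\cos\beta\geq\frac34$ — is not carried out in your sketch. You replace it by a bound on the metric radius and colatitude of $D_0^{\pm}$ together with a claimed monotonicity in $n$ reducing to $n=9$ and the limit $\alpha_2\to\alpha_2^{\lim}$; but both the disk and the rotation angle vary with $n$, no monotonicity is established, and the limit regime is not a single checkable case. Even granting the structural reduction, the estimate that carries the whole projection strategy is left unproved.
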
 
 
 In order to show these propositions, we will project the bisectors $\mathcal{J}_k^{\pm}$ on the visual sphere of $[p_U]$. Then, we will obtain a family of disks invariant by the action of $[U]$ on the visual sphere. In the case where $[U]$ is loxodromic, this projection will be enough to show Proposition \ref{prop_inter_biss_lox}. In the case where $[U]$ is elliptic, we will need to refine the argument before completing the proof of Proposition \ref{prop_inter_biss_ell}.

 Recall that in \cite{parker_wang_xie}, Parker, Wang and Xie give a proof of Proposition \ref{prop_inter_biss_ell} for $n \geq 4$. The correspondence between their notation and ours is given by $\mathcal{J}_k^{+} = \mathcal{B}_{-2k}$ and $\mathcal{J}_k^{-} = \mathcal{B}_{-2k-1}$. Nevertheless, we are going to give a proof of Proposition \ref{prop_inter_biss_ell}, but using different tools, that could be applied besides from representations coming from triangle groups. However, our method will not allow us to to reach the cases $n = 4,5,6,7,8$, that are treated by Parker, Wang and Xie.

\subsection{First data}
 We establish first some results on the projection of the bisectors $\mathcal{J}_k^{\pm}$ on the visual sphere $L_{[p_U]}$ of $[p_U]$. We are going to identify those projections as disks with boundaries passing by the images of some remarkable points. Recall that we studied the visual spheres of points of $\cp2$ in Section \ref{sect_sphere_visuelle} of Part \ref{part_geom_background}.
At first, we establish a criterion for the tangency of a complex line with a spinal surface.

\begin{lemme}[Tangency criterion]\label{lemme_critere_tangence}
 Let $p,q \in \mathbb{C}^3 \setminus \{0\}$ such that $\langle p, p \rangle = \langle q ,q \rangle \neq 0$ and $\langle p, q \rangle \in \mathbb{R} \setminus \{0\}$. Let $[r] \in \mathfrak{S}(p,q)$. Then, the complex line $l_{[p],[r]}$ is tangent to $\mathfrak{S}(p,q)$ at $[r]$ if and only if there exists $\epsilon \in \{ \pm 1 \}$ such that:
 \begin{enumerate}
  \item $\langle p, r \rangle = \epsilon \langle q, r \rangle$
  \item $\langle q , p \rangle \neq \epsilon \langle p,p \rangle$
 \end{enumerate}
\end{lemme}
\begin{proof}
 We are going to show that $l_{[p] , [r]}$ intersects $\mathfrak{S}(p , q)$ only at $[r]$ if and only if the conditions of the statement are satisfied. An other point of $l_{[p] , [r]} \cap \mathfrak{S}(p , q)$ can be written as $[p + \lambda r]$ with $\lambda \in \mathbb{C}$, and satisfies:
 
 \begin{equation*}
  \langle p + \lambda r , p + \lambda r \rangle = 0 
 \end{equation*}
 \begin{equation*}
  |\langle p , p + \lambda r \rangle|^2 = |\langle q , p + \lambda r \rangle|^2 
 \end{equation*} 
 
 By expanding and using that $\langle r , r \rangle = 0$, that $\langle p, p \rangle = \langle q, q \rangle \in \mathbb{R}$ and that $|\langle p , r \rangle|= |\langle q , r \rangle|$, we obtain:
  \begin{equation}
  \langle p , p \rangle + 2 \Re (\lambda \langle p , r \rangle) = 0 
 \end{equation}
 \begin{equation}
  \langle p , p \rangle \Re (\lambda \langle p , r \rangle) = \Re ( \lambda \langle q , p \rangle \langle q ,  r \rangle)
 \end{equation} 
 Replacing $\Re (\lambda \langle p , r \rangle)$ in the second equation and noticing that $\langle q , p \rangle \in \mathbb{R} \setminus \{0\}$, we have:
  \begin{equation}
  - \frac{1}{2} \langle p , p \rangle = \Re (\lambda \langle p , r \rangle )
 \end{equation}
  \begin{equation}
  - \frac{1}{2}\frac{\langle p , p \rangle ^2}{\langle q , p \rangle} =  \Re ( \lambda  \langle q ,  r \rangle)
 \end{equation} 
  We obtain two equations of real lines in $\mathbb{C}$ for $\lambda$. The intersection of the lines is empty if and only if the lines are different and parallel. Since $|\langle p , r \rangle | = |\langle q , r \rangle|$, this is equivalent to the existence of $\epsilon \in \{\pm 1\}$ such that $\langle p , r \rangle  = \epsilon \langle q , r \rangle$ and $- \frac{1}{2} \langle p , p \rangle \neq \epsilon \left( - \frac{1}{2}\frac{\langle p , p \rangle ^2}{\langle q , p \rangle} \right)$, i.e. $\langle q , p \rangle \neq \epsilon \langle p,p \rangle$. 
 
 \end{proof}
 
 \begin{lemme}
 For $\alpha_2 \neq \pm \alpha_2^{\lim}, \pm \frac{\pi}{2}$, the complex lines $l_{[p_U] , [Up_A]}$ and $l_{[p_U] , [U^{-1}p_B]}$ are tangent to the sphere $\mathfrak{S}(p_U , p_V)$, respectively at $[U p_A]$ and $[U^{-1}p_B]$.
\end{lemme}

\begin{proof}
 By Lemmas \ref{lemme_incidences_pa} and \ref{lemme_incidence_pb}, we know that $[U p_A]$ and $[U^{-1}p_B]$ belong to $\mathfrak{S}(p_U,p_V)$. Furthermore, we have:
\begin{eqnarray*}
 \langle pu , pu \rangle = \langle pv , pv \rangle &=& 4\cos^2(\alpha_2) - \frac{3}{2} \\
 \langle p_U , Up_A \rangle = \langle p_V , Up_A \rangle &=& e^{-2i \alpha_2} \\
 \langle p_U , U^{-1}p_B \rangle = \langle p_V , U^{-1}p_B \rangle &=& 1 \\
 \text{ and } \langle p_V , p_U \rangle &=& - \frac{3}{2}.
\end{eqnarray*}
   By Lemma \ref{lemme_critere_tangence}, we have the tangencies of the statement.
\end{proof}

 Since, by Proposition \ref{prop_proj_biss_sphere_visuelle}, we know that the projection of a bisector of the form $\mathfrak{B}(p,q)$ on the visual sphere $L_{[p]}$ is a disk, we deduce the following corollary:
\begin{cor} The set
$\pi_{[p_U]}(\mathcal{J}^{+}_0)$ is a disk whose boundary contains $\pi_{[p_U]}(l_{[p_U],[Up_A]})$ and $\pi_{[p_U]}(l_{[p_U],[U^{-1}p_B]})$.
\end{cor}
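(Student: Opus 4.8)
The statement to prove is the corollary:

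\emph{The set $\pi_{[p_U]}(\mathcal{J}^{+}_0)$ is a disk whose boundary contains $\pi_{[p_U]}(l_{[p_U],[Up_A]})$ and $\pi_{[p_U]}(l_{[p_U],[U^{-1}p_B]})$.}

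The plan is to invoke Proposition \ref{prop_proj_biss_sphere_visuelle} applied to the bisector $\mathcal{J}_0^+ = \con{\mathfrak{B}}(p_U,p_V)$, projected from the point $[p_U]$ that is one of the two defining points. Since $\langle p_U, p_U\rangle = \langle p_V, p_V\rangle = 4\cos^2(\alpha_2) - \tfrac32$ and $\langle p_U, p_V\rangle = -\tfrac32$, the quantity $r = \langle p_U,p_U\rangle\langle p_V,p_V\rangle - \langle p_U,p_V\rangle\langle p_V,p_U\rangle = 4\cos^2(\alpha_2)(4\cos^2(\alpha_2)-3)$ is exactly the determinant computed in Lemma \ref{lemme_nature_bissecteurs}; it is nonzero precisely when $\alpha_2 \neq \tfrac{\pi}{6}, \tfrac{\pi}{2}$ (and also $\alpha_2\neq 0$). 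When $r\neq 0$, Proposition \ref{prop_proj_biss_sphere_visuelle} says $\pi_{[p_U]}(\mathfrak{S}(p_U,p_V))$ is a closed disk with the two centres $l_{[p_U],[p_V]}$ and $l_{[p_U],[p_U\boxtimes p_V]}$, and $\pi_{[p_U]}(\mathfrak{B}(p_U,p_V))$ is its interior. Since $\mathcal{J}_0^+$ is the closed bisector $\con{\mathfrak{B}}(p_U,p_V) = \mathfrak{B}(p_U,p_V)\cup\mathfrak{S}(p_U,p_V)$, its projection under $\pi_{[p_U]}$ is the union of the interior and part of the boundary circle, hence in any case a topological disk (it is sandwiched between the open disk and the closed disk, and taking closures it is the closed disk). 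When $r = 0$ (the case $\alpha_2 = \tfrac{\pi}{6}$), the second bullet of Proposition \ref{prop_proj_biss_sphere_visuelle} still gives that $\pi_{[p_U]}(\mathfrak{S}(p_U,p_V))$ is a closed disk whose boundary contains $l_{[p_U],[p_V]}$, so the conclusion that $\pi_{[p_U]}(\mathcal{J}_0^+)$ is a disk survives uniformly in $\alpha_2\in\,]0,\tfrac{\pi}{2}[$.

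It remains to see that the two specified lines $\pi_{[p_U]}(l_{[p_U],[Up_A]})$ and $\pi_{[p_U]}(l_{[p_U],[U^{-1}p_B]})$ lie on the boundary circle of this disk. The key input is the tangency lemma just proved: the complex lines $l_{[p_U],[Up_A]}$ and $l_{[p_U],[U^{-1}p_B]}$ are tangent to $\mathfrak{S}(p_U,p_V)$ at $[Up_A]$ and $[U^{-1}p_B]$ respectively. A complex line through $[p_U]$ projects to a single point of $L_{[p_U]}$, namely the point of the visual sphere it represents; such a line meets $\mathfrak{B}(p_U,p_V)$ (giving an interior point of the projected disk) unless it is tangent to the spinal surface, in which case it only touches $\con{\mathfrak{B}}(p_U,p_V)$ along $\mathfrak{S}(p_U,p_V)$ and its projection lands on the boundary of the disk. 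This is exactly the observation made in the first sentence of the proof of Proposition \ref{prop_proj_biss_sphere_visuelle} (``the complex lines that intersect $\mathfrak{S}(p,q)$ also intersect $\mathfrak{B}(p,q)$ unless they are tangent to $\mathfrak{S}(p,q)$''). Since tangency is precisely what the preceding lemma establishes for these two lines, both $\pi_{[p_U]}(l_{[p_U],[Up_A]})$ and $\pi_{[p_U]}(l_{[p_U],[U^{-1}p_B]})$ are boundary points of $\pi_{[p_U]}(\mathcal{J}_0^+)$.

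There is no serious obstacle here: the corollary is a direct assembly of Proposition \ref{prop_proj_biss_sphere_visuelle} (disk structure of the projection) and the tangency lemma (boundary membership of the two marked lines), with the only minor care being that $\mathcal{J}_0^+$ is the \emph{closed} bisector rather than the open one or its boundary sphere, so that its projection is unambiguously a closed topological disk regardless of the sign of $r$. One writes: by Proposition \ref{prop_proj_biss_sphere_visuelle}, $\pi_{[p_U]}(\con{\mathfrak{B}}(p_U,p_V))$ is a closed disk; by the tangency of $l_{[p_U],[Up_A]}$ and $l_{[p_U],[U^{-1}p_B]}$ to $\mathfrak{S}(p_U,p_V)$ together with the first paragraph of the proof of Proposition \ref{prop_proj_biss_sphere_visuelle}, their images lie on the boundary of that disk; this is the claim.
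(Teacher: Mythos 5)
Your proof is correct and is essentially the paper's own argument: there the corollary is deduced, with no further detail, from Proposition \ref{prop_proj_biss_sphere_visuelle} (projection of $\con{\mathfrak{B}}(p_U,p_V)$ from $[p_U]$ is a closed disk, with $\pi_{[p_U]}(\mathfrak{B}(p_U,p_V))$ its interior) together with the preceding tangency lemma, exactly as you assemble it, and your extra remarks on the closed bisector and the $r=0$ case only make explicit what the paper leaves implicit. The one point glossed over --- by you and by the paper alike --- is that ``tangent to $\mathfrak{S}(p_U,p_V)$'' (single intersection point with the spinal surface) does not by itself force the line to miss the open bisector $\mathfrak{B}(p_U,p_V)$, which is what boundary membership really requires; here it does miss it, since on the equal-modulus locus of $l_{[p_U],[Up_A]}$ (resp.\ $l_{[p_U],[U^{-1}p_B]}$) away from the tangency point the Hermitian norm is constantly $-\langle p_V,p_U\rangle=\tfrac{3}{2}>0$, a one-line computation that closes the gap at the same level of rigor as the paper.
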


\begin{notat}
 We will denote those disks by $D_k^{\pm}$, in order to have $D_k^{\pm} = \pi_{[p_U]}(\mathcal{J}^{\pm}_0)$.
\end{notat}

Recall that we also have a symmetry associated to the involution $I \in \mathrm{U}(2,1)$ given by:
\[ I =
\begin{pmatrix}
1 & 0 & 0 \\
-\sqrt{2}e^{i\alpha_2} & -1 & 0 \\
-1 & -\sqrt{2}e^{-i\alpha_2} & 1
\end{pmatrix}
.\]

As seen in Section \ref{subsect_symetrie}, it satisfies $Ip_U = p_U$, $Ip_V = p_W$ and $Ip_W = p_V$. The action of $[I]$ on $\cp2$ fixes $[p_U]$ and exchanges the bisectors $ \mathcal{J}_0^+$ and $ \mathcal{J}_0^-$.

\subsection{The chart $\psi_{p'_U,p''_U}$ of $L_{[p_U]}$.}
We are going to make some computations in the chart  $\psi_{p'_U,p''_U}$ of $L_{[p_U]}$ in order to identify the intersections of the disks $D_k^{\pm}$ and deduce the global combinatorics of the intersections of the bisectors $\mathcal{J}_k^{\pm}$.

\begin{notat}
 In order to avoid heavy notation, if $[q] \in \cp2 \setminus \{[p_U]\}$ we will write $\psi(q)$ instead of $\psi_{p'_U,p''_U}(l_{[p_U],[q]}) $.
\end{notat}

\begin{rem} The image by $\psi$ of several remarkable points is easy to compute. Indeed, we have:

\begin{itemize}
 \item  $\psi(p'_U) = \infty$ and $\psi(p''_U) = 0$
 \item  $\psi(p_B) = 1$
 \item  $\psi(p_A) = \frac{-(8\cos^2(\alpha_2)+1) + \delta}{-(8\cos^2(\alpha_2)+1) - \delta} = \frac{\mathrm{tr}(U)+1 - \delta}{\mathrm{tr}(U)+1 + \delta}$.
\end{itemize} 

\end{rem}

The elements $U$ and $I$ of $\mathrm{U}(2,1)$ fix $p_U$ and hence have a natural projective actions on $L_{[p_U]}$. We are going to identify those two actions in the chart $\psi_{p'_U,p''_U}$. We begin by $U$:

\begin{rem}
The action of $U$ on $\cp2$ fixes $[p_U]$, $[p'_U]$ and $[p''_U]$. Hence it acts on $L_{[p_U]}$ by fixing $\psi(p'_U)$ and $\psi(p''_U)$. In the chart $\psi_{p'_U,p''_U}$, the action is hence given by either a rotation, or a homothety with center 0. We will give some details later on the corresponding angle or ratio.
\end{rem}

 Consider now the action of $I$.

\begin{lemme}
 The action of $I$ on $L_{p_U}$ is given, in the chart $\psi_{p'_U , p''_U}$, by $z \mapsto \frac{1}{z}$.
\end{lemme}
 \begin{proof}
 The involution $I$ satisfies
 $I p'_U = p''_U$, $Ip''_U = p'_U$ and $Ip_B = p_B$. Since $\psi(p'_U)= \infty$, $\psi(p''_U)= 0$ and $\psi(p_B) = 1$,
 the involution $I$ has acts on the chart $\psi_{p'_U , p''_U}$ by a projective map fixing $1$ and exchanging $0$ and $\infty$. Hence, it is $z \mapsto \frac{1}{z}$.
 \end{proof}

\subsection{The loxodromic side}\label{subsect_combi_lox}
 In this subsection, we are going to study the case where $[U]$ is loxodromic. We are going to study the relative position of the disks $D_k^{\pm}$ in $L_{[p_U]}$, which will be sufficient to prove the global combinatorics of the intersections of the bisectors $\mathcal{J}_k^{\pm}$. We are going to show that the disks are placed as in Figure \ref{fig_proj_anneaux_lox}.

\begin{figure}[ht]
 \center
 \includegraphics[width = 7cm]{./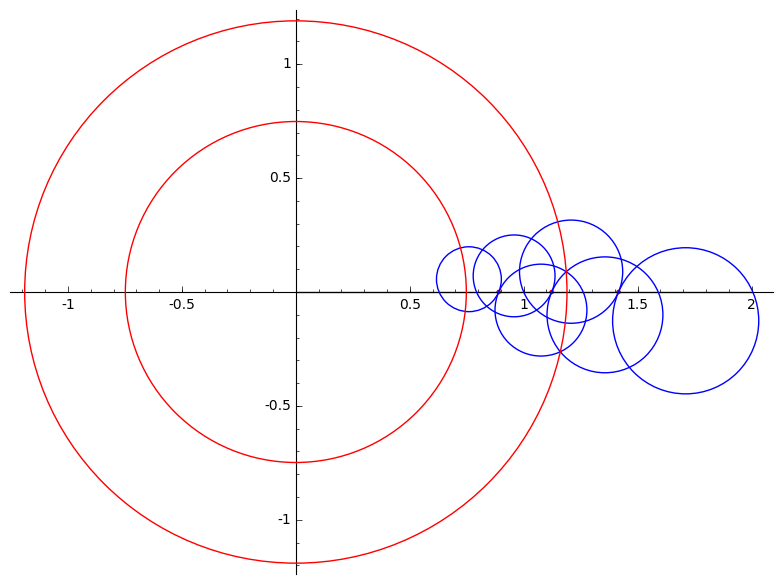}
 \caption{The disks $D_k^{\pm}$ (in blue) for $\alpha_2 = 0.91$. \label{fig_proj_anneaux_lox}}
\end{figure}

 We consider here the parameters $\alpha_2 \in ] \frac{\pi}{6}, \alpha_2^{\lim}  [$. By Remark \ref{rem_alpha2_lox}, we know that for such $\alpha_2$, the element $U$ is loxodromic and has eigenvalues $1 , e^{l}$ and $e^{-l}$. The length $l \in ]0 , \mathrm{argch}(\frac{5}{2}) [$ can also be used as a parameter for the deformation. It is related to $\alpha_2$ by the equation:
 \begin{equation*}
  2\cosh(l) + 1 = \mathrm{tr}(U) = 8\cos^2(\alpha_2)
 \end{equation*}

In this case, recall also that, by setting $\delta = 2\sinh(l)$, we have:
\[
[p'_U] = \begin{bmatrix}
2(2e^{2i\alpha_2}+1) \\
 -\sqrt{2}e^{i\alpha_2}(2e^{2i\alpha_2}+1 + \delta) \\
 -(8\cos^2(\alpha_2)+1) -\delta
\end{bmatrix}
\text{ and }
[p''_U] = \begin{bmatrix}
2(2e^{2i\alpha_2}+1) \\
 -\sqrt{2}e^{i\alpha_2}(2e^{2i\alpha_2}+1 - \delta) \\
 -(8\cos^2(\alpha_2)+1) + \delta
\end{bmatrix}
\]

\begin{rem}
 In the loxodromic side, $[p'_U], [p''_U] \in \dh2c$, and $[p_U], [p'_U], [p''_U]$ form an auto-polar triangle. We compute the following Hermitian products:
\begin{eqnarray*}
 \langle p'_U , p'_U \rangle = \langle p''_U , p''_U \rangle &=& 0 \\
 \langle p_U , p_U \rangle &=& \cosh(l) - 1 \\
 \langle p_U , p'_U \rangle = \langle p_U , p''_U \rangle &=& 0 \\
 \langle p'_U , p''_U \rangle &=& - 16 \sinh^2(l)
\end{eqnarray*}

\end{rem}

\begin{rem} In the case where $U$ is loxodromic, we have $\psi(p_A) =  \frac{\mathrm{tr}(U)+1 - \delta}{\mathrm{tr}(U)+1 + \delta} = \frac{1 + e^{-l}}{1 + e^{l}} = e^{-l}$.
\end{rem}

\begin{rem}
The action of $U$ on $L_{[p_U]}$ fixes $l_{[p_U],[p'_U]}$ and $l_{[p_U],[p''_U]}$. In the chart $\psi_{p'_U,p''_U}$, since $Up'_U = e^{l}p'_U$ and $Up''_U = e^{l}p''_U$,
 the action of $U$ is a homothety centered at $0$ and with ratio $e^{2l}$; it is given by $z \mapsto e^{2l}z$.
\end{rem}

\begin{rem}
 Since $\psi(p_B) = 1$, $\psi(p_A) = e^{-l}$ and $U$ acts by a homothety of ratio $e^{2l}$, we deduce that for all $k\in \mathbb{Z}$ we have
 
  \[\psi(U^kp_B) = e^{2kl} \text{ and } \psi(U^kp_A) = e^{(2k-1)l}.\]
\end{rem}

We deduce from the actions of $U$ and $I$ a result on the intersections:
\begin{prop}\label{prop_tangence_disques_lox}
 The disks $D_0^+$ and $D_1^{-}$ are tangent at the point $\pi_{[p_U]}(l_{[p_U],[Up_A]})$.
 
  The disks $D_0^+$ ans $D_{-1}^{-}$ are tangent at the point $\pi_{[p_U]}(l_{[p_U],[U^{-1}p_B]})$.
\end{prop}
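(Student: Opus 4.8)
The plan is to work entirely in the chart $\psi = \psi_{p'_U,p''_U}$ of the visual sphere $L_{[p_U]}$, in which we have already identified the actions of $U$ and $I$ as $z \mapsto e^{2l}z$ and $z \mapsto \frac1z$ respectively, together with the images $\psi(p_B) = 1$, $\psi(p_A) = e^{-l}$, $\psi(U^k p_B) = e^{2kl}$ and $\psi(U^k p_A) = e^{(2k-1)l}$. First I would pin down the disk $D_0^+ = \pi_{[p_U]}(\mathcal{J}_0^+)$ precisely. By Proposition \ref{prop_proj_biss_sphere_visuelle}, $D_0^+$ is a closed disk in $\ccp1$, and by the corollary preceding the notation $D_k^\pm$ its boundary circle passes through the two points $\psi(Up_A) = e^{l}$ and $\psi(U^{-1}p_B) = e^{-2l}$. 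Moreover, by Corollary \ref{cor_incidences} the bisector $\mathcal{J}_0^+ = \overline{\mathfrak{B}}(p_U,p_V)$ contains $p_A, Up_A, p_B, U^{-1}p_B$, so $\partial D_0^+$ also passes through $\psi(p_A) = e^{-l}$ and $\psi(p_B) = 1$; the boundary circle of $D_0^+$ is therefore the circle through the four real points $e^{-2l} < e^{-l} < 1 < e^{l}$, which forces $\partial D_0^+$ to be a \emph{straight line} — namely the real axis (a circle through three collinear points of $\ccp1$ is the line they span). Hence $D_0^+$ is one of the two closed half-planes bounded by $\RR \cup \{\infty\}$; checking the norm of one interior slice (e.g. the slice through $p_U \boxtimes p_V$, which lies in $\h2c$ iff $\alpha_2 < \pi/6$, or more simply evaluating $\langle \cdot,\cdot\rangle$ on a non-real test point of a slice) determines which half-plane. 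Applying the symmetry $[I]$, which sends $\mathcal{J}_0^+$ to $\mathcal{J}_0^-$ and acts by $z \mapsto 1/z$ (an anti-Möbius-type inversion fixing $\RR \cup \{\infty\}$ setwise), one gets that $\partial D_0^-$ is again the real axis, hence $D_0^- = D_0^+$ or $D_0^-$ is the complementary half-plane; since $\mathcal{J}_0^+ \ne \mathcal{J}_0^-$ it must be the complementary one, so $D_0^+ \cap D_0^- = \RR \cup \{\infty\}$.

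Next I would transport this along the $U$-action. Since $U$ acts by the real homothety $z \mapsto e^{2l}z$, which preserves $\RR \cup \{\infty\}$ and each of the two half-planes, every $D_k^\pm = U^k D_0^\pm$ is again one of the same two half-planes — and in fact $D_k^+ = D_0^+$ and $D_k^- = D_0^-$ as \emph{sets}! This is the key structural point: the projections all coincide with one of two fixed half-planes, so the ``disks'' in Figure \ref{fig_proj_anneaux_lox} must really be read as a nested family of \emph{sub-disks} of these half-planes (bounded arcs of $\partial D_k^\pm$ together with the corresponding chords), and the statement of Proposition \ref{prop_tangence_disques_lox} about tangency of $D_0^+$ and $D_1^-$ must be understood as the tangency of the relevant boundary \emph{pieces} at the stated point. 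So the real content is: the boundary circle $\partial D_0^+$ and the boundary circle $\partial D_1^-$ meet the shared line $\RR\cup\{\infty\}$, and the claim is that the point $\psi(Up_A) = e^{l}$ is a point of tangency of the two bisector projections there, i.e. that $l_{[p_U],[Up_A]}$ is a common tangent slice direction. I would prove this by the tangency criterion, Lemma \ref{lemme_critere_tangence}: we already know from the lemma just before the corollary that $l_{[p_U],[Up_A]}$ is tangent to $\mathfrak{S}(p_U,p_V) = \partial_\infty \mathcal{J}_0^+$ at $[Up_A]$; applying $U$ to the incidences of $p_A$ (Lemma \ref{lemme_incidences_pa}), $[Up_A] \in \mathcal{J}_1^- = \overline{\mathfrak{B}}(p_U, U p_W)$, and one checks $\langle p_U, Up_A\rangle = \langle Up_W, Up_A\rangle$ up to sign together with the non-degeneracy condition $\langle Up_W, p_U\rangle \ne \pm \langle p_U,p_U\rangle$, so Lemma \ref{lemme_critere_tangence} gives that $l_{[p_U],[Up_A]}$ is also tangent to $\mathfrak{S}(p_U, Up_W) = \partial_\infty\mathcal{J}_1^-$ at $[Up_A]$. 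Two disks in $\ccp1$ whose boundary circles are tangent to the same complex-line direction at a common point $\pi_{[p_U]}(l_{[p_U],[Up_A]})$ are tangent there; this gives the first assertion.

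For the second assertion, the cleanest route is the symmetry $[I]$ together with the $U$-action. We have $I \mathcal{J}_0^+ = \mathcal{J}_0^-$, hence $I \mathcal{J}_{-1}^+ = U^{-1} I \mathcal{J}_0^+\cdot(\ldots)$ — more carefully, $I U I = U^{-1}$ (from Section \ref{subsect_symetrie}) gives $I \mathcal{J}_k^+ = \mathcal{J}_{-k}^-$, so applying $I$ to ``$D_0^+$ tangent to $D_1^-$ at $\psi(Up_A)$'' yields ``$D_0^-$ tangent to $D_{-1}^+$ at $\psi(I(Up_A))$''; then applying $U$ (shift indices by $1$) gives ``$D_1^-$ tangent to $D_0^+$ at $\psi(U\cdot I(Up_A))$'' — so I instead directly repeat the Lemma \ref{lemme_critere_tangence} argument for $[U^{-1}p_B]$: by the pre-corollary lemma $l_{[p_U],[U^{-1}p_B]}$ is tangent to $\mathfrak{S}(p_U,p_V) = \partial_\infty\mathcal{J}_0^+$ at $[U^{-1}p_B]$, and by Lemma \ref{lemme_incidence_pb} applied with $U^{-1}$, $[U^{-1}p_B] \in \mathcal{J}_{-1}^- = \overline{\mathfrak{B}}(p_U, U^{-1}p_W)$ with $\langle p_U, U^{-1}p_B\rangle = \langle U^{-1}p_W, U^{-1}p_B\rangle$ up to sign and the non-degeneracy clause satisfied, so Lemma \ref{lemme_critere_tangence} gives tangency of $l_{[p_U],[U^{-1}p_B]}$ to $\partial_\infty\mathcal{J}_{-1}^-$ at $[U^{-1}p_B]$ as well; hence $D_0^+$ and $D_{-1}^-$ are tangent at $\pi_{[p_U]}(l_{[p_U],[U^{-1}p_B]})$.

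\textbf{Expected main obstacle.} The delicate point is bookkeeping of the \emph{signs} $\epsilon \in \{\pm 1\}$ and, above all, the non-degeneracy conditions $\langle q,p\rangle \ne \epsilon\langle p,p\rangle$ in Lemma \ref{lemme_critere_tangence}: these are exactly what fail at the excluded parameters $\alpha_2 = \pm\alpha_2^{\lim}, \pm\frac\pi2$, and one must verify that for all $\alpha_2$ in the loxodromic range $]\frac\pi6, \alpha_2^{\lim}[$ the relevant Hermitian products (which will be explicit trigonometric expressions in $\alpha_2$, obtained by applying powers of $U$ to the values $\langle p_U,p_U\rangle = 4\cos^2\alpha_2 - \frac32$, $\langle p_V, p_U\rangle = -\frac32$, $\langle p_U, Up_A\rangle = e^{-2i\alpha_2}$, $\langle p_U, U^{-1}p_B\rangle = 1$ from the lemma preceding the corollary) satisfy the strict inequality. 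A secondary subtlety, worth a sentence in the write-up, is the interpretation issue flagged above: because all $D_k^+$ coincide as half-planes (and likewise all $D_k^-$), ``tangent'' here is a statement about the boundary pieces / common slice directions, which is precisely what the Lemma \ref{lemme_critere_tangence} argument delivers and what is needed downstream for Proposition \ref{prop_inter_biss_lox}.
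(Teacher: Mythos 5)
Your proof has two genuine breakdowns. First, the claim that $\partial D_0^+$ passes through $\psi(p_A)$ and $\psi(p_B)$ is false. Corollary \ref{cor_incidences} only says $[p_A],[p_B]$ lie on the bisector $\mathcal{J}_0^+$, which puts their projections in the \emph{closed disk} $D_0^+$; a point of the spinal surface projects to the boundary circle only when the complex line joining it to $[p_U]$ is tangent to $\mathfrak{S}(p_U,p_V)$, which is exactly what the tangency criterion (Lemma \ref{lemme_critere_tangence}) gives for $[Up_A]$ and $[U^{-1}p_B]$ but fails for $[p_A]$ and $[p_B]$: one has $\langle p_U,p_A\rangle = e^{-2i\alpha_2}$ while $\langle p_V,p_A\rangle = -1$, so the two products differ by a non-real unit factor for the loxodromic parameters and $l_{[p_U],[p_A]}$ cuts through the open bisector, placing $\psi(p_A)$ in the \emph{interior} of $D_0^+$ (similarly for $p_B$). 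Consequently the deduction that $\partial D_0^+$ is the real axis, and the ``key structural point'' that all $D_k^+$ coincide as half-planes, are wrong; they contradict the next proposition of the paper (which confines $D_0^+$ to the annulus of radii $e^{-5l/2}$ and $e^{3l/2}$ — impossible for a half-plane) and Figure \ref{fig_proj_anneaux_lox}. Second, your fallback argument does not deliver the conclusion: verifying via Lemma \ref{lemme_critere_tangence} that $l_{[p_U],[Up_A]}$ is tangent to \emph{both} spinal surfaces $\partial_\infty\mathcal{J}_0^+$ and $\partial_\infty\mathcal{J}_1^-$ at $[Up_A]$ only shows that the single point $\psi(Up_A)$ lies on both boundary circles $\partial D_0^+$ and $\partial D_1^-$; it says nothing about the tangent directions of those circles inside $L_{[p_U]}$, so the two disks could still cross transversally at that common boundary point — which is precisely what the proposition must exclude. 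Note the paper's logic runs the other way: disk tangency is proved first, and only then combined with the tangency of the line to $\partial_\infty\mathcal{J}_0^+$ to conclude (in the corollary) that $\mathcal{J}_0^+\cap\mathcal{J}_1^-$ is a single point.

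The paper's actual proof is a short conformal-symmetry argument you could adopt: let $\theta$ be the angle between $\partial D_0^+$ and the real axis at $\psi(Up_A)=e^{l}$. The involution $I$ acts on the chart by $z\mapsto 1/z$, which is conformal, preserves $\RR\cup\{\infty\}$, sends $D_0^+$ to $D_0^-$ and $e^{l}$ to $\psi(p_A)=e^{-l}$; then $U$ acts by the real homothety $z\mapsto e^{2l}z$, sending $D_0^-$ to $D_1^-$ and $e^{-l}$ back to $e^{l}$. Hence $\partial D_1^-$ meets the real axis at $e^{l}$ with the same angle $\theta$ as $\partial D_0^+$ (equivalently, the composite $z\mapsto e^{2l}/z$ maps $D_0^+$ onto $D_1^-$ and fixes $e^{l}$, carrying the tangent line of $\partial D_0^+$ there to itself), so the two boundary circles are tangent at $\psi(Up_A)$; the case of $D_0^+$ and $D_{-1}^-$ at $\psi(U^{-1}p_B)=e^{-2l}$ is identical.
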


\begin{proof}
Recall that $D_0^+$ is a disk whose boundary contains $\psi(U^{-1}p_B)$ and $\psi(Up_A)$, and that $D_1^{-}$ is a disk whose boundary contains $\psi(U^2p_B)$ and $\psi(Up_A)$.
 Let $\theta$ be the angle at $\psi(Up_A)$ between $\partial D_0^+$ and the real axis. Since the map $z \mapsto \frac{1}{z}$ is conformal, and it exchanges $D_0^+$ and $D_0^-$, the angle between $\partial D_0^-$ and the real axis at $\psi(p_A) = \frac{1}{\psi(Up_A)}$ equals also $\theta$. Since $D_1^{-}$ is obtained from $D_0^-$ by a homothety of ratio $e^{2l}$, the angle between the real axis and $\partial D_1^{-}$ at $\psi(Up_A)$ equals $\theta$. Since $\partial D_1^{-}$ and $\partial D_0^{+}$ intersect the real axis $\psi(Up_A)$ with the same angle, they are tangent.
 
 The proof for the other tangency is analogous.
\end{proof}

\begin{cor}
 The bisectors $ \mathcal{J}_0^+$ and $ \mathcal{J}_1^-$ are tangent at $[Up_A]$.
 The bisectors  $ \mathcal{J}_0^+$ and $ \mathcal{J}_{-1}^-$ are tangent at $[U^{-1}p_B]$.
\end{cor}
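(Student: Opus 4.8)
The plan is to upgrade the tangency of the visual-sphere projections, already established in Proposition~\ref{prop_tangence_disques_lox}, to tangency of the bisectors themselves. First I would record two preliminary facts. Since $U$ fixes $p_U$, we have $\mathcal{J}_1^- = U\,\con{\mathfrak{B}}(p_U,p_W) = \con{\mathfrak{B}}(p_U,Up_W)$, so both $\mathcal{J}_0^+$ and $\mathcal{J}_1^-$ are bisectors of the form $\con{\mathfrak{B}}(p_U,\cdot)$; by Proposition~\ref{prop_proj_biss_sphere_visuelle} their projections $D_0^+$ and $D_1^-$ on $L_{[p_U]}$ are closed disks, and $\pi_{[p_U]}$ maps each spinal surface onto the disk and the bisector onto its interior. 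Moreover, by Corollary~\ref{cor_incidences}, $[Up_A]$ lies on both $\mathcal{J}_0^+$ and $\mathcal{J}_1^-$.

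The key geometric input is that $l_{[p_U],[Up_A]}$ is a common tangent complex line to both spinal surfaces at $[Up_A]$. Tangency to $\partial_\infty\mathcal{J}_0^+ = \mathfrak{S}(p_U,p_V)$ is exactly the preceding lemma. Tangency to $\partial_\infty\mathcal{J}_1^- = \mathfrak{S}(p_U,Up_W)$ follows either by transporting by $U$ the tangency of $l_{[p_U],[p_A]}$ to $\mathfrak{S}(p_U,p_W)$ at $[p_A]$, or directly from the tangency criterion of Lemma~\ref{lemme_critere_tangence} applied to the pair $(p_U,Up_W)$: using $\langle Up_W,Up_A\rangle = \langle p_W,p_A\rangle$ and $\langle p_U,Up_A\rangle = \langle p_U,p_A\rangle$, the computations of Lemmas~\ref{lemme_incidences_pa} and~\ref{lemme_incidence_pb} give $\langle p_U,Up_A\rangle = \langle Up_W,Up_A\rangle$ (so $\epsilon = 1$) while $\langle Up_W,p_U\rangle \neq \langle p_U,p_U\rangle$ for the parameters $\alpha_2 \in\ ]\tfrac{\pi}{6},\alpha_2^{\lim}[$; this is a routine Hermitian-product check.

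Granting this, near $[Up_A]$ the projection $\pi_{[p_U]}$ restricted to each of $\partial_\infty\mathcal{J}_0^+$ and $\partial_\infty\mathcal{J}_1^-$ folds along the tangent complex line, sending $[Up_A]$ to the common boundary point $\pi_{[p_U]}(l_{[p_U],[Up_A]})$ of $\partial D_0^+$ and $\partial D_1^-$; the fibres of $\pi_{[p_U]}$ are arcs of complex lines through $[p_U]$, hence common to both surfaces. Since $\partial D_0^+$ and $\partial D_1^-$ are tangent at that boundary point by Proposition~\ref{prop_tangence_disques_lox}, the tangent $3$-planes of $\mathcal{J}_0^+$ and $\mathcal{J}_1^-$ at $[Up_A]$ coincide, which is the asserted tangency. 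The tangency of $\mathcal{J}_0^+$ and $\mathcal{J}_{-1}^-$ at $[U^{-1}p_B]$ is proved in exactly the same way, using the other half of Proposition~\ref{prop_tangence_disques_lox} together with the corresponding incidences of Corollary~\ref{cor_incidences} and the preceding lemma. The one delicate point is making the fold reduction precise — that a common tangent complex line through $[p_U]$, together with the tangency of the projected boundary circles, forces equality of the tangent $3$-planes upstairs; this can be handled by a local computation in the chart $\psi_{p'_U,p''_U}$, or by checking directly that the order of contact of $\partial_\infty\mathcal{J}_0^+$ and $\partial_\infty\mathcal{J}_1^-$ at $[Up_A]$ is at least two.
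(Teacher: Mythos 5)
Your preliminary reductions are fine (and the check that $l_{[p_U],[Up_A]}$ is also tangent to $\mathfrak{S}(p_U,Up_W)$ does go through via Lemma~\ref{lemme_critere_tangence}), but the proof as written has a genuine gap, and it sits exactly at the step you flag as delicate: the ``fold reduction'' asserting that tangency of the projected circles $\partial D_0^+$ and $\partial D_1^-$ forces the tangent $3$-planes of the two extors to coincide at $[Up_A]$ is never proved, only announced as something a local computation could handle. That step is the entire content of passing from the visual sphere back to $\con{\h2c}$, so it cannot be left as a remark. Moreover, even if you completed it, first-order contact of the two hypersurfaces is not the statement this corollary is used for: what is needed downstream (condition (LC) and the ``furthermore'' in Proposition~\ref{prop_inter_biss_lox}) is the set-theoretic fact $\mathcal{J}_0^+\cap\mathcal{J}_1^-=\{[Up_A]\}$, and two hypersurfaces can have equal tangent planes at a common point while still meeting along a curve through it, so coincidence of tangent planes alone does not yield the conclusion.

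The intended argument is shorter and assembles the ingredients you already list in a different way. Since $D_0^+$ and $D_1^-$ are tangent (Proposition~\ref{prop_tangence_disques_lox}), their unique common point is $\pi_{[p_U]}(l_{[p_U],[Up_A]})$, so every point of $\mathcal{J}_0^+\cap\mathcal{J}_1^-$ lies on the single fibre $l_{[p_U],[Up_A]}$. Now use only the tangency of this line to $\mathfrak{S}(p_U,p_V)$ (the lemma preceding the corollary): by the proof of Lemma~\ref{lemme_critere_tangence}, tangency means the line meets $\mathfrak{S}(p_U,p_V)$ in the single point $[Up_A]$, and since the line projects to a boundary point of $D_0^+$ it misses the open bisector $\mathfrak{B}(p_U,p_V)$ by Proposition~\ref{prop_proj_biss_sphere_visuelle}. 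Hence $\mathcal{J}_0^+\cap\mathcal{J}_1^-$ is reduced to $\{[Up_A]\}$, which is the tangency claimed; the case of $\mathcal{J}_0^+\cap\mathcal{J}_{-1}^-$ at $[U^{-1}p_B]$ is identical. In particular the tangency of the line to the second spinal surface $\mathfrak{S}(p_U,Up_W)$, on which you spend effort, is not needed at all.
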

\begin{proof}
 By the proposition above, we know that the intersection of $ \mathcal{J}_0^+$ and $ \mathcal{J}_1^-$ is contained in the line $l_{[p_U],[Up_A]}$. By Lemma \ref{lemme_tangences_faces}, this line is tangent to $\partial_\infty \mathcal{J}_0^+$ at $[Up_A]$: hence the intersection contains exactly one point.
\end{proof}

 The following proposition is a key result in order to determine the relative position of the disks $D_k^{\pm}$.
We are going to show that each disk is contained in an annulus centred at $0$  with explicit radii; we will see later that the annuli are obtained by homotheties with center $0$. One of the annuli that we consider is pictured in Figure \ref{fig_proj_anneaux_lox}. The effective bounds that we give in the following proposition will be enough to prove the global combinatorics of the intersections of the disks $D_k^{\pm}$ and the bisectors $\mathcal{J}_k^{\pm}$.
 
\begin{prop}
 The disk $D_0^{+}$ is contained in the annulus of center $0$ and radii $e^{\frac{-5}{2}l}$ and $e^{\frac{3}{2}l}$. 
\end{prop}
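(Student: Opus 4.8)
The plan is to compute explicitly, in the chart $\psi_{p'_U,p''_U}$, the image of the spinal surface $\mathfrak{S}(p_U,p_V)$ under $\pi_{[p_U]}$, and then to bound the modulus of every point on this image. Concretely, a point of $\mathfrak{S}(p_U,p_V)$ can be written (as in the proof of Lemma~\ref{lemme_echine_orthogonale} and Proposition~\ref{prop_proj_biss_sphere_visuelle}) as $[z]$ with $|\langle z,p_U\rangle|=|\langle z,p_V\rangle|$ and $\langle z,z\rangle=0$; I would parametrise the corresponding complex line $l_{[p_U],[z]}$ by its coordinate $\psi(z)=\frac{\langle p'_U,z\rangle}{\langle p''_U,z\rangle}$. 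The boundary $\partial D_0^+$ is then the curve traced by $\psi(z)$ as $[z]$ runs over the spinal sphere; since $D_0^+$ is a topological disk, bounding $|\psi|$ on the boundary curve bounds it on the whole disk by the maximum/minimum modulus behaviour (more elementarily: $|\psi|$ restricted to $D_0^+$ attains its extrema on $\partial D_0^+$ because $D_0^+$ is the closure of an open disk in $\ccp1$ and $z\mapsto |\psi(z)|$ has no interior critical points giving new extrema — or one simply checks directly that $\partial D_0^+$ separates the annulus from its complement).

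The computation itself proceeds as follows. I would first normalise lifts of $p_U, p_V, p'_U, p''_U$ using the explicit coordinates already recorded in Section~\ref{subsect_notat_points} and in the loxodromic remarks, together with the Hermitian products $\langle p_U,p_U\rangle=\langle p_V,p_V\rangle=\cosh(l)-1$, $\langle p_U,p_V\rangle=-\tfrac32$, $\langle p'_U,p'_U\rangle=\langle p''_U,p''_U\rangle=0$, $\langle p'_U,p''_U\rangle=-16\sinh^2(l)$, and $\langle p_U,p'_U\rangle=\langle p_U,p''_U\rangle=0$. Writing a general null vector $z$ in the basis $(p_U,p'_U,p''_U)$ as $z=a\,p_U+b\,p'_U+c\,p''_U$, the condition $\langle z,z\rangle=0$ and the bisector condition $|\langle z,p_U\rangle|=|\langle z,p_V\rangle|$ both become explicit relations among $a,b,c$; the coordinate $\psi(z)$ is then essentially $b/c$ up to the fixed nonzero scalar $\langle p'_U,p''_U\rangle$. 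Eliminating $a$ and using the two constraints, I expect $|b/c|$ to be governed by a one–parameter trigonometric expression whose extrema over the parameter are $e^{-5l/2}$ and $e^{3l/2}$; the endpoints should correspond geometrically to the tangency points $\psi(Up_A)=e^{l}$ and $\psi(U^{-1}p_B)=e^{-2l}$ on $\partial D_0^+$ (note $e^{-2l}$ lies between $e^{-5l/2}$ and $e^{3l/2}$, and $e^{l}$ as well, consistent with the claim), plus the two extreme points of the real spine of $\mathfrak{B}(p_U,p_V)$, which are where the modulus is maximised and minimised. A sanity check: applying $I$ (acting by $z\mapsto 1/z$) sends $D_0^+$ to $D_0^-$, so the annulus for $D_0^-$ has radii $e^{-3l/2}$ and $e^{5l/2}$; and $U$ acts by $z\mapsto e^{2l}z$, so the annuli for all $D_k^\pm$ are obtained by the homotheties of ratio $e^{2kl}$ — this is exactly the picture of Figure~\ref{fig_proj_anneaux_lox} and is what makes the disjointness argument for Proposition~\ref{prop_inter_biss_lox} work ($e^{-5l/2+2kl}>e^{3l/2}$ as soon as $k\ge 2$).

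The main obstacle will be carrying out the elimination cleanly: the bisector equation $|\langle z,p_U\rangle|^2=|\langle z,p_V\rangle|^2$ mixes $a,b,c$ and their conjugates through the cross terms $\langle p_U,p'_U\rangle$, $\langle p_V,p'_U\rangle$ etc., and $\langle p_V,p'_U\rangle$, $\langle p_V,p''_U\rangle$ are nonzero and must be computed from the explicit coordinates (they are not diagonal like the $p_U$ products). So the real work is: (i) compute $\langle p_V,p'_U\rangle$ and $\langle p_V,p''_U\rangle$ explicitly in terms of $l$ (equivalently $\alpha_2$); (ii) substitute into the two constraints; (iii) reduce to a single real variable (an angle $\phi$ parametrising the circle of solutions after the null and bisector conditions are imposed, exactly as in the proof of Proposition~\ref{prop_diam_ang_reel_bisector}); (iv) maximise and minimise the resulting expression for $|\psi|^2$ over $\phi$, checking the critical points give precisely $e^{\pm}$ of the stated exponents. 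I would organise this so that the extremal values are read off from $\tanh$-type identities already used in Corollary~\ref{cor_diam_ang_reel_bisector}, since the angular-diameter computation there is morally the same extremisation in disguise; indeed the bound $\theta_{\max}$ on the real visual diameter seen from $[p_U]$, combined with the fact that the real spine endpoints have $\psi$-coordinates $e^{l}$ and $e^{-2l}$ (the images of $Up_A$ and $U^{-1}p_B$), should pin down the annulus radii without a brute-force trig maximisation. If that shortcut works the proof is short; if not, the trigonometric extremisation in step (iv) is routine but tedious and is where I would expect to spend the effort.
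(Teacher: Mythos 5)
Your raw materials are the same as the paper's (work in the basis $(p_U,p'_U,p''_U)$, use the listed Hermitian products, note that $\psi(z)$ is the ratio of the $p''_U$- and $p'_U$-coefficients, reduce to a trigonometric estimate), but the plan stops exactly where the actual work begins, and the two devices you propose for finishing it do not function. First, the shortcut through Proposition \ref{prop_diam_ang_reel_bisector} and Corollary \ref{cor_diam_ang_reel_bisector} is unavailable on the loxodromic side: those statements require $[p],[q]\in\h2c$ (lifts of negative norm), whereas here $\langle p_U,p_U\rangle=\cosh(l)-1>0$, so $[p_U]$ lies outside $\con{\h2c}$; moreover the quantity they control is the angle between \emph{real} geodesics issued from $[p_U]$, which is not the same as $\log|\psi|$ in the complex chart $\psi_{p'_U,p''_U}$, so even a formal analogue would have to be proved from scratch. (This is why the paper only invokes the angular-diameter bound on the elliptic side, where $[p_U]\in\h2c$.) Second, your expectation that the extremisation gives \emph{exactly} $e^{3l/2}$ and $e^{-5l/2}$ is unfounded: these radii are not the extremal moduli of $D_0^+$ but convenient non-sharp bounds — the paper's argument ends with strict slack, via convexity of $\cosh$ — so the check ``the critical points give precisely the stated exponents'' would fail, and you would be thrown back onto the extremisation you have not carried out.

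There is also a misstatement at the start: as $[z]$ runs over $\mathfrak{S}(p_U,p_V)$, $\psi(z)$ does not trace $\partial D_0^+$ but fills the whole closed disk $D_0^+$ (Proposition \ref{prop_proj_biss_sphere_visuelle}); this actually helps you, since bounding $|\psi|$ over the projection of the spinal surface already bounds it over the disk and no maximum-modulus argument is needed, but note that after imposing the null and bisector conditions \emph{two} real parameters remain, not one, so your step (iv) is a genuine two-variable constrained optimisation. The paper sidesteps it by reversing the logic: it fixes the modulus at $e^{3l/2}$ (and, by a symmetry of the resulting inequality, at $e^{-5l/2}$), writes a putative point of the bisector as $p''_U+e^{3l/2}e^{i\theta}p'_U+\mu p_U$, uses the norm condition and the bisector equation together with the triangle inequality to eliminate $\mu$, bounds the remaining expression of the form $a\cos\theta+b\sin\theta$ by $\sqrt{a^2+b^2}$, and reaches a contradiction with the convexity of $\cosh$; since the two circles are avoided and $\partial D_0^+$ contains $\psi(Up_A)=e^{l}$, which lies inside the annulus, connectedness of the disk concludes. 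To turn your proposal into a proof you must either reproduce that elimination-and-contradiction argument or honestly perform the two-parameter extremisation and verify that the extremal radii lie strictly between $e^{-5l/2}$ and $e^{3l/2}$; as written, the key inequality is missing.
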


\begin{proof}
 We are going to show that the circles of radii $e^{\frac{-5}{2}l}$ and $e^{\frac{3}{2}l}$ do not intersect the disk $D_0^+$.
  This comes to show that, if $\theta \in \mathbb{R}$, then any point with image $e^{i\theta} e^{\frac{3l}{2}}$ or $e^{i\theta} e^{-\frac{5l}{2}}$ by $\psi_{p'_U , p''_U}$ belongs to the bisector $\mathfrak{B}(p_U,p_V)$.
  
  We begin by the first case. The second is analogous; we will make a remark during the computation in order to check it. We want to show that no point of the form $q_{\theta,\mu} = p''_U+ e^{\frac{3l}{2}} e^{i\theta} p'_U + \mu p_U $, where $\theta \in \mathbb{R}$ and $\mu \in \mathbb{C}$, belongs to the bisector $\mathfrak{B}(p_U,p_V)$. We make a proof by contradiction, and suppose that there exists a point of the form $q_{\theta,\mu}$ in $\mathfrak{B}(p_U,p_V)$.
  
 We have two conditions for $q_{\theta,\mu}$ to belong to $\mathfrak{B}(p_U,p_V)$:
 \begin{enumerate}
  \item $\langle q_{\theta,\mu} , q_{\theta ,\mu} \rangle \leq 0$
  \item $|\langle q_{\theta,\mu} , p_U \rangle| = |\langle q_{\theta,\mu} , p_V \rangle|$
 \end{enumerate}
   
   They can be written as:
   
   \begin{enumerate}
    \item $2 \Re( e^{\frac{3l}{2}}e^{i\theta} \langle p''_U , p'_U \rangle) + |\mu|^2\langle p_U , p_U \rangle \leq 0$
    \item $|\mu||\langle p_U , p_U \rangle| = |\langle p''_U , p_V \rangle + e^{\frac{3l}{2}}e^{i\theta} \langle p'_U , p_V \rangle + \overline{\mu} \langle p_U , p_V \rangle|$
   \end{enumerate}
   
   By the computations above, and setting $h_1 = \langle p'_U , p_V \rangle $ and $h_2 = \langle p''_U , p_V \rangle $ to simplify the notation, the conditions become:
   
 \begin{enumerate}
  \item $|\mu|^2 \leq 2e^{\frac{3l}{2}}\frac{16\sinh^2(l)}{\cosh(l) - 1} \cos(\theta) = 32e^{\frac{3l}{2}}(\cosh(l)+1)\cos(\theta)$
  \item $(\cosh(l)-1)|\mu| = |h_2 + e^{\frac{3l}{2}}e^{i\theta}h_1 -\frac{3}{2} \overline{\mu}|$
 \end{enumerate}    

  We make some computations in order to have a more explicit second condition.  
It implies, by triangular inequality, that:
\[(\cosh(l) - 1)|\mu| \geq   |h_2 + e^{\frac{3l}{2}}e^{i\theta}h_1| - \frac{3}{2} |\mu|\]

We deduce that:

\[(\cosh(l )+ \frac{1}{2})|\mu| \geq  |h_2 + e^{\frac{3l}{2}}e^{i\theta}h_1|\]

By condition 1, we deduce that:

\[|h_2 + e^{\frac{3l}{2}}e^{i\theta}h_1|^2 \leq 32e^{\frac{3l}{2}}(\cosh(l )+ \frac{1}{2})^2 (\cosh(l)+1) \cos(\theta)\]

This equation can be re-written in the following way:
\begin{equation}\label{inegalite_cote_lox}
|h_2 + e^{\frac{3l}{2}}e^{i\theta}h_1|^2 \leq 16e^{\frac{3l}{2}}(2\cosh(l )+ 1)^2\cosh^2(\frac{l}{2}) \cos(\theta).
\end{equation}

The left side of the inequality can be written in the form:

\begin{equation}
 |h_2|^2 + 2e^{\frac{3l}{2}}\Re(e^{i\theta}h_1 \con{h_2} ) + e^{3l}|h_1|^2
\end{equation}

 We compute those terms.
We have:
\begin{eqnarray*}
 h_1 = \langle p'_U,p_V \rangle &=& (2 \sinh(l)+3 + 2i\sin(2\alpha_2))(e^{2i\alpha_2}+1) \\
  &=& 2\cos(\alpha_2)e^{i\alpha_2}(2 \sinh(l)+3 + 2i\sin(2\alpha_2)) \\
  h_2 = \langle p'_U,p_V \rangle &=& (-2 \sinh(l)+3 + 2i\sin(2\alpha_2))(e^{2i\alpha_2}+1) \\
  &=& 2\cos(\alpha_2)e^{i\alpha_2}(-2 \sinh(l)+3 + 2i\sin(2\alpha_2))
\end{eqnarray*}

We easily deduce, replacing $\sin(2\alpha_2)^2$ by $4\cos^2(\alpha_2)(1-\cos^2(\alpha_2))$ and $\cos^2(\alpha_2)$ by $\frac{1}{8}(2\cosh(l)+1)$, that:

\begin{eqnarray*}
 |h_1|^2 &=& 12e^{\frac{l}{2}}(2\cosh(l) + 1) \cosh(\frac{l}{2}) \\
 |h_2|^2 &=& 12e^{-\frac{l}{2}}(2\cosh(l) + 1) \cosh(\frac{l}{2}) \\
 |h_1||h_2| &=& 12(2\cosh(l) + 1)\cosh(\frac{l}{2})
\end{eqnarray*}

Furthermore,
\begin{equation}
 h_1 \con{h_2} = 2(2\cosh(l)+1)((5-2\cosh(l))(\cosh(l)+1) - 4i\sin(2\alpha_2)\sinh(l))
\end{equation}

We deduce then: 

\begin{eqnarray*}
2\Re(e^{i\theta}h_1\con{h_2}) &=& 4(2\cosh(l)+1) \\ & & \times((5-2\cosh(l))(\cosh(l)+1)\cos(\theta) + 4\sin(2\alpha_2)\sinh(l)\sin(\theta)) \\
&=& 8(2\cosh(l)+1)\cosh(\frac{l}{2}) \\ & & \times ((5-2\cosh(l))\cosh(\frac{l}{2})\cos(\theta) + 4\sin(2\alpha_2)\sinh(\frac{l}{2})\sin(\theta))
\end{eqnarray*}

The inequality (\ref{inegalite_cote_lox}) becomes, after simplifying by $4(2\cosh(l) + 1) \cosh(\frac{l}{2})e^{\frac{3l}{2}}$:
\begin{eqnarray*}
3e^{-2l} + 3e^{2l} + 2((5-2\cosh(l))\cosh(\frac{l}{2})\cos(\theta) + 4\sin(2\alpha_2)\sinh(\frac{l}{2})\sin(\theta)) \\
 \leq 4(2\cosh(l )+ 1)\cosh(\frac{l}{2}) \cos(\theta)
\end{eqnarray*}

Notice that by replacing $\frac{3l}{2}$ by $-\frac{5l}{2}$, we obtain the same inequality.
Passing the terms in $\theta$ to the right side, we have:
\begin{equation}
3e^{-2l} + 3e^{2l} \leq  6(2\cosh(l)-1)\cosh(\frac{l}{2})\cos(\theta) - 8\sin(2\alpha_2)\sinh(\frac{l}{2})\sin(\theta)
\end{equation}

The right side is of the form $a\cos(\theta) +b\sin(\theta)$. Its maximum value is $\sqrt{a^2 + b^2}$. In particular, we have:
\begin{equation}
(3e^{-2l} + 3e^{2l})^2 \leq 36(2\cosh(l)-1)^2\cosh^2(\frac{l}{2}) + 64 \sin^2(2\alpha_2)\sinh^2(\frac{l}{2})
\end{equation}

By replacing $\sin(2\alpha_2)^2$ by its expression in terms of $\cosh(l)$ we see that the right side, after linearisation, equals $16\cosh(3l)+16\cosh(2l)-16\cosh(l)+20$. We deduce that

\begin{equation}
36\cosh(2l)^2 \leq 16\cosh(3l)+16\cosh(2l)-16\cosh(l)+20
\end{equation}

By linearising the left side and dividing by 2, we obtain:
\begin{equation}
9\cosh(4l) +8\cosh(l) \leq 8\cosh(3l)+8\cosh(2l)+1 
\end{equation}

\begin{equation}
 8\cosh(4l)+8\cosh(l) +\cosh(4l)  \leq  8\cosh(3l) + 8\cosh(2l) +1
\end{equation}

Since the function $\cosh$ is convex, we have:

\begin{equation}
8\cosh(4l)+8\cosh(l) \geq 8\cosh(3l) + 8\cosh(2l)
\end{equation}
and, on the other hand, $\cosh(4l) > 1$, which gives a contradiction.

\end{proof}

Since $D_1^-$ is the image of $D_0^+$ by $z \mapsto \frac{1}{z}$, we have the following corollary:
\begin{cor}
 The disk $D_0^-$ is contained in the open annulus of center $0$ and radii $e^{\frac{-3l}{2}}$ and $e^{\frac{5l}{2}}$
\end{cor}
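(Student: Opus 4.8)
The plan is to deduce this immediately from the preceding proposition together with the symmetry $I$ already studied. Recall that $I \in \mathrm{U}(2,1)$ fixes $p_U$ and exchanges $p_V$ and $p_W$, so that $I\mathcal{J}_0^+ = \mathcal{J}_0^-$; since $[I]$ fixes $[p_U]$, it acts on the visual sphere $L_{[p_U]}$ and this action carries $D_0^+ = \pi_{[p_U]}(\mathcal{J}_0^+)$ onto $D_0^- = \pi_{[p_U]}(\mathcal{J}_0^-)$. Moreover we have checked that in the chart $\psi_{p'_U,p''_U}$ this induced action is the inversion $z \mapsto \frac{1}{z}$.

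First I would record that, by the previous proposition and identifying $L_{[p_U]} \simeq \mathbb{CP}^1$ via $\psi_{p'_U,p''_U}$, the disk $D_0^+$ is contained in the open annulus $\{\, z \in \mathbb{C} \mid e^{-\frac{5l}{2}} < |z| < e^{\frac{3l}{2}} \,\}$. Then I would apply the conformal involution $z \mapsto \frac{1}{z}$: it sends the circle of radius $e^{-\frac{5l}{2}}$ to the circle of radius $e^{\frac{5l}{2}}$ and the circle of radius $e^{\frac{3l}{2}}$ to the circle of radius $e^{-\frac{3l}{2}}$, hence the above open annulus onto the open annulus $\{\, e^{-\frac{3l}{2}} < |z| < e^{\frac{5l}{2}} \,\}$. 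Since this involution maps $D_0^+$ onto $D_0^-$, the inclusion $D_0^- \subset \{\, e^{-\frac{3l}{2}} < |z| < e^{\frac{5l}{2}} \,\}$ follows at once.

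There is essentially no obstacle: the only points requiring care are that $z \mapsto \frac{1}{z}$ sends $D_0^+$ \emph{exactly} onto $D_0^-$ (a consequence of $I\mathcal{J}_0^+ = \mathcal{J}_0^-$ and of the equivariance of $\pi_{[p_U]}$ under the subgroup of $\mathrm{U}(2,1)$ fixing $p_U$), and that the two bounding radii correspond under inversion, which is immediate. Thus the corollary is a one-line consequence once the proposition and the description of the $I$-action on $L_{[p_U]}$ are available.
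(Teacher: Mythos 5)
Your proof is correct and follows the same route as the paper: the paper deduces the corollary in one line from the fact that $D_0^-$ is the image of $D_0^+$ under the action of the involution $I$ on $L_{[p_U]}$, which in the chart $\psi_{p'_U,p''_U}$ is $z \mapsto \frac{1}{z}$ and therefore sends the annulus of radii $e^{-\frac{5l}{2}}, e^{\frac{3l}{2}}$ to the annulus of radii $e^{-\frac{3l}{2}}, e^{\frac{5l}{2}}$. Your additional care about the equivariance of $\pi_{[p_U]}$ is exactly the content already established in the symmetry subsection, so nothing is missing.
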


\begin{prop} The disks $D_k^{\pm}$ intersect in the following way:
 \begin{enumerate}
 \item $D_0^+$ intersects $D_k^+$ if and only if $k \in \{-1,0,1\}$.
 \item $D_0^-$ intersects $D_k^-$ if and only if $k \in \{-1,0,1\}$.
 \item $D_0^+$ intersects $D_k^-$ if and only if $k \in \{-1,0,1\}$. Furthermore, $D_0^+$ is tangent to $D_1^-$ and $D_0^+$.
\end{enumerate} 
\end{prop}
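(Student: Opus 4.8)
The strategy is to leverage the three facts already established: (i) $D_0^+$ is contained in the closed annulus $A^+$ of center $0$ and radii $e^{-5l/2}$ and $e^{3l/2}$; (ii) $D_0^-$ is contained in the closed annulus $A^-$ of radii $e^{-3l/2}$ and $e^{5l/2}$; and (iii) the action of $U$ on $L_{[p_U]}$ in the chart $\psi_{p'_U,p''_U}$ is the homothety $z \mapsto e^{2l}z$, so that $D_k^{\pm} = e^{2kl} D_0^{\pm}$. Since a homothety of ratio $e^{2kl}$ sends $A^+$ to the annulus of radii $e^{(2k-5/2)l}$ and $e^{(2k+3/2)l}$, and similarly for $A^-$, the plan is to read off from these radii exactly when two of the transported annuli can meet, then upgrade "the annuli are disjoint'' to "the disks are disjoint'' and separately treat the borderline cases $k=\pm1$ using the tangencies already proved.

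\textbf{Key steps.} First I would record the radial extents: $D_k^+ \subset [e^{(2k-5/2)l}, e^{(2k+3/2)l}]$ and $D_k^- \subset [e^{(2k-3/2)l}, e^{(2k+5/2)l}]$ (radii only). For the $++$ case: $D_0^+$ lives in radii $[e^{-5l/2}, e^{3l/2}]$ and $D_k^+$ in $[e^{(2k-5/2)l}, e^{(2k+3/2)l}]$; these two radial intervals are disjoint as soon as $2k - 5/2 > 3/2$, i.e. $k \geq 2$, and symmetrically for $k \leq -2$, using $l > 0$. Hence $D_0^+ \cap D_k^+ = \emptyset$ for $|k| \geq 2$; for $k \in \{-1,0,1\}$ the intersection is nonempty because these faces share the vertices $[p_A], [p_B], [Up_A]$ (their images under $\psi$ are $e^{-l}, 1, e^{l}$, which by Corollary to Lemma~\ref{cor_incidences} lie on the relevant spinal surfaces, hence on the corresponding disks). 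The $--$ case is identical, or follows by applying the symmetry $z \mapsto 1/z$ coming from $I$, which swaps $D_k^+$ with $D_{-k}^-$. For the $+-$ case: $D_0^+ \subset [e^{-5l/2}, e^{3l/2}]$ and $D_k^- \subset [e^{(2k-3/2)l}, e^{(2k+5/2)l}]$; disjointness of radial intervals holds when $2k - 3/2 > 3/2$, i.e. $k \geq 2$, and when $2k + 5/2 < -5/2$, i.e. $k \leq -3$. The potentially problematic values are thus $k \in \{-2,-1,0,1\}$. The cases $k \in \{-1,0,1\}$ are handled by Proposition~\ref{prop_tangence_disques_lox} and its corollary: $D_0^+$ is tangent to $D_1^-$ at $\psi(Up_A)$ and to $D_{-1}^-$ at $\psi(U^{-1}p_B)$, and meets $D_0^-$ along a Giraud disk by Lemma~\ref{lemme_inter_jk+_jk-}. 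The one remaining case, $k = -2$, needs a slightly sharper estimate than the crude radial bound: here I would note that $D_{-2}^-$ lies in radii $[e^{-11l/2}, e^{-3l/2}]$, whose outer radius $e^{-3l/2}$ only barely exceeds the inner radius $e^{-5l/2}$ of $A^+$, so the overlap region is the thin annulus $[e^{-5l/2}, e^{-3l/2}]$, and I would check that $D_0^+$ and $D_{-2}^-$ actually occupy disjoint angular sectors there — most cleanly by observing that by the $I$-symmetry this is equivalent to $D_0^- \cap D_2^+ = \emptyset$, which is the $k=2$ subcase of the $+-$ statement already cleared, so in fact the $k=-2$ case reduces to a case with a genuine radial gap.

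\textbf{Conclusion from disks to bisectors.} Once the intersection pattern of the disks $D_k^{\pm}$ is known, Propositions~\ref{prop_inter_biss_lox} follows: if $\pi_{[p_U]}(\mathcal{J}_i) \cap \pi_{[p_U]}(\mathcal{J}_j) = \emptyset$ then $\mathcal{J}_i \cap \mathcal{J}_j = \emptyset$ since $\mathcal{J}_i, \mathcal{J}_j$ project into those disks; and the precise identifications $\mathcal{J}_0^+ \cap \mathcal{J}_1^- = \{[Up_A]\}$, $\mathcal{J}_0^+ \cap \mathcal{J}_{-1}^- = \{[U^{-1}p_B]\}$ come from the tangency of the projected disks combined with the tangency of the complex line $l_{[p_U],[Up_A]}$ (resp. $l_{[p_U],[U^{-1}p_B]}$) to $\partial_\infty\mathcal{J}_0^+$ at that point, exactly as in the proof of the corollary to Proposition~\ref{prop_tangence_disques_lox}.

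\textbf{Main obstacle.} The delicate point is the case $k = -2$ of the $+-$ intersection (equivalently $k=2$), where the crude radial bounds $[e^{-5l/2}, e^{3l/2}]$ and $[e^{-3l/2}, e^{5l/2}]$ overlap in a genuine annulus rather than being disjoint; one must either invoke the $I$-symmetry to turn it into a case with a real radial gap, or carry out a small additional angular estimate showing the two disks sit in different sectors of that overlap annulus. Everything else is a bookkeeping exercise in comparing exponentials $e^{2kl}$ with the two fixed radii $e^{\pm 3l/2}$, $e^{\pm 5l/2}$, using only $l > 0$.
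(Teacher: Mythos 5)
Your plan is the same as the paper's: transport the two annulus bounds by the homothety $z\mapsto e^{2l}z$, conclude disjointness whenever the radial intervals are disjoint, and get the required intersections and tangencies from the shared vertices (Corollary \ref{cor_incidences}) and from Proposition \ref{prop_tangence_disques_lox}. Your bookkeeping for the $++$ and $--$ cases and for the $+-$ case with $k\geq 2$ or $k\leq -3$ is correct (for the borderline values $k=\pm 2$ of the like-sign cases one should use that the containments are in \emph{open} annuli, so that equality of the radii $e^{3l/2}$ still gives disjointness). You were also right, and more careful than the paper's own write-up, to flag that the radial bounds alone do not settle the pair $(D_0^+,D_{-2}^-)$: the paper's proof assigns to $D_k^-$ the same annulus as to $D_k^+$, which contradicts its own corollary ($D_0^-$ lies in the annulus of radii $e^{-3l/2}$ and $e^{5l/2}$, and its boundary really does reach radius $e^{2l}=\psi(Up_B)$), so the overlap you identified is genuine.

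However, your resolution of that case is circular, and this is the gap. The inversion $z\mapsto 1/z$ sends the pair $(D_0^+,D_{-2}^-)$ to $(D_0^-,D_2^+)$; this is \emph{not} the already-cleared pair $(D_0^+,D_2^-)$ of your $+-$ list, and applying the homothety $z\mapsto e^{-4l}z$ carries $(D_0^-,D_2^+)$ back to $(D_{-2}^-,D_0^+)$, the configuration you started from. Concretely, the radial intervals of $D_0^-$ and $D_2^+$ overlap in $(e^{3l/2},e^{5l/2})$, so no radial gap appears after your reduction. Worse, the disjointness you are aiming for in this case is simply not true for the closed disks: by Lemma \ref{lemme_incidence_pb}, $[p_B]$ lies on $\mathcal{J}_{1}^{+}$ and on $\mathcal{J}_{-1}^{-}$, hence $[U^{-1}p_B]\in\mathcal{J}_0^{+}\cap\mathcal{J}_{-2}^{-}$, so the point $\psi(U^{-1}p_B)=e^{-2l}$ belongs to both $D_0^+$ and $D_{-2}^-$ (it lies on both boundary circles). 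Consequently no amount of radial or symmetry bookkeeping can prove the empty intersection; what one can and should prove is that $D_0^+\cap D_{-2}^-$ reduces to this single tangency point (equivalently, restate item 3 for the interiors of the disks), by an angular argument at $e^{-2l}$ in the spirit of the proof of Proposition \ref{prop_tangence_disques_lox}. This step is missing from your proposal, and it is also the point that the paper's own proof glosses over.
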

\begin{proof}
 By an immediate induction, we know that the disk $D_k^+$ is contained in the open annulus of radii $e^{2kl+\frac{3l}{2}}$ and $e^{2kl - \frac{5l}{2}}$ and that the disk $D_k^-$ is contained in the open annulus of radii  $e^{2kl+\frac{3l}{2}}$ and $e^{2kl - \frac{5l}{2}}$.
 If $|k|>0$, the disks $D_k^+$ and $D_0^+$ are then in two disjoint annuli. The same happens for the disks $D_k^-$ and $D_0^-$, as well as for $D_k^-$ and $D_0^+$.
 Hence it only remains to prove that the expected intersections exist. By Proposition \ref{prop_inter_biss_lox}, we know that
\begin{itemize}
 \item  $\mathcal{J}_0^+$ intersects $\mathcal{J}_1^+$ and $\mathcal{J}_{-1}^+$
 \item  $\mathcal{J}_0^-$ intersects $\mathcal{J}_1^-$ and $\mathcal{J}_{-1}^-$
 \item  $\mathcal{J}_0^+$ intersects $\mathcal{J}_0^-$,
\end{itemize} 
which proves the incidences.
 Finally, by Proposition \ref{prop_tangence_disques_lox}, we know that $D_0^+$ is tangent to $D_1^-$ and $D_0^+$.
\end{proof}

With the projection described above, we obtain Proposition \ref{prop_inter_biss_lox}. Let us recall its statement:

 \begin{prop}\label{prop_inter_globale_lox}
  If $U$ is loxodromic, then
\begin{enumerate}
 \item $\mathcal{J}_0^+$ intersects $\mathcal{J}_k^+$ if and only if $k \in \{-1,0,1\}$.
 \item $\mathcal{J}_0^-$ intersects $\mathcal{J}_k^-$ if and only if $k \in \{-1,0,1\}$.
  \item $\mathcal{J}_0^+$ intersects $\mathcal{J}_k^-$ if and only if $k \in \{-1,0,1\}$. Furthermore, $\mathcal{J}_0^+ \cap \mathcal{J}_1^- = \{[Up_A]\}$ and $\mathcal{J}_0^+ \cap \mathcal{J}_{-1}^- = \{[U^{-1}p_B]\}$.
\end{enumerate}    
 \end{prop}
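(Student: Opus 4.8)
The plan is to reduce the statement about intersections of the bisectors $\mathcal{J}_k^{\pm} \subset \con{\h2c}$ to the statement about the disks $D_k^{\pm} = \pi_{[p_U]}(\mathcal{J}_k^{\pm})$ in the visual sphere $L_{[p_U]}$, which has already been proved just above. The key observation is that if $\mathcal{J}_0^{\pm}$ and $\mathcal{J}_k^{\pm}$ intersect at a point $[z] \in \con{\h2c}$, then the complex line $l_{[p_U],[z]}$ belongs to both $\pi_{[p_U]}(\mathcal{J}_0^{\pm})$ and $\pi_{[p_U]}(\mathcal{J}_k^{\pm})$, so $D_0^{\pm} \cap D_k^{\pm} \neq \emptyset$. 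Hence whenever $D_0^{\pm}$ and $D_k^{\pm}$ are disjoint, so are the corresponding bisectors. By the disk proposition, this already rules out every $k \notin \{-1,0,1\}$, which gives the "only if" direction of all three items.

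For the "if" direction I would invoke Proposition \ref{prop_inter_biss_sym} and the incidence lemmas proved in Section \ref{sect_topo_faces}: Lemma \ref{lemme_inter_jk+_jk-} shows that $\mathcal{J}_0^+ \cap \mathcal{J}_0^-$ and $\mathcal{J}_0^- \cap \mathcal{J}_1^+$ are Giraud disks, hence nonempty; applying $U^{-1}$ gives $\mathcal{J}_0^+ \cap \mathcal{J}_{-1}^- \neq \emptyset$ as well (it is $U^{-1}(\mathcal{J}_1^+ \cap \mathcal{J}_0^-)$); and Corollary \ref{cor_incidences} together with Lemma \ref{lemme_inter_jk+_jk-} shows $\mathcal{J}_0^+ \cap \mathcal{J}_1^+$ and $\mathcal{J}_0^+ \cap \mathcal{J}_{-1}^+$ are nonempty (they contain $[U p_A]$, resp.\ $[p_A]$, and are in fact Giraud disks once one checks the symmetry hypothesis $p_V = A p_U$, $p_B = A^2 p_U$ or the analogous one for $\mathcal{J}_{-1}^+$). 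The same argument applied with the involution $I$, which exchanges $\mathcal{J}_k^+$ and $\mathcal{J}_{-k}^-$ (Subsection \ref{subsect_symetrie}), transfers the $+$ statements to the $-$ statements. This establishes items (1), (2) and the incidence part of (3).

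It remains to pin down the two tangential intersections in item (3): that $\mathcal{J}_0^+ \cap \mathcal{J}_1^- = \{[U p_A]\}$ and $\mathcal{J}_0^+ \cap \mathcal{J}_{-1}^- = \{[U^{-1} p_B]\}$. Here I would use the already-proved Corollary following Proposition \ref{prop_tangence_disques_lox}: the disks $D_0^+$ and $D_1^-$ are tangent at $\pi_{[p_U]}(l_{[p_U],[U p_A]})$, so any point of $\mathcal{J}_0^+ \cap \mathcal{J}_1^-$ must lie on the single common slice $l_{[p_U],[U p_A]}$; and by Lemma \ref{lemme_tangences_faces} (or the tangency criterion Lemma \ref{lemme_critere_tangence}) this line is tangent to $\partial_\infty \mathcal{J}_0^+$ at $[U p_A]$, so it meets $\con{\mathfrak{B}}(p_U,p_V)$ in exactly that one point. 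The case of $\mathcal{J}_{-1}^-$ is identical after applying $U^{-1}$ and using $I$, or directly via the second tangency of Proposition \ref{prop_tangence_disques_lox}.

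The main obstacle in this plan is not any single step but making sure the disjointness argument in the visual sphere is airtight for \emph{all} $k$ at once: one must observe that $D_k^{\pm}$ lies in the open annulus centered at $0$ of radii $e^{2kl - \frac{5l}{2}}$ and $e^{2kl + \frac{3l}{2}}$ — a consequence of the bound $D_0^+ \subset \{e^{-5l/2} < |z| < e^{3l/2}\}$ proved above and the fact that $U$ acts on the chart $\psi_{p'_U,p''_U}$ as the homothety $z \mapsto e^{2l} z$ — and that these annuli for $k = 0$ and $|k| \geq 2$ are disjoint, which holds precisely because $\frac{3l}{2} < 4l - \frac{5l}{2} = \frac{3l}{2}$ fails to be strict only at the boundary case $|k|=1$; for $|k| \geq 2$ one has $2kl - \frac{5l}{2} \geq \frac{3l}{2}$, so the annuli are genuinely disjoint. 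Once this bookkeeping is done, everything else is a direct application of the incidence and tangency results already in hand.
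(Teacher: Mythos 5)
Your plan follows the paper's own route through Subsection \ref{subsect_combi_lox} (project to $L_{[p_U]}$, use the annulus bound together with the homothety $z\mapsto e^{2l}z$, get the existing intersections from the incidence results, and pin down the one--point contacts via Proposition \ref{prop_tangence_disques_lox}), and the reduction ``bisectors meet $\Rightarrow$ disks meet'' as well as items (1)--(2) are fine. The genuine gap is in the mixed--family bookkeeping. The annulus $e^{2kl}\cdot\{e^{-5l/2}<|z|<e^{3l/2}\}$ is correct only for $D_k^{+}$: since $D_0^{-}$ is the image of $D_0^{+}$ under $z\mapsto 1/z$, one has $D_k^{-}\subset e^{2kl}\cdot\{e^{-3l/2}<|z|<e^{5l/2}\}$, and with the correct annuli the pair $(D_0^{+},D_{-2}^{-})$ is \emph{not} separated: the two annuli overlap in $\{e^{-5l/2}<|z|<e^{-3l/2}\}$. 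This cannot be repaired, because the disks really do meet there: by Corollary \ref{cor_incidences} the point $[U^{-1}p_B]$ lies on $\mathcal{J}_0^{+}$ (it is $U^{k-1}p_B$ for $k=0$) and on $\mathcal{J}_{-2}^{-}$ (it is $U^{k+1}p_B$ for $k=-2$), so $\mathcal{J}_0^{+}\cap\mathcal{J}_{-2}^{-}\neq\emptyset$ and $\psi(U^{-1}p_B)=e^{-2l}\in D_0^{+}\cap D_{-2}^{-}$. Hence the ``only if'' half of item (3) fails at $k=-2$ as stated; the correct picture is that $\mathcal{J}_0^{+}$ meets $\mathcal{J}_k^{-}$ exactly for $k\in\{-2,-1,0,1\}$, with Giraud disks for $k\in\{-1,0\}$ and single points $[Up_A]$, $[U^{-1}p_B]$ for $k=1$, $k=-2$. (This indexing slip is already present in the printed proposition and in the second half of Proposition \ref{prop_tangence_disques_lox}, so your faithful reproduction of that bookkeeping inherits it; also your displayed inequality $\tfrac{3l}{2}<4l-\tfrac{5l}{2}=\tfrac{3l}{2}$ is garbled, though what you need for the same--family cases is simply that the open annuli for $k=0$ and $|k|\ge 2$ share no point.)

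The same slip undermines your last paragraph, and your own citations flag it: you invoke Lemma \ref{lemme_inter_jk+_jk-} (via $U^{-1}(\mathcal{J}_1^{+}\cap\mathcal{J}_0^{-})$) to get $\mathcal{J}_0^{+}\cap\mathcal{J}_{-1}^{-}\neq\emptyset$, and that very lemma says this intersection is a two--dimensional Giraud disk containing $[p_A]$, $[p_B]$ and $[U^{-1}p_B]$, which is incompatible with the claim that it equals $\{[U^{-1}p_B]\}$. Concretely, the tangency argument does not apply to that pair: with the normalized lifts one has $\langle p_U,p_B\rangle=1$ while $\langle p_W,p_B\rangle=-e^{2i\alpha_2}$, so by Lemma \ref{lemme_critere_tangence} the slice through $[p_B]$ is not tangent to $\mathfrak{S}(p_U,p_W)$; thus $\psi(p_B)=1$ is interior to $D_0^{-}$ and $e^{-2l}$ is interior to $D_{-1}^{-}$, and the disk whose boundary is tangent to $\partial D_0^{+}$ at $e^{-2l}$ is $D_{-2}^{-}$ (the tangent slice of $\mathfrak{S}(p_U,p_W)$ among the marked points is at $[Up_B]$). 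It is for the pairs $(\mathcal{J}_0^{+},\mathcal{J}_1^{-})$ and $(\mathcal{J}_0^{+},\mathcal{J}_{-2}^{-})$ that the slice--plus--tangency argument gives a single point, and even there you should add a word ruling out an internal tangency of the disks (tangency of the boundary circles alone does not do this, and the annuli for $D_0^{+}$ and $D_1^{-}$ overlap in $\{e^{l/2}<|z|<e^{3l/2}\}$). So the proof cannot be completed as written: the mixed part of the statement must first be reindexed, and then your argument, with the corrected annulus for the minus family and the tangency attributed to $D_{-2}^{-}$, goes through along the paper's lines.
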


\subsection{The elliptic side}\label{subsect_combi_ell}
 We are going to study now the case where $[U]$ is elliptic. As in the loxodromic case, we are going to determine the relative position of the disks  $D_{k}^{\pm}$, that will be obtained this time by rotations. Some examples of the relative position of the disks $D_{k}^{\pm}$ are pictured in Figures \ref{fig_proj_partielle_ell} and \ref{fig_proj_complete_ell}.
 However, in this case we will need to slightly refine the argument and consider the real visual sphere in order to determine the global combinatorics of the intersections of the bisectors $\mathcal{J}_k^{\pm}$.
 We consider here the parameters $\alpha_2 \in ]\alpha_2^{\lim} , \frac{\pi}{2} [$. By Remark \ref{rem_alpha2_ell}, we know that for such a parameter $\alpha_2$, the element $U$ is elliptic and has eigenvalues $1 , e^{i\beta}$ and $e^{-i\beta}$. The angle $\beta \in ]0 , \frac{2\pi}{3} [$ can also be used as a parameter for the deformation. It is related to $\alpha_2$ by the equation:
 \begin{equation*}
  2\cos(\beta) + 1 = \mathrm{tr}(U) = 8\cos^2(\alpha_2)
 \end{equation*}

In this case, also recall that, by setting $\delta = 2\sin(\beta)$, we have:
\[
[p'_U] = \begin{bmatrix}
2(2e^{2i\alpha_2}+1) \\
 -\sqrt{2}e^{i\alpha_2}(2e^{2i\alpha_2}+1 + \delta) \\
 -(8\cos^2(\alpha_2)+1) -\delta
\end{bmatrix}
\text{ and }
[p''_U] = \begin{bmatrix}
2(2e^{2i\alpha_2}+1) \\
 -\sqrt{2}e^{i\alpha_2}(2e^{2i\alpha_2}+1 - \delta) \\
 -(8\cos^2(\alpha_2)+1) + \delta
\end{bmatrix}.
\]

\begin{figure}[ht]
 \center
 \includegraphics[width = 7cm]{./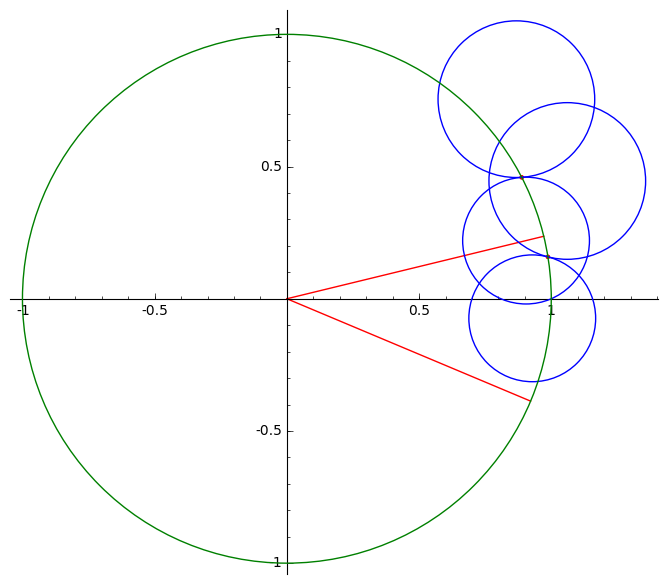}
 \caption{Projection of the bisectors on $L_{[p_U]}$ for $\alpha_2 = 0,915$. \label{fig_proj_partielle_ell}}
\end{figure}

\begin{figure}[htbp]
\center
\begin{subfigure}{0.4\textwidth}
 \includegraphics[width = 7cm]{./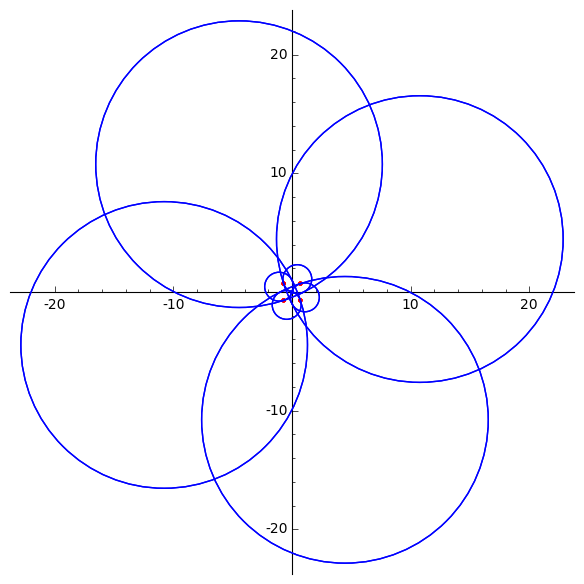}
 \caption{$n=8$}
 \end{subfigure} \hspace{1cm}
 \begin{subfigure}{0.4\textwidth}
 \includegraphics[width = 7cm]{./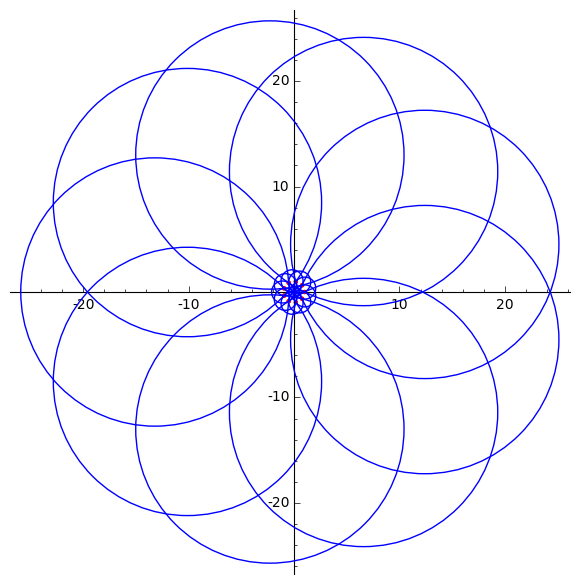}
 \caption{$n=9$}
 \end{subfigure}  \\
 \begin{subfigure}{0.4\textwidth}
 \includegraphics[width = 7cm]{./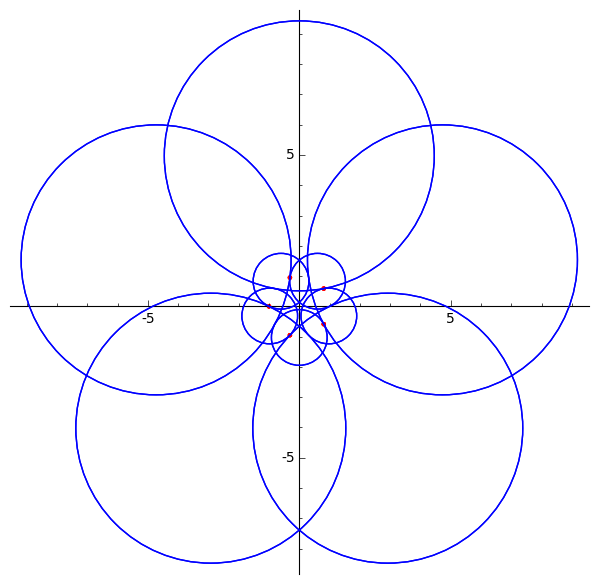}
 \caption{$n=10$}
 \end{subfigure} \hspace{1cm}
 \begin{subfigure}{0.4\textwidth}
 \includegraphics[width = 7cm]{./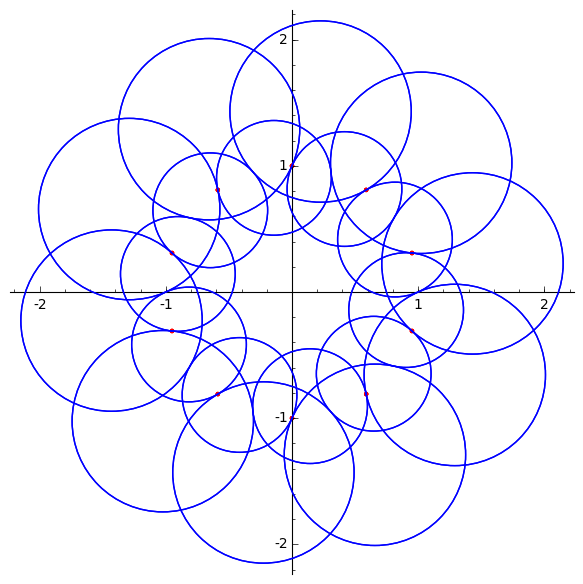}
 \caption{$n=20$}
 \end{subfigure} 
\caption{Projection on the visual sphere for $n = 8,9,10$ and $20$. \label{fig_proj_complete_ell}}
\end{figure}

 We make now some remarks about the actions on $L_{[p_U]}$ and the images of some points in the chart $\psi$.

\begin{rem} If $U$ is elliptic, we have $\psi(p_A) =  \frac{\mathrm{tr}(U)+1 - \delta}{\mathrm{tr}(U)+1 + \delta} = \frac{1 + e^{-i\beta}}{1 + e^{i\beta}} = e^{-i\beta}$.
\end{rem}

\begin{rem}
The action of $U$ on $L_{[p_U]}$ fixes $l_{[p_U],[p'_U]}$ and $l_{[p_U],[p''_U]}$. In the chart $\psi_{p'_U,p''_U}$, since $Up'_U = e^{i\beta}p'_U$ and $Up''_U = e^{-i\beta}p''_U$,
 the action of $U$ is a rotation of center $0$ and angle $2 \beta$; it is given by $z \mapsto e^{2i\beta}z$.
\end{rem}

\begin{rem}
 Since $\psi(p_B) = 1$, $\psi(p_A) = e^{-i\beta}$ and since $U$ acts by a rotation of angle $2\beta$, we deduce that for all $k\in \mathbb{Z}$, we have
 
  \[\psi(U^kp_B) = e^{2ik\beta} \text{ and } \psi(U^kp_A) = e^{(2k-1)i\beta}.\]
\end{rem}

We can deduce from the actions of $U$ and $I$ a result on the intersections :
\begin{prop}
 The disks $D_0^+$ and $D_1^{-}$ are tangent at the point $\pi_{[p_U]}(l_{[p_U],[Up_A]})$.
 
  The disks $D_0^+$ and $D_{-1}^{-}$ are tangent at the point $\pi_{[p_U]}(l_{[p_U],[U^{-1}p_B]})$.
\end{prop}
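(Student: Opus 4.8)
The plan is to mirror exactly the argument used in the loxodromic case (Proposition~\ref{prop_tangence_disques_lox}), since the only structural difference is that the action of $U$ on $L_{[p_U]}$ is now a rotation $z \mapsto e^{2i\beta}z$ instead of a homothety. First I would recall the images under $\psi = \psi_{p'_U,p''_U}$ of the relevant vertices: $\psi(p_B) = 1$, $\psi(p_A) = e^{-i\beta}$, hence $\psi(Up_A) = e^{i\beta}$ and $\psi(U^{-1}p_B) = e^{-2i\beta}$; and $\psi(U^2 p_B) = e^{4i\beta}$. I would also use that $D_0^+$ is a disk whose boundary passes through $\psi(Up_A)$ and $\psi(U^{-1}p_B)$, that $D_0^-$ is its image under the conformal involution $z \mapsto 1/z$ (coming from $I$), and that $D_1^- = U \cdot D_0^-$ is the image of $D_0^-$ under the rotation $z \mapsto e^{2i\beta}z$, with $\partial D_1^-$ passing through $\psi(U^2 p_B) = e^{4i\beta}$ and through $\psi(Up_A) = e^{i\beta}$.

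The key computation is a tangency-of-boundaries argument at the common point $\psi(Up_A) = e^{i\beta}$. Let $\theta$ denote the angle at $\psi(Up_A)$ between $\partial D_0^+$ and the circle $|z| = 1$ (which is invariant under both $z \mapsto 1/z$ and $z \mapsto e^{2i\beta}z$). Since $z \mapsto 1/z$ is conformal, maps $\partial D_0^+$ to $\partial D_0^-$, maps the unit circle to itself, and sends $\psi(Up_A) = e^{i\beta}$ to $\psi(p_A) = e^{-i\beta}$, the angle at $\psi(p_A)$ between $\partial D_0^-$ and the unit circle equals $\theta$. Applying the rotation $z \mapsto e^{2i\beta}z$ (also conformal, preserving the unit circle) which carries $D_0^-$ to $D_1^-$ and $\psi(p_A) = e^{-i\beta}$ to $\psi(Up_A) = e^{i\beta}$, the angle at $\psi(Up_A)$ between $\partial D_1^-$ and the unit circle is again $\theta$. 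Hence $\partial D_0^+$ and $\partial D_1^-$ meet the unit circle at $\psi(Up_A)$ with the same angle, so they are tangent there. Pulling back via $\pi_{[p_U]}$, this shows the bisectors $\mathcal{J}_0^+$ and $\mathcal{J}_1^-$ are tangent along the slice $l_{[p_U],[Up_A]}$, which by the tangency lemma meets $\partial_\infty \mathcal{J}_0^+$ only at $[Up_A]$; the tangency of $D_0^+$ and $D_{-1}^-$ at $\pi_{[p_U]}(l_{[p_U],[U^{-1}p_B]})$ follows by the completely symmetric argument (apply the symmetry $I$ and the rotation $U^{-1}$).

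The proof I would write is therefore essentially verbatim the proof of Proposition~\ref{prop_tangence_disques_lox}, with $e^{2l}$ replaced by $e^{2i\beta}$ throughout; I expect no genuine obstacle here since the conformality argument is insensitive to whether the generator acts by a rotation or a homothety, and all the ingredients (the involution $I$ acting as $z \mapsto 1/z$, the explicit values of $\psi$ on the vertices, the tangency criterion Lemma~\ref{lemme_critere_tangence}) have already been established in the elliptic setting in the preceding remarks. The only point requiring a word of care is that one must check the relevant boundary circles are genuinely tangent rather than merely making equal angles with an arbitrary auxiliary circle: here the auxiliary circle is the unit circle $\{|z|=1\}$, which is canonically distinguished as the circle through $\psi(p_A)$, $\psi(p_B)$ and all their $U$-translates, and both $\partial D_0^+$ and $\partial D_1^-$ are tangent to it at $\psi(Up_A)$ from the same side, forcing them to be tangent to each other.
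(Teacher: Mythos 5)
Your proposal is correct and is essentially the paper's own argument: the paper proves the tangency at $\psi(Up_A)$ by comparing the angles that $\partial D_0^+$ and $\partial D_1^-$ make with the unit circle, using the conformality of $z \mapsto \frac{1}{z}$ (the involution $I$) and the rotation $z \mapsto e^{2i\beta}z$ (the action of $U$), exactly as you do. Your extra remark about the two boundary circles meeting the unit circle from the same side is a reasonable point of care that the paper leaves implicit.
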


\begin{proof}
Recall that $D_0^+$ is a disk whose boundary contains $\psi(U^{-1}p_B)$ and $\psi(Up_A)$, and that $D_1^{-}$ is a disk whose boundary contains $\psi(U^2p_B)$ and $\psi(Up_A)$.
 Let $\theta$ be the angle at $\psi(Up_A)$ between $\partial D_0^+$ and the unit circle. Since the map $z \mapsto \frac{1}{z}$ is conformal, and it exchanges $D_0^+$ and $D_0^-$, the angle between $\partial D_0^-$ and the unit circle at $\psi(p_A) = \frac{1}{\psi(Up_A)}$ is also equal to $\theta$. Since $D_1^{-}$ is obtained from $D_0^-$ by a rotation of angle $2\beta$, the angle between the unit circle and $\partial D_1^{-}$ at $\psi(Up_A)$ equals $\theta$. Since $\partial D_1^{-}$ and $\partial D_0^{+}$ intersect the unit circle at $\psi(Up_A)$ with the same angle, they are tangent.
 
 The proof for the other tangency is analogous.
\end{proof}

\begin{cor}\label{cor_tangence_bissec_ell}
 The bisectors $ \mathcal{J}_0^+$ and $ \mathcal{J}_1^-$ are tangent at $[Up_A]$.
 The bisectors  $ \mathcal{J}_0^+$ and $ \mathcal{J}_{-1}^-$ are tangent at $[U^{-1}p_B]$.
\end{cor}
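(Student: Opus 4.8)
The statement to prove is Corollary \ref{cor_tangence_bissec_ell}: the bisectors $\mathcal{J}_0^+$ and $\mathcal{J}_1^-$ are tangent at $[Up_A]$, and $\mathcal{J}_0^+$ and $\mathcal{J}_{-1}^-$ are tangent at $[U^{-1}p_B]$.

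The plan is to deduce this corollary directly from the preceding proposition about the disks $D_k^{\pm}$ in the visual sphere $L_{[p_U]}$, exactly as was done in the loxodromic case (compare the corollary following Proposition \ref{prop_tangence_disques_lox}). First I would record that, by the preceding proposition, the disks $D_0^+ = \pi_{[p_U]}(\mathcal{J}_0^+)$ and $D_1^- = \pi_{[p_U]}(\mathcal{J}_1^-)$ are tangent in $L_{[p_U]}$ at the single point $\pi_{[p_U]}(l_{[p_U],[Up_A]})$. Since $\pi_{[p_U]}$ is the projection onto the visual sphere of $[p_U]$ and each bisector is a union of slices through $[p_U]$ (well, $\mathcal{J}_0^+$ and $\mathcal{J}_1^-$ are defined by $p_U$-normalized pairs so they contain the relevant slices), any point of $\mathcal{J}_0^+ \cap \mathcal{J}_1^-$ lies on a complex line through $[p_U]$ whose image lies in $D_0^+ \cap D_1^-$. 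Because that intersection of disks reduces to the one boundary point $\pi_{[p_U]}(l_{[p_U],[Up_A]})$, the intersection $\mathcal{J}_0^+ \cap \mathcal{J}_1^-$ is contained in the complex line $l_{[p_U],[Up_A]}$.

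Next I would invoke Lemma \ref{lemme_tangences_faces} (or more precisely the tangency lemma preceding it, Lemma \ref{lemme_critere_tangence} applied via the lemma stating that $l_{[p_U],[Up_A]}$ and $l_{[p_U],[U^{-1}p_B]}$ are tangent to $\mathfrak{S}(p_U,p_V)$): the line $l_{[p_U],[Up_A]}$ is tangent to $\partial_\infty \mathcal{J}_0^+ = \mathfrak{S}(p_U,p_V)$ at $[Up_A]$, hence meets $\con{\mathcal{J}_0^+}$ in exactly the single point $[Up_A]$. Intersecting with $\mathcal{J}_1^-$ then shows $\mathcal{J}_0^+ \cap \mathcal{J}_1^- = \{[Up_A]\}$. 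The tangency of the two bisectors at this point follows since their intersection is a single point and both are smooth there (or, transversality would force a higher-dimensional intersection, contradicting the disk computation). The statement for $\mathcal{J}_0^+$ and $\mathcal{J}_{-1}^-$ at $[U^{-1}p_B]$ is entirely analogous, using the tangency of $l_{[p_U],[U^{-1}p_B]}$ to $\mathfrak{S}(p_U,p_V)$ at $[U^{-1}p_B]$.

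The main obstacle here is not in this corollary itself — it is a short consequence of the visual-sphere picture — but rather in the underlying proposition about the disks $D_k^{\pm}$, whose proof requires the effective control of the angular position of each disk (the elliptic analogue of the annulus bounds in the loxodromic case, replacing radial intervals by arcs of angles). In the write-up of the corollary I would simply cite the preceding proposition on the $D_k^{\pm}$ together with Lemma \ref{lemme_tangences_faces}, mirroring the structure of Subsection \ref{subsect_combi_lox}. One subtlety to keep in mind is that in the elliptic case the indices are taken modulo $n$, so the tangency statements should be read with that convention; this does not affect the local argument at $[Up_A]$ or $[U^{-1}p_B]$.
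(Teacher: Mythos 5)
Your proposal is correct and follows essentially the same route as the paper: the preceding tangency proposition for the disks $D_0^+$ and $D_1^-$ forces $\mathcal{J}_0^+ \cap \mathcal{J}_1^-$ into the single complex line $l_{[p_U],[Up_A]}$, and the lemma that this line is tangent to $\mathfrak{S}(p_U,p_V)$ at $[Up_A]$ reduces the intersection to that one point, with the symmetric argument at $[U^{-1}p_B]$. No gaps to report.
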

\begin{proof}
 By the proposition above, we know that the intersection of $ \mathcal{J}_0^+$ and $ \mathcal{J}_1^-$ is contained in the line $l_{[p_U],[Up_A]}$. By Lemma \ref{lemme_tangences_faces}, this line is tangent to $\partial_\infty \mathcal{J}_0^+$ at $[Up_A]$, hence the intersection contains exactly one point.
\end{proof}

In the same way as for the loxodromic side, the key proposition for controlling the intersections of the disks $D_k^{\pm}$ is given by the following statement, that bounds the angular diameter of the disks seen from $0$. The action of $[U]$ by rotations is then analogous to the action by homotheties that we have studied in the loxodromic side. The annuli correspond to angular sectors: an example is pictured in Figure \ref{fig_proj_partielle_ell}.

\begin{prop}
 If $\beta \geq \frac{2\pi}{9}$, the disks $D_k^{\pm}$ have an angular diameter $< 4\beta$ from $0$.
\end{prop}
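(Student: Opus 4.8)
The plan is to mimic the loxodromic computation of the previous subsection, replacing the role of the homothety by the rotation $z \mapsto e^{2i\beta}z$ and replacing the annulus by an angular sector centred at $0$. Concretely, I would show that the disk $D_0^+ = \pi_{[p_U]}(\mathcal{J}_0^+)$ is contained in the angular sector $\{ r e^{i\psi} \mid r>0,\ \psi \in ]-\tfrac{5\beta}{2}, \tfrac{3\beta}{2}[ \}$, which is the exact analogue, under the substitution $l \rightsquigarrow i\beta$, of the annulus of radii $e^{-5l/2}$ and $e^{3l/2}$ obtained in the loxodromic case. Granting this, the angular diameter of $D_0^+$ from $0$ is at most $\tfrac{3\beta}{2} - (-\tfrac{5\beta}{2}) = 4\beta$, and strict inclusion in the open sector gives the strict inequality $< 4\beta$ claimed. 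The hypothesis $\beta \geq \tfrac{2\pi}{9}$ (equivalently $n \leq 9$ on one side; together with $n\geq 9$ it pins down $n=9$, but the statement as phrased only needs $\beta \ge 2\pi/9$) is what guarantees the argument is sharp enough to control the combinatorics in the next propositions; within the proof of this proposition it will only enter, if at all, to rule out degenerate coincidences of the boundary rays.

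The key steps, in order. First, I would argue that a ray from $0$ in $L_{[p_U]}$, i.e. the image under $\psi_{p'_U,p''_U}$ of a pencil of complex lines through $[p_U]$, meets $\mathcal{J}_0^+$ if and only if a certain trigonometric inequality in the angle $\psi$ of the ray has a solution; this is literally Proposition~\ref{prop_proj_biss_sphere_visuelle} applied to $\mathfrak{B}(p_U,p_V)$, which tells us $\pi_{[p_U]}(\mathfrak{S}(p_U,p_V))$ is a closed disk, so its complement (the rays missing the bisector) is exactly the complement of a disk, hence avoiding a suitable angular sector is equivalent to a clean inequality. Second, following the loxodromic model proof verbatim, I would take a candidate point $q_{\psi,\mu} = p''_U + e^{i(\text{boundary angle})} p'_U + \mu p_U$ lying on the chosen boundary ray and on the bisector, write the two conditions $\langle q_{\psi,\mu}, q_{\psi,\mu}\rangle \le 0$ and $|\langle q_{\psi,\mu}, p_U\rangle| = |\langle q_{\psi,\mu}, p_V\rangle|$, and reduce to an inequality for $\cos,\sin$ of the angle using the Hermitian products $\langle p'_U, p''_U\rangle = -16\sin^2\beta$ (the elliptic analogue of $-16\sinh^2 l$), $\langle p_U, p_U\rangle = \cos\beta - 1$, $h_1 = \langle p'_U, p_V\rangle$, $h_2 = \langle p''_U, p_V\rangle$, whose moduli and product I would recompute from the given coordinates of $[p'_U],[p''_U]$ exactly as in the loxodromic proof with $e^{\pm l/2}$ replaced by $e^{\pm i\beta/2}$ (equivalently $\cosh$ replaced by $\cos$). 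Third, I would reduce everything to a single-variable trigonometric inequality of the shape $a\cos\psi + b\sin\psi \le c$ and bound the left side by $\sqrt{a^2+b^2}$, exactly as was done for $3e^{-2l}+3e^{2l} \le 6(2\cosh l-1)\cosh(l/2)\cos\theta - 8\sin(2\alpha_2)\sinh(l/2)\sin\theta$. After linearising (using $8\cos^2\alpha_2 = 2\cos\beta+1$ to eliminate $\alpha_2$), this should become a polynomial inequality in $\cos(k\beta)$, $k = 1,2,3,4$, analogous to $9\cosh 4l + 8\cosh l \le 8\cosh 3l + 8\cosh 2l + 1$; the convexity argument used there (which gave the contradiction $\cosh 4l > 1$) has no direct analogue for $\cos$, so here is where the hypothesis $\beta \ge \tfrac{2\pi}{9}$ must be used to force the inequality to fail on the relevant interval $\beta \in [\tfrac{2\pi}{9}, \tfrac{2\pi}{3}[$, establishing by contradiction that no boundary point of the sector lies on $\mathcal{J}_0^+$.

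Finally, I would invoke the symmetry $[I]$ acting on $L_{[p_U]}$ as $z \mapsto 1/z$ (established in Section~\ref{subsect_symetrie} and recalled in this subsection) to deduce the corresponding sector bound for $D_0^-$, and then the $U$-invariance (action by $z \mapsto e^{2i\beta}z$) to propagate the bound to every $D_k^{\pm}$, so that $D_0^+$ has angular diameter $<4\beta$ from $0$ as stated. I expect the main obstacle to be precisely the last point of Step~three: in the loxodromic case the decisive inequality was handled by convexity of $\cosh$, a tool unavailable for $\cos$, so the corresponding trigonometric polynomial inequality in $\cos(k\beta)$ on the interval $\beta\in[\tfrac{2\pi}{9},\tfrac{2\pi}{3}[$ will need a genuine (though elementary) argument — either a factorisation exhibiting a manifestly signed product, or a monotonicity/interval-bisection estimate — and getting the threshold exactly at $\beta = \tfrac{2\pi}{9}$ is the delicate part. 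Everything else is a routine, if lengthy, transcription of the loxodromic computation with the substitution $l \mapsto i\beta$.
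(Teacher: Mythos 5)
Your opening move is the paper's: show that the two rays of arguments $\tfrac{3\beta}{2}$ and $-\tfrac{5\beta}{2}$ miss $D_0^+$, so that $D_0^+$ sits in a sector of width $4\beta$, then propagate to all $D_k^{\pm}$ via the rotation $z\mapsto e^{2i\beta}z$ and the involution $z\mapsto 1/z$. But the transcription ``$l \mapsto i\beta$'' of the loxodromic computation is structurally wrong. When $[U]$ is elliptic all eigenvalues have modulus one, so $p'_U$ and $p''_U$ are no longer null vectors pairing nontrivially with each other: they are positive vectors with $\langle p'_U,p''_U\rangle = 0$ and $\langle p'_U,p'_U\rangle = \langle p''_U,p''_U\rangle = 16\sin^2\beta$; your claimed analogue $\langle p'_U,p''_U\rangle = -16\sin^2\beta$ is false. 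Moreover the boundary of the sector is a pair of \emph{rays}, so the free parameter along it is the modulus, not an angle: the candidate point must be $q_{k,\mu}=p''_U + k e^{3i\beta/2}p'_U + \mu p_U$ with $k>0$, and after the same triangle-inequality step one is left with showing that a quadratic polynomial $P_\beta(k)$ is negative for all $k\ge 0$ --- not with an inequality of the shape $a\cos\psi+b\sin\psi\le c$. There is no convexity step to replace: negativity comes from the sign of the discriminant of $P_\beta$, which is that of $2+4\cos\beta-9\cos^2\beta$, together with the sign of the constant coefficient.

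This is where the hypothesis enters, and in the direction opposite to the one you assumed. The discriminant is negative when $\cos\beta\ge\tfrac34$, i.e.\ when $\beta\le\tfrac{2\pi}{9}$ (note $\cos\tfrac{2\pi}{8}<\tfrac34<\cos\tfrac{2\pi}{9}$), and it becomes positive not far beyond; the paper itself states that this visual-sphere method cannot reach $n=4,\dots,8$, i.e.\ precisely the range $\beta>\tfrac{2\pi}{9}$ on which you propose to work. The printed ``$\beta\ge\tfrac{2\pi}{9}$'' is a typo for ``$\beta\le\tfrac{2\pi}{9}$'' (equivalently $n\ge 9$), as the only use of the proposition, in the proof of Proposition \ref{prop_inter_globale_ell} with $\beta=2\pi/n$ and $n\ge 9$, confirms. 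So your plan --- take the hypothesis at face value, expect it only to exclude degenerate coincidences of the boundary rays, and establish the inequality on $\beta\in[\tfrac{2\pi}{9},\tfrac{2\pi}{3}[$ --- cannot be carried out by these computations, and the hypothesis (in its correct direction) is not a safeguard against degeneracies but the decisive ingredient making $P_\beta$ rootless. The threshold is also not ``exactly $2\pi/9$'': $2\pi/9$ is simply the largest angle of the form $2\pi/n$ below the root $\tfrac{2+\sqrt{22}}{9}$ of the discriminant, which is why the cases $n\le 8$ are delegated to Parker--Wang--Xie.
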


\begin{proof}
 Since $D_k^{\pm}$ is the image of $D_0^+$ by a rotation centered at $0$ and eventually $z \mapsto \frac{1}{z}$, it is enough to show that the angular diameter from $0$ of the disk $D_0^+$ is $< 4 \beta$.
 We are going to show that the real half-lines with arguments $\frac{3\beta}{2}$ and $-\frac{5\beta}{2}$ do not intersect the disk $D_0^+$. These two half-lines are traced in Figure \ref{fig_proj_partielle_ell}.
  This comes to show that if $k \in \mathbb{R}^{+}$, then no point with image $k e^{\frac{3i\beta}{2}}$ or $k e^{-\frac{5i\beta}{2}}$ by $\psi_{p'_U , p''_U}$ belongs to the bisector $\mathfrak{B}(p_U,p_V)$.
  
  We begin with the first case. We want to show that a point of the form $q_{k,\mu} = p''_U+ ke^{\frac{3\beta}{2}} p'_U + \mu p_U $, where $k \in \mathbb{R}^{+}$ and $\mu \in \mathbb{C}$, does not belong to the bisector $\mathfrak{B}(p_U,p_V)$. 
  
 We have two conditions for $q_{k,\mu}$ to belong to $\mathfrak{B}(p_U,p_V)$:
 \begin{enumerate}
  \item $\langle q_{k,\mu} , q_{k ,\mu} \rangle \leq 0$
  \item $|\langle q_{k,\mu} , p_U \rangle| = |\langle q_{k,\mu} , p_V \rangle|$
 \end{enumerate}
   
   They can be written as:
   
   \begin{enumerate}
    \item $\langle p''_U , p''_U \rangle + k^2 \langle p'_U , p'_U \rangle + |\mu|^2\langle p_U , p_U \rangle \leq 0$
    \item $|\mu||\langle p_U , p_U \rangle| = |\langle p''_U , p_V \rangle + ke^{-\frac{3\beta}{2}} \langle p'_U , p_V \rangle + \overline{\mu} \langle p_U , p_V \rangle|$
   \end{enumerate}
   
   By the computations above, setting $h_1 = \langle p'_U , p_V \rangle $ and $h_2 = \langle p''_U , p_V \rangle $ to simplify the notation, the conditions become:
   
 \begin{enumerate}
  \item $|\mu|^2 \geq (1+k^2)\frac{16\sin^2(\beta)}{1-\cos(\beta)} = 16(1+k^2)(1+\cos(\beta))$
  \item $(1-\cos(\beta))|\mu| = |h_2 + ke^{-\frac{3\beta}{2}}h_1 -\frac{3}{2} \overline{\mu}|$
 \end{enumerate}    

 We make some computations to have more details on the second condition.  
It implies, by the triangle inequality, that:
\[(1-\cos(\beta))|\mu| \geq  \frac{3}{2} |\mu| - |h_2 + ke^{-\frac{3\beta}{2}}h_1|\]

We deduce that:

\[(\cos(\beta )+ \frac{1}{2})|\mu| \leq  |h_2 + ke^{-\frac{3\beta}{2}}h_1|\]

By condition 1, we deduce that:

\[|h_2 + ke^{-\frac{3\beta}{2}}h_1|^2 \geq 16(\cos(\beta )+ \frac{1}{2})^2 (1+k^2)(1+\cos(\beta))\]

\[|h_2 + ke^{-\frac{3\beta}{2}}h_1|^2 \geq 4(2\cos(\beta )+ 1)^2 (1+k^2)(1+\cos(\beta)).\]

Let us compute the quantity $|h_2 + ke^{-\frac{3\beta}{2}}h_1|^2$. We have:

\[|h_2 + ke^{-\frac{3\beta}{2}}h_1|^2 = |h_2|^2 + k^2|h_1|^2 + 2k \mathrm{Re}(e^{\frac{3\beta}{2}}h_2\overline{h_1})\] 
  
In order to obtain a contradiction, it is enough to show that the following polynomial on $k$ is always negative:
\begin{eqnarray*}
P_\beta(k) &=& |h_2|^2 + k^2|h_1|^2 + 2k \mathrm{Re}(e^{\frac{3\beta}{2}}h_2\overline{h_1}) - 4(2\cos(\beta )+ 1)^2 (1+k^2)(1+\cos(\beta)) \\
&=& (|h_2|^2-4(2\cos(\beta )+ 1)^2(1+\cos(\beta)) \\
 & & 
+ 2k \mathrm{Re}(e^{\frac{3\beta}{2}}h_2\overline{h_1}) +(|h_1|^2-4(2\cos(\beta )+ 1)^2(1+\cos(\beta))k^2
\end{eqnarray*}
 
  In order to study the polynomial $P_\beta$, we need to compute some expressions. We make the computation in the following lemma: 
  
\begin{lemme}
We have:
\begin{enumerate}
 \item $h_2 \overline{h_1} = 6(2\cos(\beta)+1)(e^{i\beta}+1) =  12e^{i\frac{\beta}{2}}(2\cos(\beta)+1)\cos(\frac{\beta}{2})$
 \item $|h_1|^2|h_2|^2 = 72(2\cos(\beta)+1)^2(\cos(\beta) +1)$
 \item $|h_1|^2 + |h_2|^2 = 4(2\cos(\beta)+1)(5-2\cos(\beta))(\cos(\beta) +1)$
\end{enumerate}

\end{lemme}  
  \begin{proof}
  Consider the explicit expressions of $h_1$ and $h_2$ in terms of $\alpha_2$ and $\beta$. We have:
  \begin{eqnarray*}
  h_1 = \langle p'_U , p_V \rangle &=& e^{-i\beta}(2i \sin(2\alpha_2) + \cos(\beta) + \frac{1}{2}) + 4i\sin(2\alpha_2) -1 \\
  h_2 = \langle p''_U , p_V \rangle &=& e^{i\beta}(2i \sin(2\alpha_2) + \cos(\beta) + \frac{1}{2}) + 4i\sin(2\alpha_2) -1 
  \end{eqnarray*}

By developing $h_2 \overline{h_1}$ and writing $\sin^2(2\alpha_2)$ in terms of $\cos(\beta)$, we obtain:
 \begin{equation*}
 h_2 \overline{h_1} = 6(2\cos(\beta)+1)(e^{i\beta}+1) =  12e^{i\frac{\beta}{2}}(2\cos(\beta)+1)\cos(\frac{\beta}{2})
\end{equation*}   
  
On the other hand, we can compute:
  \begin{eqnarray*}
  |h_1|^2 = 2(2\cos(\beta)+1)((\cos(\beta)+1)(5-2\cos(\beta))-4\sin(2\alpha_2)\sin(\beta)) \\
  |h_2|^2 = 2(2\cos(\beta)+1)((\cos(\beta)+1)(5-2\cos(\beta))+4\sin(2\alpha_2)\sin(\beta))
  \end{eqnarray*}
  
  We deduce immediately that
  \begin{equation*}
   |h_1|^2 + |h_2|^2 = 4(2\cos(\beta)+1)(5-2\cos(\beta))(\cos(\beta) +1)
  \end{equation*}
 and, by developing and writing $\sin^2(2\alpha_2)$ in terms of $\cos(\beta)$:
\begin{equation*}
 |h_1|^2|h_2|^2 = 72(2\cos(\beta)+1)^2(\cos(\beta) +1)
\end{equation*} 
  \end{proof}

We deduce that $\mathrm{Re}(e^{\frac{3\beta}{2}}h_2\overline{h_1}) = 12 \cos(2\beta)(2\cos(\beta)+1)\cos(\frac{\beta}{2})$. Remark that it is exactly the same term if we replace $\frac{3}{2}\beta$ by $-\frac{5}{2}\beta$. 
  Hence the polynomial $P_\beta(k)$ can be written in the following way:
  \begin{equation*}
  P_\beta(k) = |h_2|^2(1 - \frac{|h_1|^2}{18}) + 24 \cos(2\beta)(2\cos(\beta)+1)\cos(\frac{\beta}{2})k + |h_1|^2(1 - \frac{|h_2|^2}{18})k^2
\end{equation*}   
  Its discriminant $\Delta_\beta$ is then equal to:

  \begin{equation*}
  \Delta_\beta = (24 \cos(2\beta)(2\cos(\beta)+1)\cos(\frac{\beta}{2}))^2 - 4|h_1|^2|h_2|^2(1 - \frac{|h_1|^2}{18}) (1 - \frac{|h_2|^2}{18})
\end{equation*}  
  By writing the first term in terms of $\cos(\beta)$, we obtain:
  \begin{eqnarray*}
  \Delta_\beta &=& 288 (2\cos^2(\beta)-1)^2(2\cos(\beta)+1)^2(\cos(\beta)+1) - 4|h_1|^2|h_2|^2(1 - \frac{|h_1|^2}{18}) (1 - \frac{|h_2|^2}{18}) \\
   &=& 4|h_1|^2|h_2|^2 ((2\cos^2(\beta)-1)^2 - (1 - \frac{1}{18}|h_1|^2)(1 - \frac{1}{18}|h_2|^2))
\end{eqnarray*}
  If $\beta \in ]0 , \frac{2\pi}{3}[$, we know that $4|h_1|^2|h_2|^2 > 0$. By developing and writing the last factor in terms of $\cos(\beta)$, we obtain:
  \begin{eqnarray*}
 \frac{\Delta_\beta}{4|h_1|^2|h_2|^2} &=& (2\cos^2(\beta)-1)^2 - (1 - \frac{1}{18}|h_1|^2)(1 - \frac{1}{18}|h_2|^2) \\
   &=&  4\cos^2(\beta)(\cos^2(\beta)-1) + \frac{|h_1|^2 + |h_2|^2}{18} - \frac{|h_1|^2 |h_2|^2}{18^2}\\
   &=& \frac{4}{9} (1-\cos^2(\beta))(2 + 4 \cos(\beta) - 9\cos^2(\beta)) \\
   &=& \frac{4}{9} \sin^2(\beta)(2-4 \cos(\beta) - 9\cos^2(\beta))
\end{eqnarray*}
  The discriminant has the same sign as $2 + 4 \cos(\beta) - 9\cos^2(\beta)= -(\cos(\beta)- \frac{3}{4})(9\cos(\beta)+\frac{11}{4}) - \frac{1}{16}$.
 In particular, if $\cos(\beta) \geq \frac{3}{4}$, it is negative. Since $\cos(\frac{2\pi}{8}) < \frac{3}{4} < \cos(\frac{2\pi}{9})$, we are in this case if $\beta \leq \frac{2\pi}{9}$.
 
 We deduce that if $\beta \in ]0 , \frac{2\pi}{9}]$, the polynomial $P_\beta$ has no real roots. In order to show that it is negative, it is enough to show that its constant coefficient is negative. Since the discriminant $\Delta_\beta$ is negative for all $\beta \in ]0 , \frac{2\pi}{9}]$, this coefficient has constant sign. If $\beta = 0$, we have $P_0 = -36 + 72k -36k^2$, which allows us to complete the proof.
 
 The computation for the half-line of argument $-\frac{5}{2}\beta$ is identical.
\end{proof}

\subsubsection{Global intersections}
We are going to show Proposition \ref{prop_inter_biss_ell} using the projection defined above. Recall its statement:

 \begin{prop}\label{prop_inter_globale_ell}
  If $U$ is elliptic of order $n \geq 9$, then
\begin{enumerate}
 \item $\mathcal{J}_0^+$ intersects $\mathcal{J}_k^+$ if and only if $k \equiv -1,0,1 \mod n$.
 \item $\mathcal{J}_0^-$ intersects $\mathcal{J}_k^-$ if and only if $k \equiv -1,0,1 \mod n$.
  \item $\mathcal{J}_0^+$ intersects $\mathcal{J}_k^-$ if and only if $k \equiv -1,0,1 \mod n$. Furthermore, $\mathcal{J}_0^+ \cap \mathcal{J}_1^- = \{[Up_A]\}$ and $\mathcal{J}_0^+ \cap \mathcal{J}_{-1}^- = \{[U^{-1}p_B]\}$.
\end{enumerate}    
 \end{prop}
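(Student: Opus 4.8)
The plan is to transfer the loxodromic analysis of Subsection \ref{subsect_combi_lox} to the elliptic setting, using the projection $\pi_{[p_U]}$ onto the visual sphere $L_{[p_U]}$, but with one essential refinement. As in the loxodromic case, the disks $D_k^{\pm} = \pi_{[p_U]}(\mathcal{J}_k^{\pm})$ are obtained from $D_0^+$ by the action of $[U]$ on $L_{[p_U]}$, which in the chart $\psi_{p'_U,p''_U}$ is the rotation $z \mapsto e^{2i\beta}z$, together with the involution $z \mapsto 1/z$ coming from $[I]$. The preceding proposition shows that for $\beta \geq \frac{2\pi}{9}$ — equivalently $n \leq 9$, but since here $U$ has order $n$, the relevant range is $n \geq 9$ i.e. $\beta = \frac{2\pi}{n} \leq \frac{2\pi}{9}$, so I must be careful about the direction of the inequality and instead invoke the bound for \emph{all} admissible $\beta$, checking that the angular-diameter estimate indeed applies — each disk $D_k^{\pm}$ has angular diameter $< 4\beta$ seen from $0$. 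First I would record that $D_k^+$ sits in the angular sector centered at the argument of $\psi(U^k p_B) = e^{2ik\beta}$ (more precisely in a sector of half-width $< 2\beta$ around the arc joining $\psi(U^{k-1}p_B)$ and $\psi(U^k p_A)$), and similarly for $D_k^-$; this is the rotational analogue of the nested annuli in the loxodromic proof.

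Next I would run the separation argument in the cyclic group $\ZZ/n\ZZ$ of rotations by $2\beta = \frac{4\pi}{n}$. The images $\psi(U^k p_A) = e^{(2k-1)i\beta}$ and $\psi(U^k p_B) = e^{2ik\beta}$ are $2n$ equally spaced points on the unit circle (spacing $\beta = \frac{2\pi}{n}$), and $D_k^{\pm}$ occupies roughly the two consecutive gaps around $e^{(2k-\frac12)i\beta}$. Since each disk has angular extent $< 4\beta$, the disks $D_0^+$ and $D_l^+$ lie in disjoint angular sectors as soon as their centers are more than $4\beta$ apart on the circle, which for $n \geq 9$ happens precisely when $l \not\equiv -1,0,1 \bmod n$; the same for $D_0^-$ versus $D_l^-$ and for $D_0^+$ versus $D_l^-$. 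This shows the ``only if'' direction of all three assertions. For the ``if'' direction, the intersections $\mathcal{J}_0^+ \cap \mathcal{J}_0^-$, $\mathcal{J}_0^+ \cap \mathcal{J}_{\pm 1}^+$ exist by Lemma \ref{lemme_inter_jk+_jk-} and Corollary \ref{cor_incidences}, and $\mathcal{J}_0^+ \cap \mathcal{J}_1^- = \{[Up_A]\}$, $\mathcal{J}_0^+ \cap \mathcal{J}_{-1}^- = \{[U^{-1}p_B]\}$ are exactly the single-point intersections established in Corollary \ref{cor_tangence_bissec_ell}. Finally, since $U$ has finite order $n$, indices are taken mod $n$ and there is nothing more to check: $\mathcal{J}_0^+$ and $\mathcal{J}_k^+$ are literally equal when $k \equiv 0$, and the three sectors around $k \equiv -1,0,1$ exhaust the neighbours.

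The subtle point — the ``refinement'' the text warns about — is that the projection $\pi_{[p_U]}$ is not injective, so disjointness of $D_k^{\pm}$ and $D_l^{\pm}$ in $L_{[p_U]}$ only shows $\mathcal{J}_k^{\pm} \cap \mathcal{J}_l^{\pm}$ contains no point off the fibers through $[p_U]$; I still need to rule out intersection points lying on a common slice $l_{[p_U],[z]}$, i.e. points $[z]$ with $\pi_{[p_U]}([z]) \in \partial D_k^{\pm} \cap \partial D_l^{\pm}$. This is where the ``real visual sphere'' enters: along such a shared boundary line one parametrizes the candidate intersection points inside the complex geodesic $l_{[p_U],[z]} \cap \h2c$ and checks, via the same kind of Hermitian-product computation as in Lemma \ref{lemme_triple_inter_extors}, that no point of negative norm arises except at the already-identified vertices $[U^j p_A]$, $[U^j p_B]$. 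Concretely I would: (i) observe that $\partial D_0^+$ meets $\partial D_l^{\pm}$ only when the angular sectors are adjacent, hence only for $l \in \{-1,0,1\}$ mod $n$; (ii) for those adjacent cases, the boundary circles are tangent (Corollary \ref{cor_tangence_bissec_ell} and the tangency of Giraud disks from Lemma \ref{lemme_tangences_faces}), so the shared slice is unique and the intersection of the bisectors on it is a single point computed explicitly.

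The main obstacle will be step (iii) of the fiber analysis for adjacent sectors: controlling, for a general admissible $\beta = \frac{2\pi}{n}$ with $n \geq 9$, that on the tangency slice the spinal surfaces meet only at the expected vertex requires an explicit but unpleasant trace/Hermitian-form computation, analogous to the torus study $\mathfrak{T} = \mathfrak{E}_0^- \cap \mathfrak{E}_{-1}^-$ of Proposition \ref{prop_inter_faces_f0-f-1-} but now with $[U]$ elliptic; one has to verify a sign condition uniformly in $n$. I expect this to reduce, after linearization in $\cos\beta$, to a polynomial inequality in $\cos\beta$ valid on the relevant interval $\cos\beta \in \left[\cos\frac{2\pi}{9}, 1\right[$ — the exact analogue of the convexity-of-$\cosh$ trick used in the loxodromic proposition — and the bound $\beta \geq \frac{2\pi}{9}$ is precisely what makes the discriminant estimate $2 + 4\cos\beta - 9\cos^2\beta < 0$ work, which is why the method excludes $n = 4,5,6,7,8$ and those cases are deferred to \cite{parker_wang_xie}.
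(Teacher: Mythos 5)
Your sector-separation step contains a genuine gap, and it is exactly the point the paper's proof has to work around. You claim that $D_0^{+}$ and $D_l^{\pm}$ lie in disjoint angular sectors precisely when $l \not\equiv -1,0,1 \bmod n$. This is false for $l$ near $\frac{n}{2}$: the disk $D_l^{\pm}$ is the image of $D_0^{\pm}$ under the rotation $z \mapsto e^{2il\beta}z$ with $\beta = \frac{2\pi}{n}$, so as $l$ runs through $\mathbb{Z}/n\mathbb{Z}$ the rotation angle $2l\beta$ runs \emph{twice} around the circle; for $n$ even, $D_{n/2}^{+}$ occupies exactly the same sector as $D_0^{+}$ (the rotation is by $2\pi$), and for every $l$ with $2l \equiv -3,\dots,3 \bmod n$ — in particular all $l \in ]\frac{n}{2}-2 , \frac{n}{2}+2[$ — the two sectors of width $4\beta$ overlap. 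Hence the projection to $L_{[p_U]}$ alone can never rule out an intersection of $\mathcal{J}_0^{+}$ with the ``antipodal'' bisectors $\mathcal{J}_l^{\pm}$, and the ``only if'' direction of all three assertions is unproved in that range. Moreover, the refinement you propose addresses a non-issue: since $[p_U]$ lies on none of the $\mathcal{J}_k^{\pm}$, every point of $\mathcal{J}_k^{\pm} \cap \mathcal{J}_l^{\pm}$ projects into $D_k^{\pm} \cap D_l^{\pm}$, so disjointness of the projected disks already gives disjointness of the bisectors — non-injectivity of $\pi_{[p_U]}$ is harmless there — while at adjacent indices the tangency slices are already settled by Corollary \ref{cor_tangence_bissec_ell}. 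Your step (i) (``boundary circles meet only for adjacent $l$'') repeats the same wrap-around error (for $n$ even the boundary circles of $D_0^{+}$ and $D_{n/2}^{+}$ coincide), and the discriminant-style computation you anticipate in step (iii) is not where the difficulty lies.

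What is actually needed, and what the paper supplies, is a second argument of a different nature for $s \in ]\frac{n}{2}-2 , \frac{n}{2}+2[$, carried out in the \emph{real} visual sphere at $[p_U]$. One considers the real cones $C_s$ with vertex $[p_U]$ over $[U^{s}]\mathcal{J}_0^{+}$, where $U^{s} = \exp(s\Log(U))$. Lemma \ref{lemme_diam_ang_reel_ordre_geq_5}, resting on the real visual diameter estimate of Corollary \ref{cor_diam_ang_reel_bisector}, bounds the real angular diameter of each such cone by $\frac{\pi}{3}$ as soon as the order is at least $5$. Next, $[U^{n/2}]$ is the point symmetry at $[p_U]$, and $[U^{n/2}]\mathcal{J}_0^{+}$ is disjoint from $\mathcal{J}_0^{+}$ and opposite to it with respect to $[p_U]$ (compare the orthogonal projections onto the complex spine $l_{[p_U],[p_V]}$). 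A cone $C_s$ of angular diameter $< \frac{\pi}{3}$ cannot meet both $C_0$ and $C_{n/2}$, since three cones of diameter $< \frac{\pi}{3}$ would otherwise form a connected set of angular diameter $\geq \pi$; by continuity of $s \mapsto [U^{s}]$ it meets only $C_{n/2}$, so $[U^{s}]\mathcal{J}_0^{+} \cap \mathcal{J}_0^{+} = \emptyset$ throughout the antipodal range. The rest of your outline (the rotation action in the chart $\psi_{p'_U,p''_U}$, the tangencies at $[Up_A]$ and $[U^{-1}p_B]$, the ``if'' direction, and your reading of the $\beta \leq \frac{2\pi}{9}$ bound) agrees with the paper, but without this real-cone step, or a substitute for it, your proof of the proposition is incomplete.
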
 

 In order to complete the proof and refine the argument on the visual sphere, we will need the following lemma about the real angular diameter of a bisector in $\h2c$:
 
 \begin{lemme}\label{lemme_diam_ang_reel_ordre_geq_5}
  If $[U]$ is elliptic of order $\geq 5$, then the real angular diameter of $\mathcal{J}_k^{\pm}$ seen from $[p_U]$ is strictly less than $\frac{\pi}{3}$.
 \end{lemme} 
 \begin{proof}
 By symmetry, it is enough to show that the angular diameter of $\mathcal{J}_0^{+}$ seen from $[p_U]$ is strictly less than $\frac{\pi}{3}$.
 
  Let $\beta$ be the angle of rotation of $[U]$. We know that $\beta \leq \frac{2\pi}{5}$, that $\mathrm{tr}(U) = 1 + 2\cos(\beta)$ and that $\langle p_U , p_U \rangle = \langle p_V , p_V \rangle = \cos(\beta) - 1$.
 In this case, the quantity $\frac{\langle p_U, p_V \rangle \langle p_V ,p_U \rangle}{\langle p_U , p_U \rangle \langle p_V , p_V \rangle}$ equals $\frac{(3/2)^2}{(1-\cos(\beta))^2}$. Since $\beta \leq \frac{2\pi}{5}$, we have $\cos(\beta) \geq \cos(\frac{2\pi}{5}) = \frac{\sqrt{5}-1}{4} > \frac{1}{4}$, and 
 $\frac{(3/2)^2}{(1-\cos(\beta))^2} > 4$. By Corollary \ref{cor_diam_ang_reel_bisector} of Proposition \ref{prop_diam_ang_reel_bisector}, we know that the angular diameter of $\mathcal{J}_0^{+}$ seen from $[p_U]$ is strictly less than $\frac{\pi}{3}$.
 \end{proof}

\begin{notat}
 For $s \in \mathbb{R}$, let $U^s$ the element $U^s = \exp(s \Log(U)) \in \su21$. In this way, $U^sp_U = p_U$ , $U^sp'_U = e^{is\beta} p'_U$ and $U^sp''_U = e^{-is\beta} p''_U$. It acts on $L_{[p_U]}$ as a rotation of angle $s\beta$ in the chart $\psi_{p'_U,p''_U}$.
\end{notat}

\begin{proof}[Proof of Proposition \ref{prop_inter_globale_ell}]
 By symmetry, the points 1 and 2 are analogous. Furthermore, by Corollaries \ref{cor_incidences} and \ref{cor_tangence_bissec_ell}, the expected intersections and tangencies occur. 
 
 We begin by showing the first point of the statement. Let $\beta = \frac{2\pi}{n}$. We know that $D_0^{+}$ is a disk with angular diameter $< 4 \beta$. If $s \in [2,\frac{n}{2}-2] \cup [\frac{n}{2}+2 , n-2]$, the disk $D_k^{+}$ does not intersect its image by a rotation of angle $2s\beta$, since they are contained in disjoint angular sectors.
 Hence, it is enough to show that if $s \in ]\frac{n}{2}-2 , \frac{n}{2}+2[$, then $[U^s]\mathcal{J}_0^+$ does not intersect $\mathcal{J}_0^+$
 
 Notice first that $[U^{\frac{n}{2}}]$ is the reflection on $[p_U]$. We know that $[U^{\frac{n}{2}}]\mathcal{J}_0^+ = \mathfrak{B}([U^{\frac{n}{2}} p_V] , p_U)$ and $J_0^+$ are disjoint, since their orthogonal projections on the complex geodesic $l_{[p_U],[p_V]}$ are the (real) line bisector of $[p_U]$ and $[p_V]$, and its image by the refection on $[p_U]$.
 
 For $s \in \mathbb{R}$, let $C_s$ be the real cone on $[U^{s}]\mathcal{J}_0^+$ with vertex $[p_U]$. We know that $C_0$ and $C_{\frac{n}{2}}$ are disjoint and opposite with respect to $[p_U]$.
 Furthermore, the projection of $C_s$ on $L_{[p_U]}$ is the rotation of angle $2s$ of $D_0^+$.  Let $s \in ]\frac{n}{2}-2 , \frac{n}{2}+2[$, such that this projection intersects $D_0^+$.
 By Lemma \ref{lemme_diam_ang_reel_ordre_geq_5}, we know that the cone $C_s$ has an angular diameter strictly less than $\frac{\pi}{3}$.
 Hence it cannot intersect $C_0$ and $C_{\frac{n}{2}}$, because we would have a union of three cones with angular diameters $< \frac{\pi}{3}$ giving a connected set with angular diameter $\geq \pi$.
  Hence, the cone $C_s$ intersects either $C_0$, of $C_{\frac{n}{2}}$. By continuity of $s \mapsto [U^s]$, it only intersects $C_{\frac{n}{2}}$.
 In an analogous way, the intersections of $\mathcal{J}_0^+$ with the orbit by $[U]$ of $\mathcal{J}_0^-$ are exactly the expected ones.
\end{proof}

\break

 \bibliographystyle{alpha}
\bibliography{./tex/ref_unif_cr}

\end{document}